\DeclareSymbolFontAlphabet{\mathbb}{AMSb} 
\DeclareSymbolFontAlphabet{\mathbbl}{bbold}
\newtheorem{thm}{Theorem}[section]
\newtheorem{prop}[thm]{Proposition}
\newtheorem{lem}[thm]{Lemma}
\newtheorem{cor}[thm]{Corollary}
\numberwithin{equation}{section}
\theoremstyle{definition}
\newtheorem{construction}[thm]{Construction}
\newtheorem{dfn}[thm]{Definition}
\newtheorem{exam}[thm]{Example}
\newtheorem{rmk}[thm]{Remark}
\begin{document}

\newcommand{\fraka}{{\mathfrak a}}
\newcommand{\frakb}{{\mathfrak b}}
\newcommand{\frakc}{{\mathfrak c}}
\newcommand{\frakd}{{\mathfrak d}}
\newcommand{\frake}{{\mathfrak e}}
\newcommand{\frakf}{{\mathfrak f}}
\newcommand{\frakg}{{\mathfrak g}}
\newcommand{\frakh}{{\mathfrak h}}
\newcommand{\fraki}{{\mathfrak i}}
\newcommand{\frakj}{{\mathfrak j}}
\newcommand{\frakk}{{\mathfrak k}}
\newcommand{\frakl}{{\mathfrak l}}
\newcommand{\frakm}{{\mathfrak m}}
\newcommand{\frakn}{{\mathfrak n}}
\newcommand{\frako}{{\mathfrak o}}
\newcommand{\frakp}{{\mathfrak p}}
\newcommand{\frakq}{{\mathfrak q}}
\newcommand{\frakr}{{\mathfrak r}}
\newcommand{\fraks}{{\mathfrak s}}
\newcommand{\frakt}{{\mathfrak t}}
\newcommand{\fraku}{{\mathfrak u}}
\newcommand{\frakv}{{\mathfrak v}}
\newcommand{\frakw}{{\mathfrak w}}
\newcommand{\frakx}{{\mathfrak x}}
\newcommand{\fraky}{{\mathfrak y}}
\newcommand{\frakz}{{\mathfrak z}}

\newcommand{\frakA}{{\mathfrak A}}
\newcommand{\frakB}{{\mathfrak B}}
\newcommand{\frakC}{{\mathfrak C}}
\newcommand{\frakD}{{\mathfrak D}}
\newcommand{\frakE}{{\mathfrak E}}
\newcommand{\frakF}{{\mathfrak F}}
\newcommand{\frakG}{{\mathfrak G}}
\newcommand{\frakH}{{\mathfrak H}}
\newcommand{\frakI}{{\mathfrak I}}
\newcommand{\frakJ}{{\mathfrak J}}
\newcommand{\frakK}{{\mathfrak K}}
\newcommand{\frakL}{{\mathfrak L}}
\newcommand{\frakM}{{\mathfrak M}}
\newcommand{\frakN}{{\mathfrak N}}
\newcommand{\frakO}{{\mathfrak O}}
\newcommand{\frakP}{{\mathfrak P}}
\newcommand{\frakQ}{{\mathfrak Q}}
\newcommand{\frakR}{{\mathfrak R}}
\newcommand{\frakS}{{\mathfrak S}}
\newcommand{\frakT}{{\mathfrak T}}
\newcommand{\frakU}{{\mathfrak U}}
\newcommand{\frakV}{{\mathfrak V}}
\newcommand{\frakW}{{\mathfrak W}}
\newcommand{\frakX}{{\mathfrak X}}
\newcommand{\frakY}{{\mathfrak Y}}
\newcommand{\frakZ}{{\mathfrak Z}}

\newcommand{\bA}{{\mathbb A}}
\newcommand{\bB}{{\mathbb B}}
\newcommand{\bC}{{\mathbb C}}
\newcommand{\bD}{{\mathbb D}}
\newcommand{\bE}{{\mathbb E}}
\newcommand{\bF}{{\mathbb F}}
\newcommand{\bG}{{\mathbb G}}
\newcommand{\bH}{{\mathbb H}}
\newcommand{\bI}{{\mathbb I}}
\newcommand{\bJ}{{\mathbb J}}
\newcommand{\bK}{{\mathbb K}}
\newcommand{\bL}{{\mathbb L}}
\newcommand{\bM}{{\mathbb M}}
\newcommand{\bN}{{\mathbb N}}
\newcommand{\bO}{{\mathbb O}}
\newcommand{\bP}{{\mathbb P}}
\newcommand{\bQ}{{\mathbb Q}}
\newcommand{\bR}{{\mathbb R}}
\newcommand{\bS}{{\mathbb S}}
\newcommand{\bT}{{\mathbb T}}
\newcommand{\bU}{{\mathbb U}}
\newcommand{\bV}{{\mathbb V}}
\newcommand{\bW}{{\mathbb W}}
\newcommand{\bX}{{\mathbb X}}
\newcommand{\bY}{{\mathbb Y}}
\newcommand{\bZ}{{\mathbb Z}}

\newcommand{\bfA}{{\mathbf A}}
\newcommand{\bfB}{{\mathbf B}}
\newcommand{\bfC}{{\mathbf C}}
\newcommand{\bfD}{{\mathbf D}}
\newcommand{\bfE}{{\mathbf E}}
\newcommand{\bfF}{{\mathbf F}}
\newcommand{\bfG}{{\mathbf G}}
\newcommand{\bfH}{{\mathbf H}}
\newcommand{\bfI}{{\mathbf I}}
\newcommand{\bfJ}{{\mathbf J}}
\newcommand{\bfK}{{\mathbf K}}
\newcommand{\bfL}{{\mathbf L}}
\newcommand{\bfM}{{\mathbf M}}
\newcommand{\bfN}{{\mathbf N}}
\newcommand{\bfO}{{\mathbf O}}
\newcommand{\bfP}{{\mathbf P}}
\newcommand{\bfQ}{{\mathbf Q}}
\newcommand{\bfR}{{\mathbf R}}
\newcommand{\bfS}{{\mathbf S}}
\newcommand{\bfT}{{\mathbf T}}
\newcommand{\bfU}{{\mathbf U}}
\newcommand{\bfV}{{\mathbf V}}
\newcommand{\bfW}{{\mathbf W}}
\newcommand{\bfX}{{\mathbf X}}
\newcommand{\bfY}{{\mathbf Y}}
\newcommand{\bfZ}{{\mathbf Z}}

\newcommand{\calA}{{\mathcal A}}
\newcommand{\calB}{{\mathcal B}}
\newcommand{\calC}{{\mathcal C}}
\newcommand{\calD}{{\mathcal D}}
\newcommand{\calE}{{\mathcal E}}
\newcommand{\calF}{{\mathcal F}}
\newcommand{\calG}{{\mathcal G}}
\newcommand{\calH}{{\mathcal H}}
\newcommand{\calI}{{\mathcal I}}
\newcommand{\calJ}{{\mathcal J}}
\newcommand{\calK}{{\mathcal K}}
\newcommand{\calL}{{\mathcal L}}
\newcommand{\calM}{{\mathcal M}}
\newcommand{\calN}{{\mathcal N}}
\newcommand{\calO}{{\mathcal O}}
\newcommand{\calP}{{\mathcal P}}
\newcommand{\calQ}{{\mathcal Q}}
\newcommand{\calR}{{\mathcal R}}
\newcommand{\calS}{{\mathcal S}}
\newcommand{\calT}{{\mathcal T}}
\newcommand{\calU}{{\mathcal U}}
\newcommand{\calV}{{\mathcal V}}
\newcommand{\calW}{{\mathcal W}}
\newcommand{\calX}{{\mathcal X}}
\newcommand{\calY}{{\mathcal Y}}
\newcommand{\calZ}{{\mathcal Z}}

\newcommand{\rA}{{\mathrm A}}
\newcommand{\rB}{{\mathrm B}}
\newcommand{\rC}{{\mathrm C}}
\newcommand{\rD}{{\mathrm D}}
\newcommand{\rd}{{\mathrm d}}
\newcommand{\rE}{{\mathrm E}}
\newcommand{\rF}{{\mathrm F}}
\newcommand{\rG}{{\mathrm G}}
\newcommand{\rH}{{\mathrm H}}
\newcommand{\rI}{{\mathrm I}}
\newcommand{\rJ}{{\mathrm J}}
\newcommand{\rK}{{\mathrm K}}
\newcommand{\rL}{{\mathrm L}}
\newcommand{\rM}{{\mathrm M}}
\newcommand{\rN}{{\mathrm N}}
\newcommand{\rO}{{\mathrm O}}
\newcommand{\rP}{{\mathrm P}}
\newcommand{\rQ}{{\mathrm Q}}
\newcommand{\rR}{{\mathrm R}}
\newcommand{\rS}{{\mathrm S}}
\newcommand{\rT}{{\mathrm T}}
\newcommand{\rU}{{\mathrm U}}
\newcommand{\rV}{{\mathrm V}}
\newcommand{\rW}{{\mathrm W}}
\newcommand{\rX}{{\mathrm X}}
\newcommand{\rY}{{\mathrm Y}}
\newcommand{\rZ}{{\mathrm Z}}

\newcommand{\Zp}{{\bZ_p}}
\newcommand{\Zl}{{\bZ_l}}
\newcommand{\Qp}{{\bQ_p}}
\newcommand{\Ql}{{\bQ_l}}
\newcommand{\Cp}{{\bC_p}}
\newcommand{\Fp}{{\bF_p}}
\newcommand{\Fl}{{\bF_l}}

\newcommand{\Ainf}{{\mathbf{A}_{\mathrm{inf}}}}
\newcommand{\AdR}{{\mathbf{A}_{\dR}}}
\newcommand{\BdRp}{{\mathbf{B}_{\mathrm{dR}}^+}}
\newcommand{\BdR}{{\mathbf{B}_{\mathrm{dR}}}}

\newcommand{\OXp}{{\widehat \calO_X^+}}
\newcommand{\OX}{{\widehat \calO_X}}
\newcommand{\AAinf}{{\bA_{\mathrm{inf}}}}            
\newcommand{\AAdR}{{\bA_{\dR}}}
\newcommand{\BBinf}{{\bB_{\mathrm{inf}}}}            
\newcommand{\BBbd}{{\bB^{\mathrm{bd}}}}              
\newcommand{\BBdRp}{{\bB_{\mathrm{dR}}^+}}           
\newcommand{\BBdRpn}{{\bB_{\mathrm{dR},n}^+}}     
\newcommand{\BBdR}{{\bB_{\mathrm{dR}}}}              
\newcommand{\OAinf}{{\calO\bA_{\mathrm{inf}}}}         
\newcommand{\OBinf}{{\calO\bB_{\mathrm{inf}}}}         
\newcommand{\OBdRp}{{\calO\bB_{\mathrm{dR}}^+}}        
\newcommand{\OBdR}{{\calO\bB_{\mathrm{dR}}}}           
\newcommand{\OC}{{\calO\bC}}                           

\newcommand{\Aut}{{\mathrm{Aut}}}
\newcommand{\Bun}{{\mathrm{Bun}}}           
\newcommand{\Cech}{{\check{H}}}             
\newcommand{\Coker}{{\mathrm{Coker}}}       
\newcommand{\colim}{{\mathrm{colim}}}       
\newcommand{\dlog}{{\mathrm{dlog}}}         
\newcommand{\End}{{\mathrm{End}}}           
\newcommand{\Exp}{{\mathrm{Exp}}}           
\newcommand{\Ext}{{\mathrm{Ext}}}           
\newcommand{\Fil}{{\mathrm{Fil}}}           
\newcommand{\Fitt}{{\mathrm{Fitt}}}         
\newcommand{\Frac}{{\mathrm{Frac}}}         
\newcommand{\Frob}{{\mathrm{Frob}}}         
\newcommand{\Gal}{{\mathrm{Gal}}}           
\newcommand{\Gr}{{\mathrm{Gr}}}             
\newcommand{\Hom}{{\mathrm{Hom}}}           
\newcommand{\HIG}{{\mathrm{HIG}}}           
\newcommand{\id}{{\mathrm{id}}}             
\newcommand{\Ima}{{\mathrm{Im}}}            
\newcommand{\Isom}{{\mathrm{Isom}}}         
\newcommand{\Ker}{{\mathrm{Ker}}}           
\newcommand{\Lie}{{\mathrm{Lie}}}           
\newcommand{\Log}{{\mathrm{Log}}}           
\newcommand{\Mat}{{\mathrm{Mat}}}           
\newcommand{\MIC}{{\mathrm{MIC}}}           
\newcommand{\Mod}{{\mathrm{Mod}}}           
\newcommand{\Perfd}{{\mathrm{Perfd}}}         
\newcommand{\pr}{{\mathrm{pr}}}             
\newcommand{\Proj}{\mathrm{Proj}}        
\newcommand{\Rep}{{\mathrm{Rep}}}           
\newcommand{\Res}{{\mathrm{Res}}}           
\newcommand{\RGamma}{{\mathrm{\rR\Gamma}}}    
\newcommand{\rk}{{\mathrm{rk}}}             
\newcommand{\Rlim}{{\mathrm{R}\underleftarrow{\lim}}} 
\newcommand{\sgn}{{\mathrm{sgn}}}           
\newcommand{\Sh}{{\mathrm{Shv}}}             
\newcommand{\Sht}{{\mathrm{Sht}}}           
\newcommand{\Spa}{{\mathrm{Spa}}}           
\newcommand{\Spf}{{\mathrm{Spf}}}           
\newcommand{\Spec}{{\mathrm{Spec}}}         
\newcommand{\Strat}{{\mathrm{Strat}}}       
\newcommand{\Sym}{{\mathrm{Sym}}}           
\newcommand{\Tor}{{\mathrm{{Tor}}}}         
\newcommand{\Tot}{{\mathrm{Tot}}}           
\newcommand{\Vect}{{\mathrm{Vect}}}         


\newcommand{\GL}{{\mathrm{GL}}}             
\newcommand{\SL}{{\mathrm{SL}}}             


\newcommand{\aff}{{\mathrm{aff}}}          
\newcommand{\an}{{\mathrm{an}}}             
\newcommand{\can}{{\mathrm{can}}}           
\newcommand{\cl}{{\mathrm{cl}}}             
\newcommand{\cofib}{{\mathrm{cofib}}}        
\newcommand{\cts}{{\mathrm{cts}}}           
\newcommand{\cris}{{\mathrm{cris}}}         
\newcommand{\crys}{{\mathrm{crys}}}         
\newcommand{\cyc}{{\mathrm{cyc}}}           
\newcommand{\dR}{{\mathrm{dR}}}             
\newcommand{\et}{{\mathrm{\acute{e}t}}}    
\newcommand{\fib}{{\mathrm{fib}}}          
\newcommand{\fl}{{\mathrm{fl}}}             
\newcommand{\fppf}{{\mathrm{fppf}}}         
\newcommand{\ket}{{\mathrm{k\acute{e}t}}}    
\newcommand{\geo}{{\mathrm{geo}}}           
\newcommand{\gp}{{\mathrm{gp}}}             
\newcommand{\la}{{\mathrm{la}}}             
\newcommand{\nil}{{\mathrm{nil}}}           
\newcommand{\pd}{{\mathrm{pd}}}             
\newcommand{\perf}{\mathrm{perf}}           
\newcommand{\proet}{{\mathrm{pro\acute{e}t}}}
\newcommand{\proket}{{prok\mathrm{\acute{e}t}}}     
\newcommand{\rig}{{\mathrm{rig}}}            
\newcommand{\sm}{{\mathrm{sm}}}             
\newcommand{\st}{{\mathrm{st}}}             
\newcommand{\tor}{{\mathrm{tor}}}           
\newcommand{\Zar}{{\mathrm{Zar}}}           



\newcommand{\Prism}{{\mathlarger{\mathbbl{\Delta}}}} 
\newcommand{\OPrism}{{\calO_{\Prism}}}
\newcommand{\IPrism}{{\calI_{\Prism}}}

\newcommand{\ya}{{\rangle}}
\newcommand{\za}{{\langle}}

\newcommand{\smat}[1]{\left( \begin{smallmatrix} #1 \end{smallmatrix} \right)}



\newcommand{\Ctilde}{\widetilde{\bfC}}
\newcommand{\CCtilde}{\widetilde{\bC}}
\newcommand{\Cpsi}{\widetilde{\mathbb{C}}_\psi^I}
\newcommand{\CCpsi}{\widetilde{\mathbb{C}}_{\tilde{\psi}}^I}
\newcommand{\Cppsi}{\widetilde{\mathbb{C}}_\psi^{I, +}}
\newcommand{\CCppsi}{\widetilde{\mathbb{C}}_{\tilde{\psi}}^{I,+}}
\newcommand{\Bnpsi}{\widetilde{\bfB}_{\psi,n}}
\newcommand{\Bpsi}{\widetilde{\mathbf{B}}_{\psi}}
\newcommand{\Bnppsi}{\widetilde{\mathbf{B}}_{\psi,\infty,n}}
\newcommand{\Bppsi}{\widetilde{\mathbf{B}}_{\psi,\infty}}
\newcommand{\norm}{{|\!|}}

\newcommand{\wtA}{\widetilde{A}}
\newcommand{\wtJ}{\widetilde{J}}
\newcommand{\wtS}{\widetilde{S}}
\newcommand{\wtR}{\widetilde{R}}
\newcommand{\wtX}{\widetilde{X}}
\newcommand{\wtY}{\widetilde{Y}}
\newcommand{\wtZ}{\widetilde{Z}}

\newcommand{\wtr}{\widetilde{\calR}}
\newcommand{\wtx}{\widetilde{\frakX}}
\newcommand{\wty}{\widetilde{\frakY}}
\newcommand{\wtz}{\widetilde{\frakZ}}

\newcommand{\Hsmall}{{\rm H}\text{-sm}}

\title{A stacky $p$-adic Riemann--Hilbert correspondence on Hitchin-small locus}






\author{Yudong Liu\footnote{{\bf Y.L.:} School of Mathematical Science, Peking University, YiHeYuan Road 5, Beijing, 100190, China. {\bf Email:} 2100010764@stu.pku.edu.cn},  Chenglong Ma\footnote{{\bf C.M.:} School of Mathematical Science, Peking University, YiHeYuan Road 5, Beijing, 100190, China. {\bf Email:} bdmcl@stu.pku.edu.cn}, Xiecheng Nie\footnote{{\bf X.N.:} Academy of Mathematics and Systems Science, University of Chinese Academy of Science, Zhongguancun East Road 80, Beijing, 100190, China {\bf Email:} niexiecheng23@mails.ucas.ac.cn}, Xiaoyu Qu\footnote{{\bf X.Q.:} School of Mathematical Science, Peking University, YiHeYuan Road 5, Beijing, 100190, China. {\bf Email:} 2200010912@stu.pku.edu.cn}, and  Yupeng Wang\footnote{{\bf Y.W.:} Beijing International Center for Mathematical Research, Peking University, YiHeYuan Road 5, Beijing, 100190, China. {\bf Email:} 2306393435@pku.edu.cn}}



\date{}

\maketitle
\setcounter{tocdepth}{1}

\begin{abstract}
  Let $C$ be an algebraically closed perfectoid field over $\Qp$ with the ring of integer $\calO_C$ and the infinitesimal thickening $\Ainf$. Let $\frakX$ be a semi-stable formal scheme over $\calO_C$ with a fixed flat lifting $\wtx$ over $\Ainf$. Let $X$ be the generic fiber of $\frakX$ and $\wtX$ be its lifting over $\BdRp$ induced by $\wtx$. Let $\MIC_r(\wtX)^{\Hsmall}$ and $\rL\rS_r(X,\BBdRp)^{\Hsmall}$ be the $v$-stacks of rank-$r$ Hitchin-small integrable connections on $\wtX$ and $\BBdRp$-local systems on $X_{v}$, respectively. In this paper, we establish an equivalence between these two stacks by introducing a new period sheaf with connection $(\calO\bB_{\dR,\pd}^+,\rd)$ on $X_{v}$.
\end{abstract}

  \textbf{MSC2020:} Primary 14G22; Secondary 14G45, 14F30.

  \textbf{Keywords:} $p$-adic Riemman-Hilbert correspondence, period sheaf, $\BBdRp$-local systems, flat connections, Hitchin-small locus.

\tableofcontents

\section{Introduction}\label{sec:Introduction}
\subsection{Overview}\label{ssec:overview}
  For a smooth projective variety $X$ over the field $\bC$ of complex numbers, the Riemann--Hilbert correspondence describes an equivalence between the category of $\bC$-local systems and the category of integrable connections on $X$. This equivalence upgrades to the moduli level; that is, there is a homeomorphism between the moduli space $\bfM_B$ of $\bC$-local systems and the moduli space $\bfM_{\dR}$ of integrable connections. 
  
  The $p$-adic Riemann--Hilbert correspondence aims to give an analogue of the above correspondence for rigid spaces; that is, it suggests an equivalence between the category of certain local systems and the category of certain integrable connections on a rigid space over a complete $p$-adic field. The first step towards this direction is due to Scholze \cite{Sch-Pi}. In \emph{loc.cit.}, for a smooth rigid variety $X$ over a complete discrete valuation field $K$ of mixed characteristic $(0,p)$ with the perfect residue field, he introduced so-call \emph{$\BBdRp$-local systems} on the pro-\'etale site $X_{\proet}$, constructed a period sheaf with connection $(\calO\BBdRp,\rd)$ on $X_{\proet}$ whose de Rham complex gives a resolution of $\BBdRp$, and established an equivalence between the category of \emph{de Rham} $\BBdRp$-local systems and the category of \emph{filtered integrable connections} on $X_{\et}$. Based on his work, Liu and Zhu constructed two functors $\calR\calH$ and $\rD_{\dR}$ from the category of $\Qp$-local systems on $X_{\et}$ to the category of $G_K$-equivariant filtered integrable connections on ``$(X\widehat \otimes_K\BdRp)_{\et}$'' and the category of filtered integrable connections on $X_{\et}$, respectively. Using this, they proved the rigidity for ``being de Rham'' of $\Qp$-local systems \cite{LZ}. Their approach also works in the logarithmic case \cite{DLLZ}. By taking grades, the functor $\calR\calH$ induces a functor $\calH$ from the category of $\Qp$-local systems on $X_{\et}$ to the category of $G_K$-equivariant Higgs bundles on $X_{\widehat{\overline K},\et}$. More generally, it was shown that $\calH$ upgrades to an equivalence from the category of generalised representations on $X_{\proet}$ to the category of $G_K$-equivariant Higgs bundles on $X_{\widehat{\overline K},\et}$ (cf. \cite{MW-JEMS}). The similar phenomenon also occurs when studying Riemann--Hilbert correspondence, in \cite{GMWrelative}, one can establish an equivalence between the category of $\BBdRp$-local systems on $X_{\proet}$ and the category of $G_K$-equivariant integrable connections on ``$X\widehat \otimes_K\BdRp$'', which can be used to classify $\BBdRp$-local systems with prismatic source (see \cite{MW-JEMS}, \cite{AHLB23} for relevant results in ``mod $t$'' case). On the other hand, in the lecture note of his ICM talk, Bhatt described his joint work with Lurie \cite[Th. 5.4]{Bha-ICM}, a Riemann--Hilbert functor from the category of certain $\Qp$-sheaves to the category of certain $D$-modules on $X$. Parts of their results are also obtained by Li \cite{Li} independently, by using a certain variant of $\calO\BBdRp$ and applying the approach of Liu--Zhu \cite{LZ}. 

  It is worth highlighting that all results above require that $X$ is defined over a complete discrete valuation field $K$ containing $\Qp$ with the perfect residue field. The advantage in this case is that all $\BBdRp$-local systems and integrable connections are contained in the fibers of the corresponding muduli stacks at the origin of the Hitchin base (cf. Remark \ref{rmk:nilpotent case} below). 

  Now, let $C$ be an algebraically closed perfectoid field containing $\Qp$ with the ring of integers $\calO_C$ and infinitesimal thickening $\Ainf$ and let $\xi$ be a generator of the natural surjection $\Ainf\to \calO_C$. One may ask if there is a $p$-adic Simpson correspondence or a $p$-adic Riemann--Hilbert correspondence for a smooth rigid space $X$ defined over $C$ (rather than $K$ as above). Toward this direction, all existing results concern about $p$-adic Simpson correspondence, and the first is due to Faltings. When $X$ admits a nice formal model $\frakX$ over $\calO_C$, Faltings established an equivalence between the category of \emph{(Faltings-)small} generalized representations on $X$ and the category of \emph{(Faltings-)small} Higgs bundles on $\frakX$ (cf. \cite{Fal, AGT,Wan23} and etc.). There are two kinds of generalization of Faltings' result. The first is to generalize his equivalence to the whole categories on both sides. This was done by Heuer \cite{Heu25} very recently. When $X$ is furthermore proper, Heuer established an equivalence between the category of generalized representations and the category of Higgs bundles on $X$, generalizing the result of Faltings for curves. The other is to generalize Faltings' result to the moduli level. This was done by Ansch\"utz--Heuer--Le Bras \cite{AHLB23} and Sheng--Wang \cite{SW24} when $X$ respectively admits a smooth and semi-stable formal model $\frakX$ over $\calO_C$. More precisely, in this case, it was proved that there exists an equivalence between the moduli stack of \emph{Hitchin-small} generalized representations and the moduli stack of \emph{Hitchin-small} Higgs bundles on $X$. We point out that a (Faltings-)small generalized representation (resp. Higgs bundle) is automatically Hitchin-small and this equivalence of moduli stacks is compatible with the equivalence established by Faltings. According to these progress on $p$-adic non-abelian Hodge theory, one may ask the following question:

  Is there a Riemann--Hilbert correspondence for a smooth rigid space $X$ over $C$ (rather than $K$)?

  Our paper aims to give the first answer to this question, and let us give a brief introduction here. Let $C$ be the $p$-adic completion of a complete discrete valuation field of mixed characteristic $(0,p)$ such that its residue field is perfect. Let $\calO_C$ be the the ring of integers of $C$ and $\Ainf$ be its infinitesimal thickening. Let $\frakX$ be a semi-stable formal scheme over $\calO_C$ in the sense of \cite{CK19} with a fixed flat lifting $\wtx$ over $\Ainf$. We shall establish an equivalence between the category of \emph{Hitchin-small} $\BBdRp$-local systems on $X_v$, the $v$-site of $X$ introduced in \cite{Sch-Diamond}, and the category of \emph{Hitchin-small} integrable connections on $\wtX$, the lifting of $X$ over $\BdRp$ induced from $\wtx$. Indeed, we shall establish this equivalence at the moduli level; that is, we shall give an equivalence between the moduli stack of Hitchin-small $\BBdRp$-local systems and the moduli stack of Hitchin-small integrable connections on $X$.
  This result generalizes previous work in \cite{AHLB23} and \cite{SW24} on the $p$-adic Simpson correspondence.

\subsection{Main results}\label{ssec:main result}
  Now let us state our main theorem. From now on, we freely use the notation in \S\ref{ssec:notation}. In particular, we always let $K$ denote a complete discrete valuation field with the ring of integers $\calO_K$ of the mixed characteristic $(0,p)$ and the perfect residue field $\kappa$.
  Let $C$ be the $p$-adic completion of a fixed algebraic closure of $K$ with the ring of integers $\calO_C$, maximal ideal $\frakm_C$ of $\calO_C$ and the residue field $\overline \kappa$.
  Denote by $\bfA_{\inf,K} = \rW(\calO_C^{\flat})\otimes_{\rW(\kappa)}\calO_K$ the ramified infinitesimal thickening of $\calO_C$ with respect to $K$. 
  We also assume that $\frakX$ is a semi-stable formal scheme over $\calO_C$ with a fixed flat lifting $\wtx$ over $\bfA_{\inf,K}$. Let $X$ be the generic fiber of $\frakX$ and $\wtX$ be its lifting over $\BdRp$ induced from $\wtx$. For any $n\geq 1$, let $\wtX_n$ be the reduction of $\wtX$ modulo $t^n$.
  Let $\Perfd$ be the $v$-site of all affinoid perfectoid spaces over $\Spa(C,\calO_C)$. 
  \begin{thm}\label{thm:Stacky RH}
      For any $n\geq 1$, the flat lifting $\wtx$ of $\frakX$ induces an equivalence
      \[\rho_{\wtx}:\rL\rS_r(X,\BBdRpn)^{{\Hsmall}}\simeq\MIC_r(\wtX_n)^{\Hsmall}\]
      between the moduli stack $\rL\rS_r(X,\BBdRpn)^{{\Hsmall}}$ of Hitchin-small $\BBdRpn$-local systems of rank $r$ on $X_{v}$ and the moduli stack $\MIC_r(\wtX_n)^{\Hsmall}$ of Hitchin-small integrable connections of rank $r$ on $\wtX_n$. 
  \end{thm}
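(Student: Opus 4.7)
The plan is to construct $\rho_{\wtx}$ explicitly via the new period sheaf with connection $(\calO\bB_{\dR,\pd}^+, \rd)$ introduced in this paper. Writing $\calO\bB_{\dR,\pd,n}^+ := \calO\bB_{\dR,\pd}^+/t^n$ and letting $\nu \colon X_v \to \wtX_{n,\et}$ be the natural morphism of sites, the key analytic input will be a Poincar\'e-type resolution
\[ 0 \longrightarrow \BBdRpn \longrightarrow \calO\bB_{\dR,\pd,n}^+ \xrightarrow{\rd} \calO\bB_{\dR,\pd,n}^+ \otimes \Omega^1_{\wtX_n} \xrightarrow{\rd} \cdots \]
on $X_v$, which one expects to establish in an earlier section. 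Granting this, for a Hitchin-small $\BBdRpn$-local system $\calL$ I would define
\[ \rho_{\wtx}(\calL) := \nu_*\bigl(\calL \otimes_{\BBdRpn} \calO\bB_{\dR,\pd,n}^+\bigr), \]
equipped with the connection $\id_{\calL} \otimes \rd$; the candidate quasi-inverse sends $(\calE, \nabla) \in \MIC_r(\wtX_n)^{\Hsmall}$ to the sheaf of horizontal sections of $\calE \otimes_{\calO_{\wtX_n}} \calO\bB_{\dR,\pd,n}^+$ for the total connection. Once the resolution above is in hand, an adjunction argument in the spirit of \cite{Sch-Pi} shows these two functors are mutually quasi-inverse at the categorical level.

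A natural strategy to establish the equivalence is induction on $n$. For the base case $n=1$, after identifying $\BBdRpn$ with $\OX$ and interpreting integrable connections on $\wtX_1$ through their graded Higgs-like data, the statement should reduce to the $p$-adic Simpson correspondence on the Hitchin-small locus established by Ansch\"utz--Heuer--Le Bras \cite{AHLB23} and Sheng--Wang \cite{SW24}. For the inductive step, both stacks fit into natural extensions relating level $n$ to level $n-1$, whose kernels are $t^{n-1}$-twists of the $n=1$ objects; since the deformation obstructions on both sides are controlled, after applying $\rho_{\wtx}$ at level one, by the same first cohomology group of an endomorphism Higgs complex, and since Hitchin-smallness ensures that this cohomology is well-behaved by \cite{AHLB23, SW24}, the equivalence lifts from level $n-1$ to level $n$, functorially in test perfectoid algebras.

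The hardest step will be the stacky upgrade: promoting the pointwise categorical equivalence to an equivalence of $v$-stacks over $\Perfd$, functorial with respect to arbitrary base change along morphisms $\Spa(R, R^+) \to X$. This requires verifying that $\rho_{\wtx}$ commutes with pullback in the $v$-topology, that both sides satisfy $v$-descent (for which Hitchin-smallness is crucial in ruling out higher cohomological obstructions), and, most delicately, that the Hitchin-small condition is preserved in both directions. The last point demands an explicit comparison of the Hitchin bases on the two sides, which I expect to follow from a local description of $\calO\bB_{\dR,\pd,n}^+$ in toroidal charts adapted to the semi-stable structure of $\frakX$, combined with the compatibility between the Hitchin maps already at work in the Simpson case of \cite{AHLB23, SW24}.
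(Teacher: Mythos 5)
Your overall architecture coincides with the paper's: build $(\calO\bB_{\dR,\pd}^+,\rd)$ from the lifting $\wtx$, prove the Poincar\'e resolution (Theorem \ref{thm:Poincare's Lemma}), define the functor by $\nu_*(\bL\otimes_{\BBdRpn}\calO\bB_{\dR,\pd}^+)$ and its candidate inverse by horizontal sections, and deduce the stacky statement from functoriality of the whole construction in the test object $Z\in\Perfd$ (Remark \ref{rmk:functoriality of period rings}); note that $v$-descent of the two moduli stacks is not something you need to re-prove, as it is already known (Remark \ref{rmk:stacks}). The genuine gap is in the middle: you assert that the two functors are mutually quasi-inverse by ``an adjunction argument in the spirit of Scholze'' once the resolution is in hand, and you locate the role of Hitchin-smallness mainly in descent/base-change and in the comparison of Hitchin bases. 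This cannot work as stated: the resolution and any formal adjunction are available for \emph{all} $\BBdRpn$-local systems, yet the equivalence beyond the Hitchin-small locus is precisely what remains open (Remark \ref{rmk:whole stack}), so an argument that does not use smallness in the computation of $\rR\nu_*(\bL\otimes_{\BBdRpn}\calO\bB_{\dR,\pd}^+)$ cannot be correct. What your sketch omits is the core local computation over the perfectoid $\Gamma$-torsor $X_{\infty,Z}\to X_Z$ of Example \ref{exam:Gamma torsor}: the explicit pd-polynomial description of the period sheaf there (Proposition \ref{prop:local description of OB_dR^+}), the reduction of $\rR\nu_*$ to continuous $\Gamma$-cohomology (Lemmas \ref{lem:vanishing} and \ref{lem:reduce to Gamma-cohomology}), and then --- only for Hitchin-small objects --- the proof that this $\Gamma$-cohomology, resp.\ the de Rham cohomology of $\calD\otimes\calO\bB_{\dR,\pd}^+$, vanishes in positive degrees and in degree $0$ yields a finite projective module of rank $r$ with the expected connection, resp.\ a Hitchin-small local system (Lemmas \ref{lem:L^+}, \ref{lem:decompletion}, \ref{lem:MW-Integral}, Propositions \ref{prop:integral simpson} and \ref{prop:local RH over BdRp}, fed into Theorem \ref{thm:global RH}). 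The convergence of $\exp(\sum_i\theta_iW_i)$ and the fact that the higher cohomology is killed by $\rho_K(\zeta_p-1)$ are exactly where smallness enters; without this the quasi-inverse property, and even the local freeness of $\calD(\bL)$, is unsupported.

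A secondary point: induction on $n$ is the right instinct, but your inductive step (matching square-zero deformation/obstruction theories on the two sides through the level-one equivalence) is an additional comparison you have not supplied and would itself require proof. The paper instead inducts on $n$ only at the local level and by elementary d\'evissage: the exact sequence $0\to tM\to M\to M/t\to 0$, vanishing of higher cohomology, the projectivity Lemma \ref{lem:projectivity}, and Nakayama to check that the explicit unit/counit maps are isomorphisms (Proposition \ref{prop:local RH over BdRp}), with the passage to $n=\infty$ handled by Lemma \ref{lem:from n to infty}. If you supply the local statements above, your deformation-theoretic induction becomes unnecessary, and the stacky upgrade is then immediate from the base-change compatibility of the period sheaf rather than being the hardest step.
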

  \begin{rmk}\label{rmk:n=1 case for main theorem}
      The $n=1$ case for the above theorem was already obtained by Ansch\"utz, Heuer, and Le Bras \cite{AHLB23} when $\frakX$ is smooth, and by Sheng and the last author \cite{SW24} when $\frakX$ is semi-stable. When $n\geq 2$, to the best of our knowledge, Theorem \ref{thm:Stacky RH} is the \emph{only} result on the stacky $p$-adic Riemann--Hilbert correspondence so far.
  \end{rmk}
  
  Now, we explain some notions appearing in above Theorem \ref{thm:Stacky RH}.
  Let $Z = \Spa(A,A^+)\in \Perfd$ be an affinoid perfectoid space. Let $\frakX_Z$ be a semi-stable formal scheme over $\Spf(A^+)$ (cf. Definition \ref{dfn:small affine}) with a flat lifting $\wtx_Z$ over $\bA_{\inf,K}(Z):=\bA_{\inf}(Z)\otimes_{\rW(\kappa)}\calO_K$. Let $X_Z$ be the generic fiber of $\frakX_Z$ and $\wtX_Z$ be its lifting over $\BBdRp(Z)$ associated to $\wtx_Z$. For any $n\geq 1$, let $\wtX_{Z,n}$ be the reduction of $\wtX_Z$ modulo $t^n$ and $\BBdRpn:=\BBdRp/t^n$. 
  
  By a \emph{$\BBdRpn$-local system} of rank $r$ on $X_{Z,v}$, the $v$-site of $X_Z$ in the sense of \cite{Sch-Diamond}, we mean a locally finite free $\BBdRpn$-module of rank $r$. When $n=1$, the sheaf $\bB_{\dR,1}^+$ is exactly the structure sheaf $\widehat \calO_{X_Z}$ and $\bB_{\dR,1}^+$-local systems coincide with $v$-vector bundles on $X_{Z,v}$. One can similarly define $\BBdRp$-local systems of rank $r$ on $X_{Z,v}$.
  
  Let $\widetilde \rd:\wtx_Z\to\Omega^1_{\wtx_Z}$ be the usual $(p,\xi_K)$-complete derivation on $\wtx_Z$ and let $\rd$ be the following composite
  \[\rd: \calO_{\wtx_Z}\xrightarrow{\widetilde \rd}\Omega^1_{\wtx_Z}\hookrightarrow\Omega^1_{\wtx_Z}\{-1\}.\]
  Here $\Omega^1_{\wtx_Z}\{-1\}$ denotes the Breuil--Kisin--Fargues twist of $\Omega^1_{\wtx_Z}$ (cf. \S\ref{ssec:notation}). By an \emph{integrable connection} of rank $r$ on $\wtX_{Z,n}$, we mean a pair $(\calM,\nabla)$ consisting of a locally finite free $\calO_{\wtX_{Z,n}}$-module $\calM$ together with a $\BBdRpn(Z)$-linear map
  \[\nabla:\calM\to\calM\otimes_{\calO_{\wtx_Z}}\Omega^1_{\wtx_Z}\{-1\}\]
  satisfying the Leibniz rule with respect to $\rd$ such that $\nabla\wedge\nabla = 0$. One can similarly define integrable connections of rank $r$ on $\wtX_Z$. We remark that if we trivialize the Tate twist $\Omega^1_{\wtX_Z}(-1)$ by $\Omega^1_{\wtX_Z}\cdot t^{-1}$, then via the identification
  \[\calM\otimes_{\calO_{\wtx_Z}}\Omega^1_{\wtx_Z}\{-1\} = \calM\otimes_{\calO_{\wtx_Z}}\Omega^1_{\wtx_Z}(-1),\]
  \'etale locally on $\wtX_{Z,n}$, $\nabla$ behaves like a \emph{$t$-connection} in the sense of \cite{Yu24}. In particular, when $n=1$, an integrable connection on $\wtX_{Z,1} = X_Z$ is exactly a Higgs bundle. 

  For a semi-stable $\frakX$ over $\calO_C$ with a fixed flat lifting $\wtx$ over $\bfA_{\inf,K}$, for any $Z=\Spa(A,A^+)\in \Perfd$, let $\frakX_Z$, $\wtx_Z$, $X_Z$ and $\wtX_Z$ be the base-changes of $\frakX$, $\wtx$, $X$ and $\wtX$ to $A^+$, $\bA_{\inf,K}(Z)$, $A$ and $\BBdRp(Z)$, respectively.
  Consider the following two functors:
  \[\rL\rS_r(X,\BBdRpn):\Perfd\to\text{Groupoid},\quad Z\mapsto\{\text{$\BBdRpn$-local systems on $X_{Z,v}$ of rank $r$}\}\]
  and 
  \[\MIC_r(\wtX_{Z,n}):\Perfd\to\text{Groupoid},\quad Z\mapsto\{\text{integrable connections on $\wtX_{Z,n}$ of rank $r$}\}.\]
  \begin{rmk}\label{rmk:stacks}
      For $n=1$, the above functors were first considered by Heuer \cite{Heu-Moduli} without assuming that $X$ has a nice integral model $\frakX$ over $\calO_C$. In \emph{loc.cit.}, he introduced the functor $v\Bun_r(X)$ (resp. $\HIG_r(X)$) of $v$-vector bundles (resp. Higgs bundles) of rank r on $X_{v}$ (resp. $X_{\et}$), which is exactly $\rL\rS_r(X,\bB_{\dR,1}^+)$ (resp. $\MIC_r(\wtX_1)$) defined above. He proved that these two functors are small $v$-stacks on $\Perfd$ and isomorphic after taking \'etale sheafifications. For general $n$, the above functors are introduced by Yu \cite{Yu24}. In \emph{loc.cit.}, Yu also proved that these functors above are small $v$-stacks, and $\rL\rS_r(X,\BBdRpn)$ is isomorphic to $\MIC_r(\wtX_n)$ after taking \'etale sheafifications.
  \end{rmk}
  Clearly, we have the following diagram:
  \begin{equation}\label{diag:reduction}
      \begin{tikzcd}
          \vdots \arrow[d] & & \vdots \arrow[d] \\
          {\rL\rS(X,\bB_{\dR,n+1}^+)} \arrow[d] & & \MIC_r(\wtX_{n+1}) \arrow[d] \\
          {\rL\rS(X,\bB_{\dR,n}^+)} \arrow[d] & & \MIC_r(\wtX_n) \arrow[d] \\
          \vdots \arrow[d] & & \vdots \arrow[d]  \\
          {v\Bun_r(X)=\rL\rS(X,\bB_{\dR,1}^+)} \qquad\qquad\qquad\arrow[rd,"\widetilde h"'] & & \qquad\qquad\qquad\MIC_r(\wtX_1)=\HIG_r(X) \arrow[ld,"h"] \\
          & \calA_r &                                     
      \end{tikzcd}
  \end{equation}
  where all vertical maps are induced by taking the obvious reduction, $h$ and $\widetilde h$ denote the Hitchin fibrations introduced in \cite{Heu-Moduli}, and $\calA_r$ is the Hitchin base, which is defined as a functor
  \[\calA_r:\Perfd\to \text{Sets},\quad Z=\Spa(A,A^+)\mapsto \oplus_{i=1}^{r}\rH^0(X_Z,\Sym^i(\Omega^1_{X_Z/A}\{-1\})).\]
  One can define the \emph{Hitchin-small locus} $\calA_r^{\Hsmall}\subset \calA_r$ as the following sub-functor
  \[\calA_r^{\Hsmall}:\Perfd\to \text{Sets},\quad Z = \Spa(A,A^+)\mapsto\oplus_{i=1}^{r}p^{>\frac{i}{p-1}}\rH^0(\frakX_Z,\Sym^i(\Omega^1_{\frakX_Z/A^+}\{-1\}))\]
  as in \cite{AHLB23}, where $p^{>\frac{i}{p-1}}:=(\zeta_p-1)^i\frakm_C\subset \calO_C$. Both functors $\calA_r$ and $\calA_r^{\Hsmall}$ make sense as $v$-sheaves. For any stack $\Sigma$ lying over $\calA_r$, define its Hitchin-small locus $\Sigma^{\Hsmall}$ as the sub-stack
  \[\Sigma^{\Hsmall}:=\Sigma\times_{\calA_r}\calA_r^{\Hsmall}.\]
  For example, we have $\rL\rS_r(X,\BBdRpn)^{\Hsmall}$ and $\MIC_r(\wtX_n)^{\Hsmall}$.
  Now, we have explained all the notions involved in Theorem \ref{thm:Stacky RH}.

  For $\BBdRpn$-local systems and integrable connections over $\wtX_n$, one can check the Hitchin-smallness by taking reduction modulo $t$:
  \begin{rmk}\label{rmk:Hitchin small can be checked modulo t}
      A $\BBdRpn$-local system (resp. integrable connection on $\wtX_n$) is Hitchin-small \emph{if and only if} its reduction modulo $t$ is a Hitchin-small $v$-bundle on $X_{v}$ (resp. Higgs bundle on $X_{\et}$).
  \end{rmk}

  We list some immediate corollaries of Theorem \ref{thm:Stacky RH}. Again, let $\frakX$ be a semi-stable formal scheme over $\calO_C$ which admits a flat lifting over $\bfA_{\inf,K}$ and fix such a lifting $\wtx$. First, letting $n$ go to $\infty$, we obtain the following equivalence of stacks:
  \begin{cor}\label{cor:main result}
      Keep notations as above. The lifting $\wtx$ of $\frakX$ induces an equivalence
      \[\rho_{\wtx}:\rL\rS_r(X,\BBdRp)^{\Hsmall}\simeq \MIC_r(\wtX)^{\Hsmall}\]
      between the moduli stack $\rL\rS_r(X,\BBdRp)^{\Hsmall}$ of Hitchin-small $\BBdRp$-local systems of rank $r$ and the moduli stack $\MIC_r(\wtX)^{\Hsmall}$ of Hitchin-small integrable connections of rank $r$.
  \end{cor}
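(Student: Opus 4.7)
The plan is to derive the corollary from Theorem \ref{thm:Stacky RH} by passing to the inverse limit $n\to\infty$. The first step is to verify that the equivalences $\rho_{\wtx,n}$ of Theorem \ref{thm:Stacky RH} are compatible with the canonical reduction maps $\BBdRp_{n+1}\twoheadrightarrow\BBdRpn$ and $\wtX_{n+1}\twoheadrightarrow\wtX_n$, i.e.\ that the square
\[
\begin{tikzcd}
\rL\rS_r(X,\BBdRp_{n+1})^{\Hsmall} \arrow[r,"\rho_{\wtx,n+1}"] \arrow[d] & \MIC_r(\wtX_{n+1})^{\Hsmall} \arrow[d] \\
\rL\rS_r(X,\BBdRpn)^{\Hsmall} \arrow[r,"\rho_{\wtx,n}"] & \MIC_r(\wtX_n)^{\Hsmall}
\end{tikzcd}
\]
commutes up to a canonical $2$-isomorphism. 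This should be a formal consequence of the construction of $\rho_{\wtx,n}$ through the period sheaf $(\calO\bB_{\dR,\pd}^+,\rd)$ introduced in the abstract: the two functors defining the equivalence respect the $t$-adic filtration, so reducing modulo $t^n$ before or after applying $\rho_{\wtx,n+1}$ yields the same output up to canonical isomorphism.

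Second, I would identify each side of the desired equivalence with the $2$-inverse limit of its truncations:
\[
\rL\rS_r(X,\BBdRp)^{\Hsmall}\simeq\varprojlim_{n}\rL\rS_r(X,\BBdRpn)^{\Hsmall},\qquad \MIC_r(\wtX)^{\Hsmall}\simeq\varprojlim_{n}\MIC_r(\wtX_n)^{\Hsmall}.
\]
For a test object $Z=\Spa(A,A^+)\in\Perfd$, the reduction tower $\{\calL/t^n\calL\}_n$ of a $\BBdRp$-local system $\calL$ gives a compatible system; conversely, the inverse limit of a compatible system $\{\calL_n\}_n$ is a module over $\varprojlim_n\BBdRpn=\BBdRp$ whose local freeness is obtained by lifting finite-free trivializations through the $t$-adic pro-nilpotent thickenings one stage at a time, the obstruction at each stage being a class in an $H^1$ that vanishes after passing to a suitable $v$-cover. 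An entirely analogous argument treats the integrable connection side, using $\calO_{\wtX}=\varprojlim_n\calO_{\wtX_n}$ and the fact that deformations of a finite locally free module with flat connection along the square-zero extension $\wtX_{n+1}\twoheadrightarrow\wtX_n$ are controlled by the de Rham complex of the kernel ideal, which is acyclic on small enough opens. Crucially, by Remark \ref{rmk:Hitchin small can be checked modulo t} the Hitchin-small condition is intrinsic to the mod-$t$ reduction, so it is preserved under both limit identifications and pins down the same sub-stack over $\calA_r^{\Hsmall}$ at every level.

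Combining the two steps, the compatible system $\{\rho_{\wtx,n}\}_n$ assembles into an equivalence of the $2$-inverse limits, yielding the desired $\rho_{\wtx}$. The main obstacle is executing the limit identification cleanly at the level of $v$-stacks: one needs to control $v$-local trivializations at each finite stage and lift them through the pro-nilpotent thickening without accumulating obstructions, and one needs the parallel statement for morphisms so that the resulting functor is fully faithful in the limit. The required finite-level input, in particular the fact that $\BBdRpn$-local systems and integrable connections on $\wtX_n$ are $v$-locally trivial in a way compatible with the reduction maps, is already embedded in the formalism of Yu \cite{Yu24} invoked in Remark \ref{rmk:stacks}, and should suffice to promote the level-$n$ equivalences of Theorem \ref{thm:Stacky RH} to an equivalence of the limit stacks with essentially no further input.
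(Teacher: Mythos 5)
Your argument is essentially correct, but it is not the route the paper takes. The paper never assembles the $n=\infty$ case as a $2$-limit of the finite-level stack equivalences: instead, the entire Riemann--Hilbert machinery is set up uniformly for $1\leq n\leq\infty$ (Proposition \ref{prop:local RH over BdRp} and Theorem \ref{thm:global RH} are stated and proved including $n=\infty$), and the passage to the limit happens once, at the level of local module categories, in Lemma \ref{lem:from n to infty} (an equivalence $\Rep_\Gamma^{\Hsmall}(\Bppsi)\simeq\varprojlim_n\Rep_\Gamma^{\Hsmall}(\Bppsi/t^n)$ and its $\MIC$ analogue, resting on the projectivity-lifting Lemma \ref{lem:projectivity} and \cite[Lem. 1.9]{MT}); Corollary \ref{cor:main result} is then literally the $n=\infty$ instance of the already-proved stacky statement, with the same functoriality-in-$Z$ argument as in the proof of Theorem \ref{thm:Stacky RH}. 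Your route instead performs the limit globally, which requires the extra identifications $\rL\rS_r(X,\BBdRp)^{\Hsmall}\simeq\varprojlim_n\rL\rS_r(X,\BBdRpn)^{\Hsmall}$ and its $\MIC$ counterpart as $v$-stacks; this is fine, but two points should be tightened. First, on the local-system side you should not lift trivializations by killing an $H^1$-obstruction on ever finer $v$-covers (a priori this forces infinitely many refinements); the clean argument is that over a single affinoid perfectoid $v$-cover all truncations are simultaneously handled because $v$-vector bundles on affinoid perfectoids come from finite projective modules and are acyclic (\cite[Th. 3.5.8]{KL16}, exactly as used in Lemma \ref{lem:vanishing}), after which freeness of the limit follows by the argument of Lemma \ref{lem:projectivity}. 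Second, no deformation theory is needed on the connection side: the underlying module lifts by the same projectivity argument on affine opens, and the connection on the limit is simply $\varprojlim_n\nabla_n$. With these adjustments your proof works; the trade-off is that the paper's formulation avoids any stack-level limit formalism by pushing the $t$-adic limit into the local categories, while yours keeps Theorem \ref{thm:Stacky RH} as a black box for finite $n$ at the cost of re-proving the limit identifications globally.
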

  To simplify the notations, set $\BBdRp:=\bB_{\dR,\infty}^+$ and $\wtX_{\infty}:=\wtX$. The next corollary follows from taking $C$-points in Theorem \ref{thm:Stacky RH} and Corollary \ref{cor:main result} immediately.
  \begin{cor}\label{cor:Hitchin-small RH}
      Keep notations as above.
      For any $1\leq n\leq \infty$, the lifting $\wtx$ induces an equivalence 
      \[\rL\rS(X,\BBdRpn)^{\Hsmall}(C)\simeq\MIC(\wtX_n)^{\Hsmall}(C)\]
      between the category $\rL\rS(X,\BBdRpn)^{\Hsmall}(C)$ of Hitchin-small $\BBdRpn$-local systems on $X_{v}$ and the category $\MIC(\wtX_n)^{\Hsmall}(C)$ of Hitchin-small integrable connections on $\wtX_n$.
  \end{cor}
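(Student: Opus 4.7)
The plan is to deduce Corollary \ref{cor:Hitchin-small RH} directly from Theorem \ref{thm:Stacky RH} (for finite $n$) and Corollary \ref{cor:main result} (for $n=\infty$) by evaluating the equivalence of $v$-stacks at the perfectoid point $Z = \Spa(C,\calO_C) \in \Perfd$.

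To make this precise, first I would unwind the definitions at $Z = \Spa(C,\calO_C)$: here $A = C$ and $A^+ = \calO_C$, and the period rings satisfy $\bA_{\inf,K}(Z) = \bA_{\inf,K}$ and $\BBdRp(Z) = \BdRp$, so that the base changes collapse to $\frakX_Z = \frakX$, $\wtx_Z = \wtx$, $X_Z = X$ and $\wtX_{Z,n} = \wtX_n$. Under these identifications, the value $\rL\rS_r(X,\BBdRpn)(\Spa(C,\calO_C))$ is precisely the category of rank-$r$ $\bB_{\dR,n}^+$-local systems on $X_v$, and similarly $\MIC_r(\wtX_n)(\Spa(C,\calO_C))$ is the category of rank-$r$ integrable connections on $\wtX_n$ as defined in Subsection~\ref{ssec:main result}. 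Moreover, since $\calA_r$ and $\calA_r^{\Hsmall}$ are defined pointwise as $v$-sheaves on $\Perfd$, the fiber product $\Sigma^{\Hsmall} = \Sigma \times_{\calA_r} \calA_r^{\Hsmall}$ defining the Hitchin-small sub-stack commutes with evaluation at $\Spa(C,\calO_C)$, so the Hitchin-smallness condition on both sides is preserved under restriction to $C$-points.

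Applying $(-)(\Spa(C,\calO_C))$ to the equivalence $\rho_{\wtx}$ then yields the desired equivalence of categories. Since the stacky version is already in hand, there is no genuine obstacle here: the whole argument amounts to the observation that the formation of each of the $v$-stacks $\rL\rS_r(X,\BBdRpn)^{\Hsmall}$ and $\MIC_r(\wtX_n)^{\Hsmall}$, as well as their defining fiber products, commutes with evaluation at the affinoid perfectoid space $\Spa(C,\calO_C)$—which is built into the definitions. The case $n = \infty$ is handled identically using Corollary \ref{cor:main result} in place of Theorem \ref{thm:Stacky RH}.
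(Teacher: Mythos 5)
Your proposal is correct and is essentially the paper's own proof: the paper deduces the corollary by "taking $C$-points" in Theorem \ref{thm:Stacky RH} and Corollary \ref{cor:main result}, i.e., evaluating the equivalence $\rho_{\wtx}$ at $Z=\Spa(C,\calO_C)$, exactly as you do. Your additional unwinding (that the base changes collapse and that the Hitchin-small fiber product commutes with evaluation at $\Spa(C,\calO_C)$) is just the routine verification the paper leaves implicit.
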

  
  \begin{cor}\label{cor:whole correspondence for P}
      Let $\bP^d$ be the projective space over $C$ of dimension $d$.
      For any $1\leq n\leq \infty$, there exists an equivalence of small $v$-stacks
      \[\rL\rS_r(\bP^d,\BBdRpn)\simeq \MIC_r(\widetilde \bP_n^d).\]
      Here $\widetilde \bP_n^d$ is defined as follows: Let $\widetilde{\frakP}^d$ be the projective space of relative dimension $d$ over $\bfA_{\inf}$ (whose reduction $\frakP^d$ modulo $\xi$ is obviously a smooth formal model of $\bP^d$ over $\calO_C$) and let $\widetilde \bP_n^d$ be the lifting of $\bP^d$ associated to $\widetilde{\frakP}^d$.
  \end{cor}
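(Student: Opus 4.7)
The plan is to reduce Corollary \ref{cor:whole correspondence for P} to Theorem \ref{thm:Stacky RH} by observing that on $\bP^d$ the Hitchin base $\calA_r$ is the zero $v$-sheaf, so the Hitchin-small locus coincides with the entire stack.

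First, I would take $K=\Qp$, so that $\bfA_{\inf,K}=\bfA_{\inf}$, and use $\widetilde{\frakP}^d$ itself as a flat lifting over $\bfA_{\inf,K}$ of the smooth (hence semi-stable) formal model $\frakP^d=\bP^d_{\calO_C}$. Theorem \ref{thm:Stacky RH} then supplies an equivalence
\[\rho_{\widetilde{\frakP}^d}:\rL\rS_r(\bP^d,\BBdRpn)^{\Hsmall}\simeq\MIC_r(\widetilde{\bP}^d_n)^{\Hsmall}.\]

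Next, I would show that $\calA_r=\calA_r^{\Hsmall}=0$ as $v$-sheaves on $\bP^d$. The classical input is the vanishing $H^0(\bP^d_C,\Sym^i\Omega^1_{\bP^d_C/C})=0$ for all $i\ge 1$, which follows from Bott's formula for the cohomology of homogeneous bundles on the flag variety $\bP^d$ (or by an induction on $d$ using the Euler sequence $0\to\Omega^1_{\bP^d}\to\calO(-1)^{d+1}\to\calO\to 0$ and taking symmetric powers). Since the Breuil--Kisin--Fargues twist $\{-1\}$ is a twist by an invertible free module, it preserves the vanishing of global sections. Using rigid-analytic GAGA for the proper rigid space $\bP^d_C$ and flat base change along $C\to A$, this propagates to
\[H^0\bigl(\bP^d_Z,\Sym^i(\Omega^1_{\bP^d_Z/A}\{-1\})\bigr)=0\quad\text{for all }i\ge 1,\ Z=\Spa(A,A^+)\in\Perfd,\]
so $\calA_r=0$ as a $v$-sheaf. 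For the integral version entering $\calA_r^{\Hsmall}$, the local freeness of $\Sym^i\Omega^1_{\frakP^d_{A^+}/A^+}$ together with the properness of $\frakP^d_{A^+}$ implies that $H^0(\frakP^d_{A^+},\Sym^i\Omega^1\{-1\})$ is a torsion-free $A^+$-module whose rationalisation is zero; hence it vanishes, and $\calA_r^{\Hsmall}=0$ as well.

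Because $\calA_r=\calA_r^{\Hsmall}$, for any stack $\Sigma$ over $\calA_r$ one has $\Sigma^{\Hsmall}=\Sigma\times_{\calA_r}\calA_r=\Sigma$. In particular,
\[\rL\rS_r(\bP^d,\BBdRpn)^{\Hsmall}=\rL\rS_r(\bP^d,\BBdRpn),\qquad\MIC_r(\widetilde{\bP}^d_n)^{\Hsmall}=\MIC_r(\widetilde{\bP}^d_n),\]
and combining this with $\rho_{\widetilde{\frakP}^d}$ yields the claimed equivalence. There is no substantive obstacle here: the entire content of the corollary is the triviality of the Hitchin base for $\bP^d$, and the only point that requires care is the cohomological vanishing and its persistence under perfectoid base change (both algebraic and integral), which is standard.
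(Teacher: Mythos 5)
Your proposal is correct and follows essentially the same route as the paper: the paper's proof simply observes that all $\BBdRpn$-local systems and integrable connections on $\widetilde\bP^d_n$ are automatically Hitchin-small (indeed lie in the fiber over $0\in\calA_r$), citing \cite[Cor. 3.26]{AHLB23}, and your Euler-sequence/Bott vanishing argument is exactly the content of that citation spelled out. One minor economy: since $\calA_r^{\Hsmall}$ is by definition a subfunctor of $\calA_r$, the vanishing $\calA_r=0$ already forces $\calA_r^{\Hsmall}=\calA_r$, so your separate integral torsion-freeness argument is not needed.
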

  \begin{proof}
      This holds true as $\BBdRpn$-local systems (resp. intergable connections on $\widetilde \bP_n^d$) are automatically Hitchin-small (and even belong to the fiber of corresponding stacks at the origin $0\in \calA_r$). See \cite[Cor. 3.26]{AHLB23} for details.
  \end{proof}
  \begin{rmk}\label{rmk:nilpotent case}
      In Corollary \ref{cor:whole correspondence for P}, it is not necessary to work with the smooth integral model of $\bP^n$. More generally, for any smooth rigid variety $X$ over $C$ with a smooth lifting $\wtX$ over $\BdRp$, using a period sheaf with connection $(\calO\bB_{\dR},\rd)$ described in \cite[Def. 2.36]{Yu24} and the same argument in this paper, we can give an equivalence of stacks
      \[\rL\rS_r(X,\BBdRpn)^{0}\simeq\MIC_r(\wtX_n)^{0}\]
      where $\Sigma^0$ denotes the fiber at the origin $0\in \calA_r$ for any stack $\Sigma$ over $\calA_r$. See \cite{LMNQ} for more details. When $X$ is the base-change along $K\to C$ for some smooth rigid space $X_0$ defined over a complete discrete valuation sub-field $K\subset C$, the base-change $\wtX$ of $X_0$ along $K\to\BdRp$ is a lifting of $X$. In this case, $\BBdRp$-local systems on $X_{0,v}$ and $G_K$-equivariant integrable connections on $\wtX$ always belong to the fiber of the corresponding stacks at the origin $0\in\calA_r$ (cf. \cite[Rem. 3.2]{MW-JEMS}).
  \end{rmk}
  \begin{rmk}\label{rmk:whole stack}
      For general $X$, it is still a question if there exists an equivalence of the whole stacks
      \begin{equation}\label{equ:whole stack equivalence}
          \rL\rS_r(X,\BBdRpn)\simeq\MIC_r(\wtX_n).
      \end{equation}
      According to \cite[\S 6]{Heu-Sigma}, in the rest of this remark we always assume that $X$ is \emph{proper} smooth.
      Note that for $n=1$; that is, in the case for $v$-vector bundles and Higgs bundles, Heuer and Xu proved that if $X$ is a smooth curve, such an equivalence (\ref{equ:whole stack equivalence}) exists up to a certain normalization \cite{HX}. Beyond curves, we only get an equivalence between the category of $v$-vector bundles on $X_v$ and the category Higgs bundles on $X_{\et}$ \cite{Heu25}. It seems that the problem (at least in the $n=1$ case) could be solved using the Simpson gerbe claimed by Bhargav Bhatt and Mingjia Zhang. However, for general $n>1$, we know \emph{nothing} on (\ref{equ:whole stack equivalence}) besides the results obtained in this paper. 
  \end{rmk}

\subsection{Strategy for the proof and organization of this paper}\label{ssec:strategy}
  Here, we sketch the idea for the proof of Theorem \ref{thm:Stacky RH} and explain how the paper is organized. Again, we freely use the notation in \S\ref{ssec:notation} below.

  Fix a semi-stable scheme $\frakX$ over $\calO_C$ of relative dimension $d$. Assume that it admits a flat lifting and fix such a lifting $\wtx$ over $\bfA_{\inf,K}$. For any $Z=\Spa(A,A^+)\in \Perfd$, let $\frakX_Z$, $\wtx_Z$, $X_Z$ and $\wtX_Z$ be the base-changes of $\frakX$, $\wtx$, $X$ and $\wtX$ to $A^+$, $\bA_{\inf,K}(Z)$, $A$ and $\BBdRp(Z)$, respectively.
  To prove Theorem \ref{thm:Stacky RH}, one have to one have to construct an equivalence
  \[\rho_{\wtx_Z}:\rL\rS_r(X,\bB_{\dR,n}^+)^{\Hsmall}(Z)\xrightarrow{\simeq}\MIC_r(\wtX_n)^{\Hsmall}(Z)\]
  between the category $\rL\rS_r(X,\bB_{\dR,n}^+)^{\Hsmall}(Z)$ of Hitchin-small $\BBdRpn$-local systems on $X_{Z,v}$ and the category $\MIC_r(X,\widehat \calO)^{\Hsmall}(Z)$ of Hitchin-small integrable connections on $\wtX_{Z,n}$,
  which is functorial in $Z$. To do so, the key point is that using the lifting $\wtx$, one can construct a period sheaf with connection $(\calO\bB_{\dR,\pd,Z}^+,\rd)$ on $X_{Z,v}$, which functorial in $Z$, such that the following Riemann--Hilbert correspondence holds.
  \begin{thm}[Theorem \ref{thm:global RH}]\label{thm:global RH-intro}
      Fix a $Z=\Spa(A,A^+)\in \Perfd$.
      Let $\nu:X_{Z,v}\to X_{Z,\et}$ be the natural morphism of sites. Let $1\leq n\leq \infty$. 
      \begin{enumerate}
          \item For any $\bL\in \rL\rS(X,\bB_{\dR,n}^+)^{\Hsmall}(Z)$ of rank $r$, we have 
          \[\rR^n\nu_*(\bL\otimes_{\BBdRp}\calO\bB_{\dR,\pd,Z}^+) = \left\{
          \begin{array}{rcl}
              \calD(\bL), & n=0 \\
              0, & n\geq 1
          \end{array}
          \right.\]
          where $\calD(\bL)$ is a locally finite free $\calO_{\wtX_{Z,n}}$-module of rank $r$ on $X_{Z,\et}$ such that 
          \[\id_{\bL}\otimes\rd:\bL\otimes_{\BBdRp}\calO\bB_{\dR,\pd,Z}^+\to \bL\otimes_{\BBdRp}\calO\bB_{\dR,\pd,Z}^+\otimes_{\calO_{\wtx_Z}}\Omega^1_{\wtx_Z}\{-1\}\]
          induces a flat connection $\nabla_{\bL}$ on $\calD(\bL)$ such that $(\calD(\bL),\nabla_{\bL})$ defines an object in $\MIC(\wtX_{n})^{\Hsmall}(Z)$.

          \item For any $(\calD,\nabla)\in \MIC^{\Hsmall}(\wtX_n)(Z)$ of rank $r$, define 
          \[\nabla_{\calD}:=\nabla\otimes\id+\id\otimes\rd:\calD\otimes_{\calO_{\wtX_Z}}\calO\bB_{\dR,\pd,Z}^+\to \calD\otimes_{\calO_{\wtX_Z}}\calO\bB_{\dR,\pd,Z}^+\otimes_{\calO_{\wtx_Z}}\Omega^1_{\wtx_Z}\{-1\}.\]
          Then 
          \[\bL(\calD,\nabla):=(\calD\otimes_{\calO_{\wtX_Z}}\calO\bB_{\dR,\pd,Z}^+)^{\nabla_{\calD} = 0}\]
          is an object of rank $r$ in $\rL\rS(X,\bB_{\dR,n}^+)^{\Hsmall}(Z)$.

          \item The functors $\bL\mapsto (\calD(\bL),\nabla_{\bL})$ and $(\calD,\nabla)\mapsto \bL(\calD,\nabla)$ in items (1) and (2) respectively define an equivalence of categories
          \[\rho_{\wtx_Z}:\rL\rS(X,\bB_{\dR,n}^+)^{\Hsmall}(Z)\xrightarrow{\simeq} \MIC(\wtX_{n})^{\Hsmall}(Z)\]
          which preserves ranks, tensor products and dualities. Moreover, for any $\bL\in \rL\rS(X,\bB_{\dR,n}^+)^{\Hsmall}(Z)$ with the associated $(\calD,\nabla)\in\MIC(\wtX_{n})^{\Hsmall}(Z)$, there exists a quasi-isomorphism of complexes of sheaves of $\BBdRpn(Z)$-modules on $X_{Z,\et}$
          \[\rR\nu_*\bL\simeq \rD\rR(\calD,\nabla).\]
          In particular, we have a quasi-isomorphism of complexes of $\BBdRpn(Z)$-modules
          \[\rR\Gamma(X_{Z,v},\bL)\simeq \rR\Gamma(X_{Z,\et},\rD\rR(\calD,\nabla)).\]
      \end{enumerate}
  \end{thm}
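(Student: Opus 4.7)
The plan is to prove the theorem locally on a semi-stable toric chart of $\frakX_Z$ and then glue via $v$-descent. First I would establish a Poincar\'e-type lemma for $(\calO\bB_{\dR,\pd,Z}^+,\rd)$, proving that the associated de Rham complex is a resolution of $\BBdRpn$ on $X_{Z,v}$ and that on a small semi-stable affine $\frakU=\Spf(R^+)\subset\frakX_Z$ with log-coordinates $T_1,\ldots,T_d$, the sheaf $\calO\bB_{\dR,\pd,Z}^+$ admits an explicit description as a PD-completion of $\calO_{\wtU_Z}[[Y_1,\ldots,Y_d]]$, where $Y_i=\log(T_i/[T_i^\flat])$ encodes the discrepancy between the lift $\wtx_Z$ and the standard perfectoid toric cover. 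The divided powers attached to the $Y_i$ are precisely what allow series indexed by Hitchin-small Higgs data to converge, whereas Scholze's $\calO\BBdRp$ would only accommodate Faltings-small data.

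On such a chart, pulling back a Hitchin-small $\BBdRpn$-local system $\bL$ to the perfectoid toric tower yields a finite free module with a commuting action of $\Gamma\cong\bZ_p(1)^d$. Extending scalars to the local sections of $\calO\bB_{\dR,\pd,Z}^+$ and taking $\Gamma$-invariants, the Hitchin-small hypothesis lets me trivialize the $\Gamma$-action canonically via a convergent exponential in the $Y_i$, producing a locally finite free $\calO_{\wtX_{Z,n}}$-module $\calD(\bL)$ on which $\rd$ descends to an integrable connection $\nabla_{\bL}$ whose reduction modulo $t$ lies in $\calA_r^{\Hsmall}$. Vanishing of $\rR^{\geq 1}\nu_*(\bL\otimes\calO\bB_{\dR,\pd,Z}^+)$ follows by combining acyclicity of the continuous $\Gamma$-cohomology on the localized period sheaf with the $v$-acyclicity of perfectoid covers. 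Conversely, given $(\calD,\nabla)\in\MIC_r(\wtX_n)^{\Hsmall}(Z)$, I would produce horizontal sections of $\nabla_\calD$ in $\calD\otimes\calO\bB_{\dR,\pd,Z}^+$ by solving $\nabla_\calD s=0$ through a Taylor-type series in the $Y_i$, whose coefficients lie in the PD-envelope exactly because of the Hitchin-small bounds on $\nabla$.

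The two functors invert each other locally, since both amount to matching a trivialization of the $\Gamma$-action with a trivialization of the connection on the perfectoid cover; compatibility with the descent data and functoriality in the chart and in $Z\in\Perfd$ then yields the global equivalence $\rho_{\wtx_Z}$. Compatibility with rank, tensor products and dualities is built into the local construction. The quasi-isomorphism $\rR\nu_*\bL\simeq\rD\rR(\calD,\nabla)$ is obtained by applying $\rR\nu_*$ to the tensor of $\bL$ with the de Rham resolution of $\calO\bB_{\dR,\pd,Z}^+$ and using the degree-zero vanishing together with the flatness of the period sheaf.

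The main obstacle I expect is controlling the convergence of the exponential and Taylor series above under only the Hitchin-small hypothesis; this is the reason for introducing the PD variant $\calO\bB_{\dR,\pd,Z}^+$ rather than using Scholze's $\calO\BBdRp$. A secondary technical issue is the semi-stable geometry: the log-derivation, the Breuil--Kisin--Fargues twist $\Omega^1_{\wtx_Z}\{-1\}$ and the divided powers along the normal crossings boundary all need to be set up compatibly. I would handle this by working in the log $v$-topology in the spirit of \cite{DLLZ} and checking that the \'etale-local comparison extends across the singular locus.
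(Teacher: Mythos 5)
Your overall strategy is the paper's: explicit local description of $\calO\bB_{\dR,\pd,Z}^+$ on the perfectoid toric tower, a Poincar\'e lemma, reduction of $\rR\nu_*$ to continuous $\Gamma$-cohomology via acyclicity on affinoid perfectoids, exponential trivialization of the $\Gamma$-action (resp.\ Taylor-series solution of $\nabla_{\calD}s=0$) under the Hitchin-small hypothesis, and \'etale-local gluing. However, there are two genuine gaps. First, your convergence mechanism is only stated for the wrong coordinates: the period sheaf is a PD-polynomial algebra in $W_i=\frac{u}{\xi_K}\log\frac{[T_i^\flat]}{T_i}$, not in $Y_i=\log(T_i/[T_i^\flat])$. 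The rescaling by $\frac{u}{\xi_K}$ is exactly what makes the exponentials $\exp(\sum_i\theta_iW_i)$ attached to Hitchin-small data (Higgs fields of the form $\sum_i(\zeta_p-1)\theta_i\otimes\frac{\dlog T_i}{\xi_K}$ with $\theta_i$ topologically nilpotent) converge; divided powers of the unrescaled logarithm would only accommodate connections divisible by $\xi_K$ (the nilpotent case), so as written your convergence claim does not go through for Hitchin-small objects.

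Second, and more seriously, your argument only treats $n=1$ honestly. For $n\geq 2$ the Hitchin-small condition is imposed only on the reduction modulo $t$, so there is no exponential description of the $\Gamma$-action on $\bL(X_{\infty,Z})$ over $\BBdRpn(X_{\infty,Z})$, nor a small normal form for $\nabla$ over $\wtR_Z/t^n$, and the ``canonical trivialization via a convergent exponential'' is simply not available. The paper bridges this by a d\'evissage: induction on $n$ using $0\to tM_\infty\to M_\infty\to M_\infty/t\to 0$, a lemma guaranteeing that a module whose graded pieces are finite projective is finite projective over $\wtR_Z/t^n$, derived Nakayama to check that the unit and counit maps are isomorphisms, and a limit argument for $n=\infty$; the needed $\Gamma$-cohomology computations (vanishing in positive degrees up to $\rho_K(\zeta_p-1)$-torsion, and decompletion from $\widehat R_{\infty,Z}$ to $\calR_Z$) are imported from the semi-stable Simpson paper rather than merely asserted. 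You should also be aware that the paper handles the semi-stable boundary not through a log-$v$-topology in the style of \cite{DLLZ}, but by exactifying the surjection of log-structures on the toric chart (introducing the units $t_i=[\underline t_i]u_i$ with $\prod_i t_i=[\varpi^a]$), which is what makes the explicit presentation with the relation $W_0+\cdots+W_r=0$ and the whole local computation possible; without such a device your \'etale-local comparison ``across the singular locus'' has no concrete content.
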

  \begin{rmk}
      The reduction of $\calO\bB_{\dR,\pd,Z}^+$ modulo $t$ already appeared in \cite{AHLB23} and \cite{SW24}. In \cite{AHLB23}, this reduction was indicated by $\calB_{\wtx}$ and constructed in another paper \cite{AHLB25}, based on the prismatic theory of Bhatt and Lurie \cite{BL22a,BL22b}. In \cite{SW24}, this reduction was indicated by $\calO\widehat \bC_{\pd}^+$ and constructed by using the integral Faltings' extension. It seems that their methods do not work in our setting. Our construction of $\calO\bB_{\dR,\pd,Z}^+$ is \emph{self-contained} in the sense that we will not use either the prismatic theory or the integral Faltings' extension any more. 
  \end{rmk}
  By construction, it is clear that $\rho_{\wtx_Z}$ is functorial in $Z$ and then Theorem \ref{thm:Stacky RH} follows immediately. We shall construct the desired period sheaf with connection $(\calO\bB_{\dR,\pd,Z}^+,\rd)$ in \S\ref{sec:period sheaf} and prove Theorem \ref{thm:global RH-intro} in \S\ref{sec:global RH}. The approach is standard: We reduce the proof to the case where $\frakX$ admits a chart and then we check everything locally. We shall show the local calculations in \S\ref{sec:local RH}. Finally, we emphasize that, as we will work with semi-stable formal schemes over an integral perfectoid $\calO_C$-algebra $A^+$ (with $\Spa(A,A^+)\in \Perfd$), we have to equip $A^+$ with a suitable log-structure, which we call the canonical log-structure. The \S\ref{sec:semi-stable schemes} is devoted to doing some preparations on this. 


\subsection{Notations}\label{ssec:notation}
  Throughout this paper, let $K$ be a complete discrete valuation field of the mixed characteristic $(0,p)$ with the ring of integers $\calO_K$ and the perfect residue field $\kappa = \calO_K$. Let $\pi\in\calO_K$ be a uniformizer $\calO_K$ and $E(u)\in \rW(\kappa)[u]$ be the minimal polynomial of $\pi$, where $\rW(\kappa)$ is the ring of Witt vectors over $\kappa$.
  Let $C$ be the $p$-adic completion of a fixed algebraic closure of $K$ with the ring of integers $\calO_C$, maximal ideal $\frakm_C$ of $\calO_C$ and the residue field $\overline \kappa$.
  Let $\bfA_{\inf,K} = \Ainf\otimes_{\rW(\kappa)}\calO_K$ and $\BdRp$ be the ramified infinitesimal period ring with respect to $K$ and the de Rham period ring, respectively.
  Fix an embedding $p^{\bQ}\subset C^{\times}$, which induces an embedding $\varpi^{\bQ}\subset C^{\flat\times}$, where $\varpi = (p,p^{1/p},p^{1/p^2},\dots)\in C^{\flat}$. Fix a coherent system $\{\zeta_{p^n}\}_{n\geq 0}$ of primitive $p^n$-th roots of unity in $C$, and let $\epsilon:=(1,\zeta_p,\zeta_p^2,\dots)\in C^{\flat}$ and $u = [\epsilon^{\frac{1}{p}}]-1\in\bfA_{\inf,K}$. There is a canonical surjection $\theta_K:\bfA_{\inf,K}\to \calO_C$ whose kernel $\Ker(\theta_K)$ is a principal ideal of $\bfA_{\inf,K}$ and let $\xi_K\in \Ker(\theta_K)$ be a generator. For example, we may choose $\xi_K = \frac{\phi(u)}{u}$ when $K = \rW(\kappa)[\frac{1}{p}]$ is unramified and $\xi_K = \pi-[\underline \pi]$ for general $K$, where $\underline \pi = (\pi,\pi^{\frac{1}{p}},\dots)\in \calO_C^{\flat}$ is determined by a compatible sequence of $p^n$-th root $\pi^{\frac{1}{p^n}}$ of $\pi$. Let $t = \log([\epsilon])\in \BdRp$ be the Fontaine's $p$-adic analogue of ``$2\pi i$''. For any (sheaf of) $\bfA_{\inf,K}$-module $M$ and any $n\in \bZ$, denote by 
  \[M\{n\}:=M\otimes_{\bfA_{\inf,K}}\Ker(\theta_K)^{\otimes n}\]
  the $n$-th Breuil--Kisin--Fargues twist of $M$, which can be trivialized by $\xi_K^n$; that is, we have the identification $M\{n\} = M\cdot\xi_K^n$.
  Using this, we can regard $M$ as a sub-$\bfA_{\inf,K}$-module of $M\{-1\}$ via the identification $M = \xi_K M\{-1\}$. In what follows, we always omit the subscript $K$ when $K = \rW(\kappa)[\frac{1}{p}]$.

  Fix a ring $R$. If an element $x\in R$ admits arbitrary pd-powers, we denote by $x^{[n]}$ its $n$-th pd-power (i.e. analogue of $\frac{x^n}{n!}$) in $R$. Put $E_i = (0,\dots,1,\dots,0)\in \bN^d$ with $1$ appearing exactly in the $i$-th component. For any $J=(j_1,\dots,j_d)\in\bN^d$ and any $x_1,\dots,x_d\in R$, we put 
  \[\underline x^J:=x_1^{j_1}\cdots x_d^{j_d}\]
  and if moreover, $x_i$ admits arbitrary pd-powers in $A$ for all $i$, we put
  \[\underline x^{[J]}:=x_1^{[j_1]}\cdots x_d^{[j_d]}.\]
  For any $\alpha\in\bN[1/p]\cap[0,1)$, we put
  \[\zeta^{\alpha} = \zeta_{p^n}^m\]
  if $\alpha = \frac{m}{p^n}$ such that $p,m$ are coprime in $\bN$. If $x\in A$ admits compatible $p^n$-th roots $x^{\frac{1}{p^n}}$, we put
  \[x^{\alpha} = x^{\frac{m}{p^n}}\]
  for $\alpha = \frac{m}{p^n}$ as above. In general, for any $\underline \alpha:=(\alpha_1,\dots,\alpha_d)\in(\bN[1/p]\cap[0,1))^d$ and any $x_1,\dots,x_d$ admitting compatible $p^n$-th roots in $A$, we put
  \[\underline x^{\underline \alpha}:=x_1^{\alpha_1}\cdots x_d^{\alpha_d}.\]

\subsection*{Acknowledgement}
  The work started as a research project of the 2024 Summer School on Algebra and Number Theory at Peking University, organized by the School of Mathematical Science, Peking University and the Academy of Mathematics and System Science, Chinese Academy of Science. The authors thank the organizers, especially Liang Xiao and Shouwu Zhang, for providing the opportunity and the institutes for providing a great environment for our collaboration. We would like to thank Jiahong Yu for explaining his paper \cite{Yu24} and thank Yue Chen and Wenrong Zou for their interest in this project. The authors also thank Ben Heuer and Ruochuan Liu for valuable comments.
  The idea for constructing $(\calO\bB_{\dR,\pd}^+,\rd)$ occurs during the period of the last author's collaboration \cite{CLWYZ} with Zekun Chen, Ruochuan Liu, Jiahong Yu and Xinwen Zhu, and we would like to thank them for allowing us to exhibit Theorem \ref{thm:Stacky RH} here.
  Y.W. is partially supported by the CAS Project for Young Scientists in Basic Research, Grant No. YSBR-032, and by the New Cornerstone Science Foundation.

\section{Review of semi-stable formal schemes}\label{sec:semi-stable schemes}
  We give a quick review of semi-stable formal schemes over an integral perfectoid ring $A^+$, following \cite{CK19} and \cite{SW24}. We first recall the canonical log structure on $A^+$.

  Put $A:= A^+[\frac{1}{p}]$ and denote by $A^{+,\flat}$ and $A^{\flat}$ the tiltings of $A^+$ and $A$, respectively. We remark that in general, for $?\in \{\emptyset,\flat\}$, the natural map $A^{+,?}\to A^?$ is not injective because $A^+$ is not necessarily $p$-torsion free. But it is still reasonable to define
  \[A^{+,?}\cap A^{?,\times}:=\{x\in A^{+,?}\mid\text{the image of $x$ in $A^?$ is invertible}.\}\]
  The following result was proved in \cite{SW24}.
  \begin{prop}\label{prop:unit group}
      For $?\in\{\emptyset,+\}$, denote by $\sharp:A^{?,\flat} = \varprojlim_{x\mapsto x^p}A^? \to A^?$ the projection to the first component (which is well-known as the sharp map). Then the following statements are true.
      \begin{enumerate}
          \item[(1)] We have $A^+\cap A^{\times} = (A^{+,\flat}\cap A^{\flat,\times})^{\sharp}\cdot A^{+,\times}$.

          \item[(2)] If, moreover, $(A,A^+)$ is a perfectoid Tate algebra (especially $A^+$ is $p$-torsion free and integrally closed in $A$), then 
          \[A^+\cap A^{\times} = (A^{+,\flat}\cap A^{\flat,\times})^{\sharp}\cdot (1+pA^+).\]
      \end{enumerate}
  \end{prop}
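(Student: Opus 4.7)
The plan is to reduce both assertions to a successive-approximation procedure based on the surjectivity of the sharp map modulo $p$, which is built into the integral perfectoid hypothesis on $A^+$. Given $x \in A^+ \cap A^\times$, the goal is to manufacture an element $y \in A^{+,\flat} \cap A^{\flat,\times}$ such that $u := x / y^\sharp$ lies in $A^{+,\times}$ (respectively in $1 + pA^+$ in case (2)); morally, the sharp factor $y^\sharp$ absorbs the ``order of vanishing at $p$'' of $x$, while $u$ captures the genuine unit remainder.

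For part (1), I would first use that the sharp map induces a surjection (in fact an isomorphism) $A^{+,\flat} \twoheadrightarrow A^+/p$ --- equivalent to the Frobenius surjectivity on $A^+/p$ from the integral perfectoid definition --- to produce $y_0 \in A^{+,\flat}$ with $y_0^\sharp \equiv x \pmod{p}$. Then one checks that $y_0 \in A^{+,\flat} \cap A^{\flat,\times}$: invertibility of $x$ in $A$ combined with $y_0^\sharp \equiv x \pmod p$ transfers, via the tilting equivalence, to invertibility of $y_0$ in $A^\flat$. I would then iteratively improve $y_0$ to a sequence $y_n \in A^{+,\flat} \cap A^{\flat,\times}$ with $y_n^\sharp \equiv x \pmod{p^{n+1}}$, by exploiting multiplicativity of $\sharp$ to absorb the error: given $y_n^\sharp = x \cdot (1 + p^{n+1} a_n)$, pick $w_n \in A^{+,\flat}$ whose sharp is congruent to $(1 + p^{n+1} a_n)^{-1}$ modulo $p^{n+2}$, and set $y_{n+1} := y_n w_n$. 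The sequence $(y_n)$ is Cauchy in the $\varpi$-adic topology on the complete ring $A^{+,\flat}$, so converges to some $y$, and $y^\sharp$ differs from $x$ by an element that becomes invertible in $A^+$ by $p$-adic completeness together with the fact that $p$ is topologically nilpotent.

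For part (2), the perfectoid Tate hypothesis provides extra rigidity: $A^+$ is $p$-torsion free and integrally closed in $A$, and $p$ is a topologically nilpotent unit of $A$, so that $1 + pA^+$ is genuinely a subgroup of $A^{+,\times}$ (each $1 + pa$ has convergent inverse $\sum_{k \geq 0}(-pa)^k$). Under these stronger conditions, the correction unit $u$ produced above automatically lies in $1 + pA^+$: at every finite stage one has $y_n^\sharp / x \in 1 + p A^+$, and this subset is closed in the $p$-adic topology, so the limit retains the property. Combined with part (1), this yields the sharper decomposition.

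The main obstacle is the iterative step in (1): the sharp map is multiplicative but not additive, so correcting an error term of shape $1 + p^n a$ cannot be handled by just ``adding'' something --- one needs a multiplicative inverse-lift in $A^{+,\flat}$. Making this precise typically requires either the Witt-vector presentation $\sharp = \theta \circ [\cdot]$ (which linearizes the situation in characteristic $p$) or a direct manipulation with compatible systems of $p$-power roots, and in parallel one must verify that each intermediate $y_n$ stays in $A^{+,\flat} \cap A^{\flat,\times}$ rather than merely in $A^{+,\flat}$. These verifications are the delicate technical heart of the argument, carried out in \cite{SW24}.
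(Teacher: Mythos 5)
There is a genuine gap, and it sits at the very first step of your successive approximation. (Note also that the paper itself does not reprove this statement: its proof is the citation \cite[Cor.~2.11 and Cor.~2.4]{SW24}, so your sketch has to be judged on its own.) From the surjection $A^{+,\flat}\cong\varprojlim_{\mathrm{Frob}}A^+/p\twoheadrightarrow A^+/p$ (it is a surjection, not an isomorphism) you pick $y_0$ with $y_0^\sharp\equiv x\pmod p$, and then (a) claim $y_0\in A^{\flat,\times}$ and (b) write the error multiplicatively as $y_n^\sharp=x(1+p^{n+1}a_n)$ with $a_n\in A^+$. Both fail precisely for the elements the proposition is about, namely $x\in A^+\cap A^\times$ that are \emph{not} units of $A^+$ (if $x\in A^{+,\times}$ one simply takes $y=1$). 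Take $x=p$: then $x\equiv 0\pmod p$, so $y_0=0$ is an admissible choice and is certainly not invertible in $A^\flat$; invertibility of $y_0$ is just not detected by its sharp modulo $p$. Even choosing $y_0\neq 0$, the additive congruence $y_n^\sharp\equiv x\pmod{p^{n+1}}$ only gives $y_n^\sharp=x\bigl(1+p^{n+1}b_n x^{-1}\bigr)$ with $b_n\in A^+$ but $x^{-1}$ only in $A$; the factor $1+p^{n+1}b_nx^{-1}$ lies in $1+p^{n+1}x^{-1}A^+$ (for $x=p^N$ this is $1+p^{n+1-N}A^+$), which is neither contained in $A^{+,\times}$ nor in $1+pA^+$, and need not be a unit at all. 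So the inductive shape you rely on is never established, the limit of the $y_n$ has no reason to differ from $x$ by a unit, and the same unproved multiplicative shape is exactly what your closedness argument in part (2) feeds on.

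What is actually needed is multiplicative control of the approximation relative to $x$, not modulo $p$: for instance, in the Tate case one first uses $x^{-1}\in A=A^+[\frac1p]$ and $p$-torsion freeness to get $xa=p^N$ in $A^+$, and then produces a tilt element whose sharp agrees with $x$ up to a unit (equivalently, one approximates modulo $px$ rather than modulo $p$, or extracts compatible $p$-power roots of $x$ up to units); this finer divisibility analysis is the content of the cited results in \cite{SW24}. Two further points need attention in your write-up: in part (1) the ring $A$ is not assumed perfectoid Tate ($A^+$ may have $p$-torsion and $A^{+,\flat}\to A^\flat$ need not be injective), so the appeal to ``the tilting equivalence'' to transfer invertibility of $y_0$ is not available there and would itself require an argument; and the easy inclusion $\supseteq$, while routine (sharp is multiplicative, so $(A^{+,\flat}\cap A^{\flat,\times})^\sharp\subset A^+\cap A^\times$), should still be recorded.
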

  \begin{proof}
      See \cite[Cor. 2.11 and Cor. 2.4]{SW24}.
  \end{proof}
  As a consequence, the log-structure $(A^{\times}\cap A^+\hookrightarrow A^+)$ on $A^+$ is associated to the pre-log-structure $(A^{+,\flat}\cap A^{\flat}\xrightarrow{\sharp}A^+)$. In general, one can make the following definition.
  \begin{dfn}[Canonical log-structure]\label{dfn:canonical log-structure}
      
      Denote by $\theta_K:\bA_{\inf,K}(A^+):=\rW(A^{+,\flat})\otimes_{\rW(\kappa)}\calO_K\to A^+$ the canonical surjection and then $\xi_K$ is a generator of $\Ker(\theta_K)$.
      For any 
      \[\bfB\in\{\bA_{\inf,K}(A^+),\bA_{\inf,K}(A^+)/\pi^n, \bA_{\inf,K}(A^+)/\xi_K^m\}\]
      with $n,m\geq 1$, the \emph{canonical log-structure} on $\bfB$ is the log-structure associated to the pre-log-structure
      \[A^{+,\flat}\cap A^{\flat,\times}\xrightarrow{[\cdot]}\bA_{\inf,K}(A^+)\to\bfB.\]
      In particular, the canonical log-structure on $A^+$ is exactly the log-structure $(A^+\cap A^{\times}\hookrightarrow A^+)$.
  \end{dfn}
  \begin{lem}\label{lem:saturated integral}
      Fix a $\bfB\in\{\bA_{\inf,K}(A^+),\bA_{\inf,K}(A^+)/\pi^n, \bA_{\inf,K}(A^+)/\xi_K^m\}$. If $A^+$ is $p$-torsion free, then the canonical log-structure on $\bfB$ is integral. If moreover, $(A,A^+)$ is a perfectoid Tate algebra, then the canonical log-structure on $\bfB$ is saturated.
  \end{lem}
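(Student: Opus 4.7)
Write $\alpha\colon P\to\bfB$ for the pre-log structure $a\mapsto[a]$, with $P=A^{+,\flat}\cap A^{\flat,\times}$, and set $Q:=\alpha^{-1}(\bfB^{\times})\subset P$. The plan is to identify the underlying monoid $M$ of the canonical log-structure with the pushout $M=P\oplus_Q\bfB^{\times}$ of commutative monoids (elements are classes $[(a,u)]$ under the relation $(aq,u)\sim(a,\alpha(q)u)$ for $q\in Q$), and to read off integrality, resp.\ saturation, of the log-structure from integrality, resp.\ saturation, of $M$.

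For integrality, I would first note that the $p$-torsion free hypothesis on $A^+$ makes the tilting map $A^{+,\flat}\hookrightarrow A^{\flat}$ injective (a standard consequence of tilting in the integral perfectoid setting), so $P$ is a submonoid of the group $A^{\flat,\times}$ and is therefore cancellative. Integrality of $M$ then follows by a direct cancellation argument in the pushout: if $[(a_1,u_1)]\cdot[(a_3,u_3)]=[(a_2,u_2)]\cdot[(a_3,u_3)]$, the defining relation furnishes $q_1,q_2\in Q$ with $a_1a_3q_1=a_2a_3q_2$ in $P$ and $u_1u_3\alpha(q_2)=u_2u_3\alpha(q_1)$ in $\bfB^{\times}$, and cancelling $a_3$ in the integral monoid $P$ and $u_3$ in the group $\bfB^{\times}$ gives $(a_1,u_1)\sim(a_2,u_2)$.

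For saturation, the perfectoid Tate hypothesis enters through the fact that $A^{+,\flat}$ is integrally closed in $A^{\flat}$. I would first deduce that $P$ itself is saturated inside $A^{\flat,\times}$: any $a\in A^{\flat,\times}$ with $a^n\in P$ is a root of the monic polynomial $x^n-a^n\in A^{+,\flat}[x]$, so $a\in A^{+,\flat}$ and hence $a\in P$. To lift this to $M$, given $m\in M^{\gp}=P^{\gp}\oplus_{Q^{\gp}}\bfB^{\times}$ with $m^n\in M$, I choose a representative $(\tilde a,u)$ with $\tilde a\in P^{\gp}\subset A^{\flat,\times}$. The condition $m^n\in M$ amounts to the existence of some $q\in Q^{\gp}$ with $\tilde a^nq\in P$; writing $q=q_1q_2^{-1}$ with $q_i\in Q$ and absorbing the factor $q_2\in P$ into the $P$-component shows one can in fact take $q\in Q$. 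The key one-line identity
\[(\tilde a q)^n=(\tilde a^n q)\cdot q^{n-1}\in A^{+,\flat},\]
valid because $q^{n-1}\in P\subset A^{+,\flat}$, combined with $\tilde aq\in A^{\flat,\times}$ and the saturation of $P$ established above, forces $\tilde aq\in P$. Hence $m=[(\tilde aq,u\alpha(q)^{-1})]$ lies in $M$, and $M$ is saturated.

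The main obstacle I expect is not the log-theoretic bookkeeping but the clean citation of the two tilting-theoretic inputs: injectivity of $A^{+,\flat}\to A^{\flat}$ under $p$-torsion freeness, and integral closedness of $A^{+,\flat}$ in $A^{\flat}$ under the perfectoid Tate hypothesis (both standard for integral perfectoid rings, but worth pinning down precisely). Once these are in hand, the log-theoretic portion is formal; in particular, because $\bfB^{\times}$ is already a group there is no need to analyse whether $Q$ is a face of $P$ or to impose any Kato integrality condition on $Q\to P$.
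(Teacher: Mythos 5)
Your proof is correct and follows essentially the same route as the paper: both arguments rest on the same two tilting inputs (injectivity of $A^{+,\flat}\to A^{\flat}$ when $A^+$ is $p$-torsion free, and integral closedness of $A^{+,\flat}$ in $A^{\flat}$ in the perfectoid Tate case) and reduce everything to integrality, resp.\ saturation, of the monoid $P=A^{+,\flat}\cap A^{\flat,\times}$ viewed inside the group $A^{\flat,\times}$. The only difference is that you verify by hand, via the pushout $P\oplus_Q\bfB^{\times}$, the standard fact that integrality and saturation pass from the pre-log monoid to the associated log structure, a step the paper simply takes for granted when it says it ``suffices to show that the monoid is integral (resp.\ saturated).''
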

  \begin{proof}
      The torsion-freeness of $A^+$ implies that the natural map $A^{+,\flat}\to A^{\flat}$ is injective (cf. \cite[(2.1.2.8)]{CS}), yielding that the group-completion of $A^{+,\flat}\cap A^{\flat}$ is exactly $A^{\flat,\times}$.
      To see the canonical log-structure on $\bfB$ is integral, it suffices to show that the monoid $A^{+,\flat}\cap A^{\flat}$ is integral; equivalently, for any $f,f^{\prime},g\in A^{+,\flat}\cap A^{\flat,\times}$, we have to show that
      \[fg = f^{\prime}g\Leftrightarrow f = f^{\prime}.\]
      However, this is trivial because $A^{+,\flat}\cap A^{\flat,\times}\subset A^{\flat,\times}$.

      Suppose that $(A,A^+)$ is a perfectoid Tate algebra. Then so is $(A^{\flat},A^{+,\flat})$. To see that the canonical log-structure on $\bfB$ is integral, it suffices to show that the monoid $A^{+,\flat}\cap A^{\flat}$ is saturated; equivalently, we have to show that for any $h\in A^{\flat,\times}$ and $n\geq 0$, 
      \[h^n\in A^{+,\flat}\cap A^{\flat,\times}\Leftrightarrow h\in A^{+,\flat}\cap A^{\flat,\times}.\]
      But this follows immediately from that $A^{+,\flat}$ is integrally closed in $A^{\flat}$.
  \end{proof}

  Let $\alpha:(M\to B)\to(\overline M\to \overline B)$ be a morphism of pre-log-structures such that the induced map $M\to\overline M$ is a surjection of integral monoids and the induced map $B\to \overline B$ is surjective as well. In general, $\alpha$ is not necessarily \emph{exact}; that is, the log-structures on $\overline B$ associated to $(\overline M\to \overline B)$ and the composite $(M\to B\to \overline B)$ may not coincide. So the closed embedding $\Spec(\overline B)\to \Spec(B)$ (or $\Spf(\overline B)\to \Spf(B)$ if we work with formal schemes) is not exact in the sense of log-geometry. To overcome this problem, we need the following construction, called \emph{exactification}, which we learn from \cite{Kos20}.
  \begin{construction}[Exactification]\label{construction:exactification}
      Let $\alpha:(M\to B)\to(\overline M\to \overline B)$ be a morphism of pre-log-structures such that the induced map $M\to\overline M$ is a surjection of integral monoids and the induced map $B\to \overline B$ is surjective as above. Denote by $\alpha^{\gp}:M^{\gp}\to \overline{M}^{\gp}$ the associated surjection on the group-completions and put $G:=\Ker(\alpha^{\gp})$. As $M$ and $\overline M$ are integral, they are sub-monoids of $M^{\gp}$ and $\overline{M}^{\gp}$ respectively. Define
      \[M^{\prime}:=(\alpha^{\gp})^{-1}(\overline M)\subset M^{\gp} \text{ and }B^{\prime}:=B\otimes_{\bZ[M\cap G]}\bZ[G].\]
      Clearly, $M^{\prime}$ is the sub-monoid of $M^{\gp}$ generated by $M$ and $G$, and there exist canonical morphisms 
      \[(M\to B)\to(M^{\prime}\to B^{\prime}) \text{ and }(M^{\prime}\to B^{\prime})\xrightarrow{\alpha^{\prime}}(\overline M\to \overline B)\]
      of pre-log-structures such that $\alpha^{\prime}$ is surjective as well and that $\alpha$ uniquely factors as 
      \[(M\to B)\to(M^{\prime}\to B^{\prime})\xrightarrow{\alpha^{\prime}}(\overline M\to \overline B).\]
      Now, one can directly check that 
      \[M^{\prime}/(M^{\prime}\cap B^{\prime,\times}) \cong M/(M\cap B^{\times})\]
      and thus the log-structures on $\overline B$ associated to the pre-log-structures $(\overline M\to \overline B)$ and $(M^{\prime}\to B^{\prime}\to \overline B)$ are eventually same.
  \end{construction}

  From now on, we always assume that $A^+$ is a perfectoid $\calO_C$-algebra such that $(A,A^+)$ is a perfectoid Tate algebra; that is, we always assume $Z=\Spa(A,A^+)\in \Perfd$. Let us recall the definition of semi-stable formal schemes over $A^+$.
  \begin{dfn}\label{dfn:small affine}
      Fix an affinoid perfectoid $Z = \Spa(A,A^+)\in\Perfd$.
      \begin{enumerate}
          \item[(1)] A $p$-complete $A^+$-algebra $\calR_Z$ is called \emph{small semi-stable} of relative dimension $d$ over $A^+$, if there exists an \'etale morphism of $p$-adic formal schemes
      \[\psi: \Spf(\calR_Z)\to \Spf(A^+\za T_0,\dots,T_r,T_{r+1}^{\pm 1},\dots, T_d^{\pm 1}\ya/(T_0\cdots T_r-p^a))\]
      for some $a\in\bQ_{\geq 0}$ and $1\leq r\leq d$, where $A^+\za T_0,\dots,T_r,T_{r+1}^{\pm 1},\dots, T_d^{\pm 1}\ya/(T_0\cdots T_r-p^a))$ stands for the $p$-adic completion of
      \[A^+[ T_0,\dots,T_r,T_{r+1}^{\pm 1},\dots, T_d^{\pm 1}]/(T_0\cdots T_r-p^a).\]
      Equip $\calR_Z$ with the log-structure associated to the pre-log-structure
      \begin{equation}\label{equ:log-structure on chart-I}
          M_{r,a}(A^+):=(\oplus_{i=0}^r\bN\cdot e_i)\oplus_{\bN\cdot(e_1+\cdots+e_r)}(A^{\times}\cap A^+)\xrightarrow{(\sum_{i=0}^rn_ie_i,x)\mapsto x\prod_{i=0}^rT_i^{n_i}} \calR_Z,
      \end{equation}
      where $(\oplus_{i=0}^r\bN\cdot e_i)\oplus_{\bN\cdot(e_1+\cdots+e_r)}(A^{\times}\cap A^+)$ denotes the push-out of monoids
      \[\xymatrix@C=0.5cm{
        \bN\ar[d]^{1\mapsto p^a}\ar[rrr]^{1\mapsto e_0+\cdots+e_r\qquad}&&&\bN\cdot e_1\oplus\cdots\oplus\bN\cdot e_r\\
        (A^{\times}\cap A^+).
      }\]
      In this case, we also say that $\Spf(\calR_Z)$ is \emph{small affine}. We call such a $\psi$ (resp. $T_0,\dots,T_d$) a \emph{chart} (resp. \emph{coordinates}) on $\calR_Z$ or on $\Spf(\calR_Z)$. Note that the generic fiber $\Spa(R_Z,R_Z^+)$ of $\Spf(\calR_Z)$ is smooth over $Z$ and endowed with the associated chart $\psi$, where $R_Z^+=\calR_Z$.

      \item[(2)] A \emph{semi-stable} formal scheme $\frakX_Z$ of relative dimension $d$ over $A^+$ is a $p$-adic formal scheme $\frakX_Z$ together with a log-structure $\calM_{\frakX_Z}$ over $A^+$ (endowed with the canonical log-structure), which is \'etale locally of the form $\Spf(\calR_Z)$ such that $\calR_Z$ is small semi-stable of relative dimension $d$ over $A^+$ with the log-structure as defined above. Note that the generic fiber $X_Z$ of $\frakX_Z$ is always smooth over $Z$.
      \end{enumerate}
  \end{dfn}

  There are two typical examples, which are the main cases that we shall deal with in this paper.
  \begin{exam}\label{exam:semi-stable formal scheme}
      \begin{enumerate}
          \item[(1)] Any smooth formal scheme $\frakX_Z$ over $A^+$ with the log-structure induced from the canonical log-structure on $A^+$ (i.e., the log-structure which is associated to the composite $(A^{\times}\cap A^+)\hookrightarrow A^+\to \calO_{\frakX_Z}$) is always semi-stable.

          \item[(2)] Let $\frakX$ be a semi-stable formal scheme over $\calO_C$ in the sense of \cite{CK19} with the log-structure $(\calM_{\frakX} = \calO_X^{\times}\cap\calO_{\frakX}\hookrightarrow\calO_{\frakX})$ as considered in \cite[\S1.5 and \S1.6]{CK19}, where $X$ is the generic fiber of $\frakX$. For any $Z = \Spa(A,A^+)$, denote by $\frakX_Z$ the base-change of $\frakX$ along $\Spf(A^+)\to\Spf(\calO_C)$ with the log-structure $\calM_{\frakX_Z}$ induced from this fiber product. Then $\frakX_Z$ is a semi-stable formal scheme over $A^+$.
      \end{enumerate}
  \end{exam}
  \begin{dfn}\label{dfn:liftablity}
      Fix an affinoid perfectoid $Z = \Spa(A,A^+)\in\Perfd$ and let $\frakX_Z$ be a semi-stable formal scheme over $A^+$ with log-structure $\calM_{\frakX_Z}$. By a \emph{lifting of $\frakX_Z$ over $\bA_{\inf,K}(Z)$}, we mean a compatible sequence $\{(\wtx_{Z,n},\calM_{\wtx_{Z,n}}\}_{n\geq 1}$ of flat $p$-adic formal schemes $\wtx_{Z,n}$ with log-structure $\calM_{\wtx_{Z,n}}$ over $\bA_{\inf,K}(Z)/\xi_K^n$ (equipped with the canonical log-structure) satisfying the following conditions:
      \begin{enumerate}
          \item[(1)] We have $(\wtx_{Z,1},\calM_{\wtx_{Z,1}}) = (\frakX_Z,\calM_{\frakX_Z})$.
          
          \item[(2)] For any $n\geq m\geq 1$, we have 
          \[\calO_{\wtx_{Z,m}} = \calO_{\wtx_{Z,n}}\otimes_{\bA_{\inf,K}(Z)/\xi_K^n}\bA_{\inf,K}(Z)/\xi_K^m\]
          such that the log-structure $\calM_{\wtx_{Z,m}}$ is induced from the composite
          \[\calM_{\wtx_{Z,n}}\to\calO_{\wtx_{Z,n}}\to\calO_{\wtx_{Z,m}}.\]
      \end{enumerate}
      Define 
      \[(\calO_{\wtx_Z},\calM_{\wtx_Z}):=\varprojlim_n(\calO_{\wtx_{Z,n}},\calM_{\wtx_{Z,n}})\]
      and then $\calO_{\wtx_Z}$ is a sheaf of flat algebras over $\bA_{\inf,K}(Z)$ and $\calM_{\wtx_Z}$ defines a log-structure on $\calO_{\wtx_Z}$. We denote the above lifting $\{(\wtx_{Z,n},\calM_{\wtx_{Z,n}})\}_{n\geq 1}$ by $(\wtx_Z,\calM_{\wtx_Z})$, which is referred to as a lifting of $\frakX_Z$ over $\bA_{\inf,K}(Z)$, for simplicity. We say a semi-stable formal scheme $\frakX$ is \emph{liftable}, if it admits at least one lifting $(\wtx_Z,\calM_{\wtx_Z})$ over $\bA_{\inf,K}(Z)$.
  \end{dfn}
  \begin{exam}\label{exam:small affine is liftable}
      Suppose $\frakX_Z = \Spf(\calR_Z)$ is small affine in the sense of Definition \ref{dfn:small affine}. Then $\frakX_Z$ is liftable. Indeed, by \cite[Lem. 3.1]{SW24}, the $p$-complete log-cotangent complex (in the sense of \cite[\S 8]{Ols}) of $\calR_Z$ over $A^+$ is
      \[\widehat \rL_{(M_{r,a}(A^+)\to \calR_Z)/(A^{\times}\cap A^+\to A^+)} \simeq \Omega^{1,\log}_{\calR_Z}[0],\]
      where $\Omega^{1,\log}_{\calR_Z}$ denotes the module of continuous log-differentials of $\calR_Z$ over $A^+$, which is finite free $\calR_Z$-module of rank $d$ and isomorphic to
      \begin{equation}
          \Omega^{1,\log}_{\calR_Z} \cong \left((\oplus_{i=0}^r\calR_Z\cdot e_i)/\calR_Z\cdot (e_0+\cdots+e_r)\right)\oplus\left(\oplus_{j=r+1}^d\calR_Z\cdot e_j\right),
      \end{equation}
      where $e_j$ stands for $\dlog T_j$ for any $r+1\leq j\leq d$. By deformation theory, (up to non-canonical isomorphisms,) $\frakX_Z$ admits a unique lifting $\wtx_Z = \Spf(\wtr_Z)$ over $\bA_{\inf,K}(Z)$. More precisely, consider the log-ring 
      \[M_{r,a}(Z) = (\oplus_{i=0}^r\bN\cdot e_i)\oplus_{\bN\cdot(e_0+\cdots+e_r)}M_Z\xrightarrow{\alpha} \bA_{\inf,K}(Z)\za T_0,\dots,T_r,T_{r+1}^{\pm 1},\dots,T_d^{\pm 1}\ya/(T_0\cdots T_r-[\varpi^a]),\]
      where $M_Z$ denotes the canonical log-structure on $\bA_{\inf,K}(Z)$, the map $\alpha$ of monoids is determined by sending each $e_i$ to $\alpha(e_i) = T_i$, and $M_{r,a}(Z)$ denotes the push-out of monoids
      \[\xymatrix@C=0.5cm{
        \bN\ar[rrrrrr]^{1\mapsto e_0+\cdots+e_r}\ar[d]_{1\mapsto[\varpi^a]}&&&&&&\bN\cdot e_0\oplus\cdots\oplus\bN\cdot e_r\\
        M_Z.
      }\]
      By the \'etaleness of the chart $\psi$, there exists a unique lifting of $\calR_Z$ over
      
      \[\bA_{\inf,K}(Z)\za T_0,\dots,T_r,T_{r+1}^{\pm 1},\dots,T_d^{\pm 1}\ya/(T_0\cdots T_r-[\varpi^a]),\]
      which is exactly $\wtr_Z$, and a unique lifting 
      \[\widetilde \psi:\bA_{\inf,K}(Z)\za T_0,\dots,T_r,T_{r+1}^{\pm 1},\dots,T_d^{\pm 1}\ya/(T_0\cdots T_r-[\varpi^a])\to\wtr_Z\]
      of $\psi$ such that the log-structure on $\wtr_Z$ is associated to the composite
      \[M_{r,a}(Z)\xrightarrow{\alpha} \bA_{\inf,K}(Z)\za T_0,\dots,T_r,T_{r+1}^{\pm 1},\dots,T_d^{\pm 1}\ya/(T_0\cdots T_r-[\varpi^a])\to\wtr_Z.\]
      Here, $\bA_{\inf,K}(Z)\za T_0,\dots,T_r,T_{r+1}^{\pm 1},\dots,T_d^{\pm 1}\ya/(T_0\cdots T_r-[\varpi^a])$ stands for the $(p,\xi_K)$-adic completion of 
      \[\bA_{\inf,K}(Z)[ T_0,\dots,T_r,T_{r+1}^{\pm 1},\dots,T_d^{\pm 1}]/(T_0\cdots T_r-[\varpi^a]).\]
  \end{exam}

  At the end of this section, we give a typical example about Construction \ref{construction:exactification}.
  \begin{lem}\label{lem:existence of log-structure}
      Fix an affinoid perfectoid $Z = \Spa(A,A^+)\in\Perfd$ and let $\frakX_Z = \Spf(\calR_Z)$ be small affine with the generic fiber $X_Z = \Spa(R_Z,R_Z^+)$. For any affinoid perfectoid $U = \Spa(S,S^+)\in X_{Z,v}$, there exist $\underline t_i\in S^{\flat,\times}\cap S^{+,\flat}$ and $u_i\in \AAinf(U)^{\times}$ satisfying
      \[([\underline t_0]u_0)\cdots([\underline t_r]u_r) = [\varpi^a]\]
      such that the image of $[\underline t_i]u_i$ modulo $\xi = \frac{\phi(u)}{u}$ is $T_i$ for any $0\leq i\leq r$.
  \end{lem}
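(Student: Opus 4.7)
The plan is to combine Proposition \ref{prop:unit group}(2) with the observation that each $T_i$ is already a unit in $S$, so the problem reduces to arranging that the ``Teichm\"uller parts'' multiply to $\varpi^a$ and the ``$1+p$'' parts multiply to $1$.

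First I would verify that $T_i \in S^+\cap S^\times$. The relation $T_0\cdots T_r = p^a$ in $\calR_Z$ passes to $R_Z = \calR_Z[\tfrac{1}{p}]$, where $p^a$ is a unit; hence each $T_i$ is a unit in $R_Z$, and therefore in $S$ after pullback. Since clearly $T_i \in S^+$, Proposition \ref{prop:unit group}(2) applied to the perfectoid Tate algebra $(S,S^+)$ gives, for $i = 1, \ldots, r$, a factorization $T_i = \underline{t}_i^\sharp \cdot v_i$ with $\underline{t}_i \in S^{+,\flat}\cap S^{\flat,\times}$ and $v_i \in 1 + pS^+$.

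Next I would define $\underline{t}_0 := \varpi^a \cdot (\underline{t}_1 \cdots \underline{t}_r)^{-1} \in S^{\flat,\times}$, which forces $\prod_{i=0}^r \underline{t}_i = \varpi^a$ by construction. The main technical point is to check $\underline{t}_0 \in S^{+,\flat}$. One computes $\underline{t}_0^\sharp = T_0 \cdot \prod_{i=1}^r v_i \in S^+$, and each coordinate $(\underline{t}_0^{1/p^n})^\sharp \in S$ is a $p^n$-th root of $\underline{t}_0^\sharp \in S^+$, hence integral over $S^+$; since $S^+$ is integrally closed in $S$ (as $(S,S^+)$ is perfectoid Tate), these coordinates all lie in $S^+$, so $\underline{t}_0 \in S^{+,\flat}$. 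Setting $v_0 := (\prod_{i=1}^r v_i)^{-1}$, the closedness of $1+pS^+$ under multiplication and inversion (using $p$-adic completeness of $S^+$) gives $v_0 \in 1+pS^+$, and by design $T_0 = \underline{t}_0^\sharp \cdot v_0$.

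Finally I would lift the $v_i$'s to units of $\AAinf(U)$: for $i = 1,\ldots,r$, choose any $u_i \in 1 + p\AAinf(U)$ reducing to $v_i$ modulo $\xi$ (these are units by $p$-adic completeness of $\AAinf(U) = W(S^{+,\flat})$), and then set
\[
u_0 := \Bigl(\prod_{i=1}^r u_i\Bigr)^{-1} \in 1 + p\AAinf(U) \subset \AAinf(U)^\times,
\]
which automatically reduces to $v_0$ modulo $\xi$. Then $\theta([\underline{t}_i]u_i) = \underline{t}_i^\sharp v_i = T_i$ for every $i$, and
\[
\prod_{i=0}^r [\underline{t}_i]\,u_i \;=\; \Bigl[\prod_{i=0}^r \underline{t}_i\Bigr]\cdot\prod_{i=0}^r u_i \;=\; [\varpi^a]\cdot 1 \;=\; [\varpi^a].
\]
The only real obstacle is the verification that $\underline{t}_0 \in S^{+,\flat}$, which is resolved by the integral-closedness argument above; everything else is a direct bookkeeping exercise with the decomposition in Proposition \ref{prop:unit group}(2).
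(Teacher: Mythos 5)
Your proof is correct, and it rests on the same key input as the paper's, namely Proposition \ref{prop:unit group}; the only difference is organizational. The paper applies the proposition to all of $T_0,\dots,T_r$, notes that the product $[\underline t_0]u_0\cdots[\underline t_r]u_r$ reduces to $p^a$ modulo $\xi$, deduces that $\underline t_0\cdots\underline t_r$ is a unit-multiple of $\varpi^a$ in $S^{+,\flat}$, and then absorbs the resulting discrepancy $v\in 1+\xi\AAinf(U)$ into $u_0$; you instead apply part (2) of the proposition only to $T_1,\dots,T_r$ and solve for $\underline t_0$, $v_0$, $u_0$ directly, so that $\prod_{i=0}^r[\underline t_i]u_i=[\varpi^a]$ holds by construction. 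Your integral-closedness check that $\underline t_0\in S^{+,\flat}$ is precisely the fact underlying the paper's ``unit-multiple of $\varpi^a$'' step (an element of $S^{\flat,\times}$ whose sharp lies in $S^+$ already lies in $S^{+,\flat}$), so the two corrections are interchangeable; if anything, your use of the $1+pS^+$ refinement makes the lifting of the unit parts to elements of $\AAinf(U)^{\times}$ (via $1+p\AAinf(U)$ and $p$-adic completeness) more explicit than in the paper, which passes from units of $S^+$ to units of $\AAinf(U)$ without comment.
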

  \begin{proof}
      As $T_i\in S^{\times}\cap S^+$, by Proposition \ref{prop:unit group}, there exist $\underline t_i\in S^{\flat,\times}\cap S^{+,\flat}$ and $u_i\in \AAinf(U)^{\times}$
      such that the reduction $[\underline t_i]u_i\mod \xi$ is $T_i$. In particular, the product
      \[[\underline t]u:=([\underline t_0]u_0)\cdots([\underline t_r]u_r)\]
      is mapped to $p^a$ modulo $\xi$, where $\underline t = \prod_{i=0}^r\underline t_i\in S^{\flat,\times}\cap S^{+,\flat}$ and $u = \prod_{i=0}^ru_i\in\AAinf(U)^{\times}$. As a consequence, $\underline{t}^{\sharp}$ is a unit-multiple of $p^a$ in $S^+$, yielding that $t$ is a unit-multiple of $\varpi^a$ in $S^{+,\flat}$. So we have
      \[[\underline t]u = [\varpi^a]v\]
      for some $v\in 1+\xi\AAinf(U)$. So we can conclude by changing $u_0$ to $u_0v^{-1}$.
  \end{proof}
  \begin{cor}\label{cor:existence of log-structure}
      Keep notations in Lemma \ref{lem:existence of log-structure} and Example \ref{exam:small affine is liftable}. Then there exists a morphism
      \[(M_{r,a}(Z)\to\wtr_Z)\to (M_U\to \bA_{\inf,K}(U))\]
      lifting the natural morphism $(M_{r,a}(A^+)\to\calR_Z)\to (M_U\to S^+)$, where $M_U$ denotes the canonical log-structures on $\bA_{\inf,K}(U)$ and $S^+$ by abuse of notations.
  \end{cor}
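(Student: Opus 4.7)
The plan is to construct simultaneously a ring map $\wtr_Z\to\bA_{\inf,K}(U)$ lifting $\calR_Z\to S^+$ and a monoid map $M_{r,a}(Z)\to M_U$ compatible with it, using the lifts $[\underline t_i]u_i$ supplied by Lemma \ref{lem:existence of log-structure} as the common input on both sides. The essential observation is that these lifts already satisfy the chart relation $\prod_{i=0}^r[\underline t_i]u_i=[\varpi^a]$, which is precisely what both the étale lifting on the ring side and the push-out identification on the monoid side require.

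First I would construct the ring map. For $0\leq i\leq r$, take $[\underline t_i]u_i\in\bA_{\inf,K}(U)$ as in Lemma \ref{lem:existence of log-structure}. For $r+1\leq j\leq d$, the image of $T_j\in\calR_Z^{\times}$ in $S^+$ lies in $S^+\cap S^{\times}$, so Proposition \ref{prop:unit group} lets me pick a unit lift $y_j\in\bA_{\inf,K}(U)^{\times}$. The assignments $T_i\mapsto[\underline t_i]u_i$ and $T_j\mapsto y_j$ define a map
\[\bA_{\inf,K}(Z)\za T_0,\dots,T_r,T_{r+1}^{\pm 1},\dots,T_d^{\pm 1}\ya/(T_0\cdots T_r-[\varpi^a])\longrightarrow\bA_{\inf,K}(U),\]
well-defined by the relation $\prod_{i=0}^r[\underline t_i]u_i=[\varpi^a]$, and compatible with $\calR_Z\to S^+$ modulo $\xi_K$. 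Because $\wtr_Z$ is étale over this chart ring by Example \ref{exam:small affine is liftable} and $\bA_{\inf,K}(U)$ is $(p,\xi_K)$-adically complete, henselian lifting produces the desired map $\wtr_Z\to\bA_{\inf,K}(U)$ uniquely extending $\calR_Z\to S^+$.

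Next, for the monoid map, I would set $e_i\mapsto[\underline t_i]u_i\in M_U$ on the free summand (a well-defined element of the associated log-structure because $[\underline t_i]$ comes from the pre-log-structure and $u_i$ is a unit), and on the factor $M_Z$ I would use functoriality of the canonical log-structure along $\bA_{\inf,K}(Z)\to\bA_{\inf,K}(U)$. The push-out identification demands that the images of $e_0+\cdots+e_r$ and of $[\varpi^a]\in M_Z$ in $M_U$ coincide, which is exactly the relation $\prod_{i=0}^r[\underline t_i]u_i=[\varpi^a]$. The square between the ring and monoid maps then commutes on generators, since via $\wtr_Z$ the element $e_i$ goes $T_i\mapsto[\underline t_i]u_i$, while via $M_U$ it is sent directly to $[\underline t_i]u_i$; reduction modulo $\xi_K$ recovers the prescribed morphism $(M_{r,a}(A^+)\to\calR_Z)\to(M_U\to S^+)$. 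I do not anticipate a genuine obstacle here: once Lemma \ref{lem:existence of log-structure} is in hand, the only mild point to keep track of is the distinction between pre-log and associated log structures on $\bA_{\inf,K}(U)$, which is immediate since $M_U$ by convention already contains all elements of the form $[\underline t_i]u_i$.
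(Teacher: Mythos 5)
Your proposal is correct and follows essentially the same route as the paper: send $T_i\mapsto[\underline t_i]u_i$ for $0\leq i\leq r$ and $T_j$ to unit lifts for $r+1\leq j\leq d$ to get a log-compatible map out of the chart ring $\bA_{\inf,K}(Z)\za T_0,\dots,T_r,T_{r+1}^{\pm1},\dots,T_d^{\pm1}\ya/(T_0\cdots T_r-[\varpi^a])$, then extend to $\wtr_Z$ by $(p,\xi_K)$-complete étaleness of the chart lifting. The only cosmetic difference is your invocation of Proposition \ref{prop:unit group} for the lifts of $T_{r+1},\dots,T_d$, where any lift works since the kernel of $\bA_{\inf,K}(U)\to S^+$ lies in the Jacobson radical; this does not affect the argument.
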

  \begin{proof}
      Put $t_i=[\underline t_i]u_i$ for any $0\leq i\leq r$ and pick $t_{r+1},\dots,t_d\in \bA_{\inf,K}(U)^{\times}$ lifting $T_{r+1},\dots,T_d$ respectively. Then the map
      \[(M_{r,a}(Z)\to\bA_{\inf,K}(Z)\za T_0,\dots,T_r,T_{r+1}^{\pm 1},\dots,T_d^{\pm 1}\ya/(T_0\cdots T_r-[\varpi^a]))\to (M_U\to\bA_{\inf,K}(U))\]
      sending each $T_i$ to $t_i$ is well-defined and lifts the natural map
      \[(M_{r,a}(Z)\to A^+\za T_0,\dots,T_r,T_{r+1}^{\pm 1},\dots,T_d^{\pm 1}\ya/(T_0\cdots T_r-p^a))\to (M_U\to S^+).\]
      Then one can conclude by $(p,\xi_K)$-complete \'etaleness of the map
      \[\bA_{\inf,K}(Z)\za T_0,\dots,T_r,T_{r+1}^{\pm 1},\dots,T_d^{\pm 1}\ya/(T_0\cdots T_r-[\varpi^a])\to\wtr_Z.\]
      This completes the proof.
  \end{proof}

  \begin{exam}\label{exam:exactification}
      Fix an affinoid perfectoid $Z = \Spa(A,A^+)\in\Perfd$. Suppose that $\frakX_Z = \Spf(\calR_Z)$ is small semi-stable with the generic fiber $X_Z = \Spa(R_Z,R_Z^+)$. Let $U = \Spa(S,S^+)\in X_{Z,v}$ be affinoid perfectoid and $(M_{r,a}(Z)\to\wtr_Z)$ be the lifting of $(M_{r,a}(A^+)\to\calR_Z)$ given in Example \ref{exam:small affine is liftable}. Consider the $(p,\xi_K)$-complete tensor product
      \[\wtr_Z\widehat \otimes_{\bA_{\inf,K}(Z)}\bA_{\inf,K}(U)\]
      with log-structure induced by fiber product, which is indeed the log-structure associated to the following pre-log-structure
      \[M_{r,a}(U):=M_{r,a}(Z)\oplus_{M_Z}M_U = (\oplus_{i=0}^r\bN\cdot e_i)\oplus_{\bN\cdot(e_0+\cdots+e_r)}M_U\xrightarrow{e_i\mapsto T_i\otimes1,~\forall~i}\wtr_Z\widehat \otimes_{\bA_{\inf,K}(Z)}\bA_{\inf,K}(U).\]
      Here, $M_Z$ and $M_U$ denote the canonical log-structure on $\bA_{\inf,K}(Z)$ and $\bA_{\inf,K}(U)$ respectively, and $M_{r,a}(Z)\oplus_{M_Z}M_U$ denotes the push-out of monoids
      \[\xymatrix@C=0.45cm{
        M_U&&\ar[ll]M_Z\ar[rr]&& M_{r,a}(Z).
      }\]
      Then there exists a surjection 
      \begin{equation}\label{equ:example for exactification-I}
          \theta_U:(M_{r,a}(U)\to\wtr_Z\widehat \otimes_{\bA_{\inf,K}(Z)}\bA_{\inf,K}(U))\to (M_U\to S^+)
      \end{equation}
      induced by the composite
      \[\wtr_Z\widehat \otimes_{\bA_{\inf,K}(Z)}\bA_{\inf,K}(U)\to\calR_Z\widehat \otimes_{A^+}S^+\to S^+,\]
      where by abuse of notation, we still denote by $M_U\to S^+$ the canonical log-structure on $S^+$. Put
      \[t_i:=[\underline t_i]u_i\]
      such that $\underline t_i$ and $u_i$ are as in Lemma \ref{lem:existence of log-structure}. One can directly check that
      \[G:=\Ker(M_{r,a}(U)^{\gp}\to M_U^{\gp})\]
      is exactly
      \[\begin{split}
        G &= \left(\oplus_{i=0}^r\bZ\cdot(e_i,t_i^{-1})\right)\oplus_{\bZ\cdot (e_0+\cdots+e_r,\prod_{i=0}^rt_i^{-1} = [\varpi^{-a}])}(1+\xi\bA_{\inf,K}(U))\\
        &\cong \left((\oplus_{i=0}^r\bZ\cdot(e_i,t_i^{-1}))/\bZ\cdot(e_0+\cdots+e_r,t_1^{-1}\cdots t_r^{-1})\right)\oplus(1+\xi\bA_{\inf,K}(U)).
      \end{split}\]
      Thus, the $(p,\xi_K)$-adic completion of the exactification associated to the surjection \eqref{equ:example for exactification-I} is given by
      \begin{equation}\label{equ:example for exactification-II}
          \theta_U^{\prime}:(M_U\to (\wtr_Z\widehat \otimes_{\bA_{\inf,K}(Z)}\bA_{\inf,K}(U))[(\frac{T_0}{t_0})^{\pm 1},\cdots,(\frac{T_r}{t_r})^{\pm 1}]^{\wedge_{(p,\xi_K)}})\to (M_U\to S^+),
      \end{equation}
      where $\Box^{\wedge_{(p,\xi_K)}}$ stands for the $(p,\xi_K)$-adic completion and $\theta_U^{\prime}$ is the extension of $\theta_U$ sending each $\frac{T_i}{t_i}$ to $1$.
  \end{exam}

\section{The period sheaf with connection $(\calO\bB_{\dR,\pd}^+,\rd)$}\label{sec:period sheaf}
  Fix a $Z = \Spa(A,A^+)\in \Perfd$. Let $\frakX_Z$ be a liftable semi-stable formal scheme over $A^+$ with the log-structure $\calM_{\frakX_Z}$ and let $X_Z$ be its generic fiber. Let $\wtx_Z$ be a fixed lifting of $\frakX_Z$ with the log-structure $\calM_{\wtx_Z}$ over $\bA_{\inf,K}(Z)$ and denote by $\wtX_Z$ the associated lifting of $X_Z$ over $\BBdRp(Z)$. In this section, we will show that $(\wtx_Z,\calM_{\wtx_Z})$ induces a period sheaf of $\BBdRp$-algebras with a $\BBdRp$-linear connection 
  \[(\calO\bB_{\dR,\pd,Z}^+,\rd:\calO\bB_{\dR,\pd,Z}^+\otimes_{\calO_{\wtx_Z}}\Omega^{1,\log}_{\wtx_Z}\{-1\})\]
  on $X_{Z,v}$ as desired in \S\ref{sec:Introduction}.

\subsection{Construction of $(\calO\bB_{\dR,\pd}^+,\rd)$: The small affine case}

  We first deal with the case where $\frakX_Z = \Spf(\calR_Z)$ is small affine of (relative) dimension $d$ in the sense of Definition \ref{dfn:small affine}. Then we have that $\wtx_Z = \Spf(\wtr_Z)$ with the log-structure associated to $M_{r,a}(Z)\to\wtr_Z$ as described in Example \ref{exam:small affine is liftable} and that the generic fiber $X = \Spa(R_Z,R_Z^+)$ of $\frakX_Z$ is smooth over $Z$. Keep the notation as in the previous section. 
  
  For any affinoid perfectoid $U = \Spa(S,S^+)\in X_{Z,v}$, let $\Sigma_U$ be the set of all \'etale morphisms
  \[i_{\frakY_Z}:\frakY_Z = \Spf(\calR^{\prime}_Z)\to\frakX_Z\]
  over $A^+$ such that the natural map $U\to X_Z$ factors through the generic fiber of $i_{\frakY_Z}$.
  By the \'etaleness of $i_{\frakY_Z}$, the lifting $\wtx_Z$ of $\frakX_Z$ induces a unique lifting $\wty_Z = \Spf(\wtr^{\prime}_Z)$ of $\frakY_Z$ such that $i_{\frakY_Z}$ uniquely lifts to an \'etale morphism
  \[i_{\wty_Z}:\wty_Z = \Spf(\wtr_Z^{\prime})\to\wtx_Z = \Spf(\wtr_Z)\]
  over $\bA_{\inf,K}(Z)$. Then we have a natural morphism
  \[\theta_{\frakY_Z}:\wtr^{\prime}_Z\widehat \otimes_{\bA_{\inf,K}(Z)}\bA_{\inf,K}(U)\to S^+\]
  lifting the natural map $\calR_Z^{\prime}\widehat \otimes_{A^+}S^+\to S^+$, which is compatible with log-structures, cf. \eqref{equ:example for exactification-I}. Denote by 
  \[\theta_{\frakY_Z}^{\prime}:\wtr^{\prime}_Z\widehat \otimes_{\bA_{\inf,K}(Z)}\bA_{\inf,K}(U)[(\frac{T_0}{t_0})^{\pm 1},\dots,(\frac{T_r}{t_r})^{\pm 1}]^{\wedge_{(p,\xi_K)}}\]
  its $(p,\xi_K)$-complete exactification, cf. \eqref{equ:example for exactification-II}. Here, $t_0,\dots,t_r$ are the elements described in Example \ref{exam:exactification}. For any $r+1\leq j\leq d$, one can always choose $t_j\in \bA_{\inf,K}(U)^{\times}$ whose reduction modulo $\xi_K$ is $T_j$, and we fix such a choice.
  \begin{lem}\label{lem:kernel is finite}
      The kernel of $\theta_{\frakY_Z}^{\prime}$ is the ideal of $\wtr^{\prime}_Z\widehat \otimes_{\bA_{\inf,K}(Z)}\bA_{\inf,K}(U)[(\frac{T_0}{t_0})^{\pm 1},\dots,(\frac{T_r}{t_r})^{\pm 1}]^{\wedge_{(p,\xi_K)}}$ finitely generated by 
      \[1-\frac{T_1}{t_1},\dots,1-\frac{T_d}{t_d},e,\xi_K\]
      where $e$ is an element killed by $\theta_{\frakY_Z}^{\prime}$ satisfying
      \[e\equiv e^2\mod (1-\frac{T_1}{t_1},\dots,1-\frac{T_d}{t_d}).\]
  \end{lem}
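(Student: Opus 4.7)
The plan is to identify the obvious generators, reduce modulo them, and extract the required idempotent $e$ from étale descent. First, $\xi_K$ lies in $\Ker(\theta_{\frakY_Z}^{\prime})$ because $\bA_{\inf,K}(U)\to S^+$ has kernel $(\xi_K)$, and each $1-\frac{T_i}{t_i}$ is killed by $\theta_{\frakY_Z}^{\prime}$ since $\frac{T_i}{t_i}\mapsto 1$. The redundancy of the $i=0$ relation comes from the identity $\prod_{i=0}^r\frac{T_i}{t_i}=\frac{[\varpi^a]}{[\varpi^a]}=1$ in the exactified ring, using Lemma \ref{lem:existence of log-structure} and the relation $T_0\cdots T_r=[\varpi^a]$ in $\wtr_Z^{\prime}$. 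This places $1-\frac{T_0}{t_0}$ in the ideal generated by $\{1-\frac{T_j}{t_j}:1\leq j\leq r\}$, explaining why only $d$ linear generators are listed.

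The main step is to analyze the quotient $\calA/\fraki$, where $\calA$ denotes the $(p,\xi_K)$-completed exactified ring and $\fraki:=(1-\frac{T_j}{t_j}:1\leq j\leq d)$. Setting $T_i=t_i$ for all $i$ collapses each $(\frac{T_i}{t_i})^{\pm 1}$ to $1$, and via the chart map $\chi_U:\wtr_Z\to\bA_{\inf,K}(U)$ from Corollary \ref{cor:existence of log-structure}, the quotient is identified with the completed tensor product $\wtr_Z^{\prime}\widehat\otimes_{\wtr_Z,\chi_U}\bA_{\inf,K}(U)$. Since $\wtr_Z^{\prime}\to\wtr_Z$ is $(p,\xi_K)$-adically étale, this base change is $(p,\xi_K)$-adically étale over $\bA_{\inf,K}(U)$, and the factorization $U\to\frakY_Z^{\mathrm{gen}}\to X_Z$ combined with unique étale lifting along the surjection $\bA_{\inf,K}(U)\to S^+$ produces a section of the multiplication map. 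Standard descent for étale algebras with a section then yields a product decomposition $\calA/\fraki\cong\bA_{\inf,K}(U)\times B$ for some étale $\bA_{\inf,K}(U)$-algebra $B$, under which the composite to $S^+$ is the canonical projection on the first factor. Hence the kernel of $\theta_{\frakY_Z}^{\prime}$ modulo $\fraki$ is generated by $\xi_K$ together with the idempotent $e_0:=(0,1)$.

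To finish, one lifts $e_0$ to an arbitrary $e\in\calA$; then $\theta_{\frakY_Z}^{\prime}(e)=0$ and $e\equiv e^2\pmod{\fraki}$ hold by construction. The inclusion $(1-\frac{T_j}{t_j},\xi_K,e)\subseteq\Ker(\theta_{\frakY_Z}^{\prime})$ is immediate, and the reverse inclusion follows from the computation $\calA/(\fraki,\xi_K,e)\simeq S^+$, which agrees with the image of $\theta_{\frakY_Z}^{\prime}$.

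The hard part will be justifying the identification $\calA/\fraki\simeq\wtr_Z^{\prime}\widehat\otimes_{\wtr_Z,\chi_U}\bA_{\inf,K}(U)$ together with the subsequent splitting into $\bA_{\inf,K}(U)\times B$. The exactification is genuinely necessary, because $t_i=[\underline t_i]u_i$ is in general not a unit in $\bA_{\inf,K}(U)$ (the Teichm\"uller $[\underline t_i]$ typically is not invertible when $\underline t_i^{\sharp}$ is topologically nilpotent in $S^+$); thus $(\frac{T_i}{t_i})^{\pm 1}$ adjoin genuinely new ring elements, and one must check that inverting them before $(p,\xi_K)$-completing commutes cleanly with subsequently enforcing $T_i=t_i$. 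The splitting requires that the section of the étale cover is both open and closed, which is routine for finite étale extensions but in our setting should be handled by working modulo $\xi_K^n$, invoking the local structure of $(p,\xi_K)$-adically étale maps, and passing to the limit.
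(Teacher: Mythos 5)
Your overall strategy (kill the coordinate differences, recognize the resulting ring as étale over $\bA_{\inf,K}(U)$ with a section, split off an idempotent, and check the quotient by the candidate ideal is $S^+$) is exactly the paper's, but the central identification you flag as "the hard part" is not just unjustified — it is wrong as stated, and the error sits precisely where the idempotent $e$ is needed. Killing $\fraki=(1-\frac{T_j}{t_j})_{1\le j\le d}$ identifies $T_i\otimes 1$ with $1\otimes t_i$, i.e. it identifies the two maps out of the \emph{chart ring} $P:=\bA_{\inf,K}(Z)\za T_0,\dots,T_r,T_{r+1}^{\pm1},\dots,T_d^{\pm1}\ya/(T_0\cdots T_r-[\varpi^a])$, not out of $\wtr_Z$. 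So the correct identification is
\[\calA/\fraki\;\cong\;\wtr'_Z\,\widehat\otimes_{P,\;T_i\mapsto t_i}\,\bA_{\inf,K}(U)\;\cong\;\wtr'_Z\,\widehat\otimes_{P}\,\wtr_Z\,\widehat\otimes_{\wtr_Z,\chi_U}\bA_{\inf,K}(U),\]
and not $\wtr'_Z\widehat\otimes_{\wtr_Z,\chi_U}\bA_{\inf,K}(U)$. For a general $r\in\wtr_Z$ the element $r\otimes1-1\otimes\chi_U(r)$ lies in $\Ker(\theta'_{\frakY_Z})$ but \emph{not} in $\fraki$: already for $\frakY_Z=\frakX_Z$ with $\wtr_Z=P[x]/(f)$ a nontrivial étale extension, $x\otimes1-1\otimes\chi_U(x)$ is nonzero in $\wtr_Z\widehat\otimes_P\bA_{\inf,K}(U)$. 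Your identification would give $\calA/\fraki\cong\bA_{\inf,K}(U)$ in that case, forcing $e=0$ and making the lemma's extra generator superfluous — which is false whenever the chart $\psi$ is not an isomorphism. In other words, the passage from "the coordinate differences die" to "all differences $r-\chi_U(r)$ die" is exactly the content that requires étaleness over $P$ and produces $e$; your write-up smuggles it into the identification.

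The good news is that once the base of the tensor product is corrected from $\wtr_Z$ to $P$, the rest of your outline is sound and coincides with the paper's argument: $\wtr'_Z$ is $(p,\xi_K)$-adically étale over $P$ (chart composed with $i_{\frakY_Z}$), so $\calA/\fraki$ is étale over $\bA_{\inf,K}(U)$; the unique étale lifting of $\wtr'_Z\to S^+$ along the $\xi_K$-adically complete surjection $\bA_{\inf,K}(U)\to S^+$, compatibly with $T_i\mapsto t_i$, gives a retraction onto $\bA_{\inf,K}(U)$; a retraction of a (topologically) étale map has kernel generated by an idempotent (lift idempotents modulo $(p,\xi_K)^n$ and pass to the limit, as you indicate — "descent" is not the right word, but the content is this clopen-section fact, the same mechanism by which the paper extracts $\overline e$ from the multiplication map $\wtr_Z\widehat\otimes_P\wtr_Z\to\wtr_Z$); finally lifting that idempotent to $e\in\calA$ and checking $\calA/(\fraki,\xi_K,e)\cong S^+$ closes the argument exactly as you propose. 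Also note a small slip: the étale map goes $\wtr_Z\to\wtr'_Z$, not the other way, and étaleness over $\wtr_Z$ alone is not enough — it is étaleness over $P$ that matters.
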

    Before proving this lemma, we remark that $1-\frac{T_0}{t_0},\dots,1-\frac{T_d}{t_d}$ and $1-\frac{T_1}{t_1},\dots,1-\frac{T_d}{t_d}$ span the same ideal in \[\wtr_Z^{\prime}\widehat \otimes_{\bA_{\inf,K}(Z)}\bA_{\inf,K}(U)[(\frac{T_0}{t_0})^{\pm 1},\dots,(\frac{T_r}{t_r})^{\pm 1}]^{\wedge_{(p,\xi_K)}}\]
      because we have $\prod_{i=0}^d\frac{T_i}{t_i} = 1$.
  \begin{proof}
      To simplify the notation, we may assume $\frakY_Z = \frakX_Z$ and thus $\wtr_Z^{\prime} = \wtr_Z$. We first deal with the case
      \[\wtr_Z = \bA_{\inf,K}(Z)\za T_0,\dots,T_r,T_{r+1}^{\pm 1},\dots,T_d^{\pm 1}\ya/(T_0\cdots T_r-[\varpi^a]).\]
      In this case, it is easy to see that 
      \[\wtr_Z\widehat \otimes_{\bA_{\inf,K}(Z)}\bA_{\inf,K}(U)[(\frac{T_0}{t_0})^{\pm 1},\dots,(\frac{T_r}{t_r})^{\pm 1}]^{\wedge_{(p,\xi_K)}} = \bA_{\inf,K}(U)\za(\frac{T_0}{t_0})^{\pm 1},\dots,(\frac{T_d}{t_d})^{\pm 1}\ya/(\prod_{i=0}^r\frac{T_i}{t_i}-1)\]
      and $\theta^{\prime}_{\frakY_Z}$ sends each $\frac{T_i}{t_i}$ to $1$. So $\Ker(\theta^{\prime}_{\frakY_Z})$ is generated by
      \[1-\frac{T_1}{t_1},\dots,1-\frac{T_d}{t_d},\xi_K\]
      as desired (by choosing $e = 0$).
      
      In general, consider the morphism
      \[\iota: \wtr_Z\to \bA_{\inf,K}(U)\]
      which is compatible with the log-structure as in Corollary \ref{cor:existence of log-structure}. Then the quotient of
      \[\wtr_Z\widehat \otimes_{\bA_{\inf,K}(Z)}\bA_{\inf,K}(U)[(\frac{T_0}{t_0})^{\pm 1},\dots,(\frac{T_r}{t_r})^{\pm 1}]^{\wedge_{(p,\xi_K)}}\]
      modulo the ideal generated by $1-\frac{T_1}{t_1},\dots,1-\frac{T_d}{t_d}$ is the quotient of 
      \[\wtr_Z\widehat \otimes_{\bA_{\inf,K}(Z)}\bA_{\inf,K}(U) = \wtr_Z\widehat \otimes_{\bA_{\inf,K}(Z)}\wtr_Z\widehat \otimes_{\wtr_Z,\iota}\bA_{\inf,K}(U)\]
      modulo the ideal generated by $T_0-t_0,\dots,T_d-t_d$, which turns out to be
      \[\wtr_Z\widehat \otimes_{\bA_{\inf,K}(Z)\za T_0,\dots,T_r,T_{r+1}^{\pm 1},\dots,T_d^{\pm 1}\ya/(T_0\cdots T_r-[\varpi^a])}\wtr_Z\widehat \otimes_{\wtr_Z,\iota}\bA_{\inf,K}(U).\]
      
      By the \'etaleness of 
      \[\bA_{\inf,K}(Z)\za T_0,\dots,T_r,T_{r+1}^{\pm 1},\dots,T_d^{\pm 1}\ya/(T_0\cdots T_r-[\varpi^a])\to\wtr_Z,\]
      the kernel of 
      \[\wtr_Z\widehat \otimes_{\bA_{\inf,K}(Z)\za T_0,\dots,T_r,T_{r+1}^{\pm 1},\dots,T_d^{\pm 1}\ya/(T_0\cdots T_r-[\varpi^a])}\wtr_Z\to\wtr_Z\]
      is principally generated by some idempotent $\overline e$ satisfying $\overline{e}^2=\overline e$. 
      
      Let $e\in \wtr_Z\widehat \otimes_{\bA_{\inf,K}(Z)}\wtr_Z$ be any lifting of $\overline e$ and then one can conclude by checking that $\Ker(\theta^{\prime}_{\frakY_Z})$ is generated by $1-\frac{T_1}{t_1},\dots,1-\frac{T_d}{t_d},e,\xi_K$ directly.
  \end{proof}

  Let $A_{\frakY_Z}$ be the $(p,\Ker(\theta_{\frakY_Z}^{\prime}))$-adic completion of $\wtr^{\prime}_Z\widehat \otimes_{\bA_{\inf,K}(Z)}\bA_{\inf,K}(U)[(\frac{T_0}{t_0})^{\pm 1},\dots,(\frac{T_r}{t_r})^{\pm 1}]^{\wedge_{(p,\xi_K)}}$. Let $u = [\epsilon^{\frac{1}{p}}]-1$ be as in \S\ref{ssec:notation}.
  Denote by $B_{\frakY_Z} = A_{\frakY_Z}[u\cdot\Ker(\theta^{\prime}_{\frakY})\{-1\}]\subset A_{\frakY_Z}[\frac{1}{\xi_K}]$ the blow-up algebra
  \[B_{\frakY_Z}:=A_{\frakY_Z}[\frac{u\cdot\Ker(\theta^{\prime}_{\frakY_Z})}{\xi_K}]\]
  and define
  \[\Gamma_{\frakY_Z}:=\big(B_{\frakY_Z}[u\cdot\Ker(\theta^{\prime}_{\frakY_Z})\{-1\}]_{\pd}\big)^{\wedge} = \big(B_{\frakY_Z}[\frac{u\cdot\Ker(\theta^{\prime}_{\frakY_Z})}{\xi_K}]_{\pd}\big)^{\wedge}\]
  the $(p,\Ker(\theta^{\prime}_{\frakY_Z}))$-adic completion of the pd-envelope of $B_{\frakY_Z}$ with respect to the ideal generated by $\frac{u\cdot\Ker(\theta^{\prime}_{\frakY_Z})}{\xi_K}$. Clearly, it is an algebra over $\bA_{\inf,K}(U)[u]_{\pd}^{\wedge}$, the $(p,\xi_K)$-adic completion of the pd-envelope of $\bA_{\inf,K}(U)$ with respect to the ideal $(u)$.

  Before we move on, let us give a quick review of the sheaves of relative Robba rings $\CCtilde^1_K$ and $\CCtilde_K^{1,+}$. Let $\pi$ and $\underline \pi$ be as in \S\ref{ssec:notation}. Then for any $U = \Spa(S,S^+)\in \Perfd$, we define
  \[\CCtilde^{1,+}_K(U) = \bA_{\inf,K}(U)[\big(\frac{[\underline \pi]}{\pi}\big)^{\pm 1}]^{\wedge_{(p,\xi_K)}} \text{ and }\CCtilde^{1}_K(U) = \CCtilde^{1,+}_K(U)[\frac{1}{p}].\]
  As the kernel of $\bA_{\inf,K}(U)\to S^+$ is generated by $\beta_K:=\pi-[\underline \pi]$ (and thus $\xi_K$ is a unit-multiple of $\beta_K$ in $\bA_{\inf,K}(U)$), the $\pi$-adic topology, $[\underline \pi]$-adic topology, and $(p,\xi_K)$-adic topology on $\CCtilde^{1,+}_K(U)$ are the same.

  The following lemma is well-known.
  \begin{lem}\label{lem:relative Robba ring}
      Keep notations as above.
      \begin{enumerate}
          \item[(1)] The element $u$ is a unit-multiple of $[\epsilon^{1/p}-1]$ and thus admits arbitary pd-powers in $\CCtilde^{1,+}_K(U)$. It is also invertible in $\CCtilde^1_K(U)$.

          \item[(2)] The series $t = \log[\epsilon] = \sum_{n\geq 1}\frac{(1-[\epsilon])^n}{n}$ converges in $\CCtilde^{1,+}_K(U)$ and is a unit-multiple of $u\xi = \phi(u)$.

          \item[(3)] The natural map $\bA_{\inf,K}(U)\to S^+$ extends to a surjection $\CCtilde^{1,+}_K(U)\to S^+$ whose kernel is principally generated by $1-\frac{[\underline \pi]}{\pi} = \frac{\beta_K}{\pi}$. Moreover, the map $\bA_{\inf,K}(U)\to \BBdRp(U)$ extends uniquely to a map
          \[\CCtilde^{1}_K(U)\to \bB_{\dR}^+(U)\]
          that identifies $\bB_{\dR}^+(U)$ with the $\frac{\xi_K}{\pi}$-adic completion of $\CCtilde^{1}_K(U)$.
      \end{enumerate}
  \end{lem}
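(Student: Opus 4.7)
The plan is to treat the three claims in sequence, working throughout inside $\CCtilde^{1,+}_K(U) = \bA_{\inf,K}(U)[([\underline{\pi}]/\pi)^{\pm 1}]^{\wedge_{(p,\xi_K)}}$. As preparation I would first note that $[\underline{\pi}]$ and $\pi$ generate the same ideal (since $[\underline{\pi}]/\pi$ is a unit), so the $(p,\xi_K)$-adic, $\pi$-adic, and $[\underline{\pi}]$-adic topologies all coincide (using $\pi^e = p \cdot (\text{unit})$ in $\calO_K$, where $e$ is the absolute ramification index of $K$, and $\xi_K = \beta_K \cdot (\text{unit}) = \pi(1-[\underline{\pi}]/\pi)\cdot(\text{unit})$), and that $\pi$ is a non-zero-divisor, inherited from the $p$-torsion freeness of $\bA_{\inf,K}(U)$ which follows from $A^+$ being perfectoid.

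For $(1)$, the first step is to prove that $[\epsilon^{1/p}-1]^{p-1}$ is a unit multiple of $p$ in $\CCtilde^{1,+}_K(U)$. Since $(\epsilon^{1/p}-1)^\sharp = \zeta_p - 1$ has $p$-adic valuation $1/(p-1)$, the element $(\epsilon^{1/p}-1)^{p-1}$ and $\underline{p}$ differ by a unit in $\calO_C^\flat$; applying the multiplicative Teichm\"uller lift and combining with $[\underline{p}] = [\underline{\pi}]^e \cdot (\text{unit})$, $[\underline{\pi}] = \pi \cdot ([\underline{\pi}]/\pi)$, and $\pi^e = p \cdot (\text{unit})$ yields the identity. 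Next, $u - [\epsilon^{1/p}-1] \in p\,\bA_{\inf,K}(U)$ follows by reducing modulo $p$, where both elements become $\epsilon^{1/p}-1 \in \calO_C^\flat$. Combining these facts gives $u = [\epsilon^{1/p}-1]\bigl(1 + [\epsilon^{1/p}-1]^{p-2}\cdot w\bigr)$ for some $w \in \CCtilde^{1,+}_K(U)$; for $p \geq 3$ the second factor is a unit because $[\epsilon^{1/p}-1]^{p-2}$ lies in the Jacobson radical, while for $p = 2$ I would appeal directly to the identities $\xi_K = 2 + u$ and $\theta_K(u) = -2$ to see that both $u/2$ and $[\epsilon^{1/2}-1]/2$ are units in $\CCtilde^{1,+}_K(U)$, hence $u/[\epsilon^{1/2}-1]$ is a unit. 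With $u^{p-1} = p\cdot(\text{unit})$ established, Legendre's formula $v_p(n!) = (n-s_p(n))/(p-1)$ together with the congruence $s_p(n)\equiv n \pmod{p-1}$ gives $v_p(p^q/n!) = (s_p(n)-r)/(p-1) \geq 0$ when $n = q(p-1)+r$ with $0 \leq r < p-1$, and the decomposition $u^n/n! = (p^q/n!)\cdot V^q \cdot u^r$ (with $V$ the unit satisfying $u^{p-1} = pV$) confirms $u^n/n! \in \CCtilde^{1,+}_K(U)$. Invertibility of $u$ in $\CCtilde^1_K(U) = \CCtilde^{1,+}_K(U)[1/p]$ is then immediate from $u^{p-1} \in p\cdot(\CCtilde^{1,+}_K(U))^\times$.

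For $(2)$, the identity $1-[\epsilon] = -([\epsilon]-1) = -u\xi = -\phi(u)$ rewrites the sum as $\sum_{n\geq 1}(-1)^n(u\xi)^n/n$. Writing $u^n/n = (n-1)!\cdot(u^n/n!)$ and using the divided powers from $(1)$ shows $u^n/n \in \CCtilde^{1,+}_K(U)$, while the factor $\xi^n$ drives convergence in the $(p,\xi_K)$-adic topology. The $n=1$ term contributes $-u\xi$ and the remaining terms lie in $(u\xi)^2\cdot\CCtilde^{1,+}_K(U)$, so $t = -u\xi\cdot(\text{unit})$, exhibiting $t$ as a unit multiple of $\phi(u) = u\xi$. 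For $(3)$, since $\theta_K([\underline{\pi}]/\pi) = 1 \in S^{+,\times}$, the map $\theta_K$ extends uniquely from $\bA_{\inf,K}(U)$ to the localization and then to $\CCtilde^{1,+}_K(U)$ by $p$-adic completeness of $S^+$; its kernel is generated by $\beta_K = \pi(1-[\underline{\pi}]/\pi)$, and cancelling the non-zero-divisor $\pi$ gives the generator $1-[\underline{\pi}]/\pi = \beta_K/\pi$. After inverting $p$, $\pi$ becomes invertible (as $\pi^e$ and $p$ are associates), so the $\xi_K$-adic and $(\xi_K/\pi)$-adic topologies on $\CCtilde^1_K(U)$ coincide; the natural map $\bA_{\inf,K}(U)[1/p] \to \BBdRp(U)$ extends uniquely to $\CCtilde^1_K(U)$ because $[\underline{\pi}]/\pi$ maps to the unit $1 \in C$ and is therefore invertible in $\BBdRp(U)$, and the universal property of $\xi_K$-adic completion identifies $\BBdRp(U)$ with the $(\xi_K/\pi)$-adic completion of $\CCtilde^1_K(U)$.

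The main technical obstacle is the unit-identification $u = [\epsilon^{1/p}-1]\cdot(\text{unit})$ in $(1)$, particularly the boundary case $p = 2$ where the exponent $p-2$ in the correction term collapses and the Jacobson-radical argument must be replaced by the direct computation using $\xi_K = 2+u$; once this and the companion fact $[\epsilon^{1/p}-1]^{p-1} = p\cdot(\text{unit})$ are secured, the remaining verifications reduce to formal consequences of Legendre's formula and the universal property of localization and completion.
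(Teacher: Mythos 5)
Your treatment of item (1) deliberately avoids the paper's key tool, the Witt-coordinate expansion $u=\sum_{n\geq 0}[u_n]p^n$ with the estimate $\nu(u_n)\geq \tfrac{1}{p^n(p-1)}$, replacing it by the weaker congruence $u\equiv[\epsilon^{1/p}-1]\bmod p\,\bA_{\inf}(U)$ together with $[\epsilon^{1/p}-1]^{p-1}=p\cdot(\mathrm{unit})$. For $p\geq 3$ this does close (the correction term picks up the topologically nilpotent factor $[\epsilon^{1/p}-1]^{p-2}$), and your Legendre-formula verification of all pd-powers $u^n/n!$ is a clean variant of the paper's reduction to $u^{p^n}/p^n!$. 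But the $p=2$ patch is a genuine gap: the identity $\xi_K=2+u$ holds only for $K=\bQ_2$ with the particular choice $\xi_K=\phi(u)/u$, whereas the lemma concerns arbitrary $K$ and an arbitrary generator $\xi_K$; and even granting it, ``$\theta_K(u)=-2$, hence $u/2$ is a unit'' does not follow, because the kernel of the extended map $\CCtilde^{1,+}_K(U)\to S^+$ is \emph{not} contained in the Jacobson radical (mod $p$ it contains the Laurent variable $1-[\underline\pi]/\pi$), so elements with unit image under $\theta$ need not be units. What is actually needed is control of the cofactor in $u-[\epsilon^{1/2}-1]\in 2\bA_{\inf}(U)$, and that is precisely what the paper's bound on the higher Witt components provides, uniformly in $p$ (including $p=2$). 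A related slip occurs in your item (2): the claim that the terms with $n\geq 2$ lie in $(u\xi)^2\CCtilde^{1,+}_K(U)$ fails at $p=2$ (the $n=2$ term is $(u\xi)^2/2$); the robust argument, as in the paper, writes $u\xi=\phi(u)=p\cdot a$ and uses $v_p(p^n/(n+1))\to\infty$. (Also $(\epsilon^{1/p}-1)^\sharp\neq\zeta_p-1$; only the valuation statement you actually use is correct.)

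In item (3) there are two further gaps. First, the kernel statement as you phrase it is incorrect: the kernel of $\CCtilde^{1,+}_K(U)\to S^+$ is generated by $1-[\underline\pi]/\pi$, not by $\beta_K=\pi\,(1-[\underline\pi]/\pi)$, and since $\pi$ is a non-unit these generate different ideals, so ``cancelling the non-zero-divisor $\pi$'' is not a legitimate step; the paper instead computes the kernel from the presentation $\bA_{\inf,K}(U)\za X^{\pm1}\ya/(\pi X-[\underline\pi])$ and the ideal identity $(\pi X-[\underline\pi],X-1)=(\pi-[\underline\pi],X-1)$. Second, and more seriously, the extension of $\bA_{\inf,K}(U)\to\BBdRp(U)$ to $\CCtilde^{1}_K(U)$ is not formal: invertibility of the image of $[\underline\pi]/\pi$ only extends the map over the \emph{uncompleted} localization, while $\CCtilde^{1}_K(U)$ involves the $(p,\xi_K)$-adic completion and $\BBdRp(U)$ is not $(p,\xi_K)$-adically complete ($p$ is invertible there). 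One must show that every convergent series $\sum_{m\in\bZ}a_mX^m$ maps to a convergent series in $\bA_{\inf,K}(U)[\tfrac1p]/\xi_K^n$; this is exactly the paper's boundedness claim that $([\underline\pi]/\pi)^m\in\pi^{1-n}(\bA_{\inf,K}(U)/\xi_K^n)$ uniformly in $m$, which is the technical heart of this half of the lemma and is missing from your proposal, as is the explicit construction of the two mutually inverse maps identifying $\BBdRp(U)$ with the $\xi_K/\pi$-adic completion.
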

  \begin{proof}
      For Item (1): We only need to deal with the case where $K = \Qp$ and $U = \Spa(C,\calO_C)$. In this case, we denote $\CCtilde^{1,+}_K(U)$ by $\bfC^{1,+}$ for short and then we have
      \[\widetilde \bfC^{1,+} = \bfA_{\inf}[\big(\frac{[\varpi]}{p}\big)^{\pm 1}]^{\wedge_{(p,\xi)}},\]
      where $\varpi = (p,p^{1/p},\dots)\in C^{\flat}$ (cf. \S\ref{ssec:notation}). Denote by $\nu:C^{\flat}\to\bR\cup\{\infty\}$ the additive valuation on $C^{\flat}$ such that $\nu(\varpi) = 1$.

      Recall that in $\rW(\Fp[X^{\frac{1}{p^{\infty}}},Y^{\frac{1}{p^{\infty}}}])$, if we write
      \[[X]-[Y] = \sum_{n\geq 0}[P_n(X,Y)]p^n,\]
      then $P_n(X,Y)\in \Fp[X^{\frac{1}{p^n}},Y^{\frac{1}{p^n}}]$ such that $X^{\frac{1}{p^n}}-Y^{\frac{1}{p^n}}$ divides $P_n(X,Y)$. By letting $X = \epsilon^{1/p}$ and $Y=1$, we see that if we write
      \[u = [\epsilon^{\frac{1}{p}}]-1 = \sum_{n\geq 0}[u_n]p^n\]
      with $u_n\in C^{\flat}$, then we have $u_0 = \epsilon^{\frac1p}-1$ and that for any $n\geq 0$,
      \[\nu(u_n)\geq \frac{1}{p^n(p-1)}.\]
      In particular, we see that
      \[u = [u_0]\cdot (1+\sum_{n\geq 1}(\frac{p}{[\varpi]})^n[\frac{\varpi^nu_n}{u_0}])\]
      and that 
      \[\nu(\frac{\varpi^nu_n}{u_0}) = n+\frac{1}{p^n(p-1)}-\frac{1}{p-1}>0.\]
      Thus, $\sum_{n\geq 1}(\frac{p}{[\varpi]})^n[\frac{\varpi^nu_n}{u_0}]$ is topologically nilpotent in $\widetilde \bfC^{1,+}$, yielding that $v:=1+\sum_{n\geq 1}(\frac{p}{[\varpi]})^n[\frac{\varpi^nu_n}{u_0}]$ is a unit. Thus, $u = v[\epsilon^{\frac{1}{p}}-1]$ is a unit-multiple of $[\epsilon^{\frac{1}{p}}-1]$. As $[\varpi]$ is invertible in $\widetilde \bfC^{1}=\widetilde \bfC^{1,+}[\frac{1}{p}]$, so is $[u_0]$ and thus so is $u$.

      To see that $u$ admits arbitary pd-powers in $\widetilde \bfC^{1,+}$, it suffices to show that for any $n\geq 0$, $\frac{u^{p^n}}{p^n!}$ is well-defined in $\widetilde \bfC^{1,+}$. To do so, note that for any $n\geq 0$, the following product
      \[v^{p^n-1}\cdot[\frac{u_0^{p^n-1}}{\varpi^{\frac{p^n-1}{p-1}}}]\cdot\frac{p^{\frac{p^n-1}{p-1}}}{p^n!}\cdot\big(\frac{[\varpi]}{p}\big)^{\frac{p^n-1}{p-1}}\]
      is a unit in $\widetilde \bfC^{1,+}$. We can conclude by using that 
      \[\frac{u^{p^n}}{p^n!} = u\cdot\frac{u^{p^n-1}}{p^n!} = u\cdot v^{p^n-1}\cdot[\frac{u_0^{p^n-1}}{\varpi^{\frac{p^n-1}{p-1}}}]\cdot\frac{p^{\frac{p^n-1}{p-1}}}{p^n!}\cdot\big(\frac{[\varpi]}{p}\big)^{\frac{p^n-1}{p-1}}. \]

      For Item (2): It suffices to show that $t = \sum_{n\geq 1}\frac{(1-[\epsilon])^n}{n}$ converges in $\widetilde \bfC^{1,+}$, which is a unit-multiple of $\phi(u) = [\epsilon]-1$. For this purpose, it suffices to show that 
      \[\sum_{n\geq 1}\frac{(1-[\epsilon])^{n-1}}{n} = 1+\sum_{n\geq 1}(-1)^n\frac{\phi(u)^n}{n+1}\]
      converges to a unit in $\widetilde \bfC^{1,+}$. Write $\phi(u) = u^p+p\delta(u)$ for some $\delta(u)\in\Ainf$, and then we have
      \[\phi(u)= puv^{p-1}[\frac{u_0^{p-1}}{\varpi}]\frac{[\varpi]}{p}+p\delta(u) = pa\]
      for some $a\in \widetilde \bfC^{1,+}$. Thus the series
      \[\sum_{n\geq 1}(-1)^n\frac{\phi(u)^n}{n+1} = \sum_{n\geq 1}(-1)^n\frac{p^n}{n+1}a^n\]
      converges to a topologically nilpotent element in $\widetilde \bfC^{1,+}$, yielding that $\sum_{n\geq 1}\frac{(1-[\epsilon])^{n-1}}{n}$ is a unit as desired.

      For Item (3): Note that we have 
      \[\CCtilde^{1,+}_K(U) = \bA_{\inf,K}(U)\za X^{\pm 1}\ya/(\pi X-[\underline \pi]),\]
      where $\bA_{\inf,K}(U)\za X^{\pm 1}\ya$ is the $(p,\xi_K)$-adic completion of $\bA_{\inf,K}(U)[ X^{\pm 1}]$. To see the kernel of $\CCtilde^{1,+}_K(U)\to S^+$ is generated by $1-\frac{[\underline \pi]}{\pi}$, it is enough to show that the kernel of the map 
      \[\bA_{\inf,K}(U)\za X^{\pm 1}\ya/(\pi X-[\underline \pi])\to S^+\]
      sending $X$ to $1$ is generated by $X-1$. However, it is clear because $X-1,\pi-[\underline \pi]$ and $\pi X-[\underline \pi],X-1$ generate the same ideal of $\bA_{\inf,K}(U)\za X^{\pm 1}\ya$ and 
      \[\bA_{\inf,K}(U)\za X^{\pm 1}\ya/(X-1,\pi-[\underline \pi]) = S^+.\]

      It remains to show that the map $\bA_{\inf,K}(U)\to \BBdRp(U)$ extends uniquely to a map 
      \[i:\CCtilde^1_K(U)\to \BBdRp(U)\]
      such that $\BBdRp(U)$ is the $\frac{\xi_K}{\pi}$-adic completion of $\CCtilde^1_K(U)$. Recall that 
      \[\BBdRp(U) = \varprojlim_n\bA_{\inf,K}(U)[\frac{1}{p}]/\xi_K^n\]
      is the $\xi_K$-adic completion of $\bA_{\inf,K}(U)[\frac{1}{p}]$ such that $\pi$ and $[\underline \pi]$ are invertible in $\BBdRp(U)$.
      We claim that for any $n\geq 1$, the map
      \[\bA_{\inf,K}(U)[X^{\pm 1}]\to \bA_{\inf,K}(U)[\frac{1}{p}]/\xi_K^n\]
      sending $X$ to $\frac{[\underline \pi]}{\pi}$ is bounded in the sense that it takes image in 
      \[\pi^{1-n}(\bA_{\inf,K}(U)/\xi_K^n)\subset(\bA_{\inf,K}(U)/\xi_K^n)[\frac{1}{p}].\]
      Note that $(X-1)^n$ is mapped to $\pi^{-n}([\underline \pi]-\pi)^n$ which is vanishing in $(\bA_{\inf,K}(U)/\xi_K^n)[\frac{1}{p}]$. For any $m\in \bZ$, the above morphism sends $\pi^{n-1}X^m$ to 
      \[\pi^{n-1}(\frac{[\underline \pi]}{\pi})^m = \pi^{n-1}\sum_{i\geq 0}\binom{m}{i}(\frac{[\underline \pi]}{\pi}-1)^i = \pi^{n-1}\sum_{i=0}^{n-1}\binom{m}{i}(\frac{[\underline \pi]}{\pi}-1)^i = \sum_{i=0}^{n-1}\binom{m}{i}([\underline \pi]-\pi)^i,\]
      which is an element in $\bA_{\inf,K}(U)/\xi_K^n$. Thus, for any $m\in\bZ$, the above morphism sends $X^m$ to $\pi^{1-n}(\bA_{\inf,K}(U)/\xi_K^n)$, yielding the above claim as desired. 

      Due to the claim above, the morphism $\bA_{\inf,K}(U)[X^{\pm 1}]\to \bA_{\inf,K}(U)[\frac{1}{p}]/\xi_K^n$ uniquely extends to a morphism 
      \[\bA_{\inf,K}(U)\za X^{\pm 1}\ya\to \bA_{\inf,K}(U)[\frac{1}{p}]/\xi_K^n\]
      and thus induces a morphism
      \[\CCtilde^{1,+}_K(U) = \bA_{\inf,K}(U)\za X^{\pm 1}\ya/(\pi X-[\underline \pi])\to \bA_{\inf,K}(U)[\frac{1}{p}]/\xi_K^n.\]
      Inverting $p$ and letting $n$ go to $\infty$, we get the desired morphism
      \[i:\CCtilde^1_K(U)\to\BBdRp(U).\]

      As $\BBdRp(U)$ is $\xi_K$-adic complete, the morphism $i$ uniquely induces morphism
      \[\widehat i:\CCtilde^{1}_K(U)^{\wedge_{\xi_K}}\to \BBdRp(U),\]
      where $\CCtilde^{1}_K(U)^{\wedge_{\xi_K}}$ denotes the $\xi_K$-adic completion of $\CCtilde^{1}_K(U)$ (which is exactly the $\frac{\xi_K}{\pi}$-adic completion of $\CCtilde^{1}_K(U)$ as $\pi$ is invertible in $\CCtilde^{1}_K(U)$). On the other hand, considering the $\xi_K$-adic completion along the natural morphism $\bA_{\inf,K}(U)[\frac{1}{p}]\to \CCtilde^{1}_K(U)$,
      we get a morphism
      \[j:\BBdRp(U)\to \CCtilde^{1}_K(U)^{\wedge_{\xi_K}}.\]
      Then we can conclude that the map $\widehat i$ gives the desired isomorphism $\CCtilde^{1}_K(U)^{\wedge_{\xi_K}}\cong \BBdRp(U)$ by checking that $j$ is the inverse of $\widehat i$ directly.
  \end{proof}

  Now, we are back to the construction of the desired period sheaf with connection.
  For any $?\in\{\emptyset,+\}$, consider the $(p,\xi_K)$-adic complete tensor product
  \[C_{\frakY_Z}^{?}:=\Gamma_{\frakY}\widehat \otimes_{\bA_{\inf,K}(U)}\CCtilde_K^{1,?}(U),\]
  and then we have $C_{\frakY_Z} = C_{\frakY_Z}^+[\frac{1}{p}]$. Define 
  \[B_{\dR,\pd,\frakY_Z}^+:=\xi_K\text{-adic completion of } C_{\frakY_Z}.\]
  Then $B^+_{\dR,\pd,\frakY_Z}$ is a $\BBdRp(U)$-algebra by Lemma \ref{lem:relative Robba ring}(3).

  Let $\rd:\wtr_Z\to\Omega^{1,\log}_{\wtr_Z}$ be the usual $(p,\xi_K)$-complete derivation on $\wtr_Z$, where $\Omega^1_{\wtr_Z}$ denotes the module of $(p,\xi_K)$-adically continuous differentials over $\wtr_Z$ over $\bA_{\inf,K}(Z)$. It extends uniquely to an $\bA_{\inf,K}(U)$-linear derivation 
  \[\rd:\wtr_Z\widehat \otimes_{\bA_{\inf,K}(Z)}\bA_{\inf,K}(U)\to\Omega^{1,\log}_{\wtr_Z}\widehat \otimes_{\bA_{\inf,K}(Z)}\bA_{\inf,K}(U)\]
  such that for any $n\geq 1$,
  \[\rd(\Ker(\theta^{\prime}_{\frakY_Z})^n)\subset\Ker(\theta^{\prime}_{\frakY_Z})^{n-1}\otimes_{\wtr_Z}\Omega^{1,\log}_{\wtr_Z}.\]
  Thus it extends uniquely to an $\bA_{\inf,K}(U)$-linear derivation 
  \[\rd:A_{\frakY_Z}\to A_{\frakY_Z}\otimes_{\wtr_Z}\Omega^{1,\log}_{\wtr_Z},\]
  and then an $\bA_{\inf,K}(U)$-linear derivation
  \[\rd:B_{\frakY_Z}\to B_{\frakY_Z}\otimes_{\wtr_Z}\Omega^{1,\log}_{\wtr_Z}\{-1\}\]
  where $\Omega^{1,\log}_{\wtr_Z}\{-1\} = \Omega^{1,\log}_{\wtr_Z}\cdot\xi_K^{-1}$. This differential also extends uniquely to an $\bA_{\inf,K}(U)$-linear derivation
  \[\rd:\Gamma_{\frakY_Z}\to u\Gamma_{\frakY_Z}\otimes_{\wtr_Z}\Omega^1_{\wtr_Z}\{-1\}.\]
  Considering its base-change along $\bA_{\inf,K}(U)\to \CCtilde_K^{1,?}(U)$ (and taking $\xi_K$-adic completion), we get a $\CCtilde_K^{1,?}(U)$-linear (resp. $\BBdRp(U)$-linear) derivation
  \[\begin{split}\rd:C_{\frakY_Z}^?\to uC_{\frakY_Z}^?\otimes_{\wtr_Z}\Omega^{1,\log}_{\wtr_Z}\{-1\}
  \text{ (resp.  $\rd:B_{\dR,\pd,\frakY_Z}^+\to B_{\dR,\pd,\frakY_Z}^+\otimes_{\wtr_Z}\Omega^{1,\log}_{\wtr_Z}\{-1\}$)}.\end{split}\]
  Recall that $u$ is invertible in $\CCtilde^1_K(U)$ and $\BBdRp(U)$.

  \begin{dfn}\label{dfn:Period sheaf-local}
      Suppose that $\frakX_Z = \Spf(\calR_Z)$ is small semi-stable of realtive dimension $d$ over $A^+$.
      Let $\calO\bB_{\dR,\pd,Z}^+$ be the sheaf corresponding to the sheafification of the presheaf sending each affinoid perfectoid $U\in X_{Z,v}$ to 
      \[\colim_{\frakY_Z\in \Sigma_U}B_{\dR,\pd,\frakY_Z}^+,\]
      and let
      \[\rd:\calO\bB_{\dR,\pd,Z}^+\to\calO\bB_{\dR,\pd,Z}^+\otimes_{\calO_{\wtx_Z}}\Omega^1_{\wtx_Z}\{-1\}\]
      be the derivation corresponding to the colimit of 
      \[\rd:B_{\dR,\pd,\frakY_Z}^+\to B_{\dR,\pd,\frakY_Z}^+\otimes_{\wtr_Z}\Omega^{1,\log}_{\wtr_Z}\{-1\}\]
      constructed above. Denote by $\rD\rR(\calO\bB_{\dR,\pd,Z}^+,\rd)$ the corresponding de Rham complex
      \[\calO\bB_{\dR,\pd,Z}^+\xrightarrow{\rd}\calO\bB_{\dR,\pd,Z}^+\otimes_{\calO_{\wtx_Z}}\Omega^{1,\log}_{\wtx_Z}\{-1\}\xrightarrow{\rd}\cdots \xrightarrow{\rd}\calO\bB_{\dR,\pd,Z}^+\otimes_{\calO_{\wtx_Z}}\Omega^{d,\log}_{\wtx_Z}\{-d\}.\]
      We remark that the construction of $(\calO\bB_{\dR,\pd,Z}^+,\rd)$ is independent of the chocie of chart on $\frakX_Z$.
  \end{dfn}

  Now, we are going to give an explicit description of $(\calO\bB_{\dR,\pd,Z}^+,\rd)$.

    \begin{lem}\label{lem:local description of A_Y}
      Suppose that $\frakX_Z = \Spf(\calR_Z)$ is small semi-stable with generic fiber $X_Z$ as in Definition \ref{dfn:small affine}.
      For any affinoid perfectoid $U = \Spa(S,S^+)\in X_{Z,v}$ and any $\frakY_Z = \Spf(\calR^{\prime}_Z) \in \Sigma_U$, the map
      \[\iota:\bA_{\inf,K}(U)[[V_0,V_1,\dots,V_d]]/(\prod_{i=0}^r(1+V_i)-1)\to A_{\frakY_Z}\]
      sending each $V_i$ to $\frac{T_i}{t_i}-1$ is a well-defined isomorphism of $\bA_{\inf,K}(U)$-algebras. Here, $t_i$'s are elements in Lemma \ref{lem:kernel is finite} and $\bA_{\inf,K}(U)[[V_1,\dots,V_d]]$ is the ring of formal power series over $\bA_{\inf,K}(U)$ freely generated by $V_1,\dots,V_d$. Moreover, the isomorphism carries the ideal $(\xi_K,V_1,\dots,V_d)$ onto the ideal $\Ker(\theta^{\prime}_{\frakY_Z})$ of $A_{\frakY_Z}$.
  \end{lem}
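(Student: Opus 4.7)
The plan is to reduce to the ``chart case'' where $\wtr_Z^{\prime}$ is the algebra $B_0 := \bA_{\inf,K}(Z)\za T_0,\dots,T_r,T_{r+1}^{\pm 1},\dots,T_d^{\pm 1}\ya/(T_0\cdots T_r - [\varpi^a])$ itself, and then bootstrap to the general case using $(p,\xi_K)$-complete \'etaleness of $\wtr_Z^{\prime}$ over $B_0$.

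\textbf{Step 1 (Well-definedness).} I first verify $\iota$ respects the relation. In $\wtr_Z^{\prime}\widehat\otimes_{\bA_{\inf,K}(Z)}\bA_{\inf,K}(U)$ one has $\prod_{i=0}^r T_i=[\varpi^a]$, while $\prod_{i=0}^r t_i=[\varpi^a]$ by Lemma \ref{lem:existence of log-structure}. Since $[\varpi^a]$ is a non-zero-divisor in $\bA_{\inf,K}(U)$, the product $\prod_{i=0}^r(T_i/t_i)$ equals $1$, so $\prod_{i=0}^r(1+V_i)-1$ is killed. Since $\theta^{\prime}_{\frakY_Z}(T_i/t_i)=1$, each image $T_i/t_i-1$ lies in $\Ker(\theta^{\prime}_{\frakY_Z})$, so power series in the $V_i$'s converge in the $(p,\Ker(\theta^{\prime}_{\frakY_Z}))$-adic completion $A_{\frakY_Z}$. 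This also shows $(\xi_K,V_1,\dots,V_d)$ is sent into $\Ker(\theta^{\prime}_{\frakY_Z})$.

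\textbf{Step 2 (Chart case).} If $\wtr_Z^{\prime}=B_0$, then after substituting $W_i := T_i/t_i$ for all $i$, the Laurent-algebra $B_0\widehat\otimes_{\bA_{\inf,K}(Z)}\bA_{\inf,K}(U)[(T_i/t_i)^{\pm 1}]^{\wedge_{(p,\xi_K)}}$ becomes $\bA_{\inf,K}(U)\za W_0^{\pm 1},\dots,W_d^{\pm 1}\ya/(\prod_{i=0}^r W_i-1)$. Setting $V_i=W_i-1$, each $1+V_i$ is a unit modulo any power of the augmentation ideal $J_0=(V_0,\dots,V_d,\xi_K)$, so the $(p,J_0)$-adic completion absorbs the Laurent inversions and collapses to the power series ring $\bA_{\inf,K}(U)[[V_0,\dots,V_d]]/(\prod_{i=0}^r(1+V_i)-1)$. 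This proves the lemma in the chart case and also identifies the ideal.

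\textbf{Step 3 (General case via \'etale rigidity).} For arbitrary $\frakY_Z\in\Sigma_U$, the ring $\wtr_Z^{\prime}\widehat\otimes_{B_0}\widetilde A_0$ is $(p,\xi_K)$-complete \'etale over $\widetilde A_0$, where $\widetilde A_0$ denotes the power series ring produced in Step 2. Since $\widetilde A_0$ is complete (hence henselian) with respect to the ideal $I=(V_0,\dots,V_d,\xi_K)$, and $\theta^{\prime}_{\frakY_Z}$ provides a section of this \'etale algebra modulo $I$, the idempotent $e$ of Lemma \ref{lem:kernel is finite} uniquely lifts to a genuine idempotent $\widetilde e$ in $\wtr_Z^{\prime}\widehat\otimes_{B_0}\widetilde A_0$ cutting out the component selected by $U\to\frakY_Z$. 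The corresponding retraction $\wtr_Z^{\prime}\widehat\otimes_{B_0}\widetilde A_0/(\widetilde e)\xrightarrow{\sim}\widetilde A_0$ (an isomorphism by uniqueness of \'etale extensions along henselian pairs) gives the inverse to $\iota$ after $(p,\Ker(\theta^{\prime}_{\frakY_Z}))$-adic completion; in particular $\widetilde e$ becomes $0$ in $A_{\frakY_Z}$, so Lemma \ref{lem:kernel is finite} reduces to $\Ker(\theta^{\prime}_{\frakY_Z})=(\xi_K,V_1,\dots,V_d)$ as claimed.

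\textbf{Main obstacle.} The delicate point is Step 3: one must justify that the $(p,\Ker(\theta^{\prime}_{\frakY_Z}))$-adic completion genuinely isolates the single connected component of the \'etale extension picked out by $U\to\frakY_Z$, rather than leaving behind a non-trivial product decomposition. This is precisely where the explicit idempotent in Lemma \ref{lem:kernel is finite} together with the henselianness of $\widetilde A_0$ along its augmentation ideal must combine, and where one must be careful that $\widetilde A_0$ is in fact $(V,\xi_K)$-complete (not merely $(p,\xi_K)$-complete) so that idempotent lifting applies.
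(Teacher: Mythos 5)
Your proposal arrives at the right statement but by a genuinely different route from the paper. The paper does not complete the chart algebra separately and does not invoke henselian pairs or idempotent lifting: it constructs the inverse of $\iota$ explicitly, by sending $T_i\mapsto t_i(1+V_i)$ on $\bA_{\inf,K}(Z)\za T_0,\dots,T_r,T_{r+1}^{\pm1},\dots,T_d^{\pm1}\ya/(T_0\cdots T_r-[\varpi^a])$, lifting this uniquely to $\wtr_Z^{\prime}$ by $(p,\xi_K)$-complete \'etaleness of the chart (the target $\bA_{\inf,K}(U)[[V_0,\dots,V_d]]/(\prod_{i=0}^r(1+V_i)-1)$ being $(p,\xi_K,V_0,\dots,V_d)$-adically complete, cf.\ Remark \ref{rmk:pre-normalization}), extending through the exactification (as $t_i(1+V_i)$ is a unit multiple of $t_i$) and through the $(p,\Ker(\theta^{\prime}_{\frakY_Z}))$-adic completion, and then proving $j\circ\iota=\id$ by inspection and $\iota\circ j=\id$ by the uniqueness of \'etale lifts. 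That argument trades your structural picture (chart case plus descent of the component cut out by $U\to\frakY_Z$) for a purely universal-property proof that avoids non-noetherian completion comparisons and any algebraization of completed-\'etale algebras; your route makes the ``one connected component'' phenomenon explicit, at the cost of importing henselian-pair technology.

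Two points in your write-up need repair before it is a proof. First, in Step 1 the relation $\prod_{i=0}^r(T_i/t_i)=1$ does not follow from $[\varpi^a]$ being a non-zero-divisor in $\bA_{\inf,K}(U)$: you would need it to be a non-zero-divisor in the completed exactified tensor product, which is not clear. The relation instead holds by the construction of the exactification: in the group $G$ of Example \ref{exam:exactification} the element $\sum_{i=0}^r(e_i,t_i^{-1})$ is trivial (using the normalization $\prod_{i=0}^r t_i=[\varpi^a]$ from Lemma \ref{lem:existence of log-structure}), and its image in the exactified ring is exactly $\prod_{i=0}^r\frac{T_i}{t_i}$. Second, and this is the obstacle you flagged, idempotent lifting requires a ring henselian along the relevant ideal: $\widetilde A_0$ is $(p,\xi_K,V_0,\dots,V_d)$-complete, but $\wtr_Z^{\prime}\widehat\otimes_{B_0}\widetilde A_0$ is only $(p,\xi_K)$-complete, so the element $e$ of Lemma \ref{lem:kernel is finite}, which is idempotent only modulo $(1-\frac{T_1}{t_1},\dots,1-\frac{T_d}{t_d})$, does not directly lift to an honest idempotent there. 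You must either algebraize the completed-\'etale algebra over $\widetilde A_0$ (so that the kernel of the lifted section is generated by a true idempotent of an honest \'etale algebra, and the quotient by it is the base) or first pass to the $(p,\Ker(\theta^{\prime}_{\frakY_Z}))$-adic completion and then argue; in the latter case you also need that completing the exactified ring in stages (first $(p,\xi_K)$-adically, then along $\Ker(\theta^{\prime}_{\frakY_Z})$) agrees with the one-step completion, which is fine because $\Ker(\theta^{\prime}_{\frakY_Z})$ is finitely generated by Lemma \ref{lem:kernel is finite}, but should be stated. With these repairs your argument goes through and yields the ideal statement as you indicate.
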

  \begin{proof}
      Without loss of generality, we may assume $\frakY_Z = \frakX_Z = \Spf(\calR_Z)$. To see $\iota$ is well-defined, we first consider the map
      \[i:\bA_{\inf,K}(U)[V_0,\dots,V_d]\to A_{\frakY}\]
      sending each $V_i$ to $\frac{T_i}{t_i}-1$. As $i(V_j)\in \Ker(\theta^{\prime}_{\frakY_Z})$ and $A_{\frakY_Z}$ is $\Ker(\theta_{\frakY_Z})$-adic complete, the above morphism $i$ uniquely extends to a map of $\bA_{\inf,K}(U)$-algebras (which is still denoted by)
      \[\bA_{\inf,K}(U)[[V_0,\dots,V_d]]\to A_{\frakY_Z}.\]
      As $\prod_{i=0}^r\frac{T_i}{t_i} = 1$ in $A_{\frakY_Z}$, this map uniquely extends to a map
      \[\bA_{\inf,K}(U)[[V_0,\dots,V_d]]/(\prod_{i=0}^r(1+V_i)-1)\to A_{\frakY_Z}\]
      which is nothing but $\iota$. To see $\iota$ is an isomorphism, it remains to construct a map
      \[j:A_{\frakY_Z}\to\bA_{\inf,K}(U)[[V_0,\dots,V_d]]/(\prod_{i=0}^r(1+V_i)-1)\]
      which is the inverse of $\iota$.

      Consider the morphism of $\bA_{\inf,K}(Z)$-algebras
      \[j_1:\bA_{\inf,K}(Z)\za T_0,\dots,T_r,T_{r+1}^{\pm 1},\dots,T_d^{\pm 1}\ya/(T_0\cdots T_r-[\varpi^a])\to\bA_{\inf,K}(U)[[V_0,\dots,V_d]]/(\prod_{i=0}^r(1+V_i)-1)\]
      sending each $T_i$ to $t_i(1+V_i)$. Clearly, this map is well-defined such that the composite 
      \[\begin{split}\bA_{\inf,K}(Z)\za T_0,\dots,T_r,T_{r+1}^{\pm 1},\dots,T_d^{\pm 1}\ya/(T_0\cdots T_r-[\varpi^a])&\xrightarrow{j_1}\bA_{\inf,K}(U)[[V_0,\dots,V_d]]/(\prod_{i=0}^r(1+V_i)-1)\\
      &\xrightarrow{\mod (\xi_K,V_0,\dots,V_d)}S^+\end{split}\]
      coincides with the natural morphism 
      \[\bA_{\inf,K}(Z)\za T_0,\dots,T_r,T_{r+1}^{\pm 1},\dots,T_d^{\pm 1}\ya/(T_0\cdots T_r-[\varpi^a])\to\calR_Z\to S^+.\]
      By the \'etaleness of the chart $\psi$, the morphism $j_1$ uniquely lifts to a morphism of $\bA_{\inf,K}(Z)$-algebras
      \[\wtr_Z\to\bA_{\inf,K}(U)[[V_0,\dots,V_d]]/(\prod_{i=0}^r(1+V_i)-1)\]
      and thus by scalar extension, uniquely extends to a morphism of $\bA_{\inf,K}(U)$-algebras
      \[j_2:\wtr_Z\widehat \otimes_{\bA_{\inf,K}(Z)}\bA_{\inf,K}(U)\to\bA_{\inf,K}(U)[[V_0,\dots,V_d]]/(\prod_{i=0}^r(1+V_i)-1)\]
      such that the composite
      \[\begin{split}\wtr_Z\widehat \otimes_{\bA_{\inf,K}(Z)}\bA_{\inf,K}(U)\xrightarrow{j_2}\bA_{\inf,K}(U)[[V_0,\dots,V_d]]/(\prod_{i=0}^r(1+V_i)-1)\xrightarrow{\mod(\xi_K,V_0,\dots,V_d)} S^+\end{split}\]
      coincides with $\theta_{\frakY_Z}$.
      By construction, $j_2(T_i) = t_i(1+V_i)$ is a unit-multiple of $t_i$. Therefore, $j_2$ uniquely extends to a morphism
      \[j_3:\wtr_Z\widehat \otimes_{\bA_{\inf,K}(Z)}\bA_{\inf,K}(U)[(\frac{T_0}{t_0})^{\pm 1},\dots,(\frac{T_r}{t_r})^{\pm 1}]^{\wedge_{(p,\xi_K)}}\to\bA_{\inf,K}(U)[[V_0,\dots,V_d]]/(\prod_{i=0}^r(1+V_i)-1)\]
      sending each $\frac{T_i}{t_i}$ to $1+V_i$ such that the composite
      \[\begin{split}\wtr_Z\widehat \otimes_{\bA_{\inf,K}(Z)}\bA_{\inf,K}(U)[(\frac{T_0}{t_0})^{\pm 1},\dots,(\frac{T_r}{t_r})^{\pm 1}]^{\wedge_{(p,\xi_K)}}&\xrightarrow{j_3}\bA_{\inf,K}(U)[[V_0,\dots,V_d]]/(\prod_{i=0}^r(1+V_i)-1)\\
      &\xrightarrow{\mod(\xi_K,V_0,\dots,V_d)} S^+\end{split}\]
      coincides with $\theta^{\prime}_{\frakY_Z}$. 
      In particular, $j_3$ carries $\Ker(\theta^{\prime}_{\frakY_Z})$ into the ideal $(\xi_K,V_1,\dots,V_d)$. Note that we have an isomorphism
      \[\bA_{\inf,K}(U)[[V_0,\dots,V_d]]/(\prod_{i=0}^r(1+V_i)-1)\cong \bA_{\inf,K}(U)[[V_1,\dots,V_d]]\] 
      because
      \[\prod_{i=0}^r(1+V_i)-1\equiv V_0+\cdots+V_r \mod (V_0,\cdots,V_r)^2.\]
      So $\bA_{\inf,K}(U)[[V_0,\dots,V_d]]/(\prod_{i=0}^r(1+V_i)-1)$ is $(p,\xi_K,V_0,\dots,V_d)$-adically complete, and thus $j_3$ uniquely extends to a morphism of $\bA_{\inf,K}(U)$-algebras
      \[j:A_{\frakY_Z}\to \bA_{\inf,K}(U)[[V_0,\dots,V_d]]/(\prod_{i=0}^r(1+V_i)-1).\]
      To complete the proof, we only need to check that $j$ is the inverse of $\iota$. Granting this, it follows from the above construction that
      \[\iota((\xi_K,V_1,\dots,V_d))\subset \Ker(\theta^{\prime}_{\frakY_K})\text{ and }j(\Ker(\theta^{\prime}_{\frakY_K}))\subset(\xi_K,V_1,\dots,V_d),\]
      yielding the ``moreover'' part of the result.

      It is clear from the construction that $j\circ\iota = \id$. We have to show $\iota\circ j = \id$. Note that by construction, the composite
      \[\bA_{\inf,K}(Z)\za T_0,\dots,T_r,T_{r+1}^{\pm 1},\dots,T_d^{\pm 1}\ya/(T_0\cdots T_r-[\varpi^a])\xrightarrow{i\circ j_1} A_{\frakY}\xrightarrow{\mod \Ker(\theta^{\prime}_{\frakY_Z})}S^+\]
      coincides with natural map 
      \[\bA_{\inf,K}(Z)\za T_0,\dots,T_r,T_{r+1}^{\pm 1},\dots,T_d^{\pm 1}\ya/(T_0\cdots T_r-[\varpi^a])\to\calR_Z\to S^+.\]
      Therefore, by the \'etaleness of $\psi$, the composite
      \[i\circ j_1:\bA_{\inf,K}(Z)\za T_0,\dots,T_r,T_{r+1}^{\pm 1},\dots,T_d^{\pm 1}\ya/(T_0\cdots T_r-[\varpi^a])\to A_{\frakY_Z}\] unique extends to a morphism of $\bA_{\inf,K}(Z)$-algebras 
      \[\wtr_Z\to A_{\frakY_Z}.\]
      However, as $i\circ j_1$ coincides with the natural map 
      \[\bA_{\inf,K}(Z)\za T_0,\dots,T_r,T_{r+1}^{\pm 1},\dots,T_d^{\pm 1}\ya/(T_0\cdots T_r-[\varpi^a])\to\wtr_Z\to\wtr_Z\widehat \otimes_{\bA_{\inf,K}(Z)}\bA_{\inf,K}(U)\to A_{\frakY_Z},\]
      by the uniqueness, the above map $\wtr_Z\to A_{\frakY_Z}$ is also the natural map 
      \[h:\wtr_Z\to\wtr_Z\widehat \otimes_{\bA_{\inf,K}(Z)}\bA_{\inf,K}(U)\to \wtr_Z\widehat \otimes_{\bA_{\inf,K}(Z)}\bA_{\inf,K}(U)[(\frac{T_0}{t_0})^{\pm 1},\dots,(\frac{T_r}{t_r})^{\pm 1}]^{\wedge_{(p,\xi_K)}} \to A_{\frakY_Z}.\]
      Using the same argument for constructing $j$, we see that $h$ uniquely extends to a morphism of $\bA_{\inf,K}(U)$-algebras $A_{\frakY_Z}\to A_{\frakY_Z}$, which is nothing but $\iota\circ j$ by construction. Then the uniqueness criterion forces $\iota\circ j = \id$ as desired because the identity $\id$ is another extension of the morphism $\wtr_Z\to A_{\frakY_Z}$ that we obtained above. This completes the proof.
  \end{proof}
  The following remark has appeared in the proof of Lemma \ref{lem:local description of A_Y}.
  \begin{rmk}\label{rmk:pre-normalization}
      As $\prod_{i=0}^r(1+V_i)-1\equiv V_0+\cdots+V_r\mod (V_0,\dots,V_r)^2$, the natural morphism of $\bA_{\inf,K}(U)$-algebras
      \[\bA_{\inf,K}(U)[[V_1,\dots,V_d]]\to\bA_{\inf,K}(U)[[V_0,\dots,V_d]]/(\prod_{i=0}^r(1+V_i)-1)\]
      sending each $V_i$ to $V_i$ is an isomorphism.
  \end{rmk}
  \begin{cor}\label{cor:local description of A_Y}
      Keep notations in Lemma \ref{lem:local description of A_Y}. Then the map
      \[\iota:\left(\bA_{\inf,K}(U)[u,Z_0,\dots,Z_d]_{\pd}^{\wedge}/(\frac{u}{\xi_K}(\prod_{i=0}^r(1+\frac{\xi_K}{u}Z_i)-1))_{\pd}^{\wedge}\right)\widehat \otimes_{\bA_{\inf,K}(U)}\CCtilde_K^{1,+}(U)\to C_{\frakY_Z}^+\]
      sending each $Z_i$ to $\frac{u}{\xi_K}(\frac{T_i}{t_i}-1)$ is a well-defined isomorphism, where $\bA_{\inf,K}(U)[u,Z_0,\dots,Z_d]_{\pd}^{\wedge}$ is the $(p,\xi_K,\frac{\xi_K}{u}Z_0,\dots,\frac{\xi_K}{u}Z_d)$-adic completion of 
      \[\bA_{\inf,K}(U)[[\frac{\xi_K}{u}Z_0,\dots,\frac{\xi_K}{u}Z_d]][u,Z_0,\dots,Z_d]_{\pd}\]
      while $(\frac{u}{\xi_K}(\prod_{i=0}^r(1+\frac{\xi_K}{u}Z_i)-1))_{\pd}^{\wedge}$ denotes the closed pd-ideal generated by $\frac{u}{\xi_K}(\prod_{i=0}^r(1+\frac{\xi_K}{u}Z_i)-1)$.
  \end{cor}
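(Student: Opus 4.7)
The strategy is to propagate the explicit identification of $A_{\frakY_Z}$ given by Lemma \ref{lem:local description of A_Y} through the two remaining operations that turn $A_{\frakY_Z}$ into $C_{\frakY_Z}^+$, namely the blow-up $B_{\frakY_Z}=A_{\frakY_Z}[u\cdot\Ker(\theta'_{\frakY_Z})/\xi_K]$ and the pd-envelope and completed base change to $\CCtilde^{1,+}_K(U)$.

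Start from the identification $A_{\frakY_Z}\cong\bA_{\inf,K}(U)[[V_0,\dots,V_d]]/(\prod_{i=0}^r(1+V_i)-1)$ with $V_i\leftrightarrow T_i/t_i-1$ and $\Ker(\theta'_{\frakY_Z})=(\xi_K,V_1,\dots,V_d)$. Since $u\xi_K/\xi_K=u$ already lies in $A_{\frakY_Z}$, to form $B_{\frakY_Z}$ we need only adjoin $Z_i:=uV_i/\xi_K$ for $1\le i\le d$; by Remark \ref{rmk:pre-normalization}, $V_0$ lies in the ideal $(V_1,\dots,V_r)$, so $Z_0:=uV_0/\xi_K$ automatically belongs to $B_{\frakY_Z}$ and is an $A_{\frakY_Z}$-linear combination of $Z_1,\dots,Z_r$. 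Multiplying the defining relation $\prod(1+V_i)-1=0$ by $u/\xi_K$ and substituting $V_i=\xi_K Z_i/u$ yields
\[
\tfrac{u}{\xi_K}\Bigl(\prod_{i=0}^r(1+\tfrac{\xi_K}{u}Z_i)-1\Bigr) \;=\; Z_0+\cdots+Z_r+\sum_{|S|\ge 2}\bigl(\tfrac{\xi_K}{u}\bigr)^{|S|-1}\!\prod_{i\in S}Z_i \;=\; 0 \quad\text{in }B_{\frakY_Z},
\]
each higher-order term being legitimate thanks to the identity $(\xi_K/u)Z_i=V_i\in A_{\frakY_Z}$.

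Next, take the pd-envelope of $B_{\frakY_Z}$ along the ideal $(u,Z_1,\dots,Z_d)=u\cdot\Ker(\theta'_{\frakY_Z})/\xi_K$ and $(p,\Ker(\theta'_{\frakY_Z}))$-adically complete; since $\Ker(\theta'_{\frakY_Z})$ corresponds to $(\xi_K,\xi_K Z_1/u,\dots,\xi_K Z_d/u)$ under the substitution, this reproduces exactly the $(p,\xi_K,\xi_K Z_i/u)$-adic completion appearing in the statement. Finally, tensor with $\CCtilde^{1,+}_K(U)$ over $\bA_{\inf,K}(U)$ and $(p,\xi_K)$-adically complete; by Lemma \ref{lem:relative Robba ring}(1), $u$ is a unit multiple of $[\epsilon^{1/p}-1]$ in $\CCtilde^{1,+}_K(U)$ and already carries divided powers there, so the tensor harmlessly absorbs the pd-structure on $u$. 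The map $\iota$ defined by $Z_i\mapsto u(T_i/t_i-1)/\xi_K$ is then the natural ring map realizing the resulting isomorphism.

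The main obstacle is the pd-bookkeeping under the change of generators $V_i\leftrightarrow Z_i=uV_i/\xi_K$. Concretely, one must verify that the closed pd-ideal generated by $\tfrac{u}{\xi_K}(\prod(1+\tfrac{\xi_K}{u}Z_i)-1)$ in the source—a pd-polynomial algebra in $u,Z_0,\dots,Z_d$ equipped with its formal power series structure in the $\xi_K Z_i/u$—corresponds under the substitution to the closed pd-ideal generated by $\prod(1+V_i)-1$ in the pd-envelope of $B_{\frakY_Z}$, even though $u/\xi_K$ fails to be a unit in $A_{\frakY_Z}$. The key input is the explicit identity $u\cdot(\xi_K Z_i/u)=\xi_K Z_i$, which controls cross-multiplications and forces the two pd-closures to agree; once matched, bijectivity of $\iota$ follows by comparing pd-monomial bases in the $Z_i$ over $\bA_{\inf,K}(U)[[\xi_K Z_j/u]]$ on both sides.
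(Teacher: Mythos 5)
Your outline follows the same route as the paper up to the decisive point: starting from Lemma \ref{lem:local description of A_Y}, one identifies $C^+_{\frakY_Z}$ with the completed pd-envelope (base-changed to $\CCtilde^{1,+}_K(U)$) of the blow-up of the explicit model $\bA_{\inf,K}(U)[[V_0,\dots,V_d]]/(\prod_{i=0}^r(1+V_i)-1)$, and the corollary then amounts to comparing this with the abstract algebra $\bA_{\inf,K}(U)[u,Z_0,\dots,Z_d]^{\wedge}_{\pd}$ modulo the closed pd-ideal generated by $\frac{u}{\xi_K}(\prod_{i=0}^r(1+\frac{\xi_K}{u}Z_i)-1)$. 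The gap is exactly at this comparison, which you call ``pd-bookkeeping'' and settle by asserting that the identity $u\cdot(\frac{\xi_K}{u}Z_i)=\xi_K Z_i$ ``forces the two pd-closures to agree'' and that ``bijectivity follows by comparing pd-monomial bases.'' Neither assertion is a proof. On the concrete side the relation $\prod_{i=0}^r(1+V_i)-1$ is already zero in $A_{\frakY_Z}$, so the statement to be proved is that the closed pd-ideal on the abstract side is precisely the kernel of the natural map, i.e.\ injectivity; and a monomial-basis comparison presupposes that both completed pd-envelopes are topologically free on pd-monomials in the $Z_i$, which is neither established nor evident: a pd-envelope of an ideal in a blow-up algebra is not a free pd-polynomial algebra, and on the abstract side the power-series variables $\frac{\xi_K}{u}Z_i$ interact nontrivially with the divided powers $Z_i^{[n]}$ (note $\frac{u}{\xi_K}$ is not an element of $\bA_{\inf,K}(U)$). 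In effect the proposal restates the corollary at the point where the argument is required.

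The paper closes this by constructing explicit mutually inverse maps. Well-definedness of the map $i$ (sending $Z_i\mapsto\frac{uV_i}{\xi_K}$) is obtained by a chain of universal-property extensions: adjoin pd-powers, pass to the completion, and kill the relation after checking it vanishes in the target. The inverse $j$ is built from $V_i\mapsto\frac{\xi_K}{u}Z_i$: it extends to the power-series ring because $\frac{\xi_K}{u}Z_i$ is topologically nilpotent in the quotient, kills the relation, extends to the blow-up variables since $j(V_i)$ is divisible by $\frac{\xi_K}{u}$, and then to the completed pd-envelope; finally one checks $i\circ j=j\circ i=\mathrm{id}$. None of these verifications --- in particular no injectivity argument --- appears in your proposal, so the key step is missing. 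The preliminary portion of your write-up (the description of $B_{\frakY_Z}$, the expansion of $\frac{u}{\xi_K}(\prod_{i=0}^r(1+\frac{\xi_K}{u}Z_i)-1)$ in terms of the $Z_i$, the observation that $Z_0$ is redundant, and the harmless final base change to $\CCtilde^{1,+}_K(U)$) is correct and consistent with the paper.
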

  \begin{proof}
      Let $V_i$ be as in Lemma \ref{lem:local description of A_Y}. By construction of $C_{\frakY_Z}^+$, sending each $V_i$ to $\frac{T_i}{t_i}-1$ gives rise to an isomorphism of $\CCtilde_K^{1,+}(U)$-algebras
      \[\left(\bA_{\inf,K}(U)[[V_0,\dots,V_d]]/(\prod_{i=0}^r(1+V_i)-1)\right)[u,\frac{uV_0}{\xi_K},\dots,\frac{uV_d}{\xi_K}]^{\wedge}_{\pd}\widehat \otimes_{\bA_{\inf,K}(U)}\CCtilde_K^{1,+}(U)\xrightarrow{V_i\mapsto (\frac{T_i}{t_i}-1)}C_{\frakY_Z}^+,\]
      where $\big(\bA_{\inf,K}(U)[[V_0,\dots,V_d]]/(\prod_{i=0}^r(1+V_i)-1)\big)[u,\frac{uV_0}{\xi_K},\dots,\frac{uV_d}{_K}]^{\wedge}_{\pd}$ is the $(p,\xi_K,V_0,\dots,V_d)$-adic completion of the pd-algebra 
      \[\big(\bA_{\inf,K}(U)[[V_0,\dots,V_d]]/(\prod_{i=0}^r(1+V_i)-1)\big)[u,\frac{uV_0}{\xi_K},\dots,\frac{uV_d}{\xi_K}]_{\pd},\]
      which is the pd-envelope with respect to the ideal $(u,\frac{uV_0}{\xi_K},\dots,\frac{uV_d}{\xi_K})$ of the blow-up algebra 
      \[\big(\bA_{\inf,K}(U)[[V_0,\dots,V_d]]/(\prod_{i=0}^r(1+V_i)-1)\big)[\frac{uV_0}{\xi_K},\dots,\frac{uV_d}{\xi_K}].\]
      In particular, for any $n\geq 1$, the pd-power
      \[(\frac{u}{\xi_K}(\prod_{i=0}^r(1+V_i)-1))^{[n]}\]
      vanishes in $\big(\bA_{\inf,K}(U)[[V_0,\dots,V_d]]/(\prod_{i=0}^r(1+V_i)-1)\big)[u,\frac{uV_0}{\xi_K},\dots,\frac{uV_d}{\xi_K}]^{\wedge}_{\pd}$.
      
      In order to show that $\iota$ is an isomorphism, it suffices to show that the map
      \[\begin{split}i\colon&\bA_{\inf,K}(U)[[\frac{\xi_K}{u}Z_0,\dots,\frac{\xi_K}{u}Z_d]][u,Z_0,\dots,Z_d]_{\pd}^{\wedge}/(\frac{u}{\xi_K}(\prod_{i=0}^r(1+\frac{\xi_K}{u}Z_i)-1))^{\wedge}_{\pd}\\
      &\to\left(\bA_{\inf,K}(U)[[V_0,\dots,V_d]]/(\prod_{i=0}^r(1+V_i)-1)\right)[u,\frac{uV_0}{\xi_K},\dots,\frac{uV_d}{\xi_K}]^{\wedge}_{\pd}\end{split}\]
      sending each $Z_i$ to $\frac{uV_i}{\xi_K}$ is a well-defined isomorphism.

      The well-definedness of $i$ follows by unwinding the definitions: Consider the morphism 
      \[\begin{split}i_1\colon&\bA_{\inf,K}(U)[[\frac{\xi_K}{u}Z_0,\dots,\frac{\xi_K}{u}Z_d]][Z_0,\dots,Z_d]\\
      &\to \left(\bA_{\inf,K}(U)[[V_0,\dots,V_d]]/(\prod_{i=0}^r(1+V_i)-1)\right)[u,\frac{uV_0}{\xi_K},\dots,\frac{uV_d}{\xi_K}]^{\wedge}_{\pd}\end{split}\]
      mapping each $Z_i$ to $\frac{u}{\xi_K}V_i$. As $u,\frac{u}{\xi_K}V_0,\dots,\frac{u}{\xi_K}V_d$ admits arbitrary pd-powers in the target, $i_1$ uniquely extends to a morphism
      \[\begin{split}i_2\colon &\bA_{\inf,K}(U)[[\frac{\xi_K}{u}Z_0,\dots,\frac{\xi_K}{u}Z_d]][u,Z_0,\dots,Z_d]_{\pd}\\
      &\to \left(\bA_{\inf,K}(U)[[V_0,\dots,V_d]]/(\prod_{i=0}^r(1+V_i)-1)\right)[u,\frac{uV_0}{\xi_K},\dots,\frac{uV_d}{\xi_K}]^{\wedge}_{\pd}.\end{split}\]
      As its target is $(p,\xi_K,V_0,\dots,V_d)$-adically complete, $i_2$ uniquely extends to a morphism
      \[\begin{split}i_3\colon&\bA_{\inf,K}(U)[[\frac{\xi_K}{u}Z_0,\dots,\frac{\xi_K}{u}Z_d]][u,Z_0,\dots,Z_d]^{\wedge}_{\pd}\\
      &\to \left(\bA_{\inf,K}(U)[[V_0,\dots,V_d]]/(\prod_{i=0}^r(1+V_i)-1)\right)[u,\frac{uV_0}{\xi_K},\dots,\frac{uV_d}{\xi_K}]^{\wedge}_{\pd}\end{split}\]
      sending $\frac{u}{\xi_K}(\prod_{i=0}^r(1+\frac{\xi_K}{u}Z_i)-1)$ to $\frac{u}{\xi_K}(\prod_{i=0}^r(1+V_i)-1)$, which vanishes in the target. So the above map uniquely extends to a morphism
      \[\begin{split}&\bA_{\inf,K}(U)[[\frac{\xi_K}{u}Z_0,\dots,\frac{\xi_K}{u}Z_d]][u,Z_0,\dots,Z_d]^{\wedge}_{\pd}/(\frac{u}{\xi_K}(\prod_{i=0}^r(1+\frac{\xi_K}{u}Z_i)-1))_{\pd}\\
      \to &\left(\bA_{\inf,K}(U)[[V_0,\dots,V_d]]/(\prod_{i=0}^r(1+V_i)-1)\right)[u,\frac{uV_0}{\xi_K},\dots,\frac{uV_d}{\xi_K}]^{\wedge}_{\pd}.\end{split}\]
      Using the $(p,\xi_K,V_0,\dots,V_d)$-adic completeness of the target again, we get the well-definedness of the morphism $i$ as desired.

      It remains to show that $i$ is an isomorphism. We shall do this by constructing its inverse as follows: Consider the map
      \[j_1:\bA_{\inf,K}(U)[V_0\dots,V_d]\to \bA_{\inf,K}(U)[[\frac{\xi_K}{u}Z_0,\dots,\frac{\xi_K}{u}Z_d]][u,Z_0,\dots,Z_d]^{\wedge}_{\pd}/(\frac{u}{\xi_K}(\prod_{i=0}^r(1+\frac{\xi_K}{u}Z_i)-1))_{\pd}^{\wedge}\]
      sending each $V_i$ to $\frac{\xi_K}{u}Z_i$. As $\frac{\xi_K}{u}Z_i$ is topologically nilpotent in the target of $j_1$ (because $n!$ divides $Z_i^n$ in the target), the above map uniquely extends to a morphism 
      \[j_2:\bA_{\inf,K}(U)[[V_0\dots,V_d]]\to \bA_{\inf,K}(U)[[\frac{\xi_K}{u}Z_0,\dots,\frac{\xi_K}{u}Z_d]][u,Z_0,\dots,Z_d]^{\wedge}_{\pd}/(\frac{u}{\xi_K}(\prod_{i=0}^r(1+\frac{\xi_K}{u}Z_i)-1))_{\pd}^{\wedge}\]
      such that $j_2(\prod_{i=0}^r(1+V_i)-1) = \prod_{i=0}^r(1+\frac{\xi_K}{u}Z_i)-1$. Thus, $j_2$ uniquely extends to a morphism 
      \[\begin{split}j_3\colon&\bA_{\inf,K}(U)[[V_0\dots,V_d]]/(\prod_{i=0}^r(1+V_i)-1)\\
      &\to \bA_{\inf,K}(U)[[\frac{\xi_K}{u}Z_0,\dots,\frac{\xi_K}{u}Z_d]][u,Z_0,\dots,Z_d]^{\wedge}_{\pd}/(\frac{u}{\xi_K}(\prod_{i=0}^r(1+\frac{\xi_K}{u}Z_i)-1))_{\pd}^{\wedge}.\end{split}\]
      As $j_3(V_i) = \frac{\xi_K}{u}Z_i$ is divided by $\frac{\xi_K}{u}$, the map $j_3$ uniquely extends to a morphism
      \[\begin{split}j_4\colon&\left(\bA_{\inf,K}(U)[[V_0\dots,V_d]]/(\prod_{i=0}^r(1+V_i)-1)\right)[\frac{uV_0}{\xi_K},\dots,\frac{uV_d}{\xi_K}]\\
      &\to \bA_{\inf,K}(U)[[\frac{\xi_K}{u}Z_0,\dots,\frac{\xi_K}{u}Z_d]][u,Z_0,\dots,Z_d]^{\wedge}_{\pd}/(\frac{u}{\xi_K}(\prod_{i=0}^r(1+\frac{\xi_K}{u}Z_i)-1))_{\pd}^{\wedge}.\end{split}\]
      Finally, as argued in the last paragraph, we see that $j_4$ uniquely extends to a morphism
      \[\begin{split}j\colon&\left(\bA_{\inf,K}(U)[[V_0\dots,V_d]]/(\prod_{i=0}^r(1+V_i)-1)\right)[u,\frac{uV_0}{\xi_K},\dots,\frac{uV_d}{\xi_K}]^{\wedge}_{\pd}\\
      &\to \bA_{\inf,K}(U)[[\frac{\xi_K}{u}Z_0,\dots,\frac{\xi_K}{u}Z_d]][u,Z_0,\dots,Z_d]^{\wedge}_{\pd}/(\frac{u}{\xi_K}(\prod_{i=0}^r(1+\frac{\xi_K}{u}Z_i)-1))_{\pd}^{\wedge}.\end{split}\]
      Now, one can conclude by checking that $j$ is the inverse of $i$ directly.
  \end{proof}

  Before moving on, we give a general lemma.
  \begin{lem}\label{lem:defined d on quotient}
    Let $B$ be a $p$-complete $p$-separated ring with a topologically nilpotent element $b\in B$. Let $B[U_0,\dots,U_d]^{\wedge_p}_{\pd}$ be the $p$-adic completion of the pd-algebra over $B$ freely generated by $U_i$'s. Fix an element $u\in B$.

    \begin{enumerate}
        \item[(1)] Put $Q:=b^{-1}(\prod_{i=0}^r(1+bU_i)-1)\in B[U_0,\dots,U_d]^{\wedge_p}_{\pd}$. Then the differential operator 
        \[\rd:=\sum_{i=0}^du(1+bU_i)\frac{\partial}{\partial U_i}\otimes\rd U_i:B[U_0,\dots,U_d]^{\wedge_p}_{\pd}\to\oplus_{i=0}^dB[U_0,\dots,U_d]^{\wedge_p}_{\pd}\cdot\rd U_i\]
        uniquely induces a differential operator (which is still denoted by)
        \[\begin{split}\rd=\sum_{i=0}^du(1+bU_i)\frac{\partial}{\partial U_i}\otimes\rd U_i: D
        \to(\oplus_{i=0}^dD\cdot\rd U_i)/D\cdot (\rd U_0+\cdots+\rd U_r)\end{split}\]
        where $D =B[U_0,\dots,U_d]^{\wedge_p}_{\pd}/(Q)_{\pd}^{\wedge_p}$ is the quotient of $B[U_0,\dots,U_d]^{\wedge_p}_{\pd}$ by the closed pd-ideal generated by $Q$.

        \item[(2)] Put $Q = U_0+\cdots+U_r\in B[U_0,\dots,U_d]^{\wedge_p}_{\pd}$. Then the differential operator 
        \[\rd:=-\sum_{i=0}^du\frac{\partial}{\partial U_i}\otimes\rd U_i:B[U_0,\dots,U_d]^{\wedge_p}_{\pd}\to\oplus_{i=0}^dB[U_0,\dots,U_d]^{\wedge_p}_{\pd}\cdot\rd U_i\]
        uniquely induces a differential operator (which is still denoted by)
        \[\begin{split}\rd=-\sum_{i=0}^du\frac{\partial}{\partial U_i}\otimes\rd U_i: D
        \to(\oplus_{i=0}^dD\cdot\rd U_i)/D\cdot (\rd U_0+\cdots+\rd U_r)\end{split}\]
        where $D =B[U_0,\dots,U_d]^{\wedge_p}_{\pd}/(Q)_{\pd}^{\wedge_p}$ is the quotient of $B[U_0,\dots,U_d]^{\wedge_p}_{\pd}$ by the closed pd-ideal generated by $Q$.

        \item[(3)] Keep notations as in items (1) and (2). Denote by $\rD\rR(D,\rd)$ the de Rham complex associated to the differential $\rd$ on $D$. If $u$ is invertible in $B$, then the following sequence
        \[0\to B\to \rD\rR(D,\rd)\]
        is exact.
    \end{enumerate}
  \end{lem}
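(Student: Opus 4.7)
The plan is to verify that the $B$-linear derivation $\rd$ defined on the ambient ring $P := B[U_0,\ldots,U_d]^{\wedge_p}_{\pd}$ descends to $D = P/(Q)^{\wedge_p}_{\pd}$ with values in $(\oplus_i D\,\rd U_i)/(D\cdot(\rd U_0 + \cdots + \rd U_r))$. A direct computation gives
\[\rd Q = u\prod_{i=0}^r(1+bU_i)\,(\rd U_0 + \cdots + \rd U_r) = u(1+bQ)(\rd U_0 + \cdots + \rd U_r)\]
in case (1), and $\rd Q = -u(\rd U_0 + \cdots + \rd U_r)$ in case (2); both manifestly lie in $P\cdot(\rd U_0 + \cdots + \rd U_r)$ and so vanish in the target quotient. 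For higher pd-powers, the Leibniz identity $\rd(fQ^{[n]}) = \rd(f)\,Q^{[n]} + f\,Q^{[n-1]}\,\rd Q$ closes the argument: for $n\geq 2$ both summands lie in $(Q)^{\wedge_p}_{\pd}\cdot\Omega^1_P$ (hence vanish in the quotient target over $D$), while $n=1$ reduces to the previous step. This shows $\rd$ descends to a well-defined map on $D$.

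\textbf{Part (3): reducing to a pd-polynomial ring.} The injectivity of $B \hookrightarrow D$ is immediate from the augmentation retraction $D \to B$ sending all $U_i \mapsto 0$, which is well-defined since $Q|_{U_i=0}=0$ in both cases. For the exactness at $D$, the strategy is to build an explicit pd-$B$-algebra isomorphism $D \cong B[U_1,\ldots,U_d]^{\wedge_p}_{\pd}$ and then invoke a classical computation. Set $P' := B[V_0,U_1,\ldots,U_d]^{\wedge_p}_{\pd}$ and define $\phi : P \to P'$ by $U_i \mapsto U_i$ for $i\geq 1$ and
\[\phi(U_0) = b^{-1}\!\left((1+bV_0)\prod_{i=1}^r(1+bU_i)^{-1} - 1\right)\]
in case (1), respectively $\phi(U_0) = V_0 - U_1 - \cdots - U_r$ in case (2). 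The geometric series $(1+bU_i)^{-1} = \sum_{n\geq 0}(-b)^n n!\,U_i^{[n]}$ converges $p$-adically in $P'$ because $b$ is topologically nilpotent in $B$, so the coefficients $(-b)^n n!$ tend to $0$; the outer $b^{-1}$ is absorbed term-by-term since every nonconstant monomial in the expansion of $(1+bV_0)\prod(1+bU_i)^{-1}-1$ carries at least one factor of $b$. By construction $\phi(Q) = V_0$ and $\phi(U_0)$ lies in the augmentation pd-ideal; the map $\psi \colon V_0 \mapsto Q$, $U_i \mapsto U_i$ is a two-sided inverse, so $\phi$ is a pd-isomorphism carrying $(Q)^{\wedge_p}_{\pd}$ onto $(V_0)^{\wedge_p}_{\pd}$. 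Consequently $D \xrightarrow{\sim} P'/(V_0)^{\wedge_p}_{\pd} = B[U_1,\ldots,U_d]^{\wedge_p}_{\pd}$.

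\textbf{Part (3): Poincar\'e-type conclusion.} Under this identification, any $f \in D$ lifts canonically to an element of the sub-pd-ring $B[U_1,\ldots,U_d]^{\wedge_p}_{\pd} \hookrightarrow P$ with no $U_0$-dependence, and the defining formula for $\rd$ gives
\[\rd f = \sum_{i=1}^d u(1+bU_i)\,\partial_{U_i}f\cdot [\rd U_i] \quad\text{(case 1)}, \qquad \rd f = -u\sum_{i=1}^d \partial_{U_i}f\cdot[\rd U_i]\quad\text{(case 2)},\]
expressed in the free $D$-basis $[\rd U_1],\ldots,[\rd U_d]$ of the quotient target. Since $u$ is invertible in $B$ and each $1+bU_i$ is a unit in $D$ (its inverse is the convergent series used above), $\rd f = 0$ forces $\partial_{U_i}f = 0$ in $B[U_1,\ldots,U_d]^{\wedge_p}_{\pd}$ for all $i=1,\ldots,d$. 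Expanding $f = \sum_K a_K U_1^{[k_1]}\cdots U_d^{[k_d]}$ and equating coefficients in the pd-monomial basis then gives $a_K = 0$ whenever $K\neq 0$, so $f \in B$, completing the exactness.

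\textbf{Main obstacle.} The technical heart of the argument is the construction of $\phi$ in case (1): one must check simultaneously the pd-compatibility and the $p$-adic convergence under only the hypothesis that $b$ is topologically nilpotent (and not, say, that $v_p(b) > 1/(p-1)$). The key point is that the factorials $n!$ appearing in the pd-expansion $(bU_i)^n = b^n n!\,U_i^{[n]}$ are integers and hence $p$-adically bounded, so convergence is driven purely by $b^n \to 0$ in $B$. Once this is in place, the rest of the proof is a dressed-up classical pd-Poincar\'e lemma with the invertible factor $u(1+bU_i)$ causing no harm.
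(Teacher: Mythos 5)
Your parts (1) and (2) are correct and are essentially the paper's own argument: the identity $(1+bU_i)\frac{\partial}{\partial U_i}(Q^{[n]})=(1+bQ)Q^{[n-1]}$ together with Leibniz shows that $\rd$ of the closed pd-ideal $(Q)^{\wedge_p}_{\pd}$ lands in $(Q)^{\wedge_p}_{\pd}\cdot\Omega^1+P\cdot(\rd U_0+\cdots+\rd U_r)$, so the operator descends. Likewise the first half of your part (3) — the change of variables $U_0\mapsto b^{-1}\bigl((1+bV_0)\prod_{i=1}^r(1+bU_i)^{-1}-1\bigr)$, convergent because $b$ is topologically nilpotent, giving $D\cong B[U_1,\dots,U_d]^{\wedge_p}_{\pd}$ with induced differential $\sum_{j=1}^d u(1+bU_j)\frac{\partial}{\partial U_j}\otimes \rd U_j$ — is exactly the reduction the paper performs.

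However, there is a genuine gap in part (3): you only prove exactness at $B$ and at $D$, i.e.\ that $\ker(\rd\colon D\to \oplus_j D\cdot \rd U_j)=B$. The assertion ``$0\to B\to \rD\rR(D,\rd)$ is exact'' means the whole augmented de Rham complex is exact, i.e.\ $\rD\rR(D,\rd)$ is a resolution of $B$, and this acyclicity in positive degrees is precisely what is used downstream (Proposition \ref{prop:local Poincare's Lemma}, Theorem \ref{thm:Poincare's Lemma}, and the quasi-isomorphism $\rR\nu_*\bL\simeq\rR\nu_*\rD\rR(\bL\otimes\calO\bB^+_{\dR,\pd,Z},\id\otimes\rd)$ in the proof of Theorem \ref{thm:global RH}); computing $H^0$ alone does not suffice. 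After your reduction, the paper finishes by observing that the complex is the Koszul complex of the commuting operators $(1+bU_j)\frac{\partial}{\partial U_j}$ on $B[U_1,\dots,U_d]^{\wedge_p}_{\pd}$, using a completed K\"unneth decomposition to reduce to one variable, and then checking that $0\to B\to B[U]^{\wedge_p}_{\pd}\xrightarrow{(1+bU)\frac{\partial}{\partial U}}B[U]^{\wedge_p}_{\pd}\to 0$ is exact — in particular \emph{surjective} in the top degree, which is where the divided powers enter ($U^{[n+1]}$ is an antiderivative of $U^{[n]}$) and which your coefficientwise argument never addresses. To complete your proof you would need this step (or an equivalent induction on the number of variables producing explicit primitives for closed forms of positive degree); as written, the claim ``completing the exactness'' covers only degree $0$.
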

  \begin{proof}
      For Item (1): By definition of $Q$, for any $0\leq i\leq r$ and any $n\geq 1$, we have 
      \[(1+bU_i)\frac{\partial}{\partial U_i}(Q^{[n]}) = (1+bQ)Q^{[n-1]}.\]
      So for any $F\in B[U_0,\dots,U_d]^{\wedge_p}_{\pd}$, we have
      \begin{equation}\label{equ:defined d on quotient}
          \rd(Q^{[n]}F) = Q^{[n]}\rd(F)+F(1+bQ)Q^{[n-1]}\sum_{i=0}^r\rd U_i,
      \end{equation}
      yielding that $\rd$ induces a well-defined differential $D\to (\oplus_{i=0}^dD\cdot\rd U_i)/D\cdot\sum_{i=0}^r\rd U_i$ as desired.

      Item (2) follows from a similar (and much easier) arguments.

      For Item (3): We only deal with the case corresponding to Item (1) while the rest can be proved in the same way. Without loss of generality, we may assume $u=1$. Note that the morphism
      \[i:B[Q,U_1,\dots,U_d]^{\wedge_p}_{\pd} \to B[U_0,\dots,U_d]^{\wedge_p}_{\pd}\]
      sending $Q$ to $b^{-1}(\prod_{i=0}^r(1+bU_i)-1)$ and each $U_i$ to $U_i$ is an isomorphism. Indeed, its inverse is given by
      \[i^{-1}:B[U_0,\dots,U_d]^{\wedge_p}_{\pd}\to B[Q, U_1,\dots,U_d]^{\wedge_p}_{\pd}\]
      sending $U_0$ to $b^{-1}((1+bQ)\prod_{i=1}^r(1+bU_i)^{-1}-1)$ (which is well-defined as $b$ is topologically nilpotent). As a consequence, we have an isomorphism
      \[D \cong B[U_1,\dots,U_d]^{\wedge_p}_{\pd}.\]
      Via the isomorphism
      \[(\oplus_{i=0}^dD\cdot \rd U_i)/D\cdot\sum_{i=0}^r\rd U_i\cong \oplus_{j=1}^dD\cdot\rd U_j,\]
      by \eqref{equ:defined d on quotient}, we see that the differential $\rd$ on $D\cong B[U_1,\dots,U_d]^{\wedge_p}_{\pd}$ is given by
      \[\rd = \sum_{j=1}^d(1+bU_j)\frac{\partial}{\partial U_j}\otimes\rd U_j:B[U_1,\dots,U_d]^{\wedge_p}_{\pd}\to \oplus_{j=1}^dB[U_1,\dots,U_d]^{\wedge_p}_{\pd}\cdot\rd U_j.\]
      
      Therefore, we are reduced to the case for $Q=0$. In this case, the de Rham complex 
      \[\rD\rR(B[U_0,\dots,U_d]^{\wedge_p}_{\pd},\rd = \sum_{j=0}^d(1+bU_j)\frac{\partial}{\partial U_j}\otimes\rd U_j)\]
      is represented by the Koszul complex
      \[\rK(B[U_0,\dots,U_d]^{\wedge_p}_{\pd};(1+bU_0)\frac{\partial}{\partial U_0},\dots,(1+bU_d)\frac{\partial}{\partial U_d}).\]
      By K\"unneth formula, there exists a quasi-isomorphism
      \[\rK(B[U_0,\dots,U_d]^{\wedge_p}_{\pd};(1+bU_0)\frac{\partial}{\partial U_0},\dots,(1+bU_d)\frac{\partial}{\partial U_d})\simeq \widehat \otimes_{j=0}^d\rK(B[U_j]^{\wedge_p}_{\pd};(1+bU_j)\frac{\partial}{\partial U_j}).\]
      Therefore, we are reduced to the case for $d = 0$. In other words, we have to show that the complex
      \[0\to B\to B[U]^{\wedge_p}_{\pd}\xrightarrow{(1+bU)\frac{\partial}{\partial U}}B[U]^{\wedge_p}_{\pd}\to0.\]
      is exact. But this is trivial because $1+bU$ is a unit in $B[U]^{\wedge_p}_{\pd}$ and the usual de Rham complex
      \[0\to B\to B[U]^{\wedge_p}_{\pd}\xrightarrow{\frac{\partial}{\partial U}}B[U]^{\wedge_p}_{\pd}\to 0\]
      is exact. This completes the proof.
  \end{proof}
  
  \begin{prop}\label{prop:local description of C_Y}
      Keep notations as above. For any $n\geq 1$, the map of $\CCtilde_K^{1,+}(U)$-algebras
      \[\iota_n:\left(\CCtilde_K^{1,+}(U)[U_0,\dots,U_d]^{\wedge_p}_{\pd}/(\frac{u}{\xi_K}(\prod_{i=0}^r(1+\frac{\xi_K}{u}U_i)-1))^{\wedge_p}_{\pd}\right)[\frac{1}{p}]/(\xi_K \pi^{-1})^n\to C_{\frakY_Z}/(\xi_K \pi^{-1})^n\]
      sending each $U_i$ to $\frac{u}{\xi_K}(\frac{T_i}{t_i}-1)$ is a well-defined isomorphism. Moreover, via the above isomorphism and the isomorphism 
      \[\Omega^{1,\log}_{\wtr_Z}\{-1\}\cong(\oplus_{0=1}^d\wtr_Z\cdot\frac{\dlog T_i}{\xi_K})/\wtr_Z\cdot(\sum_{i=0}^r\frac{\dlog T_i}{\xi_K}),\]
      the derivation $\rd$ on $C_{\frakY_Z}$ is given by
      \[\begin{split}\rd=\sum_{i=0}^d(u+\xi_K U_i)\frac{\partial}{\partial U_i}\otimes\frac{\dlog T_i}{\xi_K}\end{split}\]
      in the sense of Lemma \ref{lem:defined d on quotient}.
      Here, $\CCtilde_K^{1,+}(U)[U_0,\dots,U_d]^{\wedge_p}_{\pd}$ denotes the $p$-adic completion of the free pd-algebra over $\CCtilde_K^{1,+}(U)$ generated by $U_0,\dots,U_d$ and $(\frac{u}{\xi_K}(\prod_{i=0}^r(1+\frac{\xi_K}{u}U_i)-1))^{\wedge_p}_{\pd}$ denotes its closed pd-ideal generated by $\frac{u}{\xi_K}(\prod_{i=0}^r(1+\frac{\xi_K}{u}U_i)-1)$.
  \end{prop}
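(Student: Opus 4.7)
My plan is to deduce the proposition from Corollary \ref{cor:local description of A_Y} by inverting $p$ and reducing modulo $(\xi_K\pi^{-1})^n$. Starting from the $\CCtilde_K^{1,+}(U)$-linear isomorphism in the corollary, I would apply the functor $(-)[1/p]/(\xi_K\pi^{-1})^n$ to both sides. The right-hand side becomes $C_{\frakY_Z}/(\xi_K\pi^{-1})^n$ by definition, so the task reduces to identifying the left-hand side with the pd-algebra appearing in the statement of the proposition, and then to verifying the formula for $\rd$ in the explicit coordinates $U_i = \frac{u}{\xi_K}(\frac{T_i}{t_i}-1)$.

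For the identification of rings, the key point is that after inverting $p$, the element $u$ becomes a unit in $\CCtilde_K^1(U)$ by Lemma \ref{lem:relative Robba ring}(1), and $\pi$ is also a unit there (by Lemma \ref{lem:relative Robba ring}(3), as $B_{\dR}^+(U)$ is obtained by $\xi_K\pi^{-1}$-adic completion). Hence $\xi_K/u$ differs from $\xi_K\pi^{-1}$ by a unit, which means that modulo $(\xi_K\pi^{-1})^n$ the elements $\frac{\xi_K}{u}Z_i$ become nilpotent. Consequently, the formal power-series ring $\bA_{\inf,K}(U)[[\frac{\xi_K}{u}Z_0,\dots,\frac{\xi_K}{u}Z_d]]$ from Corollary \ref{cor:local description of A_Y} collapses to a polynomial ring in this quotient, and the $(p,\xi_K,\frac{\xi_K}{u}Z_i)$-adic completion reduces to the $p$-adic completion. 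Substituting $U_i = Z_i$ then identifies the resulting algebra with the stated quotient of $\CCtilde_K^{1,+}(U)[U_0,\dots,U_d]^{\wedge_p}_{\pd}$. The key technical step to verify carefully is this reduction of completions, which I expect to be the main obstacle of the proof.

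For the derivation formula, I would argue by direct chain-rule computation. Since $t_i\in \bA_{\inf,K}(U)^\times$ comes from the base and therefore satisfies $\rd t_i = 0$, we have
\[\rd(T_i/t_i) = (T_i/t_i)\dlog T_i = \big(1 + \tfrac{\xi_K}{u}U_i\big)\dlog T_i,\]
so $\rd U_i = \frac{u}{\xi_K}\rd(T_i/t_i) = (u+\xi_K U_i)\frac{\dlog T_i}{\xi_K}$. The Leibniz rule then yields the global formula claimed in the proposition. Finally, to verify that $\rd$ descends to the quotient by the closed pd-ideal generated by $Q := \frac{u}{\xi_K}(\prod_{i=0}^r(1+\frac{\xi_K}{u}U_i)-1)$, I would compute
\[\rd Q = \frac{u}{\xi_K}\big(1+\tfrac{\xi_K}{u}Q\big)\sum_{i=0}^r \dlog T_i,\]
which vanishes in $\Omega^{1,\log}_{\wtr_Z}\{-1\}$ thanks to the relation $\sum_{i=0}^r \dlog T_i = 0$ coming from $\prod_{i=0}^r T_i = [\varpi^a]$ on the chart (cf.\ Example \ref{exam:small affine is liftable}). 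This is precisely the situation handled by Lemma \ref{lem:defined d on quotient}(1), applied with $B = \CCtilde_K^{1,+}(U)$ and $b = \xi_K/u$, which gives both the well-definedness of $\rd$ on the quotient and the required normalization modulo the relation $\sum_{i=0}^r \frac{\dlog T_i}{\xi_K}$.
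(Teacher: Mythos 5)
Your proposal is correct and takes essentially the same route as the paper: both deduce the statement from Corollary \ref{cor:local description of A_Y} by inverting $p$ and reducing modulo $(\xi_K\pi^{-1})^n$, with the decisive input being that $\frac{\xi_K}{u}=\frac{\xi_K}{\pi}\cdot\frac{\pi}{u}$ becomes nilpotent in that quotient (the paper makes your ``reduction of completions'' precise by writing down the two maps $i$ and $j$ and checking they are mutually inverse). The chain-rule computation of $\rd U_i$ and the verification that $\rd Q$ lies in the submodule spanned by $\sum_{i=0}^r\dlog T_i$, which the paper leaves as ``the rest follows directly,'' are carried out correctly in your write-up.
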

  \begin{proof}
      It suffices to show that $\iota_n$ is a well-defined isomorphism, and the rest follows directly. Without loss of generality, we may assume $\frakY_Z = \frakX_Z$ and $\wtr^{\prime}_Z = \wtr_Z$ as before.

      Let $Z_i$ be as in Corollary \ref{cor:local description of A_Y}. It suffices to show the map
      \[\begin{split}i\colon&\left(\CCtilde_K^{1,+}(U)[U_0,\dots,U_d]^{\wedge_p}_{\pd}/(\frac{u}{\xi_K}(\prod_{i=0}^r(1+\frac{\xi_K}{u}U_i)-1))^{\wedge_p}_{\pd}\right)[\frac{1}{p}]/(\xi_K \pi^{-1})^n\\
      &\to \left(\bA_{\inf,K}(U)[u,Z_0,\dots,Z_d]_{\pd}^{\wedge}/(\frac{u}{\xi_K}(\prod_{i=0}^r(1+\frac{\xi_K}{u}Z_i)-1))_{\pd}^{\wedge}\right)\widehat \otimes_{\bA_{\inf,K}(U)}\CCtilde_K^{1}(U)/(\xi_K \pi^{-1})^n\end{split}\]
      sending each $U_i$ to $Z_i$ is a well-defined isomorphism.

      The well-definedness of $i$ is trivial: As 
      \[\left(\bA_{\inf,K}(U)[u,Z_0,\dots,Z_d]_{\pd}^{\wedge}/(\frac{u}{\xi_K}(\prod_{i=0}^r(1+\frac{\xi_K}{u}Z_i)-1))_{\pd}^{\wedge}\right)\widehat \otimes_{\bA_{\inf,K}(U)}\CCtilde_K^{1,+}(U)/(\xi_K \pi^{-1})^n\]
      is a $p$-adic complete $\CCtilde_K^{1,+}(U)$-algebra and $Z_i$ admits arbitrary pd-powers for all $i$, there exists a well-defined morphism 
      \[\begin{split}
          &\CCtilde_K^{1,+}(U)[U_0,\dots,U_d]^{\wedge_p}_{\pd}\\
          \to&\left(\bA_{\inf,K}(U)[u,Z_0,\dots,Z_d]_{\pd}^{\wedge}/(\frac{u}{\xi_K}(\prod_{i=0}^r(1+\frac{\xi_K}{u}Z_i)-1))_{\pd}^{\wedge}\right)\widehat \otimes_{\bA_{\inf,K}(U)}\CCtilde_K^{1,+}(U)/(\xi_K \pi^{-1})^n
      \end{split}\]
      sending each $U_i$ to $\frac{uZ_i}{\xi_K}$. As the above map carries $\frac{u}{\xi_K}(\prod_{i=0}^r(1+\frac{\xi_K}{u}U_i)-1)$ to $\frac{u}{\xi_K}(\prod_{i=0}^r(1+Z_i)-1)$, which vanishes in its target, after inverting $p$, we get the well-definedness of $i$ as desired.
      
      It remains to show that $i$ is an isomorphism. We will do this by constructing its inverse as follows: As $u,U_0,\dots,U_d$ admit arbitrary pd-powers in 
      \[\left(\CCtilde_K^{1,+}(U)[U_0,\dots,U_d]^{\wedge_p}_{\pd}/(\frac{u}{\xi_K}(\prod_{i=0}^r(1+\frac{\xi_K}{u}U_i)-1))^{\wedge_p}_{\pd}\right)[\frac{1}{p}]/(\xi_K\pi^{-1})^n,\]
      we get a well-defined morphism of $\bA_{\inf,K}(U)$-algebras
      \[\begin{split}
        j_1:\bA_{\inf,K}(U)[u,Z_0,\dots,Z_d]_{\pd}\to\left(\CCtilde_K^{1,+}(U)[U_0,\dots,U_d]^{\wedge_p}_{\pd}/(\frac{u}{\xi_K}(\prod_{i=0}^r(1+\frac{\xi_K}{u}U_i)-1))^{\wedge_p}_{\pd}\right)[\frac{1}{p}]/(\xi_K\pi^{-1})^n
      \end{split}\]
      sending each $Z_i$ to $U_i$.
      As $\frac{\xi_K}{u} = \frac{\xi_K}{\pi}\frac{\pi}{u}$ is nilpotent in $\CCtilde^{1}_K(U)/(\xi_K\pi^{-1})^n$, the above map $j_1$ uniquely factors through $(p,\xi_K,\frac{\xi_K Z_0}{u},\dots,\frac{\xi_K Z_d}{u})$-adic completion of $\bA_{\inf,K}(U)[[\frac{\xi_K Z_0}{u},\dots,\frac{\xi_K Z_d}{u}]][Z_0,\dots,Z_d]_{\pd}$, yielding a morphism
      \[\begin{split}
        j_2:\bA_{\inf,K}(U)[u,Z_0,\dots,Z_d]_{\pd}^{\wedge}\to\left(\CCtilde_K^{1}(U)[U_0,\dots,U_d]^{\wedge_p}_{\pd}/(\frac{u}{\xi_K}(\prod_{i=0}^r(1+\frac{\xi_K}{u}U_i)-1))^{\wedge_p}_{\pd}\right)[\frac{1}{p}]/(\xi_K\pi^{-1})^n.
      \end{split}\]
      As $j_2(\frac{u}{\xi_K}\prod_{i=0}^r(1+\frac{\xi_K}{u}Z_i)-1)) = \frac{u}{\xi_K}\prod_{i=0}^r(1+\frac{\xi_K}{u}U_i)-1)$ which vanishes in the target, after taking linear extension along $\bA_{\inf,K}(U)\to\CCtilde_K^{1}(U)/(\xi_K \pi^{-1})^n$, the map $j_2$ uniquely extends to a morphism of $\CCtilde_K^{1}(U)$-algebras
      \[\begin{split}
        j\colon&\left(\bA_{\inf,K}(U)[u,Z_0,\dots,Z_d]_{\pd}^{\wedge}/(\frac{u}{\xi_K}(\prod_{i=0}^r(1+\frac{\xi_K}{u}Z_i)-1))_{\pd}^{\wedge}\right)\widehat \otimes_{\bA_{\inf,K}(U)}\CCtilde_K^{1}(U)/(\xi_K \pi^{-1})^n\\
        &\to \left(\CCtilde_K^{1,+}(U)[U_0,\dots,U_d]^{\wedge_p}_{\pd}/(\frac{u}{\xi_K}(\prod_{i=0}^r(1+\frac{\xi_K}{u}U_i)-1))^{\wedge_p}_{\pd}\right)[\frac{1}{p}]/(\xi_K\pi^{-1})^n.
      \end{split}\]
      Now, one can conclude by checking that $j$ is the inverse of $i$ as expected directly.
  \end{proof}

    \begin{cor}\label{cor:local description of C_Y}
      Keep notations as above. The morphism  sending each $U_i$ to $\frac{u}{\xi_K}(\frac{T_i}{t_i}-1)$
      \[\iota:\bB_{\dR\mid_U}^+\za U_0,\dots,U_d\ya_{\pd}/(\frac{u}{\xi_K}(\prod_{i=0}^r(1+\frac{\xi_K}{u}U_i)-1))^{\wedge}_{\pd}\to(\calO\bB_{\dR,\pd,Z}^+)_{\mid U}\]
      is an isomorphism of sheaves of $\bB_{\dR\mid_U}^+$-algebras, where $\bB_{\dR\mid_U}^+\za U_0,\dots,U_d\ya_{\pd}$ denotes the $\xi_K$-adic completion of 
      \[\CCtilde^{1}_{K\mid_{U}}[U_0,\dots,U_d]^{\wedge_p}_{\pd}=\CCtilde^{1,+}_{K\mid_{U}}[U_0,\dots,U_d]^{\wedge_p}_{\pd}[\frac{1}{p}]\]
      while $(\frac{u}{\xi_K}(\prod_{i=0}^r(1+\frac{\xi_K}{u}U_i)-1))^{\wedge}_{\pd}$ denotes the closed pd-ideal generated by $\frac{u}{\xi_K}(\prod_{i=0}^r(1+\frac{\xi_K}{u}U_i)-1)$.
      Moreover, via this isomorphism and the isomorphism 
      \[\Omega^{1,\log}_{\frakX_Z}\{-1\}\cong(\oplus_{0=1}^d\calO_{\frakX_Z}\cdot\frac{\dlog T_i}{\xi_K})/(\calO_{\frakX_Z}\cdot\sum_{i=0}^r\frac{\dlog T_i}{\xi_K}),\]
      the differential $\rd$ on $(\calO\bB_{\dR,\pd,Z}^+)_{\mid U}$ is given by
      \[\begin{split}\rd=\sum_{i=0}^d(u+\xi_K U_i)\frac{\partial}{\partial U_i}\otimes\frac{\dlog T_i}{\xi_K}\end{split}\]
      in the sense of Lemma \ref{lem:defined d on quotient}.
  \end{cor}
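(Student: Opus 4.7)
The plan is to deduce Corollary \ref{cor:local description of C_Y} from Proposition \ref{prop:local description of C_Y} by (a) taking the inverse limit over $n$ to recover information about $B_{\dR,\pd,\frakY_Z}^+$ from $C_{\frakY_Z}/(\xi_K\pi^{-1})^n$, and then (b) passing through the colimit over $\Sigma_U$ and the sheafification that together define $\calO\bB_{\dR,\pd,Z}^+$.

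For step (a), I would observe that the isomorphisms $\iota_n$ of Proposition \ref{prop:local description of C_Y} are compatible under reduction from level $n+1$ to level $n$. The inverse limit of $C_{\frakY_Z}/(\xi_K\pi^{-1})^n$ is the $\xi_K$-adic completion $B_{\dR,\pd,\frakY_Z}^+$, using that $\pi$ is invertible in $C_{\frakY_Z}$ so the $\xi_K$- and $(\xi_K\pi^{-1})$-adic topologies agree there. On the source side, the inverse limit is the $\xi_K$-adic completion of
\[\left(\CCtilde_K^{1,+}(U)[U_0,\dots,U_d]^{\wedge_p}_{\pd}/\big(\tfrac{u}{\xi_K}(\prod_{i=0}^r(1+\tfrac{\xi_K}{u}U_i)-1)\big)^{\wedge_p}_{\pd}\right)[\tfrac{1}{p}],\]
and I would identify this with the ring $\bB_{\dR\mid_U}^+\za U_0,\dots,U_d\ya_{\pd}/(\cdots)^{\wedge}_{\pd}$ appearing in the statement. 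This amounts to showing that $\xi_K$-adic completion commutes with the closed pd-quotient, which is true because the relation $\tfrac{u}{\xi_K}(\prod_{i=0}^r(1+\tfrac{\xi_K}{u}U_i)-1)$ and all its pd-powers already belong to $\CCtilde_K^{1,+}(U)[U_0,\dots,U_d]^{\wedge_p}_{\pd}[\tfrac{1}{p}]$, so its closed pd-ideal in the $\xi_K$-adic completion is the closure of the corresponding ideal upstairs.

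For step (b), the key observation is that although $B_{\dR,\pd,\frakY_Z}^+$ genuinely depends on the choice of \'etale neighborhood $\frakY_Z \in \Sigma_U$, the target of $\iota$ depends only on the globally fixed chart $(T_0,\dots,T_d)$ on $\frakX_Z$ and the fixed choices of lifts $t_i \in \bA_{\inf,K}(U)^\times$. Consequently, taking the colimit over $\Sigma_U$ of the level-$\infty$ isomorphisms from step (a) produces an isomorphism of presheaves on $U_v$ between the presheaf $\frakY_Z\mapsto B_{\dR,\pd,\frakY_Z}^+$ and the constant presheaf with value $\bB_{\dR\mid_U}^+\za U_0,\dots,U_d\ya_{\pd}/(\cdots)^{\wedge}_{\pd}$. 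Sheafifying on $U_v$ then yields the desired isomorphism $\iota$ of sheaves of $\bB_{\dR\mid_U}^+$-algebras on $U$, since the constant presheaf here is already a sheaf. The formula for $\rd$ follows directly: passing Proposition \ref{prop:local description of C_Y}'s formula $\rd=\sum_{i=0}^d(u+\xi_K U_i)\tfrac{\partial}{\partial U_i}\otimes\tfrac{\dlog T_i}{\xi_K}$ through the inverse limit and then the colimit/sheafification is legitimate because the derivation constructed in Definition \ref{dfn:Period sheaf-local} is continuous for the $\xi_K$-adic topology by construction, and the formula is intrinsically expressed via Lemma \ref{lem:defined d on quotient}.

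The main technical obstacle I expect is the justification of the interchange of $\xi_K$-adic completion with the closed pd-quotient in step (a); pinning down precisely which pd-generators one needs and that their pd-powers remain in the uncompleted algebra requires some bookkeeping, but it is the kind of manipulation already present in the proofs of Lemma \ref{lem:local description of A_Y} and Corollary \ref{cor:local description of A_Y}. Once this identification is in place, steps (b) and the connection-formula check are essentially formal, since the source of $\iota$ is manifestly independent of $\frakY_Z$.
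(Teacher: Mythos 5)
Your proposal matches the paper's own proof: the paper likewise deduces the corollary by applying Proposition \ref{prop:local description of C_Y} for each $n$, taking the inverse limit $B_{\dR,\pd,\frakY_Z}^+=\varprojlim_n C_{\frakY_Z}/\xi_K^n$ to identify each $B_{\dR,\pd,\frakY_Z}^+$ with $\bB_{\dR}^+(U)\za U_0,\dots,U_d\ya_{\pd}/(\frac{u}{\xi_K}(\prod_{i=0}^r(1+\frac{\xi_K}{u}U_i)-1))^{\wedge}_{\pd}$ (independently of $\frakY_Z$), then taking the colimit over $\Sigma_U$ and letting $U$ vary, with the formula for $\rd$ following directly. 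Your added care about interchanging the $\xi_K$-adic completion with the closed pd-quotient is exactly the step the paper passes over silently in its chain of identifications, so the route is essentially identical.
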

  \begin{proof}
      It suffices to show that $\iota$ is an isomorphism and the rest follows directly.
      By Proposition \ref{prop:local description of C_Y}, for any $\frakY_Z\in \Sigma_U$, we have
      \begin{equation*}
      \begin{split}
          B_{\dR,\pd,\frakY_Z}^+& = \varprojlim_n C_{\frakY_Z}/\xi_K^n\\
          &\cong \varprojlim_n \left(\CCtilde_K^{1,+}(U)[U_0,\dots,U_d]^{\wedge_p}_{\pd}/(\frac{u}{\xi_K}(\prod_{i=0}^r(1+\frac{\xi_K}{u}U_i)-1))^{\wedge_p}_{\pd}\right)[\frac{1}{p}]/(\xi_K \pi^{-1})^n\\
          &= \varprojlim_n\CCtilde^{1,+}(U)[U_0,\dots,U_d]^{\wedge_p}_{\pd}[\frac{1}{p}]/(\xi_K\pi^{-1})^n\\
          &=\bB_{\dR}^+(U)\za U_0,\dots,U_d\ya_{\pd}/(\frac{u}{\xi_K}(\prod_{i=0}^r(1+\frac{\xi_K}{u}U_i)-1))^{\wedge}_{\pd}.
      \end{split}
      \end{equation*}
      By taking colimit, we get that 
      \[\calO\bB_{\dR,\pd}^+(U) = \colim_{\frakY_Z\in\Sigma}B_{\dR,\pd,\frakY_Z}^+\cong \bB_{\dR}^+(U)\za U_0,\dots,U_d\ya_{\pd}/(\frac{u}{\xi_K}(\prod_{i=0}^r(1+\frac{\xi_K}{u}U_i)-1))^{\wedge}_{\pd}.\]
      Letting $U$ vary, we see that $\iota$ is an isomorphism as desired.
  \end{proof}
  One can give another description of $(\calO\bB^{+}_{\dR,\pd,Z},\rd)$.
  \begin{prop}\label{prop:local description of OB_dR^+}
      Keep notations as above.
      The series $\frac{u}{\xi_K}\log\frac{t_i}{T_i} = -\frac{u}{\xi_K}\sum_{n\geq 1}\frac{(1-\frac{T_i}{t_i})^n}{n}$ converges in $(\calO\bB^{+}_{\dR,\pd,Z})_{\mid_U}$. The morphism sending each $W_i$ to $\frac{u}{\xi_K}\log\frac{t_i}{T_i}$
      \[\iota:\bB^+_{\dR\mid_{U}}\za W_0,\dots,W_d\ya_{\pd}/(W_0+\cdots+W_r)^{\wedge}_{\pd}\to (\calO\bB^+_{\dR,\pd,Z})_{\mid_U}\]
       is an isomorphism of the sheaves of $\bB_{\dR\mid_{U}}^+$-algebras. Moreover, via this isomorphism and the isomorphism \[\Omega^{1,\log}_{\frakX_Z}\{-1\}\cong (\oplus_{i=0}^d\calO_{\frakX_Z}\cdot\frac{\dlog T_i}{\xi_K})/(\calO_{\frakX_Z}\cdot\sum_{i=0}^r\frac{\dlog T_i}{\xi_K}),\] 
       the differential $\rd$ on $(\calO\bB^+_{\dR,\pd,Z})_{\mid_U}$ is given by
      \[\rd = \sum_{i=0}^d-u\frac{\partial}{\partial W_i}\otimes\frac{\dlog T_i}{\xi_K}\]
      in the sense of Lemma \ref{lem:defined d on quotient}.
  \end{prop}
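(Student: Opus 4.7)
The plan is to reduce everything to the description of $(\calO\bB^{+}_{\dR,\pd,Z})_{\mid_U}$ obtained in Corollary \ref{cor:local description of C_Y}, by implementing a formal change of coordinates $U_i\leftrightarrow W_i$ that encodes the log/exp duality between multiplicative and additive generators.

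First I would verify convergence of the defining series. Using the identity $\tfrac{T_i}{t_i} - 1 = \tfrac{\xi_K}{u}U_i$ from Corollary \ref{cor:local description of C_Y} and replacing ordinary powers with pd-powers via $U_i^n = n!\,U_i^{[n]}$, one rewrites
\[
-\frac{u}{\xi_K}\sum_{n\geq 1}\frac{(1-\tfrac{T_i}{t_i})^n}{n}
= \sum_{n\geq 1}(-1)^{n+1}(n-1)!\,\left(\frac{\xi_K}{u}\right)^{n-1}U_i^{[n]}.
\]
Since $u$ is a unit in $\bB_{\dR}^+$ (as $\theta(u)=\zeta_p-1\in C^\times$), the $n$-th term is divisible by $\xi_K^{n-1}$, so the series converges in the $\xi_K$-adic topology on $(\calO\bB^+_{\dR,\pd,Z})_{\mid U}$. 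Next I would check that $\sum_{i=0}^{r} W_i = 0$: by the choice of the $t_i$'s from Lemma \ref{lem:kernel is finite}, $\prod_{i=0}^{r} t_i = [\varpi^a] = \prod_{i=0}^{r}T_i$, hence $\prod_{i=0}^{r}(T_i/t_i) = 1$ and so $\sum_{i=0}^{r}\log(t_i/T_i) = 0$. Thus $\iota$ descends to a well-defined map of $\bB_{\dR\mid_U}^+$-pd-algebras on the quotient.

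To show $\iota$ is an isomorphism I would construct an explicit inverse $j$ sending $U_i \mapsto \tfrac{u}{\xi_K}\bigl(\exp(-\tfrac{\xi_K}{u}W_i)-1\bigr)$, which after the same pd-power manipulation becomes $\sum_{n\geq 1}(-1)^n(\tfrac{\xi_K}{u})^{n-1}W_i^{[n]}$, converging $\xi_K$-adically for the same reason. Under $j$, the relation $\tfrac{u}{\xi_K}\bigl(\prod_{i=0}^{r}(1+\tfrac{\xi_K}{u}U_i)-1\bigr)$ of Corollary \ref{cor:local description of C_Y} is mapped to $\tfrac{u}{\xi_K}\bigl(\exp(-\tfrac{\xi_K}{u}\sum_{i=0}^{r}W_i)-1\bigr)=0$, so $j$ descends to the corresponding quotient. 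That $\iota\circ j = \id$ and $j\circ\iota = \id$ is then a formal consequence of the identities $\exp\log(1+x)=1+x$ and $\log\exp(x)=x$, interpreted at the level of pd-power series in the relevant $\xi_K$-adically complete pd-algebras.

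Finally, the differential formula follows from the chain rule: differentiating $W_i = -\tfrac{u}{\xi_K}\log(1+\tfrac{\xi_K}{u}U_i)$ yields $\tfrac{\partial W_i}{\partial U_j} = -\delta_{ij}\tfrac{u}{u+\xi_K U_i}$, so the basis vector field $(u+\xi_K U_i)\tfrac{\partial}{\partial U_i}$ of Corollary \ref{cor:local description of C_Y} transforms to $-u\tfrac{\partial}{\partial W_i}$, giving the asserted formula. I expect the main technical obstacle to be ensuring these formal power series manipulations are legitimate in the quotient by the closed pd-ideal; this amounts to checking that $\exp$ and $\log$ respect the pd-structure and the completion, which reduces, via Lemma \ref{lem:defined d on quotient}, to the unconstrained case where the computation is standard.
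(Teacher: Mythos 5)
Your proposal is correct and follows essentially the same route as the paper: reduce to the $U$-coordinate presentation of Corollary \ref{cor:local description of C_Y}, check $\xi_K$-adic convergence of the $\log$ and $\exp$ series after rewriting them in pd-powers (using that $u$ is a unit in $\bB_{\dR}^+$), invert via the exponential substitution, and transport the differential by the chain rule. The only cosmetic difference is that the paper matches the two relation ideals by showing $W_0+\cdots+W_r$ is a unit multiple of $\frac{u}{\xi_K}(\prod_{i=0}^r(1-\frac{\xi_K}{u}U_i)-1)$ in the common free pd-algebra, whereas you verify descent of both maps separately and then use the $\exp$/$\log$ identities — the same computation organized slightly differently.
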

  \begin{proof}
      Thanks to Corollary \ref{cor:local description of C_Y}, the morphism
      \[\bB_{\dR\mid_U}^+\za U_0,\dots,U_d\ya_{\pd}/(\frac{u}{\xi_K}(\prod_{i=0}^r(1-\frac{\xi_K}{u}U_i)-1))^{\wedge}_{\pd}\to(\calO\bB_{\dR,\pd,Z}^+)_{\mid U}\]
      sending each $U_i$ to $\frac{u}{\xi_K}(1-\frac{T_i}{t_i})$ is an isomorphism.
      Thus, we only need to show that the series
      \[-\frac{u}{\xi_K}\log(1-\frac{\xi_K}{u}U_i)=-\frac{u}{\xi_K}\sum_{n\geq 1}\frac{(\frac{\xi_K}{u}U_i)^n}{n}\]
      converges in $\bB^+_{\dR,\pd\mid_{X_{\infty}}}\za U_1,\dots,U_d\ya_{\pd}$ and the morphism 
      \[\begin{split}i:\bB^+_{\dR\mid_{U}}\za W_0,\dots,W_d\ya_{\pd}/(W_0+\cdots+W_r)^{\wedge}_{\pd}\to\bB_{\dR\mid_U}^+\za U_0,\dots,U_d\ya_{\pd}/(\frac{u}{\xi_K}(\prod_{i=0}^r(1-\frac{\xi_K}{u}U_i)-1))^{\wedge}_{\pd}\end{split}\]
      sending each $W_i$ to $-\frac{u}{\xi_K}\log(1-\frac{\xi_K}{u}U_i)$ is an isomorphism. 
      
      Note that both sides of the following equality
      \[\frac{u}{\xi_K}\sum_{n\geq 1}\frac{(\frac{\xi_K}{u}U_i)^n}{n} = \sum_{n\geq 1}(\frac{\xi_K}{u})^{n-1}(n-1)!U_i^{[n]}\]
      reduce to a finite sum modulo $\xi_K^n$ for any $n\geq 1$. So the series
      \[-\frac{u}{\xi_K}\log(1-\frac{\xi_K}{u}U_i)=\frac{u}{\xi_K}\sum_{n\geq 1}\frac{(\frac{\xi_K}{u}U_i)^n}{n}\]
      converges in $\bB_{\dR\mid_U}^+\za U_0,\dots,U_d\ya_{\pd}$. Similarly, the series 
      \[\frac{u}{\xi_K}(1-\exp(-\frac{\xi_K}{u}W_i)) = \sum_{n\geq 1}(-\frac{\xi_K}{u})^{n-1}W_i^{[n]}\]
      converges in $\bB^+_{\dR\mid_U}\za W_0,\dots,W_d\ya_{\pd}$.
      So the map of $\bB^+_{\dR\mid_{U}}$-algebras
      \[j:\bB^+_{\dR\mid_{U}}\za U_0,\dots,U_d\ya_{\pd}\to\bB^+_{\dR\mid_{U}}\za W_0,\dots,W_d\ya_{\pd}\]
      sending each $U_i$ to $\frac{u}{\xi_K}(1-\exp(-\frac{\xi_K}{u}W_i))$ is a well-defined isomorphism and its inverse is induced by the morphism sending $W_i$ to $-\frac{u}{\xi_K}\log(1-\frac{\xi_K}{u}U_i)$. 

      Put $P:=\sum_{i=0}^rW_i$ and $Q:=\frac{u}{\xi_K}(\prod_{i=0}^r(1-\frac{\xi_K}{u}U_i)-1)$.
      To see $i$ is an isomorphism, it remains to show that via the identification 
      \[\bB^+_{\dR\mid_{U}}\za U_0,\dots,U_d\ya_{\pd} = \bfB = \bB^+_{\dR\mid_{U}}\za W_0,\dots,W_d\ya_{\pd},\]
      the polynomials $P$ and $Q$ generate the same pd-ideal in $\bfB$. It suffices to show that $P$ is a unit-multiple of $Q$ in $\bfB$. As
      \[\sum_{n\geq 0}\frac{(-\frac{\xi_K}{u}Q)^n}{n+1}\equiv 1\mod \xi_K\bfB,\]
      it is a unit in $\bfB$ because $\xi_K$ is topologically nilpotent.
      Thus, one can conclude by noting that
      \[P = -\frac{u}{\xi_K}\sum_{i=0}^r\log(1-\frac{\xi_K}{u}U_i) = -\frac{u}{\xi_K}\log(1+\frac{\xi_K}{u}Q) = -Q\sum_{n\geq 0}\frac{(-\frac{\xi_K}{u}Q)^n}{n+1}.\]
  \end{proof}

  Now, we are able to show the following (local) Poincar\'e's Lemma for $(\calO\bB_{\dR,\pd,Z}^+,\rd)$:
  \begin{prop}[Local Poincar\'e's Lemma]\label{prop:local Poincare's Lemma}
      Suppose $\frakX_Z$ is small semi-stable. Then the natural morphism $\BBdRp\to\calO\bB_{\dR,\pd,Z}^+$ induces an exact sequence
      \[0\to\BBdRp\to\rD\rR(\calO\bB_{\dR,\pd,Z}^+,\rd).\]
  \end{prop}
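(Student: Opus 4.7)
The plan is to verify exactness of the sequence locally on the $v$-site of $X_Z$, exploiting the explicit description of $\calO\bB_{\dR,\pd,Z}^+$ provided by Proposition \ref{prop:local description of OB_dR^+}. Since $\calO\bB_{\dR,\pd,Z}^+$ is a sheaf and the map $\BBdRp\to\calO\bB_{\dR,\pd,Z}^+$ together with $\rd$ is compatible with localization, it suffices to check exactness on the basis of affinoid perfectoids $U=\Spa(S,S^+)\in X_{Z,v}$. For any such $U$, Proposition \ref{prop:local description of OB_dR^+} identifies
\[\calO\bB_{\dR,\pd,Z}^+|_U\cong \bB_{\dR\mid_U}^+\za W_0,\dots,W_d\ya_{\pd}/(W_0+\cdots+W_r)^{\wedge}_{\pd}\]
with the differential $\rd=-\sum_{i=0}^d u\,(\partial/\partial W_i)\otimes \dlog T_i/\xi_K$, taken modulo the relation $\sum_{i=0}^r \dlog T_i/\xi_K=0$ on the target.

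Next, I would adapt the computation in the proof of Lemma \ref{lem:defined d on quotient}(3) to this $\xi_K$-adically complete setting. The linear substitution $W_0\mapsto -(W_1+\cdots+W_r)$ (legitimate because $W_0+\cdots+W_r$ is already linear, so its pd-envelope is trivial) yields an isomorphism
\[\calO\bB_{\dR,\pd,Z}^+|_U\cong \bB_{\dR\mid_U}^+\za W_1,\dots,W_d\ya_{\pd},\]
and picking $\{\dlog T_i/\xi_K\}_{i=1}^d$ as representatives for the quotient $\bigoplus_{i=0}^d D\cdot \dlog T_i/\xi_K \,\big/\, D\cdot\sum_{i=0}^r\dlog T_i/\xi_K$, the differential becomes $\rd=-\sum_{i=1}^d u\,(\partial/\partial W_i)\otimes \dlog T_i/\xi_K$. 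The associated de Rham complex then factorizes by K\"unneth as a completed tensor product over $\bB_{\dR\mid_U}^+$ of $d$ one-variable complexes
\[0\to \bB_{\dR\mid_U}^+\za W_j\ya_{\pd}\xrightarrow{-u\,\partial/\partial W_j}\bB_{\dR\mid_U}^+\za W_j\ya_{\pd}\to 0.\]
Since $u$ is a unit in $\bB_{\dR\mid_U}^+$ by Lemma \ref{lem:relative Robba ring}(1) and (3), and since $\partial/\partial W_j$ on the pd-power series ring $\bB_{\dR\mid_U}^+\za W_j\ya_{\pd}$ visibly has kernel equal to the subring $\bB_{\dR\mid_U}^+$ of constants, each factor contributes $\bB_{\dR\mid_U}^+$ in degree zero; thus $\ker(\rd)=\bB_{\dR\mid_U}^+$, proving the required exactness on $U$.

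The main technical subtlety will be to justify that the K\"unneth factorization, which in Lemma \ref{lem:defined d on quotient}(3) is carried out for $p$-adic completions, passes to our $\xi_K$-adically complete setting. The only properties of the completion parameter used there are topological nilpotence and convergence of pd-power series expansions coefficient-wise, and both hold for $\xi_K$ on the $\xi_K$-adically complete algebra $\bB_{\dR\mid_U}^+$, so the argument transfers verbatim. Once this local exactness is established on a basis of affinoid perfectoids, sheafification yields the global exactness of $0\to \BBdRp\to \rD\rR(\calO\bB_{\dR,\pd,Z}^+,\rd)$.
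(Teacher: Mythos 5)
Your proposal is correct and follows essentially the same route as the paper: the paper's proof likewise combines the explicit description of $(\calO\bB_{\dR,\pd,Z}^+,\rd)$ from Proposition \ref{prop:local description of OB_dR^+} with the argument of Lemma \ref{lem:defined d on quotient}(3) (eliminate one variable using the linear relation, apply K\"unneth, and integrate the one-variable pd-complex, using that $u$ is invertible). The only difference is cosmetic: you unfold the proof of Lemma \ref{lem:defined d on quotient}(3) in the $\xi_K$-adically complete setting and explicitly flag that the completion-transfer is harmless, whereas the paper simply cites that lemma directly.
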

  \begin{proof}
      As $u$ is invertible in $\BBdRp$, the result follows from Proposition \ref{prop:local description of OB_dR^+} together with Lemma \ref{lem:defined d on quotient}(3) directly.
  \end{proof}
  \begin{rmk}\label{rmk:Poincare Lemma}
      Suppose $\frakX_Z$ is small semi-stable and $K = \rW(\kappa)[\frac{1}{p}]$ with $p\geq 3$.
      One can similarly define $\calO\CCtilde^{1,+}_{\pd,Z}$ and $\calO\CCtilde^{1}_{\pd,Z}$ be the sheafifications of the presheaves sending each affinoid perfectoid $U\in X_{Z,v}$ to 
      \[\colim_{\frakY_Z\in \Sigma_U}C_{\frakY}^+ \text{ and } \colim_{\frakY_Z\in \Sigma_U}C_{\frakY}\]
      respectively. Then the usual (continuous) derivation on $\wtx_Z$ induces a derivation
      \[\rd:\calO\bP_{\pd,Z}\to\calO\bP_{\pd,Z}\otimes_{\calO_{\wtx_Z}}u\Omega^{1,\log}_{\wtx_Z}\{-1\}\]
      for $\bP\in \{\CCtilde^{1,+},\CCtilde^{1}\}$. Then the arguments below also proves that there exists an exact sequence
      \[0\to\bP\to\rD\rR(\calO\bP_{\pd,Z},\rd).\]
      Here, we have to assume that $K$ is unramified and $p\geq 3$ because in this case, $\frac{\xi}{u}\in \CCtilde^{1,+}(U)$ is topologically nilpotent. This will be used to construct the morphism $j$ in the proof of (the analogue of) Proposition \ref{prop:local description of C_Y}. We leave the details to interested readers.
  \end{rmk}
  \begin{rmk}\label{rmk:OA_dR}
      Suppose $\frakX_Z$ is small semi-stable and $K = \rW(\kappa)[\frac{1}{p}]$.
      If we put $\AAdR$ and $\calO\bA_{\dR,Z}$ the $(\xi p^{-1})$-adic completion of $\CCtilde^{1,+}$ and $\calO\CCtilde^{1,+}_{\pd,Z}$ (cf. Remark \ref{rmk:Poincare Lemma}) respectively, then the derivation $\rd$ on $\calO_{\wtx_Z}$ induces a derivation 
      \[u^{-1}\rd:\calO\AAdR\to \calO\AAdR\otimes_{\calO_{\wtx}}\cdot\Omega^1_{\wtx_Z}\{-1\}.\]
      The last paragraph in the proof of Proposition \ref{prop:local description of OB_dR^+} together with Proposition \ref{prop:local description of C_Y}
      tells us the morphism
      \[\iota:\bA_{\dR\mid_{U}}[W_1,\dots,W_d]^{\wedge}_{\pd}\to (\calO\bA_{\dR,Z})_{\mid U}\]
      sending $W_i$ to $\frac{u^2}{t}\log\frac{[T_i^{\flat}]}{T_i}$ is a well-defined isomorphism such that the differential $u^{-1}\rd$ is given by 
      \[u^{-1}\rd = -\sum_{i=1}^d\frac{\partial}{\partial W_i}\otimes\frac{u\dlog T_i}{t}.\]
      One can check directly the sequence
      \[0\to\AAdR\to\calO\bA_{\dR,Z}\otimes_{\calO_{\wtx_Z}}\Omega^{1,\log}_{\wtx_Z}\{-1\}\to\cdots\to\calO\bA_{\dR,Z}\otimes_{\calO_{\wtx_Z}}\Omega^d_{\wtx_Z}\{-d\}\]
      is exact. This will lead to an integral version of $p$-adic Riemann--Hilbert correspondence lifting the integral $p$-adic Simpson correspondence in \cite{MW-AIM,SW24}.
  \end{rmk}

  We end this subsection with the following example.
  \begin{exam}\label{exam:Gamma torsor}
      Suppose that $\frakX_Z = \Spf(\calR_Z)$ is small semi-stable with a chart 
      \[\psi:\frakX_Z\to \Spf(A_0^+:=A^+\za T_0,\dots,T_r,T_{r+1}^{\pm 1},\dots,T_d^{\pm 1}\ya/(T_0\cdots T_r-p^a))\]
      and a lifting $\wtx_Z = \Spf(\wtr_Z)$ and let $X_Z = \Spa(R_Z,R_Z^+=\calR_Z)$ be its generic fiber as before. For any $n\geq 0$, define
      \[A_n^+:= A^+\za T_0^{\frac{1}{p^n}},\dots,T_r^{\frac{1}{p^n}},T_{r+1}^{\pm \frac{1}{p^n}},\dots,T_d^{\pm \frac{1}{p^n}}\ya/((T_0\cdots T_r)^{\frac{1}{p^n}}-p^{\frac{a}{p^n}})\]
      and 
      \[\widehat A_{\infty}^+:=(\cup_{n\geq 0}A_n^+)^{\wedge_p} = A^+\za T_0^{\frac{1}{p^{\infty}}},\dots,T_r^{\frac{1}{p^{\infty}}},T_{r+1}^{\pm \frac{1}{p^{\infty}}},\dots,T_d^{\pm \frac{1}{p^{\infty}}}\ya/((T_0\cdots T_r)^{\frac{1}{p^{\infty}}}-p^{\frac{a}{p^{\infty}}})\]
      where $((T_0\cdots T_r)^{\frac{1}{p^{\infty}}}-p^{\frac{a}{p^{\infty}}})$ denotes the closed ideal generated by $\{(T_0\cdots T_r)^{\frac{1}{p^n}}-p^{\frac{a}{p^n}}\}_{n\geq 0}$.    
      Let $X_{\infty,Z}:=\Spa(\widehat R_{\infty,Z},\widehat R_{\infty,Z}^+)$ be the base-change of $X_Z$ along the pro-\'etale $\Gamma$-torsor
      \[\Spa(\widehat A_{\infty} =\widehat A_{\infty}^+[\frac{1}{p}],\widehat A_{\infty}^+)\to \Spa(A_0 = A_0^+[\frac{1}{p}],A_0^+),\]
      where $\Gamma$ is the following group
      \begin{equation}\label{equ:gamma group-I}
          \Gamma = \{\delta_0^{n_0}\cdots\delta_r^{n_r}\delta_{r+1}^{n_{r+1}}\cdots\delta_d^{n_d}\in \bZ_p^{r+1}\mid n_0,\dots,n_d\in\bZ_p \text{ such that } n_0+\cdots+n_r = 0\}
      \end{equation}
      and for any $0\leq i,j\leq d$ and any $n\geq 0$, the $\Gamma$-action on $\widehat A_{\infty}$ is determined by
      \begin{equation}\label{equ:gamma acts on T}
          \delta_i(T_j^{\frac{1}{p^n}}) = \zeta_{p^n}^{\delta_{ij}}T_j^{\frac{1}{p^n}}.
      \end{equation}
      Here, $\delta_{ij}$ denotes the Kronecker's delta.
      If we put $\gamma_i:=\delta_0^{-1}\delta_i$ for any $1\leq i\leq r$ and $\gamma_j = \delta_j$ for any $r+1\leq j\leq d$, then we have an isomorphism
      \begin{equation}\label{equ:gamma group-II}
          \Gamma \cong \oplus_{i=1}^d\Zp\cdot\gamma_i.
      \end{equation}
      Put $T_i^{\flat}:=(T_i,T_i^{1/p},\dots)\in \widehat R_{\infty,Z}^{\flat+}$ for all $i$, and then we have
      \[T_0^{\flat}\cdots T_r^{\flat} = \varpi^a.\] 
      and that the $\Gamma$-action on $\widehat R_{\infty,Z}^{\flat,+}$ is determined by that for any $0\leq i,j\leq d$ and any $n\geq 0$,
      \begin{equation}\label{equ:gamma acts on T^flat}
          \delta_i((T_j^{\flat})^{\frac{1}{p^n}}) = \epsilon^{\delta_{ij}\frac{1}{p^n}}(T_j^{\flat})^{\frac{1}{p^n}}.
      \end{equation}
      Now, for $U = X_{\infty,Z}$ in Lemma \ref{lem:existence of log-structure}, we can choose $(\underline t_i,u_i) = (T_i^{\flat},1)$ for any $0\leq i\leq r$ and then we can choose $t_i = [T_i^{\flat}]$ in Lemma \ref{lem:kernel is finite}. Thus the morphism of the sheaves of $\bB_{\dR\mid_{X_{\infty,Z}}}^+$-algebras
      \[\iota:\bB^+_{\dR\mid_{X_{\infty,Z}}}\za W_0,\dots,W_d\ya_{\pd}/(W_0+\cdots+W_r)^{\wedge}_{\pd}\to (\calO\bB^+_{\dR,\pd,Z})_{\mid_{X_{\infty,Z}}}\]
      sending each $W_i$ to $\frac{u}{\xi_K}\log\frac{[T_i]^{\flat}}{T_i}$ is an isomorphism such that the derivation $\rd$ on $(\calO\bB^+_{\dR,\pd,Z})_{\mid_{X_{\infty,Z}}}$ is given by
      \[\rd = \sum_{i=0}^d-u\frac{\partial}{\partial W_i}\otimes\frac{\dlog T_i}{\xi_K}.\]
      Note that $(\calO\bB^+_{\dR,\pd,Z})_{\mid_{X_{\infty,Z}}}$ admits a natural action of $\Gamma$. Via the above isomorphism $\iota$, by \eqref{equ:gamma acts on T} and \eqref{equ:gamma acts on T^flat}, this action is determinded by
      \begin{equation}\label{equ:gamma acts on W}
          \delta_i(W_j) = W_j+\delta_{ij}u\frac{t}{\xi_K},~\forall~0\leq i,j\leq d,
      \end{equation}
      which clearly commutes with $\rd$. Choose $\xi_K = [\underline \pi]-\pi$ and $\xi = \frac{\phi(u)}{u}$. Then we have
      \[\frac{t}{\xi_K} = u\frac{t}{u\xi}\frac{\xi}{E([\underline \pi])}\frac{E([\underline \pi])-E(\pi)}{[\underline \pi]-\pi}.\]
      Denote by $\rho_K$ the reduction of $\frac{t}{\xi_K}\in \BdRp$ modulo $t$. Note that
      \begin{enumerate}
          \item[$\bullet$] $\frac{\xi}{E([\underline \pi])}\in\bfA_{\inf}^{\times}$,

          \item[$\bullet$] $u\equiv \zeta_p-1 \mod t\BdRp$, $\frac{t}{u\xi}\equiv 1\mod t\BdRp$ and $\frac{E([\underline \pi])-E(\pi)}{[\underline \pi]-\pi}\equiv E'(\pi)\mod t\BdRp$.
      \end{enumerate}
      We have $\nu_p(\rho_K) = \nu_p((\zeta_p-1)E'(\pi)) = \frac{1}{p-1}+\nu_p(\calD_K)$, where $\calD_K$ is the ideal of differentials of $K$. Modulo $t$, we see that the $\Gamma$-action on $(\calO\bB^+_{\dR,\pd,Z})_{\mid_{X_{\infty,Z}}}/t$ is given by
      \begin{equation}\label{equ:gamma acts on W-II}
          \delta_i(W_j) = W_j+\delta_{ij}\rho_K(\zeta_p-1),~\forall~0\leq i,j\leq d.
      \end{equation}
      So we get the same $\Gamma$-action as in \cite[Prop. 3.10]{SW24}.
  \end{exam}

\subsection{Construction of $(\calO\bB_{\dR,\pd}^+,\rd)$: The general case}
  Let $\frakX_Z$ be a semi-stable formal scheme over $A^+$ and let $X_Z$ be its generic fiber. Note that the sub-site
  \[X_{Z,v}^{\Box}:=\{U\in X_{Z,v}\mid \text{The composite $U\to X_Z\to \frakX_Z$ factors through some small affine $\frakX^{\prime}_Z\to\frakX_Z$}\}\]
  forms a basis for the topology on $X_{Z,v}$.
  \begin{dfn}\label{dfn:Period sheaf-global}
      Let $\frakX_Z$ be a liftable semi-stable formal scheme of relative dimension $d$ over $A^+$ with a fixed lifting $\wtx_Z$ over $\bA_{\inf,K}(Z)$ as above.
      Denote by $(\calO\bB_{\dR,\pd,Z}^+,\rd)$ the period sheaf with connection on $X_{Z,v}$ associated to the pre-sheaf with connection which sends each $U\in X_{Z,v}^{\Box}$ to $((\calO\bB_{\dR,\pd,Z}^+)_{\mid_U},\rd)(U)$ as defined in Definition \ref{dfn:Period sheaf-local}. Denote by $\rD\rR(\calO\bB_{\dR,\pd,Z}^+,\rd)$ the corresponding de Rham complex
      \[\begin{split}\calO\bB_{\dR,\pd,Z}^+\xrightarrow{\rd}\calO\bB_{\dR,\pd,Z}^+\otimes_{\calO_{\wtx_Z}}\Omega^{1,\log}_{\wtx_Z}\{-1\}\xrightarrow{\rd}\cdots \xrightarrow{\rd}\calO\bB_{\dR,\pd,Z}^+\otimes_{\calO_{\wtx_Z}}\Omega^{d,\log}_{\wtx_Z}\{-d\}.\end{split}\]
  \end{dfn}
  \begin{thm}[Poincare's Lemma]\label{thm:Poincare's Lemma}
      Keep notations as above. Then the following complex
      \[0\to\BBdRp\to \rD\rR(\calO\bB_{\dR,\pd,Z}^+,\rd)\]
      of sheaves of $\BBdRp$-algebras on $X_{Z,v}$ is exact.
  \end{thm}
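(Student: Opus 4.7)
The strategy is to deduce Theorem \ref{thm:Poincare's Lemma} from its small-affine counterpart, Proposition \ref{prop:local Poincare's Lemma}. Since $X_{Z,v}^{\Box}$ forms a basis for the $v$-topology on $X_{Z,v}$, and exactness of a complex of sheaves of abelian groups can be verified on any basis (equivalently, stalkwise), it suffices to prove that for every $U\in X_{Z,v}^{\Box}$ the restriction of
\[0\to\BBdRp\to \rD\rR(\calO\bB_{\dR,\pd,Z}^+,\rd)\]
to the localized site $(X_{Z,v})_{/U}$ is exact. By definition of $X_{Z,v}^{\Box}$, such a $U$ admits a factorization through some small affine étale $\frakX'_Z\to \frakX_Z$, and we may regard $U$ as an object of $X'_{Z,v}$, where $X'_Z$ denotes the generic fiber of $\frakX'_Z$.

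The key intermediate step is a compatibility claim: the restriction $(\calO\bB_{\dR,\pd,Z}^+,\rd)_{\mid X'_{Z,v}}$ of the globally defined period sheaf with connection agrees with the small-affine construction of Definition \ref{dfn:Period sheaf-local} performed directly on $\frakX'_Z$ (with its induced lifting $\wtx'_Z$ over $\bA_{\inf,K}(Z)$, unique by étaleness). Concretely, one checks that for any $V\in X'_{Z,v}\cap X_{Z,v}^{\Box}$, the indexing category $\Sigma_V$ computed relative to $\frakX_Z$ and the indexing category $\Sigma'_V$ computed relative to $\frakX'_Z$ yield the same colimit of the local building blocks $B^+_{\dR,\pd,\frakY_Z}$. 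This is because any étale $\frakY_Z\to\frakX_Z$ together with a map $V\to\frakY_Z$ (over $X_Z$) can be base-changed to an étale $\frakY_Z\times_{\frakX_Z}\frakX'_Z\to\frakX'_Z$ through which $V$ factors, making $\Sigma'_V$ cofinal in $\Sigma_V$; and under this base change the associated rings $A_{\frakY_Z}$, blow-up algebras $B_{\frakY_Z}$, pd-envelopes $\Gamma_{\frakY_Z}$, extensions to $C^+_{\frakY_Z}$, and $\xi_K$-adic completions $B^+_{\dR,\pd,\frakY_Z}$ are all preserved, as is the derivation $\rd$. Granting this compatibility, Proposition \ref{prop:local Poincare's Lemma} applied to $\frakX'_Z$ provides the desired exactness on $U$.

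The main obstacle will be cleanly establishing the compatibility claim above: verifying that each step in the construction (adic completion, blow-up, pd-envelope, base change along $\bA_{\inf,K}(U)\to\CCtilde^{1,+}_K(U)$, $\xi_K$-adic completion, and the induced differential) is functorial with respect to étale maps $\frakX'_Z\to\frakX_Z$ in a way that makes the colimits defining $\calO\bB_{\dR,\pd,Z}^+$ genuinely independent of which small affine neighborhood one works in. Once this functoriality is in place, the passage from the local to the global exactness is essentially formal: sheafification commutes with the formation of the complex, the local sections are exact by Proposition \ref{prop:local Poincare's Lemma}, and exactness is checked on the basis $X_{Z,v}^{\Box}$.
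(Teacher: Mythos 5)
Your proposal is correct and follows essentially the same route as the paper: the paper's proof is simply that the statement is local on $\frakX_{Z,\et}$, so one reduces to the small affine case and invokes Proposition \ref{prop:local Poincare's Lemma}. The compatibility claim you single out as the main obstacle is in effect built into Definition \ref{dfn:Period sheaf-global} (the global sheaf is by construction the sheafification of the small-affine values on the basis $X_{Z,v}^{\Box}$, and Definition \ref{dfn:Period sheaf-local} already notes independence of the chart), so the paper treats that step as immediate rather than requiring a separate cofinality argument.
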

  \begin{proof}
      Since the problem is local on $\frakX_{Z,\et}$, we may assume $\frakX_Z$ is small affine, and then Proposition \ref{prop:local Poincare's Lemma} applies.
  \end{proof}
  \begin{rmk}\label{rmk:functoriality of period rings}
      Let $\frakX_Z$ be a semi-stable formal scheme over $A^+$ with a fixed flat lifting $\wtx_Z$ as before.
      The construction of $(\calO\bB_{\dR,\pd,Z}^+,\rd)$ is clearly functorial in $Z$ in the following sense:
      Let $Z^{\prime} = \Spa(A^{\prime},A^{\prime+})$ be an affinoid perfectoid space over $Z$. Let $\frakX_{Z^{\prime}}$, $X_{Z^{\prime}}$ and $\wtX_{Z^{\prime}}$ be the base-changes of $\frakX_Z$, $X_Z$ and $\wtx_Z$ along $\Spf(A^{\prime+})\to\Spf(A^+)$, $Z^{\prime}\to Z$ and $\Spf(\bA_{\inf,K}(Z^{\prime}))\to\Spf(\bA_{\inf,K}(Z))$ respectively. Let $(\calO\bB_{\dR,\pd,Z}^+,\rd)$ be the period sheaf with connection on $X_{Z,v}$ associated to the given lifting $\wtx_Z$. Then by construction, its restriction $(\calO\bB_{\dR,\pd,Z}^+,\rd)_{\mid_{X_{Z^{\prime},v}}}$ to $X_{Z^{\prime},v}$ is nothing but the period sheaf with connection on $X_{Z^{\prime},v}$ associated to the lifting $\wtx_{Z^{\prime}}$ of $\frakX_{Z^{\prime}}$.
  \end{rmk}

\begin{rmk}\label{rmk:compare to MW-Integral}
    The period sheaves with connnection $(\calO\bA_{\dR,\pd,Z},-u^{-1}\rd)$ and $(\calO\bB_{\dR,\pd,Z}^+,-u^{-1}\rd)$ reduce to $(\calO\widehat \bC_{\pd}^+,\Theta)$ and $(\calO\widehat \bC_{\pd},\Theta)$ in \cite{SW24} modulo $\xi_K \pi^{-1}$. For simplicity, we only check this on $X_{\infty,Z,v}$ when $\frakX_Z = \Spf(\calR_Z)$ is small semi-stable: Keep notations in Example \ref{exam:Gamma torsor}, and then the reduction of $(\calO\bA_{\dR,\pd,Z},-u^{-1}\rd)$ is given by
    \[(\widehat \calO_{X_Z}^+[W_0,\dots,W_d]^{\wedge_p}_{\pd}/(W_0+\cdots+W_r)^{\wedge_p}_{\pd},\sum_{i=0}^d\frac{\partial}{\partial W_i}\otimes\frac{\dlog T_i}{\xi})\]
    such that the $\Gamma$-action is uniquely determined by
    \[\delta_i(W_j) = W_j+\delta_{ij}\rho_K(\zeta_p-1),~\forall~1\leq i,j\leq d\]
    by noting that the reduction of $u$ is $\zeta_p-1$. Now, one can conclude by applying \cite[Prop. 3.10]{SW24}. When $\frakX_Z$ is smooth, one can also check the reduction of $\calO\bA_{\dR,\pd,Z}$ modulo $\xi_K \pi^{-1}$ behaves like $\calB_{\widetilde \calX}$ appearing in \cite[Lem. 3.8]{AHLB23} (see also \cite[Prop. 4.8]{AHLB25}) and like $(\calO\widehat \bC_{\pd}^+,\Theta)$ in \cite{MW-AIM}.
\end{rmk}

\section{Local small Riemann--Hilbert correspondence}\label{sec:local RH}
  In this section, we fix a $Z=\Spa(A,A^+)\in\Perfd$ and let $\frakX_Z$ be a liftable semi-stable formal scheme over $A^+$ with a fixed lifting $\wtx_Z$ over $\bA_{\inf,K}(Z)$. Let $(\calO\bB_{\dR,\pd,Z}^+,\rd)$ be the period sheaf with connection as defined in the previous section and denote by $(\calO\widehat \bC_{\pd,Z},\Theta)$ its reduction modulo $t$. For any $1\leq n\leq \infty$ and $r\geq 1$, denote by 
  \[\rL\rS_r(X_Z,\BBdRpn)^{\Hsmall}(Z) \text{ and }\MIC_r(\wtX_{Z,n})^{\Hsmall}(Z)\]
  the categories of Hitchin-small $\BBdRpn$-local systems on $X_{Z,v}$ and integrable connections on $\wtX_{Z,n}$ of rank $r$ respectively. In particular, when $n=1$, we put
  \[v\Bun_r(X_Z)^{\Hsmall}(Z):=\rL\rS_r(X_Z,\bB_{\dR,1}^+)^{\Hsmall}(Z) \text{ and }\HIG_r(X_Z)^{\Hsmall}(Z):=\MIC_r(\wtX_{Z,1})^{\Hsmall}(Z)\]
  for short. Denote by
  \[\begin{split}
      &\rL\rS(X_Z,\BBdRpn)^{\Hsmall}(Z):=\cup_{r\geq 1}\rL\rS_r(X_Z,\BBdRpn)^{\Hsmall}(Z)\\
      &\MIC(\wtX_{Z,n})^{\Hsmall}(Z) = \cup_{r\geq 1}\MIC_r(\wtX_{Z,n})^{\Hsmall}(Z)\\
      &v\Bun(X_Z)^{\Hsmall}(Z):=\cup_{r\geq 1}v\Bun_r(X_Z)^{\Hsmall}(Z)\\
      &\HIG(X_Z)^{\Hsmall}(Z):=\cup_{r\geq 1}\HIG_r(X_Z)^{\Hsmall}(Z)\\
  \end{split}\]
  the categories of Hitchin-small $\BBdRpn$-local systems on $X_{Z,v}$, Hitchin-small integrable connections on $\wtX_{Z,n}$, Hitchin-small $v$-bundles on $X_{Z,v}$, and Hitchin-small Higgs bundles on $X_{Z,\et}$, respectively. Recall in \S\ref{sec:Introduction}, the Hitchin-smallness was defined by Hitchin fiberation. In this section, we want to give an explicit description of Hitchin-smallness, and establish a rank-preserving equivalence of categories
  \[\rL\rS^{\Hsmall}(X_Z,\BBdRpn)(Z)\simeq\MIC^{\Hsmall}_r(\wtX_{Z,n})(Z)\]
  when $\frakX_Z$ is furthermore small semi-stable.
  
\subsection{A criterion for being Hitchin-small}\label{ssec:Review of SW24}
  Recall by Remark \ref{rmk:Hitchin small can be checked modulo t}, to check a $\BBdRpn$-local system (resp. integrable connection on $\wtX_{Z,n}$) is Hitchin-small, it suffices to show that its reduction modulo $t$ is a Hitchin-small $v$-bundle on $X_{Z,v}$ (resp. Hitchin-small Higgs bundle on $X_{Z,\et}$). So, it is enough to give explicit descriptions of Hitchin-small $v$-bundle on $X_{Z,v}$ and Hitchin-small Higgs bundle on $X_{Z,\et}$. 

    \begin{dfn}\label{dfn:Hitchin-small representation}
      Suppose that $\frakX_Z = \Spf(\calR_Z)$ is small semi-stable and keep notations in Example \ref{exam:Gamma torsor}.
      Let $\bfB\in\{\calR_Z=R_Z^+,R_Z,\widehat R_{\infty,Z}^+,\widehat R_{\infty,Z}\}$
      \begin{enumerate}
          \item By a \emph{Hitchin-small representation} of $\Gamma$ over $\calR_Z$ of rank $r$, we mean a finite projective $\calR_Z$-module $L^+$ of rank $r$ endowed with an action of $\Gamma$ such that via the isomorphism \eqref{equ:gamma group-II}
          \[\Gamma = \Zp\gamma_1\oplus\cdots\oplus\Zp\gamma_d,\]
          for any $1\leq i\leq d$, there exists a topologically nilpotent $\theta_{i}\in\End_{\calR_Z}(L^+)$ such that for any $x\in L^+$, we have 
          \[\gamma_i(x) = \exp(\rho_K(\zeta_p-1)\theta_{i})(x).\]
          Denote by $\Rep_{\Gamma}^{\Hsmall}(\calR_Z)$ the category of Hitchin-small representations of $\Gamma$ over $\calR_Z$.

          \item In general, by a \emph{Hitchin-small representation} of $\Gamma$ over $\bfB$ of rank $r$, we mean a finite projective $\bfB$-module $L$ endowed with an action of $\Gamma$ satisfying the condition that there exists some $L^+\in\Rep_{\Gamma}^{\Hsmall}(\calR_Z)$ of rank $r$ such that 
          \[L\cong L^+\otimes_{\calR_Z}\bfB.\]
          Denote by $\Rep_{\Gamma}^{\Hsmall}(\bfB)$ the category of Hitchin-small representations of $\Gamma$ over $\bfB$.

          \item By a \emph{Hitchin-small Higgs module} over $\calR_Z$ of rank $r$, we mean a pair 
          \[(H^+,\theta^+:H^+\to H^+\otimes_{\calR_Z}\Omega^{1,\log}_{\calR_Z}\{-1\})\]
          satisfying $\theta^+\wedge\theta^+ = 0$ such that $H^+$ is a finite projective $\calR_Z$-module of rank $r$ and that via the identification
          \[\Omega^{1,\log}_{\calR_Z} = (\oplus_{i=0}^r\calR_Z\cdot\dlog T_i)/(\calR_Z\cdot\sum_{i=0}^r\dlog T_i)\cong \oplus_{i=1}^d\calR_Z\cdot\dlog T_i,\]
          the Higgs field $\theta^+$ is of the form
          \[\theta^+ = \sum_{i=1}^d(\zeta_p-1)\theta_i\otimes\frac{\dlog T_i}{\xi_K})\]
          with $\theta_i\in\End_{\calR_Z}(H^+)$ topologically nilpotent. Denote by $\HIG^{\Hsmall}(\calR_Z)$ the category of Hitchin-small Higgs module over $\calR_Z$. A Higgs module $(H,\theta)$ over $R_Z$ of rank $r$ is called \emph{Hitchin-small} if it is of the form $(H,\theta) = (H^+,\theta^+)[\frac{1}{p}]$ for some $(H^+,\theta^+)\in\HIG^{\Hsmall}(\calR_Z)$ of rank $r$. Denote by $\HIG^{\Hsmall}(R_Z)$ the category of Hitchin-small Higgs module over $R_Z$.
      \end{enumerate}
  \end{dfn}
  
  \begin{prop}\label{prop:Being Hitchin-small}
      Suppose that $\frakX_Z$ is a semi-stable (not necessarily small) formal scheme over $A^+$ with generic fiber $X_Z$.
      \begin{enumerate}
          \item[(1)] A $v$-bundle $\bL$ on $X_{Z,v}$ of rank $r$ is Hitchin-small if and only if there exists an \'etale covering $\{\frakX_{Z,i}\to\frakX_Z\}_{i\in I}$ of $\frakX_Z$ such that for each $i$, $\frakX_{Z,i}=\Spf(\calR_{Z,i})$ is small semi-stable and if we denote by $X_{i,\infty,Z} = \Spa(\widehat R_{i,\infty,Z},\widehat R_{i,\infty,Z}^+)\to X_{i,Z}$ the corresponding $\Gamma$-torsor in the sense of Example \ref{exam:Gamma torsor}, then the evaluation $\bL(X_{i,\infty,Z})$ is a Hitchin-small representation of $\Gamma$ of rank $r$ over $\widehat R_{i,\infty,Z}$.

          \item[(2)] A Higgs bundle $(\calH,\theta)$ of rank $r$ on $X_{Z,\et}$ is Hitchin-small if and only if  there exists an \'etale covering $\{\frakX_{Z,i}\to\frakX_Z\}_{i\in I}$ of $\frakX_Z$ such that for each $i$, $\frakX_{Z,i}=\Spf(\calR_{Z,i})$ is small semi-stable and the evaluation $(\calH,\theta)(X_{i,Z})$ is a Hitchin-small Higgs module of rank $r$ over $R_{i,Z} = \calR_{i,Z}[\frac{1}{p}]$.
      \end{enumerate}
  \end{prop}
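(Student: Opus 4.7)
Since the Hitchin-small condition is defined by pull-back from the Hitchin base, and since small semi-stable formal schemes form a basis for the \'etale topology on $\frakX_Z$, both sides of the equivalence in items (1) and (2) descend along \'etale coverings. Hence the statement reduces at once to the case where $\frakX_Z=\Spf(\calR_Z)$ is itself small semi-stable with a fixed chart and coordinates $T_0,\dots,T_d$. Under this reduction, the task is to match the local descriptions of Definition \ref{dfn:Hitchin-small representation} with the image of the characteristic-polynomial map landing in the Hitchin-small locus $\calA_r^{\Hsmall}$.

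\textbf{Item (2).} Evaluation on $X_Z$ identifies Higgs bundles on $X_{Z,\et}$ of rank $r$ with Higgs $R_Z$-modules $(H,\theta)$, and the Hitchin fibration sends $(H,\theta)$ to the tuple $(a_1,\dots,a_r)$ of coefficients of the characteristic polynomial $\det(X\cdot\id-\theta)=X^r+a_1X^{r-1}+\cdots+a_r$. Writing $\theta=\sum_{i=1}^d\eta_i\otimes\frac{\dlog T_i}{\xi_K}$ in the chart, the content of the proposition reduces to the claim that the $a_i$ all lie in $p^{>i/(p-1)}H^0(\frakX_Z,\Sym^i(\Omega^{1,\log}_{\frakX_Z}\{-1\}))$ if and only if there exists a $\theta$-stable $\calR_Z$-lattice $H^+\subset H$ on which each $\eta_i$ factors as $(\zeta_p-1)\theta_i$ with $\theta_i\in\End_{\calR_Z}(H^+)$ topologically nilpotent. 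The ``if'' direction is immediate from $|\zeta_p-1|=|p|^{1/(p-1)}$ together with the topological nilpotence of the $\theta_i$. For the ``only if'' direction, we plan to adapt the lattice-adjustment argument of \cite{SW24} (their Prop.~3.12) and its smooth counterpart in \cite{AHLB23}.

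\textbf{Item (1).} When $\frakX_Z$ is small semi-stable, the pro-\'etale $\Gamma$-torsor $X_{\infty,Z}\to X_Z$ constructed in Example \ref{exam:Gamma torsor} is a $v$-cover; hence evaluation identifies $v$-bundles of rank $r$ on $X_{Z,v}$ with finite projective $\widehat R_{\infty,Z}$-modules of rank $r$ carrying a continuous semilinear $\Gamma$-action. The small Simpson functor $\widetilde h$ appearing in diagram (\ref{diag:reduction}) extracts a Higgs field from the $\Gamma$-action via infinitesimal generators, and the relation $\gamma_i(x)=\exp(\rho_K(\zeta_p-1)\theta_i)(x)$ is exactly the exponential of the Higgs datum $(\zeta_p-1)\theta_i\otimes\frac{\dlog T_i}{\xi_K}$. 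Consequently, the Hitchin-smallness of $\bL$ (read off by $\widetilde h$) coincides with the Hitchin-smallness of its associated Higgs bundle (read off by $h$), and item (1) then follows from item (2).

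\textbf{Main obstacle.} The hard step is the ``only if'' direction of item (2): extracting an honest lattice together with topologically nilpotent $\theta_i$ from the mere spectral-type bounds $a_i\in p^{>i/(p-1)}H^0$. The characteristic polynomial only constrains symmetric functions of $\theta$, whereas Definition \ref{dfn:Hitchin-small representation} demands control of the individual components $\eta_i$, which need not even pairwise commute. We plan to handle this by Zariski-localizing on $\frakX_Z$ so that $H^+$ becomes free, representing each $\eta_i$ as a matrix over $\calR_Z$, and exploiting the strict gap between $p^{>i/(p-1)}$ and $p^{i/(p-1)}$ in the definition of $\calA_r^{\Hsmall}$ to absorb the rescaling by $(\zeta_p-1)^{-1}$ into a topologically nilpotent matrix $\theta_i=\eta_i/(\zeta_p-1)$, possibly after replacing $H^+$ by a suitable $\theta$-stable enlargement.
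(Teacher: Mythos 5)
The paper does not reprove this statement at all: after observing that Definition \ref{dfn:Hitchin-small representation} agrees with the corresponding definitions in \cite{SW24}, it quotes the result verbatim as \cite[Prop.~5.21]{SW24}. Your proposal instead attempts a direct argument, and the two steps that carry all of the content are exactly the ones left open. For the ``only if'' direction of (2), ``absorb the rescaling by $(\zeta_p-1)^{-1}$ into a topologically nilpotent matrix, possibly after replacing $H^+$ by a suitable $\theta$-stable enlargement'' is precisely the statement to be proved, not an argument: the Hitchin datum only records the characteristic polynomial of the total Higgs field, so one has to manufacture, \'etale-locally, a single lattice adapted simultaneously to the (generally non-commuting) components of $\theta$, and in the converse direction the passage from \'etale-local smallness back to the global condition $a_i\in p^{>\frac{i}{p-1}}\rH^0(\frakX_Z,\Sym^i(\Omega^{1,\log}_{\frakX_Z}\{-1\}))$ is not formal either, since $\frakm_C$ is not finitely generated and a section lying \'etale-locally in the small subsheaf need not visibly lie in $p^{>\frac{i}{p-1}}\rH^0$. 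This lattice construction is the technical core of \cite{SW24} (and of \cite{AHLB23} in the smooth case); your sketch defers to it without supplying it, so nothing beyond the statement of the difficulty is actually proved.

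Your deduction of (1) from (2) is moreover circular as written. You assert that the $\Gamma$-action on $\bL(X_{i,\infty,Z})$ is of the form $\gamma_i=\exp(\rho_K(\zeta_p-1)\theta_i)$, so that the Hitchin invariant of $\bL$ agrees with that of an associated Higgs bundle; but for a general $v$-bundle the action need not have exponential form on any lattice, and Definition \ref{dfn:Hitchin-small representation} requires a lattice defined over $\calR_{Z,i}$, not merely over $\widehat R^+_{i,\infty,Z}$. Producing such a lattice from the hypothesis $\widetilde h(\bL)\in\calA_r^{\Hsmall}$ is a decompletion/Sen-theoretic statement, and one also needs to identify Heuer's map $\widetilde h$ with the characteristic polynomial of the locally defined operators $\theta_i$; neither point is addressed, and both are part of what \cite[Prop.~5.21]{SW24} establishes. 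So the proposal correctly locates the difficulty and the right reduction to small charts, but it does not close the gap; as it stands, the efficient route is the paper's: match the definitions and invoke \cite[Prop.~5.21]{SW24}.
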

  \begin{proof}
       Comparing Definition \ref{dfn:Hitchin-small representation} with \cite[Def. 4.1, Def. 5.14 and Def. 5.15]{SW24}, the result is exactly \cite[Prop. 5.21]{SW24}.
  \end{proof}

  Now, we are going to give an explicit description for Hitchin-small $\BBdRpn$-local systems on $X_{Z,v}$ and Hitchin-small integrable connections on $\wtX_{Z,n}$ for general $n\geq 1$.

  \begin{construction}\label{Construction:local coefficients ring}
      Suppose that $\frakX_Z = \Spf(\calR_Z)$ is small semi-stable and keep notations in Example \ref{exam:Gamma torsor}. By Corollary \ref{cor:existence of log-structure}, there exists a unique morphism of $\bA_{\inf,K}(Z)$-algebras 
      \[\widetilde \psi:\wtr_Z\to \bA_{\inf,K}(X_{\infty,Z})\]
      sending each $T_i$ to $[T_i^{\flat}]$ for any $0\leq i\leq d$. Inverting $p$ and taking $\xi_K$-adic completion, we get a morphism of $\BBdRp(Z)$-algebras
      \[\widetilde \psi_{\dR}:\wtR_Z\to \BBdRp(X_{\infty,Z}).\]
        Put $\Bppsi = \BBdRp(X_{\infty,Z})$ and then it is equipped with a $\Gamma$-action such that for any $0\leq i,j\leq d$,
      \[\delta_i([T_j^{\flat}]) = [\epsilon]^{\delta_{ij}}[T_j^{\flat}].\]
      Then the morphism $\widetilde{\psi}_{\dR}$ identifies $\wtR_Z$ with a $\Gamma$-stable sub-$\BBdRp(Z)$-algebra of $\Bppsi$, denoted by $\Bpsi$, such that $\Bppsi$ admits a $\Gamma$-equivariant decomposition
      \[\Bppsi = \widehat \oplus_{\underline \alpha \in J_r}\Bpsi\cdot[(T_0^{\flat})^{\alpha_0}]\cdots[(T_d^{\flat})^{\alpha_d}],\]
      where 
      \[J_r = \{\underline \alpha = (\alpha_0,\dots.\alpha_d)\in (\bN[\frac{1}{p}]\cap[0,1))^{d+1}\mid \alpha_0+\cdots+\alpha_r = 0\}.\]
      Clearly, $\Bppsi/t = \widehat R_{\infty,Z}$ and $\Bpsi/t = R_Z$. For simplicity, in what follows, we put
      \[\Bppsi/t^{\infty}:=\Bppsi, \Bpsi/t^{\infty} := \Bpsi \text{ and } \wtR_Z/t^{\infty} := \wtR_Z.\]
  \end{construction}

  \begin{dfn}\label{dfn:local Hitchin-small local systems}
      Fix an $1\leq n\leq \infty$.
      \begin{enumerate}
          \item Let $\bfB\in \{\Bppsi/t^n,\Bpsi/t^n\}$ By an \emph{Hitchin-small representation} of $\Gamma$ over $\bfB$ of rank $r$, we mean a finite projective $\bfB$-module $L$ of rank $r$ endowed with an action of $\Gamma$ such that its reduction $L/t\in\Rep_{\Gamma}^{\Hsmall}(\bfB/t)$. Denote by $\Rep_{\Gamma}^{\Hsmall}(\bfB)$ the category of Hitchin-small representations of $\Gamma$ over $\bfB$.
          
          \item By a \emph{integrable connection} over $\wtR_Z/t^n$ of rank $r$, we mean a pair 
          \[(D,\nabla:D\to D\otimes_{\wtr_Z}\Omega^{1,\log}_{\wtr_Z}\{-1\})\]
          satisfying $\nabla\wedge\nabla = 0$ such that $D$ is a finite projective $\wtR_Z/t^n$-module of rank $r$ and $\nabla$ satisfies the Leibniz rule with respect to the derivation $\rd:\wtr_Z\to\Omega^{1,\log}_{\wtr_Z}\{-1\}$. An integrable connection $(D,\nabla)$ is called \emph{Hitchin-small} if its reduction $(D,\nabla)/t\in \HIG^{\Hsmall}(R_Z)$. Denote by $\MIC^{\Hsmall}(\wtR_Z/t^n)$ the category of Hitchin-small integrable connections over $\wtR_Z/t^n$. For an integrable connection $(D,\nabla)$, denote by
          \[\rD\rR(D,\nabla):=[D\xrightarrow{\nabla}D\otimes_{\wtr_Z}\Omega^{1,\log}_{\wtr_Z}\{-1\}\xrightarrow{\nabla}\cdots\xrightarrow{\nabla} D\otimes_{\wtr_Z}\Omega^{d,\log}_{\wtr_Z}\{-d\}]\]
          its associated de Rham complex and define
          \[\rH^n_{\dR}(D,\nabla):=\rH^n(\rD\rR(D,\nabla)).\]
      \end{enumerate}
  \end{dfn}
  We have the following analogue of Proposition \ref{prop:Being Hitchin-small}.
  \begin{cor}\label{cor:Being Hitchin-small}
      Suppose that $\frakX_Z$ is a liftable (not necessarily small) semi-stable formal scheme over $A^+$ with a fixed lifting $\wtx_Z$ over $\bA_{\inf,K}(Z)$. Denote by $X_Z$ the generic fiber of $\frakX_Z$ and $\wtX_Z$ the lifting of $X_Z$ over $\BBdRp(Z)$ associated to $\wtx_Z$. Fix an $1\leq n\leq \infty$.
      \begin{enumerate}
          \item[(1)] A $\BBdRpn$-local system $\bL$ on $X_{Z,v}$ of rank $r$ is Hitchin-small if and only if there exists an \'etale covering $\{\frakX_{Z,i}\to\frakX_Z\}_{i\in I}$ of $\frakX_Z$ with $\frakX_{Z,i}=\Spf(\calR_{Z,i})$ small semi-stable such that for each $i$, if we denote by $X_{i,\infty,Z} = \Spa(\widehat R_{i,\infty,Z},\widehat R_{i,\infty,Z}^+)\to X_{i,Z}$ the corresponding $\Gamma$-torsor in the sense of Example \ref{exam:Gamma torsor}, then the evaluation $\bL(X_{i,\infty,Z})$ is a Hitchin-small representation of $\Gamma$ over $\BBdRpn(X_{i,\infty,Z})$.

          \item[(2)] An integrable connection $(\calD,\nabla)$ on $X_{Z,n}$ is Hitchin-small if and only if  there exists an \'etale covering $\{\frakX_{Z,i}\to\frakX_Z\}_{i\in I}$ of $\frakX_Z$ such that for each $i$, $\frakX_{Z,i}=\Spf(\calR_{Z,i})$ is small semi-stable with generic fiber $X_{i,Z} = \Spa(R_{i,Z},R_{i,Z}^+)$ and the evaluation $(\calD,\nabla)(X_{i,Z})$ is a Hitchin-small integrable connection over $\wtR_{i,Z}/t^n$, where $\wtR_{i,Z}$ denotes the lifting of $R_{i,Z}$ over $\BBdRp(Z)$ induced from $\wtX_Z$.
      \end{enumerate}
  \end{cor}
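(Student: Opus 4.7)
The plan is to reduce both statements to the $n=1$ case of Proposition \ref{prop:Being Hitchin-small} by exploiting that Hitchin-smallness is defined via reduction modulo $t$ at every level it appears. Globally, Remark \ref{rmk:Hitchin small can be checked modulo t} says that $\bL$ (resp.\ $(\calD,\nabla)$) is Hitchin-small if and only if $\bL/t$ (resp.\ $(\calD,\nabla)/t$) is a Hitchin-small $v$-bundle (resp.\ Higgs bundle). Proposition \ref{prop:Being Hitchin-small} then furnishes the desired étale cover $\{\frakX_{Z,i} = \Spf(\calR_{Z,i}) \to \frakX_Z\}$ by small semi-stables on which the mod-$t$ local conditions hold. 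Locally, Definition \ref{dfn:local Hitchin-small local systems} declares a representation $L \in \Rep_{\Gamma}(\BBdRpn(X_{i,\infty,Z}))$ (resp.\ an integrable connection over $\wtR_{i,Z}/t^n$) Hitchin-small precisely when its reduction modulo $t$ is, exactly matching the $n=1$ definition. So it suffices to identify the mod-$t$ reduction of each local evaluation with the evaluation of the mod-$t$ reduction.

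For (1), since $\bL$ is $v$-locally isomorphic to $(\BBdRpn)^r$, both $\bL(X_{i,\infty,Z})$ and $(\bL/t)(X_{i,\infty,Z})$ can be computed by \v{C}ech limits along a common trivializing $v$-cover of $X_{i,\infty,Z}$; using $\BBdRpn/t = \bB_{\dR,1}^+ = \widehat{\calO}_{X_Z}$, reduction mod $t$ commutes with the \v{C}ech limit and yields a $\Gamma$-equivariant isomorphism $\bL(X_{i,\infty,Z})/t \cong (\bL/t)(X_{i,\infty,Z})$. For (2), $\calD$ is a locally finite free $\calO_{\wtX_{Z,n}}$-module, so $\calD(X_{i,Z})$ is a finite projective $\wtR_{i,Z}/t^n$-module whose mod-$t$ reduction is $(\calD/t)(X_{i,Z})$. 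To see that the induced $\overline{\nabla}$ is a genuine Higgs field (i.e.\ $\calO$-linear), observe that the composite $\rd\colon \calO_{\wtx_Z} \xrightarrow{\widetilde{\rd}} \Omega^{1,\log}_{\wtx_Z} \hookrightarrow \Omega^{1,\log}_{\wtx_Z}\{-1\}$ uses the inclusion given by multiplication by $\xi_K$ under the trivialization $\Omega^1\{-1\} = \Omega^1 \cdot \xi_K^{-1}$, and this vanishes modulo $t$; hence the Leibniz term $\bar{x}\otimes\overline{\rd(f)}$ disappears and $\overline{\nabla}$ becomes $R_{i,Z}$-linear.

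Combining these compatibilities with Proposition \ref{prop:Being Hitchin-small} then proves both assertions in one stroke: at the chosen étale cover, being Hitchin-small at level $n$ (globally or locally) is equivalent to the mod-$t$ analogue, and these mod-$t$ analogues match by Proposition \ref{prop:Being Hitchin-small}. The main obstacle, modest though it is, is the commutation of evaluation with reduction modulo $t$ on the $v$-site, which requires the local finite freeness of $\bL$ to turn evaluations into \v{C}ech limits that behave well under taking quotients; for the integrable connection, the crucial structural observation is that $\rd$ vanishes modulo $t$, which is what upgrades the reduced connection from a mere $\BBdRpn$-linear map to an honest Higgs field.
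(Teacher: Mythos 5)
Your proposal is correct and follows essentially the same route as the paper: the paper's proof is precisely the observation that Hitchin-smallness at level $n$ is detected modulo $t$ (Remark \ref{rmk:Hitchin small can be checked modulo t}) combined with the $n=1$ statement of Proposition \ref{prop:Being Hitchin-small}. Your additional verifications (compatibility of evaluation with reduction mod $t$, and the fact that $\rd$ vanishes mod $t$ so the reduced connection is a Higgs field) are correct elaborations of details the paper leaves implicit, not a different argument.
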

  \begin{proof}
      By Remark \ref{rmk:Hitchin small can be checked modulo t}, this follows from Proposition \ref{prop:Being Hitchin-small} directly.
  \end{proof}

\subsection{Local small Simpson correspondence}
  This subsection is essentially same as \cite[\S 4]{SW24} and aims to establish an equivalence of categories
  \[\Rep^{\Hsmall}_{\Gamma}(\widehat R_{\infty,Z})\simeq \HIG^{\Hsmall}(R_Z)\]
  as in \cite[Cor. 4.9]{SW24}. 
  The only difference from \cite{SW24} is that the reduction of our period sheaf $(\calO\bB_{\dR,\pd,Z}^+,\rd)$ modulo $t$ is actually $(\calO\widehat \bC_{\pd},(1-\zeta_p)\Theta)$ in \cite{SW24} (cf. Remark \ref{rmk:compare to MW-Integral}).
  
  Assume $\frakX_Z = \Spf(\calR_Z)$ is small semi-stable as before and keep notations in Example \ref{exam:Gamma torsor}. In particular, we have the following isomorphism
  \[(\calO\bB_{\dR,\pd,Z}^+,\rd)(X_{\infty,Z}) = (\Bppsi\za\underline W\ya_{\pd}:=\Bppsi\za W_1,\dots W_d\ya_{\pd},\rd=\sum_{i=1}^d-u\frac{\partial}{\partial W_i}\otimes\frac{\dlog T_i}{\xi_K})\]
  via the identification
  \[\Bppsi\za W_0,\dots W_d\ya_{\pd}/(W_0+\cdots+W_r)^{\wedge}_{\pd}\cong \Bppsi\za W_1,\dots W_d\ya_{\pd}\]
  and the identification
  \begin{equation}\label{equ:identification omega}
      \Omega^{1,\log}_{\wtr_Z}\cong (\oplus_{i=0}^d\wtr_Z\cdot\dlog T_i)/(\wtr_Z\cdot\sum_{i=0}^r\dlog T_i)\cong \oplus_{i=1}^d\wtr_Z\cdot \dlog T_i
  \end{equation}
  such that via the isomorphism \eqref{equ:gamma group-II}, the $\Gamma$-action on $\calO\bB_{\dR,\pd,Z}^+(X_{\infty,Z}) = \Bppsi\za\underline W\ya_{\pd}$ is given by
  \[\gamma_i(W_j) = W_j+\delta_{ij}u\frac{t}{\xi_K},~\forall~1\leq i,j\leq d.\]
  Define
  \[(B_{\infty},\Theta):=\left((\calO\bB^+_{\dR,\pd,Z},\rd)(X_{\infty})\right)/t\]
  and then we have
  \begin{equation}\label{equ:B_infty}
      (B_{\infty},\Theta) = (\widehat R_{\infty,Z}[\underline W]^{\wedge}_{\pd},-\sum_{i=1}^d(\zeta_p-1)\frac{\partial}{\partial W_i}\otimes\frac{\dlog T_i}{\xi_K})
  \end{equation}
  such that the action of $\Gamma \cong \oplus_{i=1}^d\Zp\gamma_i$ on $B_{\infty}$ is given by
  \[\gamma_i(W_j) = W_j+\delta_{ij}\rho_K(\zeta_p-1),~\forall~1\leq i,j\leq d.\]
  Set 
  \[B^+ := R_Z^+[\underline W]^{\wedge}_{\pd}\subset \widehat R_{\infty,Z}^+[\underline W]^{\wedge}_{\pd}=: B_{\infty}^+.\]
  Then both $B_{\infty}^+$ and $B^+$ are $\Theta$-preserving sub-$R_Z^+$-algebras of $B_{\infty}$ and are stable under the $\Gamma$-action.

  Now let $(H^+,\theta^+)$ be a Hitchin-small Higgs module of rank $r$ over $\calR_Z$ such that via the identification \eqref{equ:identification omega}, the Higgs field $\theta^+$ is of the form
  \[\theta^+ = \sum_{i=1}^d(\zeta_p-1)\theta_i\otimes\frac{\dlog T_i}{\xi_K}\]
  with $\theta_i\in\End_{\calR_Z^+}(H^+)$ topologically nilpotent. Put
  \[\Theta_H^+:=\theta^+\otimes\id+\id\otimes \Theta:H^+\otimes_{\calR_Z}B_{\infty}^+\to H^+\otimes_{\calR_Z}B_{\infty}^+\otimes_{\calR_Z}\Omega^{1,\log}_{\calR_Z}\{-1\}\]
  and then $H^+\otimes_{\calR_Z}B^+$ is a $\Theta_H^+$-preserving sub-$\calR_Z$-module of $H^+\otimes_{\calR_Z}B_{\infty}^+$. Put
  \[L^+(H^+,\theta^+):=(H^+\otimes_{\calR_Z}B^+)^{\Theta_H^+=0} \text{ and }L_{\infty}^+(H^+,\theta^+):=(H^+\otimes_{\calR_Z}B_{\infty}^+)^{\Theta_H^+=0}.\]
  Then $L^+(H^+,\theta^+)$ and $L_{\infty}^+(H^+,\theta^+)$ inherit the $\Gamma$-actions from $B_{\infty}^+$ satisfying 
  \[L_{\infty}^+(H^+,\theta^+) = L^+(H^+,\theta^+)\otimes_{\calR_Z}\widehat R_{\infty,Z}^+.\]
  \begin{lem}\label{lem:L^+}
      Keep notations as above.
      The $\calR_Z$-module $L^+:=L^+(H^+,\theta^+)$ is a finite projective of rank $r$ such that for any $x\in L^+$ and any $1\leq i\leq d$, we have
      \begin{equation}\label{equ:L^+}
          L^+ = \exp(\sum_{i=1}^d\theta_iW_i)(H^+) := \{x\in H^+\otimes_{\calR_Z}B^+\mid x=\sum_{J\in\bN^d}\underline \theta^{J}(y)\underline W^{[J]}, y\in H^+\}
      \end{equation}
      such that the $\Gamma$-action on $L^+$ is given by that for any $x\in L^+$ and any $1\leq i\leq d$,
      \[\gamma_i(x) = \exp(\theta_i\rho_K(\zeta_p-1))(x).\]
      As a consequence, we have
      \[L^+(H^+,\theta^+)\in \Rep^{\Hsmall}_{\Gamma}(\calR_Z) \text{ and }L_{\infty}^+(H^+,\theta^+)\in \Rep^{\Hsmall}_{\Gamma}(\widehat R_{\infty,Z}^+).\]
  \end{lem}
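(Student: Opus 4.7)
The plan is to identify $L^+ = L^+(H^+,\theta^+)$ explicitly with $H^+$ via the ``formal parallel transport'' $y \mapsto \sum_{J \in \bN^d}\underline{\theta}^J(y)\,\underline{W}^{[J]}$, and then transport the $\Gamma$-action along this bijection using the pd-binomial formula. Since Remark \ref{rmk:compare to MW-Integral} matches our mod-$t$ reduction of $(\calO\bB_{\dR,\pd,Z}^+,\rd)$ with the sheaf from \cite{SW24}, the argument would closely mirror \cite[\S4]{SW24} adapted to the semi-stable log-setup.

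First I would write any $x \in H^+ \otimes_{\calR_Z} B^+$ uniquely as $x = \sum_J x_J\,\underline{W}^{[J]}$ with $x_J \in H^+$, using that $B^+ = R_Z^+[W_1,\ldots,W_d]^{\wedge}_{\pd}$ is the $p$-adically completed free pd-algebra (the relation $W_0+\cdots+W_r = 0$ being absorbed, cf.\ Proposition \ref{prop:local description of OB_dR^+}). Using $\theta^+ = \sum_{i=1}^d (\zeta_p-1)\theta_i\otimes\frac{\dlog T_i}{\xi_K}$ together with (\ref{equ:B_infty}), a direct computation yields
\[
\Theta_H^+(x) \;=\; \sum_{i=1}^d (\zeta_p-1)\Bigl(\sum_J\bigl[\theta_i(x_J) - x_{J+E_i}\bigr]\underline{W}^{[J]}\Bigr)\otimes\frac{\dlog T_i}{\xi_K},
\]
so $\Theta_H^+(x)=0$ is equivalent to the recursion $x_{J+E_i} = \theta_i(x_J)$. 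Commutativity of the $\theta_i$ (from $\theta^+\wedge\theta^+=0$) makes this consistent and forces $x_J = \underline{\theta}^J(x_0)$; topological nilpotency of the $\theta_i$ then guarantees convergence of $\sum_J \underline{\theta}^J(y)\underline{W}^{[J]}$ in the pd-completion, yielding (\ref{equ:L^+}) and a $\calR_Z$-linear bijection $H^+ \xrightarrow{\sim} L^+$ sending $y$ to $x_0=y$. In particular $L^+$ is finite projective of rank $r$.

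Next I would compute the $\Gamma$-action: since $\gamma_i$ fixes $\widehat{R}_{\infty,Z}^+$ and sends $W_j \mapsto W_j + \delta_{ij}\rho_K(\zeta_p-1)$, expanding via the pd-binomial identity $(W_i+c)^{[n]} = \sum_{k=0}^n W_i^{[k]}c^{[n-k]}$ with $c = \rho_K(\zeta_p-1)$ and reindexing $j_i = k+k'$ collapses the double sum to
\[
\gamma_i(x) \;=\; \sum_K \underline{\theta}^K\bigl(\exp(\theta_i\,\rho_K(\zeta_p-1))(y)\bigr)\,\underline{W}^{[K]},
\]
which under the identification $L^+\cong H^+$ is exactly the action of $\exp(\theta_i\,\rho_K(\zeta_p-1))$ on $L^+$ (with $\theta_i$ extended to $\End_{\calR_Z}(L^+)$ through the bijection; note that $\exp(\theta_i\,\rho_K(\zeta_p-1))$ commutes with each $\theta_j$, so it genuinely preserves $L^+$). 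Base change along $\calR_Z \to \widehat{R}_{\infty,Z}^+$ gives the assertion for $L_\infty^+$, and Hitchin-smallness of both follows from Definition \ref{dfn:Hitchin-small representation} using topological nilpotency of the $\theta_i$.

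The main technical point to verify carefully is that the pd-binomial expansion of $(W_i + \rho_K(\zeta_p-1))^{[n]}$ and the exponential series $\exp(\theta_i\,\rho_K(\zeta_p-1))$ both make sense in the $p$-complete setting. This hinges on $\rho_K(\zeta_p-1)$ admitting pd-powers in the relevant coefficient rings, which uses the valuation estimate $\nu_p(\rho_K) = \frac{1}{p-1}+\nu_p(\calD_K) > 0$ from Example \ref{exam:Gamma torsor} together with $p$-adic completeness of $B^+$. With these convergence facts in hand, the rest is bookkeeping parallel to \cite[Lem.~4.2, Prop.~4.3]{SW24}.
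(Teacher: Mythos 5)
Your proof is correct and follows essentially the same route as the paper's: expand $x=\sum_{J}x_J\underline W^{[J]}$, derive the recursion $x_{J+E_i}=\theta_i(x_J)$ from $\Theta_H^+(x)=0$, conclude $L^+=\exp(\sum_i\theta_iW_i)(H^+)\cong H^+$, and read off the $\Gamma$-action from $\gamma_i(W_j)=W_j+\delta_{ij}\rho_K(\zeta_p-1)$; your pd-binomial computation just spells out what the paper states in one line. One small slip: $\gamma_i$ does not fix $\widehat R_{\infty,Z}^+$ (the $\Gamma$-action there is nontrivial), but it does fix $\calR_Z=R_Z^+$ and $H^+$, which is all your computation actually uses, so the conclusion is unaffected.
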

  \begin{proof}
      Let $x = \sum_{J\in\bN^d}x_{J}\underline W^{[J]}$ be an element in $H^+\otimes_{\calR_Z}B^+$ with $x_J\in H^+$ for any $J\in\bN^d$. Then we have 
      \begin{equation*}
          \begin{split}
              \Theta_H^+(x) = &\sum_{i=1}^d\sum_{J\in\bN^d}\big((\zeta_p-1)\theta_i(x_J)\underline W^{[J]}-(\zeta_p-1)x_J\underline W^{[J-E_i]}\big)\otimes\frac{\dlog T_i}{\xi_K}\\
              =& (\zeta_p-1)\sum_{i=1}^d\sum_{J\in\bN^d}\big(\theta_i(x_J)-x_{J+E_i}\big)\underline W^{[J]}\otimes\frac{\dlog T_i}{\xi_K}.
          \end{split}
      \end{equation*}
      Thus, $x\in L^+$ if and only if for any $J\in\bN^d$ and any $1\leq i\leq d$, 
      \[x_{J+E_i} = \theta_i(x_J).\]
      Using this, by iteration, we see that $x\in L^+$ if and only if 
      \[x = \sum_{J\in\bN^d}\underline \theta^J(x_0)\underline W^{[J]} = \exp(\sum_{i=1}^d\theta_iW_i)(x_0).\]
      So we see that
      \[L^+ = \exp(\sum_{i=1}^d\theta_iW_i)(H^+)\]
      is isomorphic to $H^+$ as an $\calR_Z$-module and thus is a finite projective of rank $r$.The description of $\Gamma$-action on $L^+$ follows as $\gamma_i(W_j) = W_j+\delta_{ij}\rho_K(\zeta_p-1)$ for any $1\leq i,j\leq d$.
  \end{proof}
  
  Before we move on, let us recall some lemmas in \cite{SW24}
  \begin{lem}\label{lem:decompletion}
      The base-change functor $L^+\mapsto L_{\infty}^+:=L^+\otimes_{\calR_Z}\widehat R_{\infty}^+$ induces an equivalence of categories
          \[\Rep_{\Gamma}^{\Hsmall}(\calR_Z)\simeq \Rep_{\Gamma}^{\Hsmall}(\widehat R_{\infty,Z}^+)\]
          such that the natural map
          \[\rR\Gamma(\Gamma,L^+)\to\rR\Gamma(\Gamma,L_{\infty}^+)\]
          identifies the former with a direct summand of the latter whose complement is concentrated in degree $\geq 1$ and killed by $\zeta_p-1$.
  \end{lem}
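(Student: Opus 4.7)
The plan is to adapt the standard Faltings--Sen decompletion strategy to the present integral semi-stable setting. Essential surjectivity of the base change functor is already built into Definition \ref{dfn:Hitchin-small representation}(2), so the real content lies in full faithfulness together with the cohomological splitting. For $L^+, M^+ \in \Rep_\Gamma^{\Hsmall}(\calR_Z)$, the internal Hom $N^+ := \Hom_{\calR_Z}(L^+, M^+)$ inherits a Hitchin-small $\Gamma$-action whose endomorphisms are the commutators $[\theta_i, -]$, which remain topologically nilpotent. One then has $\Hom_{\calR_Z[\Gamma]}(L^+, M^+) = (N^+)^\Gamma$, and similarly after base change. Thus full faithfulness reduces to proving $(N_\infty^+)^\Gamma = N^+$ for any Hitchin-small $N^+$, which together with the higher-degree claim amounts to showing that the natural map $\rR\Gamma(\Gamma, N^+) \to \rR\Gamma(\Gamma, N_\infty^+)$ is a split injection whose cokernel sits in degrees $\geq 1$ and is killed by $\zeta_p - 1$.

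Next, I would exploit the $\Gamma$-equivariant weight decomposition $\widehat R_{\infty,Z}^+ = \calR_Z \,\widehat\oplus\, M$, where $M := \widehat\oplus_{\underline\alpha \in J_r \setminus \{0\}} \calR_Z \cdot (T_0^\flat)^{\alpha_0}\cdots (T_d^\flat)^{\alpha_d}$ arising as the mod-$t$ reduction of Construction \ref{Construction:local coefficients ring}. Tensoring with $N^+$ yields a $\Gamma$-equivariant splitting $N_\infty^+ = N^+ \oplus (N^+ \otimes_{\calR_Z} M)$, which in turn splits the Koszul complex computing continuous $\Gamma$-cohomology. It therefore suffices to verify that for every nonzero $\underline\alpha \in J_r$ the complex $\rR\Gamma(\Gamma, N^+ \cdot (T^\flat)^{\underline\alpha})$ is concentrated in degrees $\geq 1$ and killed by $\zeta_p - 1$. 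On such a weight piece each $\gamma_i$ acts as $\zeta^{c_i(\underline\alpha)} \cdot \exp(\rho_K(\zeta_p - 1)\theta_i)$ for explicit exponents $c_i(\underline\alpha)$ read off from Example \ref{exam:Gamma torsor}. The constraint defining $J_r$ forces $\alpha_0 = \cdots = \alpha_r = 0$, so for $\underline\alpha \neq 0$ there must exist some $j \geq r+1$ with $c_j(\underline\alpha) = \alpha_j \neq 0$; combining the partial invertibility of $\zeta^{\alpha_j} - 1$ with the topological nilpotence of $\theta_j$ renders $\gamma_j - 1$ invertible on this summand up to a controlled scalar, yielding the desired vanishing after multiplication by $\zeta_p - 1$.

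The main obstacle lies in the bookkeeping of $p$-adic valuations in the previous step: for $\alpha_j \in \bN[1/p] \cap [0, 1) \setminus \{0\}$ of arbitrary $p$-power denominator, one must show that the operator $(\zeta_p - 1) \cdot (\gamma_j - 1)^{-1}$ remains bounded on $N^+ \cdot (T^\flat)^{\underline\alpha}$, after absorbing the topologically nilpotent perturbation coming from $\theta_j$. Fortunately, by Remark \ref{rmk:compare to MW-Integral} the period sheaf $(\calO\bB_{\dR,\pd,Z}^+,\rd)$ reduces modulo $t$ to the sheaf $(\calO\widehat \bC_{\pd,Z}, (1-\zeta_p)\Theta)$ of \cite{SW24}, and our Definition \ref{dfn:Hitchin-small representation} matches theirs under this identification. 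Since the present lemma concerns precisely the mod-$t$ (i.e.\ $n = 1$) theory, this analysis is already carried out in \cite[\S 4]{SW24}; the proof therefore reduces to verifying the normalization constants and invoking their decompletion theorem verbatim. The extension to higher $n$ will be treated separately in \S\ref{sec:global RH} by dévissage along the $t$-adic filtration.
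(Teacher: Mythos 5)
Your endpoint agrees with the paper: the paper's entire proof of this lemma is the one-line citation to \cite[Prop. 4.2]{SW24}, which is exactly where your final paragraph lands, and your reduction of full faithfulness to $\Gamma$-invariants of the internal Hom plus a cohomological splitting along the weight decomposition of $\widehat R_{\infty,Z}^+$ over $\calR_Z$ is the standard route that \cite{SW24} itself follows. (Note only that essential surjectivity being ``built in'' is indeed how Definition \ref{dfn:Hitchin-small representation}(2) is phrased, and that the splitting you need is the \emph{integral} decomposition of $\widehat R_{\infty,Z}^+$, not literally the mod-$t$ reduction of Construction \ref{Construction:local coefficients ring}, which is a rational statement.)

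However, the sketched weight-piece analysis contains a genuine error. You read the condition defining $J_r$ literally, conclude $\alpha_0=\cdots=\alpha_r=0$, and deduce that every nonzero weight must have some $\alpha_j\neq 0$ with $j\geq r+1$. That cannot be right: $\widehat R_{\infty,Z}^+$ visibly contains $T_0^{1/p},\dots,T_r^{1/p}$, so nontrivial weight pieces supported on the first $r+1$ coordinates do occur, and in the totally degenerate case $r=d$ your reading would force $M=0$, i.e.\ $\widehat R_{\infty,Z}^+=\calR_Z$, which is absurd since the torsor is nontrivial. The indexing condition must be understood modulo $\bZ$ (it parametrizes characters of $\Gamma$ after quotienting by the relation $T_0^\flat\cdots T_r^\flat=\varpi^a$; equivalently one normalizes $\alpha_0$ in terms of $\alpha_1,\dots,\alpha_r$). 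The statement you actually need is: on a nonzero weight piece \emph{some} $\gamma_i$ with $1\leq i\leq d$ --- possibly one of the directions $i\leq r$, which acts through $\delta_0^{-1}\delta_i$ on monomials in $T_0,\dots,T_r$ --- acts by a nontrivial $p$-power root of unity $\zeta^{c_i}$ times $\exp(\rho_K(\zeta_p-1)\theta_i)$, and then the integrality of $(\zeta_p-1)/(\zeta_{p^n}-1)$ together with the smallness of the perturbation makes $\gamma_i-1$ invertible up to $\zeta_p-1$. With this correction the argument is exactly the content of \cite[\S 4]{SW24}, so your ultimate appeal to that result keeps the lemma justified; but the step as you wrote it would fail.
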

  \begin{proof}
     This is \cite[Prop. 4.2]{SW24}.
  \end{proof} 
  \begin{lem}\label{lem:MW-Integral}
      Fix an $L^+\in \Rep^{\Hsmall}_{\Gamma}(\calR_Z)$ of rank $r$ and put  $L_{\infty}^+ = L^+\otimes_{\calR_Z}\widehat R_{\infty,Z}^+$. Then the following assertions are true.

      \begin{enumerate}
          \item[(1)] The natural inclusion
          \[\rR\Gamma(\Gamma,L^+\otimes_{\calR_Z}B^+)\to\rR\Gamma(\Gamma,L_{\infty}^+\otimes_{\widehat R_{\infty,Z}^+}B_{\infty}^+)\]
          identifies the former with a direct summand of the latter whose complement is concentrated in degree $\geq 1$ and killed by $\zeta_p-1$.

          \item[(2)] Let $\theta_i\in \End_{\calR}(L^+)$ be as in Definition \ref{dfn:Hitchin-small representation}(1). Then the $\calR_Z$-module
          \begin{equation}\label{equ:MW-Integral-I}
          \begin{split}
              \rH^+(L^+):=\rH^0(\Gamma,L^+\otimes_{\calR_Z}B^+)=&\exp(-\sum_{i=1}^d\theta_iW_i)(L^+)\\
              :=&\{x\in L^+\otimes_{\calR_Z}B^+\mid x = \sum_{J\in \bN^d}(-1)^{|J|}\underline \theta^J(y)\underline W^{[J]}, y\in L^+\}.
          \end{split}
          \end{equation}
          is finite projective of rank $r$ such that the natural inclusion $\rH^+(L^+)\hookrightarrow L^+\otimes_{\calR_Z}B^+$ induces an isomorphism
          \begin{equation}\label{equ:MW-Integral-II}
            H^+(L^+)\otimes_{\calR_Z}B^+\xrightarrow{\cong}L^+\otimes_{\calR_Z}B^+.
          \end{equation}
          Moreover for any $n\geq 1$, $\rH^n(\Gamma,L^+\otimes_{\calR_Z}B^+)$ is killed by $\rho_K(\zeta_p-1)$.

          \item[(3)] Via the isomorphism \eqref{equ:identification omega}, the restriction $\theta^+$ of 
          \[\Theta_L^+:=\id_{L^+}\otimes\Theta: L^+\otimes_{\calR_Z}B^+\to L^+\otimes_{\calR_Z}B^+\otimes_{\calR_Z}\Omega^{1,\log}_{\calR_Z}\{-1\}\]
          to $H^+(L^+)$ is given by
          \[\theta^+:=\sum_{i=1}^d(\zeta_p-1)\theta_i\otimes\frac{\dlog T_i}{\xi_K}.\]
          As a consequence, we have $(H^+(L^+),\theta^+)\in \HIG^{\Hsmall}(\calR_Z)$.
      \end{enumerate}
  \end{lem}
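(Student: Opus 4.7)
The three parts share a common theme: trivialize the $\Gamma$-action on $L^+\otimes_{\calR_Z}B^+$ via the explicit intertwining operator $\sigma:=\exp\bigl(-\sum_i\theta_i W_i\bigr)$ coming from the Hitchin-small hypothesis, and combine with Lemma~\ref{lem:decompletion} and standard d\'evissage.

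For part~(1), my plan is to equip both $B^+$ and $B_\infty^+$ with their $\Gamma$-stable pd-degree filtration $F_n:=\bigoplus_{|J|\le n}\calR_Z\cdot W^{[J]}\subset B^+$ (resp.\ $\bigoplus_{|J|\le n}\widehat R_{\infty,Z}^+\cdot W^{[J]}\subset B_\infty^+$). Since $\gamma_i(W_j)-W_j=\delta_{ij}\rho_K(\zeta_p-1)$ sits in pd-degree zero, $\Gamma$ acts trivially on each associated graded $\mathrm{gr}_n$, which is finite free over the base. Hence $\rR\Gamma(\Gamma,L^+\otimes\mathrm{gr}_nB^+)$ is a finite direct sum of copies of $\rR\Gamma(\Gamma,L^+)$, and likewise over $\widehat R_{\infty,Z}^+$, so Lemma~\ref{lem:decompletion} applies graded-piece-by-graded-piece. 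A standard induction on $n$ via the long exact sequences attached to $0\to F_{n-1}\to F_n\to\mathrm{gr}_n\to 0$, followed by a limit as $n\to\infty$, then yields~(1).

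For part~(2), I would first verify that $\sigma$ (defined by $\sigma(x)=\sum_J(-1)^{|J|}\underline\theta^J(x)\,\underline W^{[J]}$ on $L^+$ and extended $B^+$-linearly) is well-defined and invertible with inverse $\tau:=\exp\bigl(\sum_i\theta_i W_i\bigr)$, using topological nilpotence of the $\theta_i$ and the pd-structure on the $W_i$. A direct computation using $\theta_i\theta_j=\theta_j\theta_i$ (from $\theta^+\wedge\theta^+=0$) together with $\gamma_i(W_j)=W_j+\delta_{ij}\rho_K(\zeta_p-1)$ and $\gamma_i|_{L^+}=\exp(\rho_K(\zeta_p-1)\theta_i)$ then shows $\gamma_i(\sigma(x))=\sigma(x)$ for every $x\in L^+$, giving the $\supseteq$-direction of~(\ref{equ:MW-Integral-I}). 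The multiplication map $\sigma(L^+)\otimes_{\calR_Z}B^+\to L^+\otimes_{\calR_Z}B^+$ coincides with $\sigma$ itself, a $B^+$-linear automorphism; this yields both the isomorphism~(\ref{equ:MW-Integral-II}) and that $H^+(L^+)\cong L^+$ is finite projective of rank $r$. To upgrade $\sigma(L^+)\subseteq H^+(L^+)$ to equality and to extract the $\rho_K(\zeta_p-1)$-torsion in higher degrees, I would rewrite $\gamma_i=\exp(cD_i)$ with $c=\rho_K(\zeta_p-1)$ and $D_i=\theta_i+\partial/\partial W_i$ a family of commuting operators, so $\gamma_i-1=cD_i\,U_i$ with $U_i=\sum_{k\ge0}(cD_i)^k/(k+1)!$. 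On each pd-graded piece, $\partial/\partial W_i$ vanishes, $D_i$ reduces to $\theta_i$, and $U_i$ becomes invertible; the Koszul complex of $\{\gamma_i-1\}$ on $\mathrm{gr}_n$ therefore becomes quasi-isomorphic to that of $\{cD_i\}$. A short recurrence identifies $\bigcap_i\ker D_i$ with $\sigma(L^+)$, and the remaining Koszul differentials all carry a factor of $c$, producing the required torsion.

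Part~(3) is a direct expansion: substituting $y=\sigma(x)$ into $\Theta_L^+=\id_{L^+}\otimes\Theta$ with $\Theta=-(\zeta_p-1)\sum_i(\partial/\partial W_i)\otimes(\dlog T_i/\xi_K)$ from~(\ref{equ:B_infty}), and re-indexing $J\mapsto J-E_i$, yields $\Theta_L^+(\sigma(x))=\sum_i(\zeta_p-1)\sigma(\theta_i x)\otimes(\dlog T_i/\xi_K)$; transporting along $\sigma\colon L^+\xrightarrow{\sim}H^+(L^+)$ gives $\theta^+=\sum_i(\zeta_p-1)\theta_i\otimes\dlog T_i/\xi_K$, which is Hitchin-small since the $\theta_i$ are topologically nilpotent. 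The main technical obstacle will be the torsion claim in~(2): rigorously passing from the Koszul complex of $\{\gamma_i-1\}$ to that of $\{cD_i\}$ requires a spectral-sequence argument along the pd-filtration, because $\partial/\partial W_i$ is not nilpotent on $B^+$ globally; one can either handle this via an $E_\infty$-level analysis on the associated graded, or by adapting the decompletion argument of \cite[Prop.~4.2]{SW24}.
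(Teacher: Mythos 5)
Your overall strategy (untwist by $\sigma=\exp(-\sum_i\theta_iW_i)$, compute $\Gamma$-cohomology by a Koszul complex, reduce the $\infty$-side to the finite side via Lemma \ref{lem:decompletion}) is sound, and in fact it is essentially how the result the paper actually invokes here, \cite[Cor. 4.8]{SW24}, is proved: the paper's own proof of this lemma simply cites that corollary for items (1)--(2) and verifies (3) by the same direct expansion you give, so your part (3) and the first half of your part (2) (well-definedness and invertibility of $\sigma$, the identification $\rH^0=\sigma(L^+)$, finite projectivity, and \eqref{equ:MW-Integral-II}) are correct and match the paper. The genuine gaps are in the two integral torsion claims.

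For (1), d\'evissage along the pd-filtration cannot deliver ``killed by $\zeta_p-1$'': an extension of two modules killed by $\zeta_p-1$ is in general only killed by $(\zeta_p-1)^2$, so your induction on $n$ degrades the bound at every filtration step and gives no uniform statement after passing to the ($p$-completed) limit; likewise the direct-summand structure of $\rR\Gamma$ does not assemble from the long exact sequences --- it has to be produced at the module level, from the $\Gamma$-equivariant weight splitting $\widehat R_{\infty,Z}^+=\calR_Z\oplus N$, after which the whole content is a uniform, character-by-character estimate for $\rR\Gamma(\Gamma,L^+\otimes_{\calR_Z}B^+\widehat\otimes_{\calR_Z}N)$; this is exactly what \cite[Prop. 4.2, Cor. 4.8]{SW24} supply and what the filtration argument does not. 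For (2), the fallback you flag (a spectral sequence along the pd-filtration) fails for the same structural reason: on the associated graded the operators $\partial/\partial W_i$ vanish, so already for $\theta_i=0$ the $E_1$-page is not killed by any power of $\zeta_p-1$; the cancellation happens only through higher differentials, from which no integral torsion bound can be extracted. The repair is simpler than what you propose and uses only your own ingredients: after stripping the units $U_i$, the Koszul differential for the family $\{cD_i\}$, $c=\rho_K(\zeta_p-1)$, is literally $c$ times the differential for $\{D_i\}$; since $\sigma$ conjugates $D_i$ into $\partial/\partial W_i$, the $\{D_i\}$-Koszul complex is acyclic in positive degrees by the pd-Poincar\'e lemma (cf. Lemma \ref{lem:defined d on quotient}(3)), and $c$-torsion-freeness then gives $\rH^n=\ker d_{D}/(c\cdot\mathrm{im}\, d_{D})$ for $n\geq 1$, which is visibly killed by $c$ --- no spectral sequence is needed. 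With these two repairs your route becomes a correct, self-contained alternative to the paper's citation of \cite[Cor. 4.8]{SW24}.
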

  \begin{proof}
     Items (1) and (2) follows from \cite[Cor. 4.8]{SW24}. Item (2) follows \eqref{equ:MW-Integral-I} together with that $\Theta = -\sum_{i=1}^d(\zeta_p-1)\frac{\partial}{\partial W_i}\otimes\frac{\dlog T_i}{\xi_K}$ (cf. \eqref{equ:B_infty}).
  \end{proof}
  Now, we are going to prove the following Simpson correspondence.
  \begin{prop}\label{prop:integral simpson}
      \begin{enumerate}
            \item For any $L_{\infty}^+\in \Rep^{\Hsmall}_{\Gamma}(\widehat R_{\infty,Z}^+)$ of rank $r$, we have
            \begin{equation*}
                \rH^i(\Gamma,L_{\infty}^+\otimes_{\widehat R_{\infty}^+}B^+_{\infty}) = \left\{\begin{array}{rcl}
                    H^+(L_{\infty}^+), & i=0\\
                  \rho_K(\zeta_p-1)\text{-torsion}, &i\geq 1,
                \end{array}\right.
            \end{equation*}
            where $H^+(L_{\infty}^+)$ is a finite projective $\calR_Z$-module of rank $r$. The restriction of 
            \[\Theta_{L_{\infty}}^+=\id_{L_{\infty}^+}\otimes\Theta:L_{\infty}^+\otimes_{\widehat R_{\infty,Z}^+}B^+_{\infty}\to L_{\infty}^+\otimes_{\widehat R_{\infty,Z}^+}B^+_{\infty}\otimes_{\calR_Z}\Omega^{1,\log}_{\calR_Z}\{-1\}\]
            to $\rH^+(L_{\infty}^+)$ defines a Higgs field $\theta^+$ on $H^+(L^+_{\infty})$ such that $(H^+(L^+_{\infty}),\theta^+)\in \HIG^{\Hsmall}(\calR_Z)$.

            (2) For any $(H^+,\theta^+)\in\HIG^{\Hsmall}(\calR_Z)$ of rank $r$, put 
            \[\Theta_H^+:=\theta^+\otimes\id+\id\otimes \Theta:H^+\otimes_{\calR_Z}B_{\infty}^+\to H^+\otimes_{\calR_Z}B_{\infty}^+\otimes_{\calR_Z}\Omega^{1,\log}_{\calR_Z}\{-1\}.\]
            Then $L^+_{\infty}(H^+,\theta^+):=(H^+\otimes_{\calR_Z}B_{\infty}^+)^{\Theta_H^+ = 0}$ equipped with the $\Gamma$-action induced from the $\Gamma$-action on $B_{\infty}^+$ is a well-defined object of rank $r$ in $\Rep_{\Gamma}^{\Hsmall}(\widehat R_{\infty,Z}^+)$.

            (3) The functors $L_{\infty}^+\mapsto (H^+(L_{\infty}^+),\theta^+)$ and $(H^+,\theta^+)\mapsto L^+_{\infty}(H^+,\theta^+)$ are quasi-inverse of each other and thus induce an equivalence of categories
            \[\Rep^{\Hsmall}_{\Gamma}(\widehat R_{\infty,Z}^+)\simeq \HIG^{\Hsmall}(\calR_Z)\]
            which preserves ranks, tensor products and dualities. Moreover, for any $L_{\infty}^+\in \Rep^{\Hsmall}_{\Gamma}(\widehat R_{\infty,Z}^+)$ with corresponding $(H^+,\theta^+)\in\HIG^{\Hsmall}(\calR_Z)$, we have a natural isomorphism that is compatible with Higgs fields
            \begin{equation}\label{equ:compatible with Higgs fields}
                (L_{\infty}^+\otimes_{\widehat R_{\infty,Z}^+}B^+_{\infty},\Theta_{L_{\infty}}^+)\cong (H^+\otimes_{\calR_Z}B_{\infty}^+,\Theta_H^+)
            \end{equation}
            and a quasi-isomorphism 
            \[\rR\Gamma(\Gamma,L_{\infty})\simeq \HIG(H,\theta)\]
            where $L_{\infty} = L_{\infty}^+[\frac{1}{p}]$ and $(H,\theta) = (H^+,\theta^+)[\frac{1}{p}]$.
      \end{enumerate}
  \end{prop}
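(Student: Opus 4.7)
The plan is to build the equivalence in two stages: first descend from $\widehat R_{\infty,Z}^+$ to $\calR_Z$ via Lemma \ref{lem:decompletion}, then invoke the correspondence over $\calR_Z$ already packaged in Lemma \ref{lem:L^+} and Lemma \ref{lem:MW-Integral}. For part (1), given $L_\infty^+\in\Rep^{\Hsmall}_{\Gamma}(\widehat R_{\infty,Z}^+)$, the decompletion equivalence produces $L^+\in\Rep^{\Hsmall}_{\Gamma}(\calR_Z)$ with $L_\infty^+ = L^+\otimes_{\calR_Z}\widehat R_{\infty,Z}^+$, and I set $\rH^+(L_\infty^+):=\rH^+(L^+)$ with Higgs field produced by Lemma \ref{lem:MW-Integral}(3). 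The cohomology statement follows by combining Lemma \ref{lem:MW-Integral}(1) (the inclusion $\rR\Gamma(\Gamma,L^+\otimes B^+)\hookrightarrow \rR\Gamma(\Gamma,L_\infty^+\otimes B_\infty^+)$ is a direct summand with complement in positive degree killed by $\zeta_p-1$) with Lemma \ref{lem:MW-Integral}(2) (the $\rH^{\geq 1}$ of the left-hand side is killed by $\rho_K(\zeta_p-1)$); since $\zeta_p-1$ divides $\rho_K(\zeta_p-1)$, all higher cohomology on the right is $\rho_K(\zeta_p-1)$-torsion, and the degree-zero part is exactly $\rH^+(L^+)$.

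For part (2), Lemma \ref{lem:L^+} gives $L^+(H^+,\theta^+)\in\Rep^{\Hsmall}_{\Gamma}(\calR_Z)$, and I then set $L^+_\infty(H^+,\theta^+):=L^+(H^+,\theta^+)\otimes_{\calR_Z}\widehat R^+_{\infty,Z}$. The agreement with the intrinsic definition $(H^+\otimes_{\calR_Z}B^+_\infty)^{\Theta_H^+=0}$ is verified by repeating the recursion $x_{J+E_i} = \theta_i(x_J)$ in the proof of Lemma \ref{lem:L^+} but allowing the coefficients $x_J$ to lie in $H^+\otimes_{\calR_Z}\widehat R^+_{\infty,Z}$, giving $(H^+\otimes B^+_\infty)^{\Theta_H^+=0} = \exp(\sum_i\theta_iW_i)(H^+\otimes\widehat R^+_{\infty,Z})$.

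For part (3), the quasi-inverse property reduces to the elementary identity $\exp(\sum\theta_iW_i)\circ\exp(-\sum\theta_iW_i)=\id$ on $H^+\otimes B^+$, applied to the explicit descriptions $L^+=\exp(\sum\theta_iW_i)(H^+)$ and $\rH^+(L^+)=\exp(-\sum\theta_iW_i)(L^+)$; the round-trip in either direction collapses to the identity. Preservation of ranks, tensor products, and dualities is formal from the explicit formulas (Higgs fields combine additively under tensor, $B^+_\infty$-linear operations commute with base change). The isomorphism \eqref{equ:compatible with Higgs fields} follows by tensoring the isomorphism $\rH^+(L^+)\otimes B^+\cong L^+\otimes B^+$ of Lemma \ref{lem:MW-Integral}(2) along $B^+\to B^+_\infty$ and invoking Lemma \ref{lem:MW-Integral}(3) for the compatibility of Higgs fields.

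The main obstacle is the quasi-isomorphism $\rR\Gamma(\Gamma,L_\infty)\simeq \HIG(H,\theta)$ after inverting $p$. The idea is to use the Poincar\'e lemma from Remark \ref{rmk:Poincare Lemma} for the mod-$t$ period sheaf: the de Rham complex $L_\infty\otimes_{\widehat R_{\infty,Z}}(B_\infty\otimes\Omega^{\bullet,\log}_{\calR_Z}\{-\bullet\},\Theta_L)$ resolves $L_\infty$ as a $\Gamma$-module. Transport this resolution across \eqref{equ:compatible with Higgs fields} to obtain $H\otimes_{\calR_Z}(B_\infty\otimes\Omega^{\bullet,\log}_{\calR_Z}\{-\bullet\},\Theta_H)$. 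Applying $\rR\Gamma(\Gamma,-)$ gives a spectral sequence whose rows compute $\rR\Gamma(\Gamma,H\otimes B_\infty\otimes\Omega^p\{-p\})[1/p]$; by Lemma \ref{lem:MW-Integral} applied to the trivial Higgs module $\Omega^p\{-p\}$ (and the fact that $\rho_K(\zeta_p-1)$ becomes invertible after inverting $p$), these rows collapse to $H\otimes\Omega^p\{-p\}$ concentrated in degree zero. The $E_2$-page then reads $H\otimes\Omega^{\bullet,\log}\{-\bullet\}$ with differential induced by $\theta\otimes\id+\id\otimes\Theta$, which on $\Gamma$-invariants of $B_\infty$ restricts to just $\theta$, yielding $\HIG(H,\theta)$ as claimed.
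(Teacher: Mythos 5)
Your proposal is correct and takes essentially the same route as the paper: parts (1) and (2) are exactly the content of Lemmas \ref{lem:decompletion}, \ref{lem:L^+} and \ref{lem:MW-Integral}, the quasi-inverse check in (3) rests on the same explicit exponential descriptions \eqref{equ:L^+} and \eqref{equ:MW-Integral-I} (the paper just packages the round-trip as canonical maps built from multiplication on $B_{\infty}^+$), and your spectral-sequence argument for $\rR\Gamma(\Gamma,L_{\infty})\simeq\HIG(H,\theta)$ is the paper's chain of quasi-isomorphisms (Poincar\'e lemma plus vanishing of higher $\Gamma$-cohomology after inverting $p$) in different packaging. The only slip is a citation: the mod-$t$ Poincar\'e lemma you invoke should come from Theorem \ref{thm:Poincare's Lemma} (or directly from Lemma \ref{lem:defined d on quotient}(3)), not from Remark \ref{rmk:Poincare Lemma}, which concerns the Robba-ring period sheaves and assumes $K$ unramified and $p\geq 3$.
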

  \begin{proof}
      Item (1) follows from Lemma \ref{lem:MW-Integral} and Item (2) follows from Lemma \ref{lem:L^+}.

      For Item (3): Granting that we have already obtained the desired equivalence, then (\ref{equ:compatible with Higgs fields}) follows from (\ref{equ:MW-Integral-II}). Note that Item (1) implies $\rH^i(\Gamma,L_{\infty}\otimes_{\widehat R_{\infty,Z}}B_{\infty}) = 0$ for any $i\geq 1$.
      As the Higgs complex $\HIG(B_{\infty},\Theta)$ is a resolution of $\widehat R_{\infty,Z}$ by Theorem \ref{thm:Poincare's Lemma}, we have quasi-isomorphisms
      \[\rR\Gamma(\Gamma,L_{\infty})\simeq \rR\Gamma(\Gamma,\HIG(L_{\infty}\otimes_{\widehat R_{\infty,Z}}B_{\infty},\Theta_{L_{\infty}}))\simeq \HIG(H,\theta_H)\]
      as desired.
      
      So it remains to establish the desired equivalence. 
      
      For any $L_{\infty}^+\in \Rep^{\Hsmall}_{\Gamma}(\widehat R_{\infty,Z}^+)$, put $(H^+,\theta^+) = (H^+(L_{\infty}),\theta^+)$. Then we have a canonical morphism 
      \[\iota_{L_{\infty}^+}:L_{\infty}(H^+,\theta^+)\to L_{\infty}\]
      compatible with $\Gamma$-actions defined by the composite of the following morphisms
      \begin{equation*}
          \begin{split}
              L_{\infty}(H^+,\theta^+) = & (H^+\otimes_{\calR_Z}B_{\infty}^+)^{\Theta_H^+ := \theta^+\otimes\id+\id\otimes\Theta =0}\\
              \xrightarrow{=} &((L^+_{\infty}\otimes_{\widehat R_{\infty,Z}^+}B_{\infty}^+)^{\Gamma}\otimes_{\calR_Z}B_{\infty}^+)^{\Theta_L^+\otimes\id+(\id\otimes\id)\otimes\Theta = 0}\\
              \hookrightarrow&(L^+_{\infty}\otimes_{\widehat R_{\infty,Z}^+}B_{\infty}^+\otimes_{\widehat R_{\infty,Z}^+}B_{\infty}^+)^{\id\otimes\Theta\otimes\id+\id\otimes\id\otimes\Theta = 0}\\
              \to & (L_{\infty}^+\otimes_{\widehat R_{\infty,Z}^+}B_{\infty}^+)^{\id\otimes\Theta = 0} \\
              =& L_{\infty}.
          \end{split}
      \end{equation*}
      Here, the last but second arrow 
      \[(L^+_{\infty}\otimes_{\widehat R_{\infty,Z}^+}B_{\infty}^+\otimes_{\widehat R_{\infty,Z}^+}B_{\infty}^+)^{\id\otimes\Theta\otimes\id+\id\otimes\id\otimes\Theta = 0}\to (L_{\infty}^+\otimes_{\widehat R_{\infty,Z}^+}B_{\infty}^+)^{\id\otimes\Theta = 0}\]
      is induced by the multiplication on $B_{\infty}^+$.

      For any $(H^+,\theta^+)\in \HIG^{\Hsmall}(\calR)$, put $L_{\infty}^+ = L_{\infty}^+(H^+,\theta^+)$. Then we have a canonical morphism
      \[\iota_{(H^+,\theta^+)}:H^+(L_{\infty}^+)\to H^+\]
      compatible with Higgs fields defined by the composite of the following morphisms
      \begin{equation*}
          \begin{split}
              H^+(L_{\infty}^+) = &(L_{\infty}^+\otimes_{\widehat R_{\infty,Z}^+}B_{\infty}^+)^{\Gamma}\\
              \xrightarrow{=} & ((H^+\otimes_{\calR_Z}B_{\infty}^+)^{\Theta_H^+ = 0}\otimes_{\widehat R_{\infty,Z}^+}B_{\infty}^+)^{\Gamma}\\
              \hookrightarrow &(H^+\otimes_{\calR_Z}B_{\infty}^+\otimes_{\widehat R_{\infty,Z}^+}B_{\infty}^+)^{\Gamma}\\
              \to &(H\otimes_{\calR_Z}B_{\infty}^+)^{\Gamma}\\
              = & H^+.
          \end{split}
      \end{equation*}
      Here, the last but second arrow 
      \[(H^+\otimes_{\calR_Z}B_{\infty}^+\otimes_{\widehat R_{\infty,Z}^+}B_{\infty}^+)^{\Gamma}\to (H\otimes_{\calR_Z}B_{\infty}^+)^{\Gamma}\]
      is again induced by the multiplication on $B_{\infty}^+$, and the last equality 
      \[(H^+\otimes_{\calR_Z}B_{\infty}^+\otimes_{\widehat R_{\infty,Z}^+}B_{\infty}^+)^{\Gamma} = H^+\]
      follows as $(B_{\infty}^+)^{\Gamma} = (B^+)^{\Gamma}=\calR_Z$ (cf. Lemma \ref{lem:MW-Integral}). 
      
      To conclude, it is enough to show that $\iota_{L_{\infty}^+}$ and $\iota_{(H^+,\theta^+)}$ are both isomorphisms. But this follows from the explicit description (\ref{equ:L^+}) and (\ref{equ:MW-Integral-I}) immediately.
  \end{proof}
\subsection{Local small Riemann--Hilbert correspondence}
  
  Now, we can give the local Riemann--Hilbert correspondence for Hitchin-small representations of $\Gamma$ over $\Bppsi/t^n$ and Hitchin-small integrable connections over $\wtR_Z/t^n$.

  \begin{prop}\label{prop:local RH over BdRp}
  Fix $1\leq n\leq \infty$. 
      \begin{enumerate}
          \item For any $L_{\infty}\in \Rep^{\Hsmall}_{\Gamma}(\Bppsi/t^n)$ of rank $r$, we have
            \[
              \rH^n(\Gamma,L_{\infty}\otimes_{\Bppsi}\Bppsi\za\underline W\ya_{\pd}) = 
                 \left\{
                    \begin{array}{rcl}
                      D(L_{\infty}), & n = 0 \\
                      0, & n\geq 1,
                    \end{array}
                 \right.
            \]
        where $D(L_{\infty})$ is a finite free $\wtR/t^n$-module of rank $r$. Moreover, the restriction of $\rd$ to $D(L_{\infty})$ induces a flat connection $\nabla$ on $D(L_{\infty})$ such that $(D(L_{\infty}),\nabla)\in\MIC^{\Hsmall}(\wtR/t^n)$.

        \item For any $(D,\nabla)\in \MIC^{\Hsmall}(\wtR/t^n)$ of rank $r$, define
        \[\nabla_{D} = \nabla\otimes\id+\id\otimes\nabla:D\otimes_{\wtR_Z}\Bppsi\za\underline W\ya_{\pd}\to D\otimes_{\wtR_Z}\Bppsi\za\underline W\ya_{\pd}\otimes_{\wtR_Z}\Omega^1_{\wtR}\{-1\}.\]
        Then it satisfies $\nabla_D\wedge\nabla_D = 0$ and we have
        \[
          \rH^n_{\dR}(D\otimes_{\wtR_Z}\Bppsi\za\underline W\ya_{\pd},\nabla_{D}) = 
            \left\{
              \begin{array}{rcl}
                  L_{\infty}(D,\nabla), & n=0 \\
                  0, & n\geq 1,
              \end{array}
            \right.
        \]
        where $L_{\infty}(D,\nabla)$ is a finite free $\Bppsi/t^n$-module of rank $r$. Moreover, the $\Gamma$-action on $\Bppsi\za\underline W\ya_{\pd}$ induces a $\Gamma$-action on $L_{\infty}(D,\nabla)$ such that $L_{\infty}(D,\nabla)\in\Rep_{\Gamma}^{\Hsmall}(\Bppsi/t^n)$.

        \item The functors $L_{\infty}\mapsto (D(L_{\infty}),\nabla)$ and $(D,\nabla)\mapsto L_{\infty}(D,\nabla)$ are quasi-inverses of each other and thus induce an equivalence of categories
        \[\Rep_{\Gamma}^{\Hsmall}(\Bppsi/t^n)\simeq \MIC^{\Hsmall}(\wtR_Z/t^n)\]
        which preserves ranks, tensor products and dualities. Moreover for any $L_{\infty}\in \Rep_{\Gamma}^{\Hsmall}(\Bppsi/t^n)$ with associated $(D,\nabla)\in \MIC^{\Hsmall}(\wtR_Z/t^n)$, we have a quasi-isomorphism
        \[\RGamma(\Gamma,L_{\infty})\simeq \rD\rR(D,\nabla).\]
      \end{enumerate}
  \end{prop}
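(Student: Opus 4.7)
The plan is to lift the small Simpson correspondence (Proposition \ref{prop:integral simpson}) $t$-adically, with the Higgs period ring $(B_\infty,\Theta)$ replaced by the Riemann--Hilbert period ring
\[\big(\Bppsi\za\underline W\ya_{\pd}/t^n,\ \rd\big) = \big((\calO\bB^+_{\dR,\pd,Z})(X_{\infty,Z})/t^n,\ \rd\big)\]
equipped with the $\Gamma$-action $\gamma_i(W_j)=W_j+\delta_{ij}u\tfrac{t}{\xi_K}$ of Example \ref{exam:Gamma torsor}. I would proceed by induction on $n$, the base case $n=1$ being furnished by Proposition \ref{prop:integral simpson} under the identification of Remark \ref{rmk:compare to MW-Integral}. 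The two functors are set up as
\[D(L_\infty) := \big(L_\infty\otimes_{\Bppsi/t^n}\Bppsi\za\underline W\ya_{\pd}/t^n\big)^\Gamma, \quad L_\infty(D,\nabla) := \big(D\otimes_{\wtR_Z/t^n}\Bppsi\za\underline W\ya_{\pd}/t^n\big)^{\nabla_D=0},\]
with connection (resp.\ $\Gamma$-action) induced from that on the period ring; the flatness $\nabla\wedge\nabla=0$ and the $\Gamma$-stability of $L_\infty(D,\nabla)$ are automatic because the $\Gamma$-action on the period ring commutes with $\rd$, directly from the constancy of $u\tfrac{t}{\xi_K}$.

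The crucial input is the simultaneous cohomological vanishing, for $i\ge 1$,
\[\rH^i\big(\Gamma,\,L_\infty\otimes\Bppsi\za\underline W\ya_{\pd}/t^n\big) = 0 = \rH^i_{\dR}\big(D\otimes\Bppsi\za\underline W\ya_{\pd}/t^n,\,\nabla_D\big).\]
At $n=1$ the first equality follows from Lemma \ref{lem:MW-Integral}(1) and Proposition \ref{prop:integral simpson}(1) after inverting $p$, while the second is Theorem \ref{thm:Poincare's Lemma} evaluated at $X_{\infty,Z}$. For the inductive step, multiplication by $t^{n-1}$ inside the period ring gives a short exact sequence of $\Gamma$-modules
\[0\to (L_\infty/t)\otimes_{\Bppsi/t}\Bppsi\za\underline W\ya_{\pd}/t \to L_\infty\otimes\Bppsi\za\underline W\ya_{\pd}/t^n \to (L_\infty/t^{n-1})\otimes\Bppsi\za\underline W\ya_{\pd}/t^{n-1}\to 0\]
whose outer terms have vanishing higher $\Gamma$-cohomology by the induction hypothesis, forcing the same for the middle term; an identical argument handles the de Rham side.

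Next I would give explicit descriptions of $D(L_\infty)$ and $L_\infty(D,\nabla)$ to verify the stated rank and Hitchin-small structure. The Hitchin-small hypothesis permits lifting the Higgs operators $\theta_i$ from the mod-$t$ reduction to commuting, topologically nilpotent operators $\tilde\theta_1,\dots,\tilde\theta_d$ on $L_\infty$ satisfying $\gamma_i = \exp\!\big(u\tfrac{t}{\xi_K}\tilde\theta_i\big)$; intrinsically $\tilde\theta_i = \tfrac{\xi_K}{ut}\log\gamma_i$, whose convergence uses the positive valuation of $\rho_K = u\tfrac{t}{\xi_K}\bmod t$ computed in Example \ref{exam:Gamma torsor}. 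A direct computation parallel to Lemma \ref{lem:MW-Integral}(2) then gives
\[D(L_\infty) \;=\; \exp\!\Big(-\textstyle\sum_i\tilde\theta_i W_i\Big)(L_\infty),\]
a finite free $\wtR_Z/t^n$-module of the same rank as $L_\infty$, with flat connection $\nabla=\sum_i\tilde\theta_i\otimes\tfrac{\dlog T_i}{\xi_K}$ manifestly Hitchin-small. The symmetric formula $L_\infty(D,\nabla) = \exp\!\big(\sum_i\partial_i W_i\big)(D)$ for $\nabla=\sum_i\partial_i\otimes\tfrac{\dlog T_i}{\xi_K}$ handles the reverse direction, the induced $\Gamma$-action $\gamma_j=\exp(u\tfrac{t}{\xi_K}\partial_j)$ exhibiting the result as Hitchin-small. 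Quasi-inverseness and the preservation of ranks, tensor products, and dualities then follow the same double-period-ring argument as in the proof of Proposition \ref{prop:integral simpson}(3), using $(\Bppsi\za\underline W\ya_{\pd}/t^n)^{\Gamma}=\wtR_Z/t^n$ (which combines Theorem \ref{thm:Poincare's Lemma} with the Simpson identity, inducted in $n$) to collapse each composition to the identity. Finally, the quasi-isomorphism $\rR\Gamma(\Gamma,L_\infty)\simeq\rD\rR(D,\nabla)$ is obtained by comparing the two spectral sequences of the double complex $L_\infty\otimes\Bppsi\za\underline W\ya_{\pd}/t^n\otimes_{\wtr_Z}\Omega^{\bullet,\log}_{\wtr_Z}\{-\bullet\}$, one degenerating by Poincar\'e's lemma and the other by the vanishing above; the $n=\infty$ case follows by passing to the inverse limit.

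\textbf{The main obstacle} is controlling the lifted operators $\tilde\theta_i$: one must justify the convergence of $\log\gamma_i$ on $L_\infty$, the intrinsic division by the non-invertible element $u\tfrac{t}{\xi_K}$, and the mutual commutativity and topological nilpotence of the resulting $\tilde\theta_i$'s. This is the point where both the Hitchin-smallness hypothesis and the valuation estimates on $\rho_K$ from Example \ref{exam:Gamma torsor} are used essentially, and once this step is secured the rest of the argument parallels the mod-$t$ Simpson case closely.
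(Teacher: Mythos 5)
Your overall skeleton (reduction to finite $n$ by passing to limits, induction on $n$ with base case Proposition \ref{prop:integral simpson}, d\'evissage along the $t$-adic filtration for the vanishing of higher cohomology, quasi-inverseness checked modulo $t$ via Nakayama, and the double-complex argument for $\rR\Gamma(\Gamma,L_\infty)\simeq\rD\rR(D,\nabla)$) agrees with the paper's proof. The gap is exactly in the step you single out as the main obstacle: the construction of lifted operators $\tilde\theta_i$ with $\gamma_i=\exp\big(u\tfrac{t}{\xi_K}\tilde\theta_i\big)$ and the ensuing formula $D(L_\infty)=\exp\big(-\sum_i\tilde\theta_iW_i\big)(L_\infty)$. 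For $n\ge 2$ the $\Gamma$-action on $\Bppsi/t^n$ is nontrivial ($\delta_i([T_j^\flat])=[\epsilon]^{\delta_{ij}}[T_j^\flat]$ and $[\epsilon]\not\equiv 1$ mod $t^n$), and it is nontrivial even on the subring $\Bpsi/t^n\cong\wtR_Z/t^n$ of Construction \ref{Construction:local coefficients ring}; hence $\gamma_i$ acts only semilinearly on $L_\infty$, $\log\gamma_i$ is not an endomorphism, and the ``intrinsic'' formula $\tilde\theta_i=\tfrac{\xi_K}{ut}\log\gamma_i$ does not define a linear operator. Definition \ref{dfn:local Hitchin-small local systems} imposes Hitchin-smallness only on the reduction mod $t$, so it provides no such exponential presentation of the action at level $n$; producing one amounts to descending $L_\infty$ to something with a linear structure, which is essentially the content of the proposition being proved (and the decompletion statement for $\Bpsi/t^n\subset\Bppsi/t^n$ at the end of \S\ref{sec:local RH} is itself deduced from this proposition, so it cannot serve as input). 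Note also that even at $n=1$ the formula of Lemma \ref{lem:MW-Integral}(2) is applied only after the decompletion Lemma \ref{lem:decompletion}, i.e.\ to a representation $L^+$ over $\calR_Z$ on which $\Gamma$ acts linearly; applied naively to $L_\infty$ over $\Bppsi/t^n$, your map $x\mapsto\exp\big(-\sum_i\tilde\theta_iW_i\big)(x)$ would be $\Bppsi/t^n$-linear in $x$, so its image could not be a finite free $\wtR_Z/t^n$-module of rank $r$. The valuation estimates on $\rho_K$ address convergence only, not this semilinearity/descent problem.

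The paper avoids the issue entirely: finiteness, freeness and the rank of $D(L_\infty)$ are obtained purely formally, by applying $\rR\Gamma(\Gamma,-\otimes_{\Bppsi}\Bppsi\za\underline W\ya_{\pd})$ to $0\to tM_\infty\to M_\infty\to M_\infty/t\to 0$ and to the four-term multiplication-by-$t$ sequence to get $D(t^mM_\infty)=t^mD(M_\infty)$ and $D(M_\infty)/t\cong D(M_\infty/t)$, then invoking the projectivity criterion Lemma \ref{lem:projectivity}; Hitchin-smallness of the output is then automatic from the $n=1$ case because it is a condition modulo $t$. If you replace your explicit-lift step by this formal argument (and treat item (2) symmetrically), the remainder of your plan goes through as written.
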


  Before proving Proposition \ref{prop:local RH over BdRp}, we first recall some well-known lemmas.
  
  \begin{lem}\label{lem:projectivity}
      Let $B$ be a ring with a non-zero divisor $t$ and $B_m:=B/t^m$ for any $m\geq 1$. 
      
      \begin{enumerate}
          \item Let $M$ be a $B_m$-module. If $M/tM$ is a finite projective $B_1$-module of rank $r$ and for any $0\leq n\leq m-1$, the multiplication by $t^n$ induces an isomorphism of $B_1$-modules 
          \[M/tM\xrightarrow{\cong} t^nM/t^{n+1}M,\]
          then $M$ is a finite projective $B_m$-module of rank $r$.

          \item Assum moreover $B$ is $t$-adically complete. Let $M$ be a $B$-module such that $M/tM$ is a finite projective $B_1$-module of rank $r$ and for any $n\geq 0$, the multiplication by $t^n$ induces an isomorphism of $B_1$-modules 
          \[M/tM\xrightarrow{\cong} t^nM/t^{n+1}M,\]
          then $M$ is a finite projective $B$-module of rank $r$.
      \end{enumerate}
  \end{lem}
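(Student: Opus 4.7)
The plan is to argue Zariski-locally on $\Spec(B_1)$ and then use a filtered-isomorphism argument. For (1), since $\Spec(B_m)$ and $\Spec(B_1)$ coincide as topological spaces (as $t$ is nilpotent in $B_m$), projectivity of $M$ is a local property on $\Spec(B_1)$. So after shrinking, we may assume $M/tM$ is free of rank $r$ over $B_1$ with some basis $\overline{e}_1,\dots,\overline{e}_r$. Lift these to elements $e_1,\dots,e_r\in M$ and define
\[
\phi\colon B_m^{\,r}\to M,\qquad (a_1,\dots,a_r)\mapsto\sum_{i=1}^r a_ie_i.
\]
The goal is to show $\phi$ is an isomorphism; once this is established, $M$ is locally free of rank $r$, hence finite projective of rank $r$.

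To prove $\phi$ is an isomorphism, I would use the $t$-adic filtrations on both sides: $t^nB_m^{\,r}\supset t^{n+1}B_m^{\,r}$ and $t^nM\supset t^{n+1}M$ for $0\le n\le m-1$ (with the convention that the filtration terminates at $n=m$). Because $t$ is a non-zero divisor on $B$, the quotient $t^nB_m^{\,r}/t^{n+1}B_m^{\,r}$ is canonically identified with $B_1^{\,r}$ via multiplication by $t^n$. By hypothesis, multiplication by $t^n$ also identifies $M/tM$ with $t^nM/t^{n+1}M$. Under these identifications, the graded piece of $\phi$ is exactly the map $B_1^{\,r}\to M/tM$ sending the standard basis to $(\overline{e}_i)$, which is an isomorphism by construction. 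A straightforward downward induction on the index of the filtration (or equivalently an iterated application of the five lemma to short exact sequences of the form $0\to t^{n+1}(-)\to t^n(-)\to t^n(-)/t^{n+1}(-)\to 0$) then shows that $\phi$ is an isomorphism.

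For (2), the same strategy applies, with the role of $B_m$ replaced by $B$, which is $t$-adically complete by hypothesis. After Zariski-localizing on $\Spec(B_1)$ and lifting a free basis of $M/tM$, one obtains $\phi\colon B^{\,r}\to M$ whose graded pieces with respect to the $t$-adic filtration are isomorphisms by the stated assumption on $t^nM/t^{n+1}M$. Reducing modulo $t^m$ and applying (1) gives that $\phi\bmod t^m$ is an isomorphism for every $m\ge 1$. Passing to the inverse limit using the $t$-adic completeness of $B^{\,r}$ yields that $\phi$ is an isomorphism onto the $t$-adic completion of $M$; provided $M$ is $t$-adically complete (which in every intended application will hold by construction, and which we expect to be the implicit running hypothesis), this produces the desired local trivialization $M\simeq B^{\,r}$.

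The main subtlety I anticipate is making the filtered-isomorphism step rigorous in part (1) when $M$ is not \emph{a priori} known to be $t$-adically separated as a $B_m$-module; here one must use crucially that the filtration has only finitely many steps (the $m$-th step being zero) so the induction terminates after finitely many rounds without any separatedness input. In part (2), the analogous worry is genuine, and the cleanest formulation is to verify that the hypotheses on the graded pieces together with $t$-adic completeness of $B$ force $M$ to be $t$-adically complete and separated; this is the step that merits the most care in writing up.
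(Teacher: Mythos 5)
Part (1) of your proposal is correct and is in substance the paper's own argument: the paper likewise reduces to the case where $M/tM$ is free (there by faithfully flat descent, in your case by Zariski localization, which is the same reduction), lifts a basis, and shows by induction on $m$ that the lifted elements form a basis; your finite-filtration five-lemma argument is that same computation packaged more cleanly, and your remark that no separatedness is needed because the filtration stops ($t^mM=0$ since $M$ is a $B_m$-module) is exactly right.

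Part (2) is where the problems are, and they are twofold. First, a concrete glitch in your write-up: you Zariski-localize to make $M/tM$ free and then invoke ``$t$-adic completeness of $B^{\,r}$'', but after localizing at a lift of $f\in B_1$ the ring is no longer $t$-adically complete, so the inverse-limit step fails as written; and without localizing you have no basis to lift. The repair is to argue globally: since $B$ is $t$-adically complete, the finite projective $B_1$-module $M/tM$ lifts to a finite projective $B$-module $P$; projectivity of $P$ lets you lift the identification to a map $P\to M$; part (1) (or your graded argument) shows $P/t^mP\to M/t^mM$ is an isomorphism for every $m$; and then one passes to the limit. Second, your hedge about completeness of $M$ is pointing at something genuine but cannot be discharged: with no hypothesis on $M$ beyond those printed, the assertion is false --- take $B=\Zp$, $t=p$, $M=\Zp\oplus\Qp$, which satisfies all stated hypotheses but is not finite projective --- so one cannot ``verify that the hypotheses force $M$ to be complete''; the identification $M\cong\varprojlim_m M/t^mM$ must be taken as an input (it holds in the paper's application, Lemma \ref{lem:from n to infty}, where $M$ arises as an inverse limit). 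The paper itself proves (2) simply as (1) plus the citation \cite[Lem.~1.9]{MT}, i.e.\ the standard passage from compatible systems of finite projective $B/t^m$-modules to finite projective modules over the complete ring $B$, which is exactly the limit argument you sketch; so your strategy matches the intended one, but as a proof of the statement as printed your part (2) is incomplete, both for the localization issue and because the missing completeness of $M$ is a hypothesis that has to be added rather than deduced.
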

  \begin{proof}
      Item (2) is a consequence of Item (1) together with \cite[Lem. 1.9]{MT}. So, it is enough to prove Item (1).

      By faithfully flat descent for finite projective modules, we are reduced to the case where $M/tM$ is finite free over $B_1$ of rank $r$.
      Let $e_1,\dots,e_r$ be elements in $M$ whose reductions modulo $t$ give rise to a $B_1$-basis of $M/tM$. We claim that $e_1,\dots,e_r$ form a $B_m$-basis of $M$. 
      We prove this by induction on $m$. We may assume $m>1$ and then by inductive hypothesis, the reduction of $e_1,\dots,e_r$ modulo $t^{m-1}$ form a $B_{m-1}$-basis of $M/t^{m-1}M$. Thus, for any $x\in M$, there exist $b_1,\dots,b_r\in B_m$ such that
      \[x\equiv b_1e_1+\cdots+b_re_r\mod t^{m-1}M.\]
      As $t^{m-1}M \cong M/tM$, one can find $c_1,\dots,c_r\in B_m$ such that 
      \[x-(b_1e_1+\cdots+b_re_r) = t^{m-1}c_1e_1+\cdots+t^{m-1}c_re_r.\]
      Put $a_i = b_i+t^{m-1}c_i$ for any $1\leq i\leq r$ and then we have
      \[x = a_1e_1+\cdots+a_re_r.\]
      So, $e_1,\dots,e_r$ generate $M$ over $B_m$. To conclude, it remains to show that for any $d_1,\dots,d_r\in B_m$ such that
      \[d_1e_1+\cdots+d_re_r = 0,\]
      we must have
      \[d_1=d_2=\cdots=d_r=0.\]
      By inductive hypothesis, we have 
      \[d_1\equiv d_2\equiv\cdots\equiv d_r\mod t^{m-1}.\]
      That is, there are $f_1,\dots,f_r\in B_m$ such that $d_i = t^{m-1}f_i$ for any $1\leq i\leq r$. Thus, we have
      \[f_1t^{m-1}e_1+\cdots+f_rt^{m-1}e_r = t^{m-1}(f_1e_1+\cdots+f_re_r) = 0.\]
      Using $t^{m-1}M\cong M/tM$ again, we conclude that 
      \[f_1\equiv f_2\equiv \cdots\equiv f_r\equiv 0\mod t,\]
      yielding that $d_i = t^{m-1}f_i = 0$ for any $1\leq i\leq r$ as desired. This completes the proof.
  \end{proof}

    \begin{lem}\label{lem:from n to infty}
      \begin{enumerate}
          \item Fix a $B\in \{\Bppsi,\Bpsi\}$. Then the functor $M\mapsto \{M/t^n\}_{n\geq 1}$ induces an equivalence of categories  
          \[\Rep_{\Gamma}^{\Hsmall}(B)\simeq \varprojlim_n\Rep_{\Gamma}^{\Hsmall}(B/t^n)\]
          which is compatible with cohomology in the sense that we have a canonical quasi-isomorphism
          \[\rR\Gamma(\Gamma,M)\xrightarrow{\simeq}\varprojlim_n\rR\Gamma(\Gamma,M/t^n).\]

          \item The functor $(D,\nabla)\mapsto (D/t^n,\nabla)$ induces an equivalence of categories  
          \[\MIC^{\Hsmall}(\wtR_Z)\simeq \varprojlim_n\MIC^{\Hsmall}(\wtR_Z/t^n),\]
          which is compatible with cohomology in the sense that we have a canonical quasi-isomorphism
          \[\rD\rR(D,\nabla) = \varprojlim_n\rD\rR(D/t^n\nabla)\]
      \end{enumerate}
  \end{lem}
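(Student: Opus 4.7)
The plan is standard: produce an inverse of the functor $M \mapsto \{M/t^n\}_{n\ge 1}$ by taking inverse limits, then verify projectivity via Lemma \ref{lem:projectivity}(2) and compute cohomology termwise on the Koszul complex.

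For Item (1), full faithfulness is immediate from $t$-adic separatedness of $B$, which identifies every $M \in \Rep_{\Gamma}^{\Hsmall}(B)$ with $\varprojlim_n M/t^n M$. For essential surjectivity, start from a compatible system $\{M_n\}_{n \ge 1}$ in $\varprojlim_n \Rep_{\Gamma}^{\Hsmall}(B/t^n)$; by hypothesis every $M_n$ is finite projective of rank $r$ over $B/t^n$ and $M_{n+1}/t^n \xrightarrow{\simeq} M_n$. Put $M := \varprojlim_n M_n$ with the induced $\Gamma$-action. The hypotheses of Lemma \ref{lem:projectivity}(2) are verified as follows: using flatness of each $M_n$ over $B/t^n$, the short exact sequences $0 \to t^k M_n \to M_n \to M_{n-k} \to 0$ combined with the Mittag-Leffler condition on $\{t^k M_n\}_{n \ge k}$ (surjective transition maps) give $\varprojlim_n t^k M_n = t^k M$ and $M/t^{k+1} M \xrightarrow{\simeq} M_{k+1}$ for all $k$; multiplication by $t^k$ on $M/tM \simeq M_1$ then induces the required isomorphisms $M/tM \xrightarrow{\simeq} t^k M/t^{k+1}M$. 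Lemma \ref{lem:projectivity}(2) then yields that $M$ is finite projective of rank $r$ over $B$, and Hitchin-smallness descends from $M/t = M_1$ via Definition \ref{dfn:local Hitchin-small local systems}.

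For the cohomological compatibility in (1), since $\Gamma \cong \bigoplus_{i=1}^d \Zp \gamma_i$ by \eqref{equ:gamma group-II}, the continuous group cohomology of any $t$-adically complete $\Gamma$-module is computed by the Koszul complex $K^\bullet(M; \gamma_1 - 1, \ldots, \gamma_d - 1)$. This is a finite complex whose terms are direct sums of copies of $M$, so it equals $\varprojlim_n K^\bullet(M_n; \gamma_1 - 1, \ldots, \gamma_d - 1)$ termwise. Since the transition maps in the latter system are surjective, Mittag-Leffler gives $\rR\varprojlim = \varprojlim$, yielding the quasi-isomorphism $\rR\Gamma(\Gamma, M) \xrightarrow{\simeq} \varprojlim_n \rR\Gamma(\Gamma, M/t^n)$.

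For Item (2), the argument is formally identical, replacing the $\Gamma$-action by the integrable connection $\nabla$ and the Koszul complex by the de Rham complex $\rD\rR$. Given a compatible system $\{(D_n, \nabla_n)\}$, set $D := \varprojlim D_n$ and $\nabla := \varprojlim \nabla_n$; the same application of Lemma \ref{lem:projectivity}(2) shows $D$ is finite projective of rank $r$ over $\wtR_Z$, while the Leibniz rule and integrability pass to the limit. Hitchin-smallness again reduces modulo $t$. The de Rham complex $\rD\rR(D, \nabla)$ is a finite complex whose terms $D \otimes_{\wtr_Z} \Omega^{i,\log}_{\wtr_Z}\{-i\}$ are $t$-adically complete, and the equality $\rD\rR(D, \nabla) = \varprojlim_n \rD\rR(D/t^n, \nabla)$ holds termwise, yielding the cohomology statement. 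The main obstacle throughout is the detailed bookkeeping in verifying the hypothesis of Lemma \ref{lem:projectivity}(2): commuting $\varprojlim$ with the sequences $0 \to t^k M_n \to M_n \to M_{n-k} \to 0$ via Mittag-Leffler and flatness, so as to identify $t^k M/t^{k+1}M$ correctly inside the limit. Once this is in place, both the equivalence and the cohomological compatibility follow readily.
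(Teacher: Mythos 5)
Your proof is correct and follows essentially the same route as the paper: the equivalence is obtained by feeding the inverse limit $M=\varprojlim_n M_n$ (using that $\Bpsi$, $\Bppsi$ and $\wtR_Z$ are $t$-adically complete and $t$-torsion free) into Lemma \ref{lem:projectivity}(2), with Hitchin-smallness checked modulo $t$, and the cohomological comparison is a limit argument. The paper compresses that last step into a one-line appeal to (derived) Nakayama/completeness, whereas you spell it out via the Koszul (resp. de Rham) complex computed termwise as a limit with surjective transition maps and Mittag--Leffler; this is just a more explicit packaging of the same standard argument.
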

  \begin{proof}
      Note that $\Bpsi$, $\Bppsi$ and $\wtR_Z$ are all $t$-adically complete and $t$-torsion free. 
      The desired equivalences follows immediately from Lemma \ref{lem:projectivity}(2) while the cohomological comparison follows from the (derived) Nakayama's Lemma.
  \end{proof}

  \begin{proof}[\textbf{Proof of Proposition \ref{prop:local RH over BdRp}:}]
      By Lemma \ref{lem:from n to infty}, it suffices to deal with the case where $n<\infty$.
      Note that the case for $n=1$ has been proved in Proposition \ref{prop:integral simpson}. We will finish the proof by induction on $n$. From now on, assume that there exists some $n\geq 1$ such that the result holds true for any $1\leq m\leq n$, and it sufficse to show that items (1), (2) and (3) hold true for $n+1$.

      For Item (1): Fix an $M_{\infty}\in \Rep^{\Hsmall}_{\Gamma}(\Bppsi/t^{n+1})$ of rank $r$. Then we have 
      \[tM_{\infty}\in \Rep^{\Hsmall}_{\Gamma}(\Bppsi/t^n)\text{ and }M_{\infty}/t\in \Rep^{\Hsmall}_{\Gamma}(\Bppsi/t).\]
      The short exact sequence
      \[0\to tM_{\infty}\to M_{\infty}\to M_{\infty}/t\to 0\]
      gives rise to an exact triangle
      \[\rR\Gamma(\Gamma,tM_{\infty}\otimes_{\Bppsi}\Bppsi\za\underline W\ya_{\pd})\to\rR\Gamma(\Gamma,M_{\infty}\otimes_{\Bppsi}\Bppsi\za\underline W\ya_{\pd})\to\rR\Gamma(\Gamma,M_{\infty}/t\otimes_{\Bppsi}\Bppsi\za\underline W\ya_{\pd}).\]
      Considering the induced long exact sequence, by inductive hypothesis, we have
      \[\rH^n(\Gamma,M_{\infty}\otimes_{\Bppsi}\Bppsi\za\underline W\ya_{\pd}) = 0\]
      for any $n\geq 1$ and there is a short exact sequence
      \[0\to D(tM_{\infty})\to D(M_{\infty})\to D(M_{\infty}/t)\to 0.\]
      
      We claim that 
      \[D(tM_{\infty}) = tD(M_{\infty})\text{ and }D(M_{\infty}/t) = D(M_{\infty})/t.\]
      Indeed, applying $\rR\Gamma(\Gamma,-\otimes_{\Bppsi}\Bppsi\za\underline W\ya_{\pd})$ to the following the exact sequence
      \[\cdots \to M_{\infty}\xrightarrow{\times t}M_{\infty}\xrightarrow{\times t^n}M_{\infty}\xrightarrow{\times t}M_{\infty}\to M_{\infty}/t\to 0,\]
      we obtain an exact sequence
      \[\cdots \to D(M_{\infty})\xrightarrow{\times t}D(M_{\infty})\xrightarrow{\times t^n}D(M_{\infty})\xrightarrow{\times t}D(M_{\infty})\to D(M_{\infty}/t)\to 0.\]
      So we have the short exact sequence
      \[0\to tD(M_{\infty})\to D(M_{\infty})\to D(M_{\infty}/t)\to 0.\]
      yielding the claim as desired.

      A similar argument implies that for any $0\leq m\leq n$, we have $D(t^mM_{\infty})\cong t^mD(M_{\infty})$,
      yielding that 
      \[t^mD(M_{\infty})/t^{m+1}D(M_{\infty})\cong D(t^mM_{\infty}/t^{m+1}M_{\infty})\cong D(M_{\infty}/t).\]
      By Lemma \ref{lem:projectivity}, we see that $D(M_{\infty})$ is finite projective of rank $r$ over $\wtR_Z/t^{n+1}$ as desired. 
      
      Now, by the $n=1$ case, we have
      \[(D(M_{\infty}/t),\nabla) = (D(M_{\infty})/t,\nabla)\in \MIC^{\Hsmall}(\wtR_Z/t),\]
      yielding that
      
      \[(D(M_{\infty}/t),\nabla)\in \MIC^{\Hsmall}(\wtR_Z/t^{n+1})\]
      as desired. This completes the proof of Item (1).

      Item (2) can be deduced from the similar argument above.

      For Item (3): Similar to the proof of Proposition \ref{prop:integral simpson} (3), for any $M_{\infty}\in \Rep_{\Gamma}^{\Hsmall}(\Bppsi/t^n)$ and any $(D,\nabla)\in \MIC^{\Hsmall}(\wtR_Z/t^{n+1})$, one can construct natural morphisms
      \[\iota_{M_{\infty}}:M_{\infty}(D(M_{\infty}),\nabla)\to M_{\infty}\]
      and 
      \[\iota_{(D,\nabla)}:(D(M_{\infty}(D,\nabla)),\nabla)\to(D,\nabla).\]
      It suffices to prove that they are both isomorphisms.
      To so so, by Nakayama's Lemma, one can check this modulo $t$, and can apply Proposition \ref{prop:integral simpson} (3). This establishes the desired equivalence of categories. The cohomological comparison follows easily.
  \end{proof}

  To complete the local theory, we state the following result, which will not be used in this paper.
  \begin{cor}
      For any $1\leq n\leq \infty$, the base-change functor $L\mapsto L_{\infty}:=L\otimes_{\Bpsi}\Bppsi$ induces an equivalence of categories
      \[\Rep_{\Gamma}^{\Hsmall}(\Bpsi/t^n)\xrightarrow{\simeq}\Rep_{\Gamma}^{\Hsmall}(\Bppsi/t^n)\]
      which is compatible with cohomology in the sense that we have a quasi-isomorphism
      \[\rR\Gamma(\Gamma,L)\cong\rR\Gamma(\Gamma,L_{\infty}).\]
  \end{cor}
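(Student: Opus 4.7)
My plan is to argue by induction on $n$, after first reducing to finite $n$ via Lemma~\ref{lem:from n to infty}, which identifies $\Rep_\Gamma^{\Hsmall}(B)$ with $\varprojlim_n \Rep_\Gamma^{\Hsmall}(B/t^n)$ compatibly with $\rR\Gamma(\Gamma,-)$ for both $B = \Bpsi$ and $B = \Bppsi$. The base case $n = 1$ asserts an equivalence $\Rep_\Gamma^{\Hsmall}(R_Z) \simeq \Rep_\Gamma^{\Hsmall}(\widehat R_{\infty,Z})$ with cohomological compatibility, and I would obtain this by inverting $p$ in Lemma~\ref{lem:decompletion}: that lemma identifies $\rR\Gamma(\Gamma, L^+)$ with a direct summand of $\rR\Gamma(\Gamma, L_\infty^+)$ whose complement is killed by $\zeta_p - 1$, and since $p$ is a unit-multiple of $(\zeta_p - 1)^{p-1}$, this complement vanishes after inverting $p$, so the inclusion becomes an isomorphism and the integral categorical equivalence descends to the rational one.

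For the inductive step from $n$ to $n+1$, I would first extend the cohomological comparison. For $L \in \Rep_\Gamma^{\Hsmall}(\Bpsi/t^{n+1})$, the short exact sequence $0 \to tL \to L \to L/tL \to 0$ has ends lying in lower-level categories: $tL \cong L/t^n L$ via multiplication by $t$, which is an isomorphism by finite projectivity and is $\Gamma$-equivariant because $t \in \BBdRp(Z)$ is $\Gamma$-fixed, so $tL \in \Rep_\Gamma^{\Hsmall}(\Bpsi/t^n)$, while $L/tL \in \Rep_\Gamma^{\Hsmall}(\Bpsi/t)$. The inductive hypothesis supplies cohomological compatibility at both ends, and the five lemma upgrades this to $L$ itself. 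Full faithfulness of the base-change functor at level $n+1$ then follows at once: the internal hom $L^\vee \otimes L'$ remains Hitchin-small by the preservation under tensor and duality established in Proposition~\ref{prop:integral simpson}~(3), and one reads off $\Hom_\Gamma(L, L') = \rH^0(\Gamma, L^\vee \otimes L') \cong \rH^0(\Gamma, L_\infty^\vee \otimes L_\infty') = \Hom_\Gamma(L_\infty, L_\infty')$.

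The main obstacle is essential surjectivity at level $n+1$. Given $L_\infty \in \Rep_\Gamma^{\Hsmall}(\Bppsi/t^{n+1})$, I would pass through the Riemann--Hilbert correspondence of Proposition~\ref{prop:local RH over BdRp} to obtain a Hitchin-small integrable connection $(D, \nabla) \in \MIC^{\Hsmall}(\wtR_Z/t^{n+1})$ with $L_\infty \cong (D \otimes_{\wtR_Z} \Bppsi\za\underline W\ya_\pd)^{\nabla_D = 0}$, and then define the candidate
\[L := \bigl(D \otimes_{\wtR_Z} \Bpsi\za\underline W\ya_\pd\bigr)^{\nabla_D = 0},\]
where $\Bpsi\za\underline W\ya_\pd$ denotes the $\xi_K$-adic completion of the pd-polynomial subring of $\Bppsi\za\underline W\ya_\pd$ over $\Bpsi$; this subring is $\Gamma$-stable because $\gamma_i(W_j) - W_j = \delta_{ij}ut/\xi_K \in \BBdRp(Z) \subset \Bpsi$, equipping $L$ with a natural $\Gamma$-action. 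The technical heart is verifying that the natural map $L \otimes_{\Bpsi} \Bppsi \to L_\infty$ is an isomorphism: this requires the $\nabla_D = 0$ kernel to commute with the faithfully flat base change $\Bpsi \to \Bppsi$, which rests on the $\Gamma$-equivariant decomposition $\Bppsi = \widehat\bigoplus_{\underline\alpha \in J_r} \Bpsi \cdot [(T_0^\flat)^{\alpha_0}]\cdots[(T_d^\flat)^{\alpha_d}]$ from Construction~\ref{Construction:local coefficients ring} and its compatibility with the pd-completion in the $\underline W$ variables. Once that identification is established, finite projectivity of $L$ of rank $r$ over $\Bpsi/t^{n+1}$ follows from faithfully flat descent applied to $L_\infty$, and Hitchin-smallness of $L$ follows from the already-proven $n = 1$ essential surjectivity applied to $L/tL$.
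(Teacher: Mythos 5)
Your proposal is correct and follows essentially the same route as the paper: full faithfulness and the cohomological comparison are reduced to the $n=1$ case of Lemma \ref{lem:decompletion} (your $t$-adic d\'evissage with the five lemma is just a hands-on version of the paper's appeal to derived Nakayama), and essential surjectivity is obtained exactly as in the paper by running Proposition \ref{prop:local RH over BdRp} with the $\Gamma$-stable, $\rd$-preserving subalgebra $\Bpsi\za\underline W\ya_{\pd}\subset\Bppsi\za\underline W\ya_{\pd}$ and checking $L\otimes_{\Bpsi}\Bppsi\cong L_\infty$ via the decomposition of $\Bppsi$ over $\Bpsi$. The only cosmetic difference is that the paper reads finite projectivity and Hitchin-smallness of $L$ directly off the explicit horizontal-section description (as in Lemma \ref{lem:L^+}) rather than via flat descent, but this does not change the argument.
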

  \begin{proof}
      The full faithfulness together with the cohomological comparison follows from the $n=1$ case (cf. Lemma \ref{lem:decompletion}) together with the derived Nakayama's Lemma. It remains to show the essential surjectivity. Note that 
      \[\Bpsi\za\underline W\ya_{\pd}\subset\Bppsi\za\underline W\ya_{\pd}\]
      is a $\rd$-preserving sub-$\Bpsi$-algebra which is stable under the action of $\Gamma$. For any $L_{\infty}\in \Rep_{\Gamma}^{\Hsmall}(\Bppsi/t^n)$ with the induced $(D,\nabla)\in\MIC^{\Hsmall}(\wtR_Z/t^n)$, one can obtain an $L\in \Rep_{\Gamma}^{\Hsmall}(\Bpsi/t^n)$ by using $\Bpsi\za\underline W\ya_{\pd}$ instead of $\Bppsi\za\underline W\ya_{\pd}$ in Proposition \ref{prop:local RH over BdRp}. Then one can conclude by checking that $L\otimes_{\Bpsi}\Bppsi\cong L_{\infty}$ directly.
  \end{proof}

\section{The stacky Riemann--Hilbert correspondence: Proof of Theorem \ref{thm:Stacky RH}}\label{sec:global RH}

  This section is devoted to proving Theorem \ref{thm:Stacky RH}. To do so, we need the following result:
    \begin{thm}\label{thm:global RH}
      Fix a $Z=\Spa(A,A^+)\in \Perfd$. Let $\frakX_Z$ be a liftable semi-stable formal scheme over $A^+$ with a fixed lifting $\wtx_Z$ over $\bA_{\inf,K}(Z)$. Let $X_Z$ be the generic fiber of $\frakX_Z$ and $\wtX_Z$ be the lifting of $X_Z$ over $\BBdRp(Z)$ associated to $\wtx_Z$. 
      Let $(\calO\bB_{\dR,\pd,Z}^+,\rd)$ be the period sheaf with connection corresponding to $\wtx_Z$ as constructed in \S\ref{sec:period sheaf}.
      Let $\nu:X_{Z,v}\to X_{Z,\et}$ be the natural morphism of sites. Let $1\leq n\leq \infty$. 
      \begin{enumerate}
          \item For any $\bL\in \rL\rS(X_Z,\bB_{\dR,n}^+)^{\Hsmall}(Z)$ of rank $r$, we have 
          \[\rR^n\nu_*(\bL\otimes_{\BBdRp}\calO\bB_{\dR,\pd,Z}^+) = \left\{
          \begin{array}{rcl}
              \calD(\bL), & n=0 \\
              0, & n\geq 1
          \end{array}
          \right.\]
          where $\calD(\bL)$ is a locally finite free $\calO_{\wtX_{Z,n}}$-module of rank $r$ on $X_{Z,\et}$ such that 
          \[\id_{\bL}\otimes\rd:\bL\otimes_{\BBdRp}\calO\bB_{\dR,\pd,Z}^+\to \bL\otimes_{\BBdRp}\calO\bB_{\dR,\pd,Z}^+\otimes_{\calO_{\wtx_Z}}\Omega^1_{\wtx_Z}\{-1\}\]
          induces a flat connection $\nabla_{\bL}$ on $\calD(\bL)$ making $(\calD(\bL),\nabla_{\bL})\in\MIC(\wtX_{Z,n})^{\Hsmall}(Z)$.

          \item For any $(\calD,\nabla)\in \MIC^{\Hsmall}(\wtX_{Z,n})$ of rank $r$, define 
          \[\nabla_{\calD}:=\nabla\otimes\id+\id\otimes\rd:\calD\otimes_{\calO_{\wtx_Z^I}}\calO\bB_{\dR,\pd,Z}^+\to \calD\otimes_{\calO_{\wtX_Z}}\calO\bB_{\dR,\pd,Z}^+\otimes_{\calO_{\wtx_Z}}\Omega^1_{\wtx_Z}\{-1\}.\]
          Then 
          \[\bL(\calD,\nabla):=(\calD\otimes_{\calO_{\wtX_Z}}\calO\bB_{\dR,\pd,Z}^+)^{\nabla_{\calD} = 0}\]
          is an object of rank $r$ in $\rL\rS(X_Z,\bB_{\dR,n}^+)^{\Hsmall}(Z)$.

          \item The functors $\bL\mapsto (\calD(\bL),\nabla_{\bL})$ and $(\calD,\nabla)\mapsto \bL(\calD,\nabla)$ described above are quasi-inverses of each other and thus define an equivalence of categories
          \[\rho_{\wtx_Z}:\rL\rS(X,\bB_{\dR,n}^+)^{\Hsmall}(Z)\xrightarrow{\simeq} \MIC^{\Hsmall}(\wtX_n)(Z)\]
          which preserves ranks, tensor products and dualities. Moreover, for any $\bL\in \rL\rS(X,\bB_{\dR,n}^+)^{\Hsmall}(Z)$ with corresponding $(\calD,\nabla)\in\MIC(\wtX_{Z,n})^{\Hsmall}(Z)$, there exists a quasi-isomorphism
          \[\rR\nu_*\bL\simeq \rD\rR(\calD,\nabla).\]
          In particular, we have a quasi-isomorphism
          \[\rR\Gamma(X_{Z,v},\bL)\simeq \rR\Gamma(X_{Z,\et},\rD\rR(\calD,\nabla)).\]
      \end{enumerate}
  \end{thm}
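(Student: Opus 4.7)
The plan is to reduce everything to the local situation handled in Proposition \ref{prop:local RH over BdRp}. Since $\rR\nu_*$ is computed \'etale-locally on $X_{Z,\et}$, and since being a $\BBdRpn$-local system (resp.\ integrable connection) is also a local condition, we may work \'etale-locally on $\frakX_Z$ and assume throughout that $\frakX_Z = \Spf(\calR_Z)$ is small semi-stable as in Example \ref{exam:Gamma torsor}, equipped with its coordinates $T_0,\dots,T_d$, pro-\'etale $\Gamma$-torsor $X_{\infty,Z}\to X_Z$, and affine lifting $\wtx_Z = \Spf(\wtr_Z)$. Corollary \ref{cor:Being Hitchin-small} tells us that in this setting, an object $\bL\in \rL\rS_r(X_Z,\BBdRpn)^{\Hsmall}(Z)$ evaluates on $X_{\infty,Z}$ to $L_\infty := \bL(X_{\infty,Z})\in\Rep_\Gamma^{\Hsmall}(\Bppsi/t^n)$, while $(\calD,\nabla)\in \MIC_r(\wtX_{Z,n})^{\Hsmall}(Z)$ evaluates on $X_Z$ to $(D,\nabla)\in\MIC^{\Hsmall}(\wtR_Z/t^n)$.

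The next step is to identify $\rR\nu_*$ with continuous $\Gamma$-cohomology of the $X_{\infty,Z}$-sections. Since $X_{\infty,Z}\to X_Z$ is a $\Gamma$-Galois cover by affinoid perfectoids, for any $v$-sheaf $\calF$ on $X_{Z,v}$ whose restriction to affinoid perfectoids over $X_{\infty,Z}$ satisfies vanishing of higher $v$-cohomology, we have the Cartan--Leray identification $\rR\nu_*\calF \simeq \rR\Gamma_{\cts}(\Gamma,\calF(X_{\infty,Z}))$. Applied to $\calF = \bL\otimes_{\BBdRp}\calO\bB_{\dR,\pd,Z}^+$, the sections over $X_{\infty,Z}$ are precisely $L_\infty\otimes_{\Bppsi}\Bppsi\za\underline W\ya_{\pd}$ by Definition \ref{dfn:Period sheaf-local} and Corollary \ref{cor:local description of C_Y} (via Proposition \ref{prop:local description of OB_dR^+}). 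Now Proposition \ref{prop:local RH over BdRp}(1) computes this cohomology: the higher $\Gamma$-cohomology vanishes, and $\rH^0$ is a finite projective $\wtR_Z/t^n$-module $D(L_\infty)$ carrying a Hitchin-small integrable connection. This produces $(\calD(\bL),\nabla_{\bL})$ locally; the construction is plainly functorial, so it glues to a global object of $\MIC_r(\wtX_{Z,n})^{\Hsmall}(Z)$, and similarly one obtains the reverse functor $\bL(\calD,\nabla)$ using Proposition \ref{prop:local RH over BdRp}(2). Items (1) and (2) of the theorem follow.

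For item (3), the fact that the two functors are quasi-inverse and preserve tensor products and dualities is a local statement that reduces to Proposition \ref{prop:local RH over BdRp}(3). The essential point for the cohomological comparison is the Poincar\'e Lemma: Theorem \ref{thm:Poincare's Lemma} gives a resolution
\[
0\to\BBdRpn\to \calO\bB_{\dR,\pd,Z}^+/t^n\to \calO\bB_{\dR,\pd,Z}^+/t^n\otimes_{\calO_{\wtx_Z}}\Omega^{1,\log}_{\wtx_Z}\{-1\}\to\cdots
\]
(for $n=\infty$ one works with the original complex). Tensoring with the flat $\BBdRpn$-module $\bL$ produces a resolution of $\bL$ by sheaves of the form $\bL\otimes_{\BBdRp}\calO\bB_{\dR,\pd,Z}^+\otimes\Omega^{i,\log}_{\wtx_Z}\{-i\}$. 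Applying $\rR\nu_*$ term-by-term, the vanishing proved in item (1) degenerates the resulting spectral sequence and yields
\[
\rR\nu_*\bL\ \simeq\ \rD\rR(\calD(\bL),\nabla_{\bL}),
\]
from which the global $\rR\Gamma$-comparison follows by applying $\rR\Gamma(X_{Z,\et},-)$.

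The main technical obstacle is ensuring that the local equivalence of Proposition \ref{prop:local RH over BdRp} is truly functorial with respect to the small affine charts on $\frakX_Z$, so that the locally constructed $\calD(\bL)$ and $\bL(\calD,\nabla)$ descend to honest sheaves on $X_{Z,\et}$ and $X_{Z,v}$ respectively. This reduces to checking that the period sheaf $(\calO\bB_{\dR,\pd,Z}^+,\rd)$ behaves well under the restriction along \'etale morphisms $\frakX_Z'\to\frakX_Z$ of small semi-stable formal schemes, which is built into its definition (Definition \ref{dfn:Period sheaf-global}) via the filtered colimit over $\Sigma_U$; together with the functoriality of the $X_{\infty,Z}$-construction, this gives the needed gluing. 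The preservation of rank, tensor products, and dualities is then formal from Proposition \ref{prop:local RH over BdRp}(3), as these are checked locally.
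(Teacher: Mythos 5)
Your proposal is correct and follows essentially the same route as the paper: reduce to the small semi-stable case via Corollary \ref{cor:Being Hitchin-small}, identify $\rR\nu_*$ with continuous $\Gamma$-cohomology over $X_{\infty,Z}$ (the paper's Lemmas \ref{lem:vanishing} and \ref{lem:reduce to Gamma-cohomology}), invoke Proposition \ref{prop:local RH over BdRp} for the local correspondence, and use the Poincar\'e Lemma (Theorem \ref{thm:Poincare's Lemma}) together with the vanishing from item (1) for the de Rham comparison. The only cosmetic difference is that the paper makes the quasi-inverse claim precise by writing down the natural maps $\iota_{\bL}$ and $\iota_{(\calD,\nabla)}$ globally and checking they are isomorphisms locally, which is the same gluing point you flag at the end.
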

  Before proving this theorem, we explain how to use it to obtain Theorem \ref{thm:Stacky RH}.
  \begin{proof}[\textbf{Proof of Theorem \ref{thm:Stacky RH}:}]
    Let $\frakX$ be a liftable semi-stable formal scheme over $\calO_C$ with a fixed flat lifting $\wtx$ over $\bfA_{\inf,K}$. Denote by $X$ its generic fiber and let $\wtX$ be the lifting of $X$ over $\BdRp$ induced by $\wtx$. For any $Z=\Spa(A,A^+)\in \Perfd$, we respectively denote by $\frakX_Z$, $\wtx_Z$, $X_Z$ and $\wtX_Z$ the corresponding base-changes of $\frakX$, $\wtx$, $X$ and $\wtX$ along $\calO_C\to A^+$, $\bfA_{\inf,K}\to \bA_{\inf,K}(Z)$, $C\to A$ and $\BdRp\to \BBdRp(Z)$. Let $(\calO\bB_{\dR,\pd,Z}^+,\rd)$ be the period sheaf with connection on $X_{Z,v}$ corresponding to the lifting $\wtx_Z$. Then it follows from Remark \ref{rmk:functoriality of period rings} that for any $f:Z_2=\Spa(A_2,A_2^+)\to Z_1=\Spa(A_1,A_1^+)$, $(\calO\bB_{\dR,\pd,Z_2}^+,\rd)$ is the restriction of $(\calO\bB_{\dR,\pd,Z_1}^+,\rd)$ to $X_{Z_2,v}$. Note that $f$ induces the obvious base-change functors
    \[f_{\rL\rS}:\rL\rS(X,\BBdRpn)^{\Hsmall}(Z_1)\to\rL\rS(X,\BBdRpn)^{\Hsmall}(Z_2),\quad \bL\mapsto \bL_{\mid_{X_{Z_2,v}}}\]
    and
    \[f_{\MIC}:\MIC(\wtX_n)^{\Hsmall}(Z_1)\to\MIC^{\Hsmall}(
    \wtX_n)(Z_2), \quad(\calD,\nabla)\mapsto (\calD\widehat \otimes_{\BBdRp(Z_1)}\BBdRp(Z_2),\nabla\otimes\id).\]
    As the construction in Theorem \ref{thm:global RH} is clearly functorial in $Z=\Spa(A,A^+)$, we have
    \[f_{\MIC}\circ\rho_{\wtx_{Z_1}} = \rho_{\wtx_{Z_2}}\circ f_{\rL\rS}.\]
    As $\rho_{\wtx_Z}$ preserves ranks, the equivalence criterion of $\rho_{\wtx_Z}$
    yields the desired equivalence of stacks
    \[\rho_{\wtx}:\rL\rS_r(X,\BBdRpn)^{\Hsmall}\xrightarrow{\simeq}\MIC_r(\wtX_n)^{\Hsmall}.\]
    This completes the proof.
  \end{proof}
  Now, we focus on the proof of Theorem \ref{thm:global RH}.
  \begin{lem}\label{lem:vanishing}
      Suppose that $\frakX_Z$ is small affine with generic fiber $X_Z$ and let $\bL$ be a $\BBdRpn$-local system on $X_{Z,v}$. Then for any affinoid perfectoid $U = \Spa(S,S^+)\in X_{Z,v}$ and for any $i\geq 1$, we have
      \[\rH^i(U,\bL\otimes_{\BBdRp}\calO\bB_{\dR,\pd,Z}^+) = 0.\]
  \end{lem}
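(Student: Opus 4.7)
The plan is to reduce the statement first to the vanishing of the higher cohomology of the period sheaf $\calO\bB_{\dR,\pd,Z}^+/t^n$ itself on any affinoid perfectoid, and then to establish this vanishing by induction on $n$ via the explicit pd-power series description given in Proposition \ref{prop:local description of OB_dR^+}.

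First, I observe that $\bL \otimes_{\BBdRp} \calO\bB_{\dR,\pd,Z}^+ = \bL \otimes_{\BBdRpn} \calO\bB_{\dR,\pd,Z}^+/t^n$ is v-locally a free $\calO\bB_{\dR,\pd,Z}^+/t^n$-module of rank $r$, since $\bL$ is killed by $t^n$. I would choose a v-cover $\widetilde U \to U$ by an affinoid perfectoid trivializing $\bL$, so on each iterated fibered product $\widetilde U^{\times_U k}$ (itself affinoid perfectoid) we have $(\bL \otimes \calO\bB_{\dR,\pd,Z}^+/t^n)_{\mid \widetilde U^{\times_U k}} \cong (\calO\bB_{\dR,\pd,Z}^+/t^n)_{\mid \widetilde U^{\times_U k}}^{\oplus r}$. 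Via the \v{C}ech-to-derived spectral sequence for this cover, together with the v-sheaf property of $\bL \otimes \calO\bB_{\dR,\pd,Z}^+/t^n$, the desired vanishing follows once I establish that for every affinoid perfectoid $V \in X_{Z,v}$,
\[\rH^i\bigl(V,\,\calO\bB_{\dR,\pd,Z}^+/t^n\bigr) = 0, \quad \forall\, i \geq 1. \qquad (\ast)\]

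Second, I would prove $(\ast)$ by induction on $n$. Since $t$ acts as a non-zero-divisor on $\calO\bB_{\dR,\pd,Z}^+$ (visible from the explicit description in Proposition \ref{prop:local description of OB_dR^+}), the short exact sequence
\[0 \to \calO\bB_{\dR,\pd,Z}^+/t \xrightarrow{\,t^{n-1}\,} \calO\bB_{\dR,\pd,Z}^+/t^n \to \calO\bB_{\dR,\pd,Z}^+/t^{n-1} \to 0\]
reduces the case of finite $n$ to the base case $n=1$. For $n = 1$, Proposition \ref{prop:local description of OB_dR^+} identifies $\calO\bB_{\dR,\pd,Z}^+/t$ locally on $V$ with $\widehat\calO_V[W_0,\dots,W_d]_{\pd}/(W_0+\cdots+W_r)_{\pd}$ (no completion is needed since $\xi_K$ vanishes modulo $t$ in $\BBdRp$), which is a countable direct sum $\bigoplus_{J\in\bN^d} \widehat\calO_V \cdot \underline W^{[J]}$ after eliminating one $W_i$. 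Since $\rH^i(V,\widehat\calO_V) = 0$ for $i \geq 1$ on affinoid perfectoids (Scholze's classical acyclicity), and higher cohomology on the quasi-compact quasi-separated v-site of $V$ commutes with countable direct sums, $(\ast)$ follows for $n = 1$. For $n = \infty$, one uses that $\calO\bB_{\dR,\pd,Z}^+$ is its own $t$-adic completion, combined with the vanishing at finite level and the Mittag-Leffler property of the inverse system $\{\rH^0(V,\calO\bB_{\dR,\pd,Z}^+/t^n)\}_n$, whose transition maps are surjective.

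The main obstacle I foresee is making the \v{C}ech descent argument in the first step fully rigorous: once $(\ast)$ is proved, one must carefully deduce the analogous vanishing for v-locally free sheaves such as $\bL \otimes \calO\bB_{\dR,\pd,Z}^+/t^n$ via the spectral sequence, which uses coherence of the v-topos and quasi-compactness of affinoid perfectoids in an essential way. The rest of the argument is a routine application of the explicit local description in Section \ref{sec:period sheaf} combined with the standard $t$-adic dévissage.
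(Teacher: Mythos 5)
There is a genuine gap in your first step, and it is precisely the point where the paper has to invoke a nontrivial theorem. Knowing $(\ast)$ (acyclicity of $\calO\bB_{\dR,\pd,Z}^+/t^n$ on every affinoid perfectoid) plus the fact that $\bL\otimes\calO\bB_{\dR,\pd,Z}^+/t^n$ is $v$-locally free does \emph{not} formally give the vanishing for $\bL$ itself. The \v{C}ech-to-derived spectral sequence for your trivializing cover $\widetilde U\to U$ only yields, using $(\ast)$ on the terms $\widetilde U^{\times_U k}$, an identification $\rH^i(U,\bL\otimes\calO\bB_{\dR,\pd,Z}^+)\cong \check{H}^i(\widetilde U/U,\bL\otimes\calO\bB_{\dR,\pd,Z}^+)$; but the \v{C}ech complex is the complex twisted by the cocycle defining $\bL$, and its exactness in positive degrees is exactly the statement you are trying to prove — nothing in the "$v$-sheaf property" forces it. (Compare: for the pro-\'etale cover $X_{\infty,Z}\to X_Z$ the analogous twisted complex computes $\Gamma$-cohomology, which is nonzero in positive degrees.) What is really needed is that a $\BBdRpn$-local system on an affinoid perfectoid $U=\Spa(S,S^+)$ is associated to a finite projective module over the ring of global sections; this is Kedlaya--Liu \cite[Th.~3.5.8]{KL16}, which the paper uses (after first reducing to $n=1$ by derived Nakayama) to replace $\bL_{\mid U}$ by $L\otimes_S\widehat\calO_U$ with $L$ finite projective, hence a direct summand of a free module, after which the problem genuinely reduces to the structure-sheaf case. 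Your argument, as written, is circular at this point.

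A secondary inaccuracy: modulo $t$ the period sheaf is locally the $p$-adically \emph{completed} pd-polynomial algebra $\widehat\calO_U^{+}[\underline W]^{\wedge}_{\pd}[\frac1p]$, not a plain countable direct sum $\oplus_J\widehat\calO_V\cdot\underline W^{[J]}$, so "cohomology commutes with countable direct sums" does not apply directly. The paper handles this by passing to $\widehat\calO_U^+$, using that $\rR\Gamma(U_v,\widehat\calO_U^+)$ is almost concentrated in degree $0$ and that this almost-vanishing persists under the $p$-adic completion of the free module (the argument of \cite[Lem.~5.7]{Wan23}), before inverting $p$. Your $t$-d\'evissage in $n$ and the limit argument for $n=\infty$ are fine in spirit (the paper instead reduces to $n=1$ at once by derived Nakayama), but the two points above — especially the missing Kedlaya--Liu input — must be repaired for the proof to stand.
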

  \begin{proof}
      We need to show that $\rR\Gamma(U,\bL\otimes_{\BBdRp}\calO\bB_{\dR,\pd,Z}^+)$ is concentrated in degree $0$. By derived Nakayama's Lemma, we are reduced to showing the case for $n=1$. In other words, we have to show for any $v$-vector bundle $\bL$,
      \[\rR\Gamma(U_v,\bL\otimes_{\widehat \calO_X}\calO\widehat \bC_{\pd,Z})\]
      is concentrated in degree $0$, where $\calO\widehat \bC_{\pd,Z}:=\calO\bB_{\dR,\pd,Z}^+/t$ denotes the reduction of $\calO\bB_{\dR,\pd,Z}^+$ modulo $t$. It follows from \cite[Th. 3.5.8]{KL16} that there is a finite projective $S$-module $L$ such that $\bL_{\mid_U}\cong L\otimes_{S}\widehat \calO_U$. Therefore, without loss of generality, we may assume $\bL_{\mid_U} = \widehat \calO_U$, and are reduced to showing that 
      \[\rR\Gamma(U_v,\calO\widehat \bC_{\pd,Z}) = \rR\Gamma(U_v,\widehat \calO_U[\underline W]_{\pd}^{\wedge})\]
      is concentrated in degree $0$, by using Proposition \ref{prop:local description of OB_dR^+}. It suffices to show that
      \[\rR\Gamma(U_v,\widehat \calO_U^+[\underline W]^{\wedge}_{\pd})\]
      has cohomologies killed by $\frakm_C$ in degree $\geq 1$. However, as $\widehat \calO_U^+[\underline W]^{\wedge}_{\pd}$ is the $p$-adic completion of the free $\widehat \calO_U^+$-module $\widehat \calO_U^+[\underline W]_{\pd}$, using the proof of \cite[Lem. 5.7]{Wan23}, we are reduced to showing that 
      \[\rR\Gamma(U_v,\widehat \calO_U^+)\]
      has cohomologies killed by $\frakm_C$ in degree $\geq 1$. This is well-known (cf. \cite[Prop. 8.8]{Sch-Diamond}).
  \end{proof}
  \begin{lem}\label{lem:reduce to Gamma-cohomology}
      Suppose that $\frakX_Z$ is small affine with generic fiber $X_Z$ and let $\bL$ be a $\BBdRpn$-local system on $X_{Z,v}$. Then there exists a natural quasi-isomorphism
      \[\rR\Gamma(\Gamma,\bL\otimes_{\BBdRp}\calO\bB_{\dR,\pd,Z}^+(X_{\infty,Z}))\to\rR\Gamma(X_{Z,v},\bL\otimes_{\BBdRp}\calO\bB_{\dR,\pd,Z}^+),\]
      where $X_{\infty,Z}\to X_Z$ is the $\Gamma$-torsor considered in Example \ref{exam:Gamma torsor}.
  \end{lem}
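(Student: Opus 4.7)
The plan is \v{C}ech descent along the pro-\'etale $\Gamma$-torsor $X_{\infty,Z}\to X_Z$. Set $\calF:=\bL\otimes_{\BBdRp}\calO\bB_{\dR,\pd,Z}^+$. The map $X_{\infty,Z}\to X_Z$ is a $v$-cover by an affinoid perfectoid, and since $\Gamma\cong\bZ_p^d$ is a profinite group acting freely, for each $n\geq 0$ there is a canonical identification
\[\underbrace{X_{\infty,Z}\times_{X_Z}\cdots\times_{X_Z}X_{\infty,Z}}_{n+1}\;\simeq\;X_{\infty,Z}\times\underline{\Gamma}^n\]
as $v$-sheaves, where $\underline{\Gamma}$ denotes the profinite set $\Gamma$ viewed as an object of $X_{Z,v}$ (a cofiltered limit of finite \'etale covers through the finite quotients of $\Gamma$). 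These fibre products are cofiltered limits of affinoid perfectoid covers of $X_{\infty,Z}$, and the same argument as in Lemma~\ref{lem:vanishing} yields
\[\rH^i\bigl(X_{\infty,Z}\times\underline{\Gamma}^n,\,\calF\bigr)=0\quad\text{for all }i\geq 1,\ n\geq 0.\]

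Hence the \v{C}ech-to-cohomology spectral sequence for this cover degenerates to its bottom row, so $\rR\Gamma(X_{Z,v},\calF)$ is computed by the \v{C}ech complex $\calF(X_{\infty,Z}\times\underline{\Gamma}^\bullet)$. Unwinding the construction of $\calO\bB_{\dR,\pd,Z}^+$ and using that $\bL$ is locally free over $\BBdRpn$, one identifies
\[\calF\bigl(X_{\infty,Z}\times\underline{\Gamma}^n\bigr)\;=\;\mathrm{Cts}\bigl(\Gamma^n,\calF(X_{\infty,Z})\bigr),\]
with the \v{C}ech differentials matching the standard inhomogeneous continuous group cochain differentials built from the $\Gamma$-action on $\calF(X_{\infty,Z})$ and the group law on $\Gamma$. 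Since $\Gamma\cong\bZ_p^d$ is topologically finitely generated and $\calF(X_{\infty,Z})$ is a $t$-adically complete topological $\Gamma$-module, this continuous cochain complex computes $\rR\Gamma(\Gamma,\calF(X_{\infty,Z}))$, yielding the desired quasi-isomorphism.

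The main technical obstacle is the sectionwise identification $\calO\bB_{\dR,\pd,Z}^+(X_{\infty,Z}\times\underline{\Gamma}^n)=\mathrm{Cts}(\Gamma^n,\calO\bB_{\dR,\pd,Z}^+(X_{\infty,Z}))$, which must pass through the filtered colimit over \'etale charts used to define $\calO\bB_{\dR,\pd,Z}^+$, and then through the $p$-adic and $\xi_K$-adic completions. The strategy is to reduce modulo $t$ by derived Nakayama: the reduction $\calO\widehat{\bC}_{\pd,Z}^+$ is described explicitly in Proposition~\ref{prop:local description of OB_dR^+} and Remark~\ref{rmk:compare to MW-Integral} as a $p$-adically completed pd-polynomial ring over $\widehat{\calO}_X^+$, and the identification on sections then reduces to classical almost-\'etale and pro-\'etale descent results for $\widehat{\calO}_X^+$ on affinoid perfectoid spaces \cite{Sch-Pi,Sch-Diamond}.
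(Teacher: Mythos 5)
Your argument is exactly the one the paper invokes: its proof is a one-line citation of the Cartan--Leray/\v{C}ech descent argument of \cite[Lem. 5.11]{Wan23} applied to the pro-\'etale $\Gamma$-torsor $X_{\infty,Z}\to X_Z$, with Lemma \ref{lem:vanishing} supplying the vanishing on the affinoid perfectoid terms $X_{\infty,Z}\times\underline{\Gamma}^n$ of the \v{C}ech nerve and the identification of the \v{C}ech complex with continuous $\Gamma$-cochains handled as in \emph{loc.\ cit.} (via the mod-$t$ description and completeness, as you indicate). So your proposal is correct and essentially coincides with the paper's proof, just written out in full.
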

  \begin{proof}
      This follows from the same argument for the proof of \cite[Lem. 5.11]{Wan23} by using Lemma \ref{lem:vanishing} instead of \cite[Lem. 5.7]{Wan23}.
  \end{proof}

  Now, we are prepared to show Theorem \ref{thm:global RH}.
  \begin{proof}[\textbf{Proof of Theorem \ref{thm:global RH}:}]

      For Item (1): Fix an $\bL\in \rL\rS(X_Z,\bB_{\dR,n}^+)^{\Hsmall}(Z)$ of rank $r$. Since the problem is local on $\frakX_{Z,\et}$, we may assume $\frakX_Z = \Spf(\calR_Z)$ is small semi-stable. By Corollary \ref{cor:Being Hitchin-small}(1), one can deduce Item (1) from Lemma \ref{lem:reduce to Gamma-cohomology} together with Proposition \ref{prop:local RH over BdRp}(1).

      For Item (2): Fix an $(\calD,\nabla)\in\MIC^{\Hsmall}(\wtX_{Z,n})(Z)$ of rank $r$. Since the problem is again local on $\frakX_{Z,\et}$, we may assume that $\frakX_Z = \Spf(\calR_Z)$ is small semi-stable with the generic fiber $X_Z = \Spa(R_Z,R_Z^+)$. Let $\wtR_Z$ be the lifting of $R_Z$ over $\BBdRp(Z)$ induced by the given lifting $\wtx_Z$. According to Corollary \ref{cor:Being Hitchin-small}(2), we see that the global section 
      \[(D,\nabla):=(\calD,\nabla)(X_Z)\in \MIC^{\Hsmall}(\wtR_Z/t^n).\]
      Then one can deduce Item (2) from Proposition \ref{prop:local RH over BdRp}(2).

      For Item (3): For any $\bL\in \rL\rS(X_Z,\bB_{\dR,n}^+)^{\Hsmall}(Z)$ and any $(\calD,\nabla)\in\MIC(\wtX_{Z,n})^{\Hsmall}(Z)$, similar to the proof of Proposition \ref{prop:local RH over BdRp}(3), one can construct two canonical morphisms
      \[\iota_{\bL}:\bL(\calD(\bL),\nabla_{\bL})\to\bL\]
      and
      \[\iota_{(\calD,\nabla)}:(\calD(\bL(\calD,\nabla)),\nabla_{\bL(\calD,\nabla)})\to(\calD,\nabla).\]
      To get the desired equivalence of categories, it is enough to show that both $\iota_{\bL}$ and $\iota_{(\calD,\nabla)}$ are isomorphisms. But this is still a local problem on $\frakX_{Z,\et}$ and thus we can conclude by Proposition \ref{prop:local RH over BdRp} (3). Finally, we have to show that for any $\bL\in \rL\rS(X_Z,\bB_{\dR,n}^+)^{\Hsmall}(Z)$ with corresponding $(\calD,\nabla)\in\MIC(\wtX_{Z,n})^{\Hsmall}(Z)$, there exists a quasi-isomorphism
      \[\rR\nu_*\bL\simeq \rD\rR(\calD,\nabla).\] 
      By Poincar\'e's Lemma (cf. Theorem \ref{thm:Poincare's Lemma}), we have
      \[\rR\nu_*\bL\simeq \rR\nu_*(\rD\rR(\bL\otimes_{\BBdRp}\calO\bB_{\dR,\pd,Z}^+,\id_{\bL}\otimes\rd)).\]
      On the other hand, it follows from Item (1) that
      \[\rR\nu_*(\rD\rR(\bL\otimes_{\BBdRp}\calO\bB_{\dR,\pd,Z}^+,\id_{\bL}\otimes\rd))\simeq \nu_*(\rD\rR(\bL\otimes_{\BBdRp}\calO\bB_{\dR,\pd,Z}^+,\id_{\bL}\otimes\rd))\simeq \rD\rR(\calD,\nabla_{\calD}).\]
      So we get a quasi-isomorphism
      \[\rR\nu_*\bL\simeq\rD\rR(\calD,\nabla_{\calD})\]
      as expected. This completes the proof.
  \end{proof}

  As an application, we give another proof of arithmetic Riemann--Hilbert correspondence in \cite{GMWrelative} in the good reduction case.

  \begin{cor}[Arithmetic Riemann--Hilbert correspondence]\label{cor:arithmetic RH}
      Let $\frakX_0$ be a smooth formal scheme over $\calO_K$ with generic fiber $X_0$. For any $1\leq n\leq \infty$, let $\wtX_n$ be the base-change of $X_0$ along $K\to \bfB_{\dR}^+/t^n$. Then there exists an equivalence of categories
      \[\rL\rS(X_{0,v},\BBdRpn)\simeq\MIC^{G_K}(\wtX_n)\]
      where $\rL\rS(X_{0,v},\BBdRpn)$ is the category of $\BBdRpn$-local systems on $X_{0,v}$ and $\MIC^{G_K}(\wtX_n)$ is the category of $G_K$-equivariant integrable connections on $\wtX_n$.
  \end{cor}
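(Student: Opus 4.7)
The plan is to deduce the corollary from Corollary~\ref{cor:Hitchin-small RH} (equivalently, Theorem~\ref{thm:global RH}) via Galois descent. Set $Z = \Spa(C, \calO_C) \in \Perfd$ and let $\frakX := \frakX_0 \widehat{\otimes}_{\calO_K} \calO_C$ be the base-change, which is a smooth formal scheme over $\calO_C$ with generic fiber $X = X_0 \widehat{\otimes}_K C$. The base-change $\wtx := \frakX_0 \widehat{\otimes}_{\calO_K} \bfA_{\inf,K}$ is a canonical flat lifting of $\frakX$ over $\bfA_{\inf,K}$, and it carries a natural $G_K$-action induced from the Galois action on $\bfA_{\inf,K}$. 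This yields a $G_K$-equivariant lifting $\wtX$ of $X$ over $\BdRp$, whose reduction modulo $t^n$ agrees with $\wtX_n$.

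Next, by $v$-descent along the pro-finite-\'etale cover $X^{\diamond} \to X_0^{\diamond}$ (which presents $X_0^{\diamond}$ as the quotient of $X^{\diamond}$ by $G_K$), pullback identifies $\rL\rS(X_{0,v}, \BBdRpn)$ with the category of $G_K$-equivariant $\BBdRpn$-local systems on $X_v$, where $G_K$ acts compatibly on $X$ and on the coefficient sheaf $\BBdRpn$. By its very definition, $\MIC^{G_K}(\wtX_n)$ is likewise the category of $G_K$-equivariant integrable connections on $\wtX_n$ viewed over $Z$. By Remark~\ref{rmk:nilpotent case} (cf.~\cite[Rem. 3.2]{MW-JEMS}), every object on either side automatically lies in the fiber at the origin $0 \in \calA_r$, and is therefore in particular Hitchin-small.

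I would then apply Corollary~\ref{cor:Hitchin-small RH} to the pair $(\frakX, \wtx)$ at $Z = \Spa(C, \calO_C)$ to obtain an equivalence
\[\rL\rS(X, \BBdRpn)^{\Hsmall}(C) \simeq \MIC(\wtX_n)^{\Hsmall}(C).\]
Because the lifting $\wtx$ and the period sheaf with connection $(\calO\bB_{\dR,\pd,Z}^+, \rd)$ constructed in Section~\ref{sec:period sheaf} are both manifestly $G_K$-equivariant (being functorial in $Z$ and built locally from $\bfA_{\inf,K}(Z)$ and the Galois-equivariant chart-lifting data), the Riemann--Hilbert functors $\bL \mapsto (\calD(\bL), \nabla_{\bL})$ and $(\calD, \nabla) \mapsto \bL(\calD, \nabla)$ of Theorem~\ref{thm:global RH} intertwine the Galois actions on both sides. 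Passing to $G_K$-equivariant objects yields the asserted equivalence.

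The main obstacle is to verify carefully that the entire construction of $(\calO\bB_{\dR,\pd,Z}^+, \rd)$ and the associated Riemann--Hilbert functors are $G_K$-equivariant. Concretely, this amounts to checking that: (i) when $\frakX_0$ is small affine over $\calO_K$, the affine charts used in the construction are compatible with the $G_K$-action on $\bfA_{\inf,K}$, so that the sheaves $A_{\frakY_Z}$, $\Gamma_{\frakY_Z}$ and the blow-up/pd-completion operations are functorial under $G_K$; (ii) the derivation $\rd$ (being induced from the canonical derivation on $\wtx$) commutes with the Galois action; and (iii) the $v$-descent identification in step two is compatible with the coefficient sheaf $\BBdRpn$, which itself carries the natural Galois action. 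Once these equivariance assertions are verified, the corollary follows formally by taking $G_K$-fixed points of the equivalence at $Z = \Spa(C, \calO_C)$.
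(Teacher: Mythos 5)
Your proposal is correct and follows essentially the same route as the paper: base-change $\frakX_0$ to $\calO_C$ with the canonical lifting $\frakX_0\widehat\otimes_{\calO_K}\bfA_{\inf,K}$, use $v$-descent along the pro-\'etale $G_K$-covering $X\to X_0$ to identify $\rL\rS(X_{0,v},\BBdRpn)$ with $G_K$-equivariant $\BBdRpn$-local systems on $X_v$, invoke \cite[Rem. 3.2]{MW-JEMS} for automatic Hitchin-smallness, and conclude by Theorem \ref{thm:global RH}. The only difference is that you spell out the $G_K$-equivariance of the period-sheaf construction and of the Riemann--Hilbert functors, which the paper leaves implicit when it says the conclusion follows ``immediately.''
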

  \begin{proof}
      Note that $X = \wtX_1$ is base-change of $X_0$ along $K\to C$, which admits an obvious smooth formal model $\frakX$ over $\calO_C$; namely, the base-change of $\frakX_0$ along $\calO_K\to\calO_C$. Let $\wtx$ be the base-change of $\frakX_0$ along $\calO_K\to\bfA_{\inf,K}$. Then $\wtx$ is the lifting of $\frakX$ over $\bfA_{\inf,K}$. As $X\to X_0$ is a pro-\'etale Galois covering with Galois group $G_K$, by $v$-descent, the restriction from $X_{0,v}$ to $X_v$ induces an equivalence of categories
      \[\rL\rS(X_{0,v},\BBdRpn)\simeq \rL\rS^{G_K}(X_v,\BBdRpn),\]
      where $\rL\rS^{G_K}(X_v,\BBdRpn)$ denotes the category of $G_K$-equivariant $\BBdRpn$-local systems on $X_v$. As $G_K$-equivariant $\BBdRpn$-local systems on $X_v$ and integrable connections on $\wtX_n$ are always Hitchin-small (cf. \cite[Rem. 3.2]{MW-JEMS}), one can conclude by Theorem \ref{thm:global RH} immediately.
  \end{proof}
    We give a remark to explain how to get rid of the smoothness condition on the formal model in the above corollary.
  \begin{rmk}\label{rmk:arithmetic RH}
      One can obtain the arithmetic Riemann--Hilbert correspondence in Corollary \ref{cor:arithmetic RH} for $X_0$ which has a semi-stable formal model $\frakX_0$ over $\calO_K$. Put 
      \[\bfA_{\dR,K}:=\text{$\xi_K\pi^{-1}$-adic completion of }\bfA_{\inf,K}[(\frac{[\underline \pi]}{\pi})^{\pm 1}]^{\wedge_p} \]
      and replace $(\bfA_{\inf,K},\xi_K)$ by $(\bfA_{\dR,K},\xi_K\pi^{-1})$. Then for a semi-stable formal scheme $\frakX$ over $\calO_C$ with a fixed flat lifting $\wtx$ over $\bfA_{\dR,K}$, one can similarly construct a period sheaf (of $\BBdRp$-algebras) with connection 
      \[(\calP,\rd:\calP\to \calP\otimes_{\calO_{\wtx}}\Omega^{1,\log}_{\wtx}\langle-1\rangle)\]
      where for any $n\in \bZ$ and for any $\bfA_{\dR,K}$-module $M$, 
      \[M\langle n \rangle:=M\otimes_{\bfA_{\dR,K}}\Ker(\bfA_{\dR,K}\to\calO_C)^{\otimes n} = M\cdot(\xi_K\pi^{-1})^n,\]
      such that the induced de Rham complex
      \[0\to\BBdRp\to \rD\rR(\calP,\rd)\]
      is exact. Define the following locus on Hitchin-base
      \[\calA_r^{\circ}:=\oplus_{i\geq 1}^rp^{>\frac{i}{p-1}}\cdot\rH^0(\frakX,\Sym^i(\Omega_{\frakX}^{1,\log}\langle-1\rangle))\subset \calA_r.\]
      Similar to Theorem \ref{thm:Stacky RH}, one can establish an equivalence of stacks
      \[\rL\rS_r(X,\BBdRpn)\times_{\calA_r}\calA_r^{\circ} \simeq \MIC_r(\wtX_n)\times_{\calA_r}\calA_r^{\circ}\]
      by using $(\calP,\rd)$ instead of $(\calO\bB_{\dR,\pd}^+,\rd)$ above.
      
      As the base-change $\wtx$ of $\frakX_0$ along $\calO_K\to \bfA_{\dR,K}$ is a lifting of $\frakX = \frakX_0\times_{\Spf(\calO_K)}\Spf(\calO_C)$ over $\bfA_{\dR,K}$, using the same proof of Corollary \ref{cor:arithmetic RH}, we get an equivalence of categories
      \[\rL\rS(X_{0,v},\BBdRpn)\simeq\MIC^{G_K}(\wtX_n)\]
      in this case, where $\wtX_n$ is the base-change of $X_0$ along $K\to\bfB_{\dR}^+/t^n$. We leave the details to the interested readers.
  \end{rmk}
   \begin{rmk}\label{rmk:equivalence on neighborhood}
      Let $X$ be an abelian variety over $C$. By spreading-out and the stable reduction theorem of Grothendieck \cite[Expos\'e IX, Th. 3.6]{SGA7I}, there exists a complete discrete valuation sub-field $K$ of $C$ and a semi-stable formal scheme $\frakX_0$ over $\calO_K$ whose generic fiber $X_0$ is an abelian variety over $K$ such that $X=X_0\times_KC$. In particular, we see that $X$ has a semi-stable formal model $\frakX=\frakX_0\times_{\Spf(\calO_K)}\Spf(\calO_C)$ which admits a lifting $\wtx$ over $\bfA_{\dR,K}$ as in Remark \ref{rmk:arithmetic RH}. So one can show that there exists an open neighborhood $\calA_r^{\circ}\subset\calA_r$ of the origin $0\in\calA_r$ (depending on $K$ and thus on $X$) such that there is an equivalence of stacks
      \[\rL\rS_r(X,\bB_{\dR,n}^+)\times_{\calA_r}\calA_r^{\circ}\simeq \MIC_r(\wtX_n)\times_{\calA_r}\calA_r^{\circ}.\]
  \end{rmk}

\section*{}

\section*{Statement and Declarations}

The authors have no competing interests, which are relevant to the content of this article, to declare.

\section*{Data availability}

The authors declare that the manuscript has no associated data.


\begin{thebibliography}{99}

  \bibitem[AGT16]{AGT} Ahmed Abbes, Michel Gros, and Takeshi Tsuji: {\it The $p$-adic {S}impson Correspondence (AM-193)}, volume 193. Princeton University Press, 2016.
  
  \bibitem[AHLB25]{AHLB25} Johannes Ansch\"utz, Ben Heuer, and Arthur-C\'esar Le Bras: {\it {H}odge-{T}ate stacks and non-abelian $p$-adic {H}odge theory of $v$-perfect complexes on smooth rigid spaces}, J. Reine Angew. Math. 820 (2025), 235–305.

  \bibitem[AHLB23]{AHLB23} Johannes Ansch\"utz, Ben Heuer, and Arthur-C\'esar Le Bras. {\it The small $p$-adic {S}impson correspondence in terms of moduli spaces}, arXiv preprint arXiv:2312.07554, 2023. Accepted by Math. Z..

  \bibitem[Bha21]{Bha-ICM} Bhargav Bhatt: {\it Algebraic geometry in mixed characteristic}, arXiv preprint arXiv:2112.12010, 2021.

  \bibitem[BL22a]{BL22a} Bhargav Bhatt and Jacob Lurie: {\it Absolute prismatic cohomology}, arXiv preprint arXiv:2201.06120, 2022.

  \bibitem[BL22b]{BL22b} Bhargav Bhatt and Jacob Lurie: {\it The prismatization of $p$-adic formal schemes}, arXiv preprint arXiv:2201.06124, 2022.

  \bibitem[CK19]{CK19} Kestutis \v Cesnavi\v cius and Teruhisa Koshikawa: {\it The $\Ainf$-cohomology in the semistable case}, Compos. Math., 155(11):2039–2128, 2019.

  \bibitem[CLWYZ]{CLWYZ} Zekun Chen, Ruochuan Liu, Yupeng Wang, Jiahong Yu and Xinwen Zhu: {\it A p-adic {R}iemann–{H}ilbert correspondence over {R}obba rings}, In preparation.

  \bibitem[CS24]{CS} Kestutis \v Cesnavi\v cius and Peter Scholze: {\it Purity for flat cohomology}, Annals of Mathematics, 199(1):51–180, 2024.

  \bibitem[DLLZ23]{DLLZ} Hansheng Diao, Kai-Wen Lan, Ruochuan Liu, and Xinwen Zhu: {\it Logarithmic {R}iemann–{H}ilbert correspondences for rigid varieties}, Journal of the American Mathematical Society, 36(2):483–562, 2023.

  \bibitem[Fal05]{Fal} Gerd Faltings: {\it A $p$-adic {S}impson correspondence}, Advances in Mathematics, 198(2):847–862, 2005.

  \bibitem[GMW24]{GMWrelative} Hui Gao, Yu Min, and Yupeng Wang: {\it Prismatic crystals and $p$-adic Riemann--Hilbert correspondence}, arXiv preprint arXiv:2411.18780, 2024.
  
  \bibitem[Heu22a]{Heu-Sigma} Ben Heuer: {\it Line bundles on rigid spaces in the $v$-topology}, In Forum of Mathematics, Sigma, volume 10, page e82. Cambridge University Press, 2022.

  \bibitem[Heu22b]{Heu-Moduli} Ben Heuer: {\it Moduli spaces in $p$-adic non-abelian {H}odge theory}, arXiv preprint arXiv:2207.13819, 2022. Accepted by J. Algebraic Geom..

  \bibitem[Heu25]{Heu25} Ben Heuer: {\it A $p$-adic {S}impson correspondence for smooth proper rigid varieties}, Invent. Math. 240 (2025), no. 1, 261–312.

  \bibitem[HX24]{HX} Ben Heuer and Daxin Xu: {\it $p$-adic non-abelian {H}odge theory for curves via moduli stacks}, arXiv preprint arxiv:2402.01365, 2024.

  \bibitem[KL16]{KL16} Kiran S Kedlaya and Ruochuan Liu: {\it Relative $p$-adic {H}odge theory, II: Imperfect period rings}, arXiv preprint arXiv:1602.06899, 2016.

  \bibitem[Kos20]{Kos20} Teruhisa Koshikawa: {\it Logarithmic prismatic cohomology}, arXiv Preprint arxiv:2007.14037, 2020.


  \bibitem[Li]{Li} Yixiao Li: {\it A $p$-adic {R}iemann–{H}ilbert correspondence for {Z}ariski $\Qp$-constructible sheaves}, In preparation.
  
  \bibitem[LMNQ24]{LMNQ} Yudong Liu, Chenglong Ma, Xiecheng Nie, and Xiaoyu Qu: {\it A stacky nilpotent $p$-adic {R}iemann-{H}ilbert correspondence}, arXiv Preprint arxiv: 2411.10165, 2024.


  \bibitem[LZ17]{LZ} Ruochuan Liu and Xinwen Zhu: {\it Rigidity and a {R}iemann-{H}ilbert correspondence for $p$-adic local systems}, Invent. Math., 207(1):291–343, 2017.

  \bibitem[MT20]{MT} Matthew Morrow and Takeshi Tsuji: {\it Generalised representations as $q$-connections in integral $p$-adic {H}odge theory}. arXiv preprint arXiv:2010.04059v2, 2020.

  \bibitem[MW22]{MW-JEMS} Yu Min and Yupeng Wang: {\it $p$-adic Simpson correpondence via prismatic crystals}, arXiv preprint arXiv:2201.08030v4, 2022. Accepted by J. Eur. Math. Soc..

  \bibitem[MW24]{MW-AIM} Yu Min and Yupeng Wang: {\it Integral $p$-adic non-abelian {H}odge theory for small representations}, Advances in Mathematics, 458:109950, 2024.

  \bibitem[Ols05]{Ols} Martin C Olsson: {\it The logarithmic cotangent complex}, Mathematische Annalen, 333:859–931, 2005.

  \bibitem[RC22]{RC-GeoSen} Juan Esteban Rodr\'{\i}guez Camargo: {\it Geometric {S}en theory over rigid analytic spaces}, arXiv preprint arxiv:2205.02016v3,2022.

  \bibitem[Sch17]{Sch-Pi} Peter Scholze: {\it $p$-adic {H}odge theory for rigid-analytic varieties}, Forum Math. Pi, 1:e1, 77, 2013.

  \bibitem[Sch17]{Sch-Diamond} Peter Scholze: {\it {\'E}tale cohomology of diamonds}, arXiv preprint arXiv:1709.07343, 2017. Accepted by Asterisque.

  \bibitem[SGA72]{SGA7I} SGA7 I: {\it {G}roupes de monodromie en g\'eom\'etrie alg'ebrique. {I}}, volume Vol. 288 of Lecture Notes in Mathematics. Springer-Verlag, Berlin-New York, 1972. S\'eminaire de G\'eom\'etrie Alg\'ebrique du Bois-Marie 1967–1969 (SGA 7 I), Dirig\'e par A. Grothendieck. Avec la collaboration de M. Raynaud et D. S. Rim.

  \bibitem[SW24]{SW24} Mao Sheng and Yupeng Wang: {\it The small $p$-adic Simpson correspondence in semi-stable reduction case}, arxiv preprint arxiv:2410.09685, 2024.

  \bibitem[Wan23]{Wan23} Yupeng Wang: {\it A $p$-adic {S}impson correspondence for rigid analytic varieties}, Algebra Number Theory, 17(8):1453–1499, 2023.

  \bibitem[Yu24]{Yu24} Jiahong Yu: {\it On smooth adic spaces over $\mathbf{B}_{\dR}^+$ and sheafified $p$-adic {R}iemann-{H}ilbert correspondence}, arXiv preprint arXiv:2403.01363v2, 2024.
\end{thebibliography}
\end{document}